\documentclass[11pt]{amsart}
\usepackage[T1]{fontenc}
\usepackage{lmodern}
\usepackage[margin=1in]{geometry}
\usepackage{amsmath,amssymb,mathtools,mathrsfs}
\usepackage{microtype}
\usepackage{hyperref}
\hypersetup{hidelinks,pdftitle={Fourier quasicrystals with positive integer masses are sections of Lee--Yang cycles},pdfauthor={Dongsheng Wei}}
\newcommand{\R}{\mathbb R}
\newcommand{\C}{\mathbb C}
\newcommand{\Z}{\mathbb Z}
\newcommand{\Q}{\mathbb Q}
\newcommand{\N}{\mathbb N}
\newcommand{\T}{\mathbb T}
\newcommand{\PP}{\mathbb P}
\newcommand{\cA}{\mathcal A}
\newcommand{\varn}{\operatorname{var}}
\newcommand{\vbar}{\overline{\operatorname{var}}}
\DeclareMathOperator{\supp}{supp}
\DeclareMathOperator{\Gr}{Gr}

\DeclareMathOperator{\Log}{Log}

\DeclareMathOperator{\Frac}{Frac}

\newtheorem{maintheorem}{Theorem}

\newtheorem{theorem}{Theorem}[section]
\newtheorem{lemma}[theorem]{Lemma}
\newtheorem{proposition}[theorem]{Proposition}
\newtheorem{corollary}[theorem]{Corollary}
\theoremstyle{definition}
\newtheorem{definition}[theorem]{Definition}
\theoremstyle{remark}
\newtheorem{remark}[theorem]{Remark}
\newtheorem{example}[theorem]{Example}
\newtheorem{question}[theorem]{Question}
\numberwithin{equation}{section}
\title[Integer-mass Fourier quasicrystals]{Fourier quasicrystals with positive integer masses are sections of Lee--Yang cycles}
\author{Dongsheng Wei}
\address{Shenzhen, China}
\date{}
\subjclass[2020]{Primary 52C23, 42B10; Secondary 14P05, 14T90, 32C38, 42A75}
\keywords{Fourier quasicrystals, Lee--Yang varieties, holonomic modules, tropical geometry, effective cycles, local intersection degree}
\begin{document}
\begin{abstract}
We prove that Fourier quasicrystal measures with positive integer masses
are precisely the nonzero real sections of effective Lee--Yang cycles
by matrices with positive maximal minors, with masses given by weighted
local intersection degrees. For Delone support, the reduced support of
the cycle can be chosen strict. For unit masses, the cycle can be chosen
reduced, with local intersection degree one at every root. Every
integer-mass FQ measure has support equal to the common complex zero
set of finitely many real-valued trigonometric polynomials. These results
answer Questions (1), (2), and (4) of Alon,
Kummer, Kurasov, and Vinzant. We recover a finite-dimensional torus from
the spectrum and prove that its canonical analytic set is algebraic.
A support-height theorem for holonomic sequences provides a linear
spectral bound, from which we reconstruct the complex branches and their
integer coefficients using fiber moments. Singular sections carry their
natural local intersection degrees; discarding these can destroy the
Fourier quasicrystal property.
\end{abstract}
\maketitle
\setcounter{tocdepth}{1}
\tableofcontents

\section{Introduction}
\label{sec:introduction}

Poisson summation exhibits discrete measures with discrete Fourier
transforms. We ask what geometric structure this simultaneous
discreteness imposes when the masses are positive integers.

We characterize integer-mass Fourier quasicrystal measures as the
nonzero real sections of effective Lee--Yang cycles by matrices with
positive maximal minors. Each mass is a weighted local intersection
degree; for Delone support, the cycle can be chosen with strict
Lee--Yang reduced support.

Limits of translates of a unit-mass measure may have atoms of higher
mass. These multiplicities are recorded by local intersection degrees.
The reduced support of the representing cycle determines the equations
for the point set.

\begin{definition}[Fourier quasicrystals]\label{def:fourier-quasicrystal}
Let $B_R$ be the Euclidean ball of radius $R$ centered at zero.
A Fourier quasicrystal measure on $\R^d$ is a measure
$\mu=\sum_{x\in\Lambda}a_x\delta_x$ whose Fourier transform is a measure
$\widehat\mu=\sum_{\xi\in\Lambda'}c_\xi\delta_\xi$, where $\Lambda$ and
$\Lambda'$ are locally finite, the displayed coefficients are nonzero, and
\begin{equation}\label{eq:fq-measure-growth}
 \sum_{x\in\Lambda\cap B_R}|a_x|+
 \sum_{\xi\in\Lambda'\cap B_R}|c_\xi|\le CR^{N_0}\qquad(R>1)
\end{equation}
for some $C>0$ and $N_0\in\N$. As in
\cite[Definition~1.1]{AKKV}, a Fourier quasicrystal set, or FQ-set, is a
set $\Lambda$ whose counting measure $\mu_\Lambda=\sum_{x\in\Lambda}\delta_x$
is a Fourier quasicrystal measure. Its growth condition reads
\begin{equation}
 \#(\Lambda\cap B_R)+\sum_{\xi\in\Lambda'\cap B_R}|c_\xi|
 \le CR^{N_0}\qquad(R>1).
 \label{eq:fq-growth}
\end{equation}
\end{definition}

\begin{definition}[Integer-mass measures]\label{def:integer-mass-measure}
An integer-mass measure on $\R^d$ is a nonzero measure
\[
 \mu=\sum_{x\in\Lambda}a_x\delta_x,
 \qquad a_x=\mu(\{x\})\in\Z_{>0},
\]
where $\Lambda=\supp\mu$ is locally finite. It is an integer-mass Fourier
quasicrystal measure if it satisfies
Definition~\ref{def:fourier-quasicrystal}, with the masses $a_x$.
An FQ-set is defined by its \emph{counting measure}, so the support of
an integer-mass FQ measure need not be
an FQ-set; see Example~\ref{ex:two-circles}.
\end{definition}

We use the Fourier kernel $e^{-2\pi i\langle x,\xi\rangle}$, so the
Fourier identity for an FQ measure reads
\begin{equation}\label{eq:intro-fourier-identity}
 \sum_{x\in\Lambda}a_x\widehat f(x)
   =\sum_{\xi\in\Lambda'}c_\xi f(\xi),\qquad f\in\mathcal S(\R^d).
\end{equation}
The growth condition ensures absolute convergence
(Remark~\ref{rem:weighted-schwartz}); thereafter the proofs use only
local finiteness of $\Lambda'$. The polynomial bound on the number of
spectral points follows from Theorem~\ref{thm:main-height}.

\subsection{Background}
For the lattice examples, Poisson summation states that if $\mathcal L\subset\R^d$
is a lattice and $\mathcal L^*$ is its dual, then
\[
 \widehat{\sum_{x\in\mathcal L}\delta_x}
   =\operatorname{covol}(\mathcal L)^{-1}
       \sum_{\xi\in\mathcal L^*}\delta_\xi.
\]
Crystalline measures are tempered measures with locally finite support
whose Fourier transforms are measures with locally finite support;
the weighted growth bound is not imposed. Meyer constructed examples
beyond finite combinations of lattice combs \cite{Meyer}.
For general coefficients the weighted growth condition is not automatic: there are crystalline measures on $\R$ that are not Fourier quasicrystal measures \cite{Favorov2024,MRS2026}.

The inverse problem depends on the distinction between local finiteness
and uniform discreteness of the spectrum. Lev and Olevskii proved
that a measure with uniformly discrete support and spectrum is a finite
sum of translated lattice combs multiplied by trigonometric polynomials,
for complex measures in one dimension and for positive measures in every
dimension \cite[Theorems~1--3]{LevOlevskii}. In the present setting the
spectrum is only locally finite. Kurasov and Sarnak \cite{KurasovSarnak}
constructed explicit positive crystalline measures and Fourier
quasicrystal measures from stable polynomials. The one-dimensional
description by real-rooted exponential polynomials is due to
Olevskii--Ulanovskii \cite{OU2020}; Alon--Cohen--Vinzant \cite{ACV}
then obtained a Lee--Yang polynomial representation. In higher
dimensions, Lawton and Tsikh \cite{LawtonTsikh} study Fourier quasicrystals
in $\R^n$ defined by systems of real-rooted trigonometric polynomials.
Earlier, Lawton and Tsikh
\cite{LawtonTsikhSufficient} gave sufficient
conditions for measures with positive integer weights to be Fourier
quasicrystals and suggested that these conditions might also be necessary.
Our inverse theorems represent every such Fourier quasicrystal as a section
of an effective Lee--Yang cycle, the mass of each point being a weighted
local intersection degree; Proposition~\ref{prop:integer-equation-obstruction}
shows that these masses need not be local colengths of any finite system
of trigonometric equations vanishing on the support.

Here toral type means that the Fourier spectrum generates a finitely
generated group \cite[\S3.2]{LawtonTsikh}. Theorem~\ref{thm:all-fq-toral-cycles}
proves this for integer-mass FQ measures in every dimension and identifies
their hulls with finite-dimensional tori, answering the question of
Lawton and Tsikh within this class \cite[\S6, Question~2]{LawtonTsikh}.
At his KTH seminar of 24 March 2026, Kummer announced the joint result
with Alon that every uniformly discrete Fourier quasicrystal is almost
periodic of toral type \cite{KTH}. A broader statement appears in
\cite[\S4.2, Theorem~3]{Lawton2026}: for Fourier quasicrystals with
positive integer coefficients and a uniform bound on the number of
points in unit balls, toral type is attributed there to a forthcoming
paper of Alon and Kummer (personal communication). The total-mass bound
in Lemma~\ref{lem:ac-translation-bound} places every integer-mass FQ
measure under this announcement, which covers the toral-type conclusion
of Theorem~\ref{thm:all-fq-toral-cycles}. Our independent proof also
describes the analytic hull through collisions, including its local
cycles and their multiplicities. For crystalline measures, strong
almost periodicity is characterized by translation boundedness
\cite[Theorem~3.9]{MRS2026}; the latter is proved here in
Lemma~\ref{lem:ac-translation-bound}
(cf.\ \cite[Lemma~3.10]{MRS2026}).

Alon--Kummer \cite{AKHypersurfaces} relate measures on periodic
hypersurfaces whose Fourier transforms are supported in cones to
Lee--Yang polynomials. Their result connects Fourier cones to algebraic
geometry; our reconstruction treats arbitrary codimension and branched
fibers.

\subsection{Lee--Yang sections and the questions of AKKV}\label{subsec:intro-lee-yang}
For a variety $X\subset(\PP^1)^n$ and a real $n\times d$ matrix $L$, set
\[
 \Lambda(X,L)=\{x\in\C^d:\exp(2\pi iLx)\in X\}.
\]
The Lee--Yang condition requires reciprocal conjugation invariance and
excludes logarithmic modulus vectors in positive planes. On the
algebraic torus, the amoeba of $X$ must avoid every positive $d$-plane
away from the origin. Such a plane is represented by a matrix with
positive maximal minors; positive lines are spanned by vectors with
positive coordinates. Section~\ref{sec:preliminaries} gives the full
definition, including the projective boundary and strictness. We use
reduced varieties throughout.

If $L$ has positive maximal minors and $X$ is Lee--Yang, every
point of $\Lambda(X,L)$ is real, since
\[
 \Log\exp(2\pi iLx)=-2\pi L\operatorname{Im}x\in L\R^d,
\]
and amoeba avoidance forces this vector, hence $\operatorname{Im}x$,
to vanish. For strict Lee--Yang varieties the construction gives
Delone FQ-sets \cite[Theorem~2.3]{AKKV}.

\begin{theorem}[Alon--Kummer--Kurasov--Vinzant]
\label{thm:akkv-construction}
If $X\subset(\PP^1)^n$ is strict Lee--Yang of codimension $d$ and
$L\in\R^{n\times d}$ has positive maximal minors, then $\Lambda(X,L)$
is a real Delone FQ-set. Its spectrum is contained in
$\{L^tk:k\in\Z^n,\ \varn(k)<d\}$.
\end{theorem}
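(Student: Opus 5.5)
The plan follows the AKKV strategy: realness of points, then uniform discreteness and relative density, then a Poisson-type Fourier identity derived from a residue formula.

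\emph{Realness and finiteness.} By the computation just above the statement, positivity of the maximal minors of $L$ together with amoeba avoidance forces $\operatorname{Im}x=0$ for every $x\in\Lambda(X,L)$. We have $X\cap(\C^*)^n$ of dimension $n-d$, and strictness keeps $X$ off the boundary of $(\PP^1)^n$ at points relevant to real $x$. Hence the map $x\mapsto \exp(2\pi iLx)$ meets $X$ in a set that is discrete modulo the kernel. In more detail, consider the real torus $\T^n=(S^1)^n$. The real points $X\cap\T^n$ form a compact real-analytic set, and $\Lambda=\{x\in\R^d: e^{2\pi iLx}\in X\}$ is the preimage under the linear flow $x\mapsto Lx \bmod \Z^n$.

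\emph{Transversality and the Delone property.} Strictness means the amoeba avoids a positive plane even after small perturbation. I would show this implies that at every point $p\in X\cap\T^n$ the tangent space $T_pX$ meets $i\,L\R^d$ only at $0$ in the complexified sense, i.e.\ $X$ is smooth at $p$ and transverse to the leaf direction. If transversality failed, a small complex displacement in the leaf direction would stay in $X$ to first order and yield imaginary parts in $L\R^d\setminus\{0\}$ on the amoeba, contradicting strictness. Compactness of $X\cap\T^n$ then gives a uniform lower bound on the angle, and hence uniform discreteness of $\Lambda$. For relative density, I would use that a positive $d$-plane meets every real $(n-d)$-cycle dual to $X$ with positive intersection number. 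Since the homology class of $X\cap\T^n$ in $H_{n-d}(\T^n)$ pairs positively with the flow, every leaf segment of bounded length meets $X$.

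\emph{Fourier identity and spectrum.} For $f\in\mathcal S(\R^d)$, write $\sum_{x\in\Lambda}\widehat f(x)$ as a sum over the lattice of a current on $\T^n$. By Poisson summation on $\T^n$, this equals $\sum_{k\in\Z^n}\widehat{[X\cap\T^n]}(k)\,f(L^tk)$, where the Fourier coefficients of the integration current are computed via the pullback of $\delta_X$. Locally, $X$ is cut out by $d$ functions and $\Lambda$ is a transverse preimage, so the counting measure is the pullback $(\exp 2\pi iL\cdot)^*[X]$. The identity then reduces to estimating $\widehat{[X]}(k)$.

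\emph{Main obstacle: showing $\widehat{[X]}(k)=0$ when $\varn(k)\ge d$.} I expect this vanishing to be the hardest step. I would write the coefficient as an integral of $z^{-k}$ against the current and deform the real torus $\T^n$ into $\{|z_j|=e^{t_j}\}$. The Lee--Yang property allows $t$ to be moved in the closure of an orthant-type region avoiding the amoeba. When $k$ has at least $d$ sign changes, there exists a positive-plane direction $t$ with $\langle t,k\rangle\to\pm\infty$ along which the contour can be pushed without crossing $X$. The integral then decays to zero.

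Growth bounds on the coefficients follow from polynomial bounds on the degrees of the Fourier transform of the current.
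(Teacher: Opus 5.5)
There is a genuine gap in the step you identify as the main obstacle.

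\textbf{The vanishing step.} For $d\ge2$ the coefficient you must kill is
$\widehat\sigma_{X,L}(k)=\int_{X(\T)}z^{-k}\,\beta$. Here $\beta=\sum_{|I|=c}L_{I^c}s(I)\,d\theta_I$ is the restriction of a holomorphic logarithmic $c$-form, and $X(\T)$ is a $c$-dimensional real cycle \emph{inside} the complex $c$-fold $X$ (compare Lemma~\ref{lem:singular-torus-measure}). Any deformation of this contour must stay in $X$. But the tori $\{\Log z=t\}$ with $t\ne0$ in a positive $d$-plane miss $X$ entirely, by the very amoeba avoidance you invoke, so there is nothing to push the cycle onto.

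Your phrase ``pushed without crossing $X$'' treats the coefficient as a torus integral of a function with singularities on $X$. That is, it treats $[X]$ as the boundary-value jump of a single meromorphic object such as $\partial\log p$. This picture is available only for hypersurfaces ($d=1$). Even there, the correct argument compares two Laurent expansions, in the positive and negative orthant components, rather than pushing in an arbitrary positive direction.

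Moreover, as written the step never uses $\varn(k)\ge d$. Every nonzero $k$ pairs nontrivially with some vector of some positive $d$-plane, so the same reasoning would kill every nonzero coefficient. That is false already for $X=\{z_1z_2=1\}$, $L=(1,\sqrt2)^t$: there $k=(1,1)$ gives the spectral point $1+\sqrt2$ with nonzero coefficient.

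\textbf{The missing idea.} You need a $(c+1)$-chain in $X$ with boundary $X(\T)$ on which $z^{-k}\bigwedge_{i\in I}dz_i/z_i$ has no poles, followed by Stokes. This is how the paper reproves the Fourier part in Lemma~\ref{lem:singular-vanishing}, following AKKV.
\begin{itemize}
\item Since $\varn(k)\ge d$, choose $d+1$ indices $I_0$ on which $k$ is nonzero and alternates. Pick $i_*\in I_0$ with $k_{i_*}<0$, and put $J_0=([n]\setminus I_0)\cup\{i_*\}$.
\item Take $\pi_{J_0}^{-1}$ of the solid torus $\{|z_i|=1\ (i\ne i_*),\ |z_{i_*}|\le1\}$. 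The real-fibered property $\pi_J^{-1}(\T^J)=X(\T)$, which follows from ordinary sign variation, makes the boundary of this chain equal to $X(\T)$.
\item Off the torus, ordinary sign variation forces $\operatorname{sgn}\log|z_j|=\operatorname{sgn}k_j$ for $j\in I_0$. This removes the poles at $0$ and $\infty$.
\item A trace along $\pi_{J_0}$ lets Stokes be applied on the base, even through singular points of $X$ off the torus.
\end{itemize}

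\textbf{Supporting steps that also need repair.}
\begin{itemize}
\item You must show that $X(\T)$ meets the leaves with a single orientation sign; this holds because the conormal planes lie in the nonnegative Grassmannian. It is needed so that the pulled-back current is the counting measure rather than a signed one, and so that $|\widehat\sigma_{X,L}(k)|\le\sigma_{X,L}(\T^n)$.
\item The growth bound and local finiteness of the spectrum come from that coefficient bound together with properness of $L^t$ on the closed cone $\{\varn(k)<d\}$, not from ``degrees.''
\item Relative density does not follow from a homological pairing, since leaf segments are not cycles. It follows, for instance, from the Fourier identity and positivity, as in Lemma~\ref{lem:ac-translation-bound}.
\item Smoothness of $X$ along $X(\T)$ is part of the strictness hypothesis, not a consequence of amoeba avoidance. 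Two transverse subtori satisfy ordinary sign variation yet form a node.
\end{itemize}
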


We prove converses and extend the construction to singular sections,
counted with local intersection degrees. We address the following
questions from \cite[\S2.4]{AKKV}, quoted with their original numbering:
\begin{quote}\small
(1) Can all Delone FQ-sets in every dimension be obtained via our
construction, namely as $\Lambda(X,L)$ with strict Lee--Yang $X$ and $L$
with positive minors?

(2) If $X$ is Lee--Yang but $X(\T)$ is not smooth, is it still true that
$\Lambda(X,L)$ is an FQ-set, possibly with multiplicities? If so, can every
FQ-set be obtained in this way?

(4) Can all FQ-sets in every dimension be obtained as the common zeros
of a real-rooted system of trigonometric equations?
\end{quote}

As in \cite[Remark~2.4]{AKKV}, a real-rooted system may have more
than $d$ equations in $d$ variables; real-rooted means that every common
complex zero is real. AKKV note that their questions have positive
answers in dimension one, and prove Question~(1) for the Delone sets
arising from the Lawton--Tsikh construction
\cite[\S2.4 and Corollary~8.6]{AKKV}.

\subsection{Main results}
An effective Lee--Yang cycle assigns positive integer coefficients to
its irreducible components; its section masses are the corresponding
weighted local intersection degrees. The cycles $\mathfrak X$ and
section measures $\nu^{\mathfrak X}_\theta$ are defined in
Definition~\ref{def:effective-lee-yang-cycle}. We begin with Delone
support (Theorem~\ref{thm:main-delone}), then remove separation and
include the unit-mass case in Theorem~\ref{thm:main-equations}. The
spectral group is the group generated by the Fourier spectrum.

\begin{maintheorem}[Delone classification]
\label{thm:main-delone}
An integer-mass measure on $\R^d$ is an FQ measure with Delone support
if and only if it is a section of an effective Lee--Yang cycle with
strict Lee--Yang reduced support, by a matrix with positive maximal
minors, counted with weighted local intersection degrees. Here Delone
support may be replaced by uniformly discrete support: every nonzero
integer-mass FQ measure has relatively dense support.
\end{maintheorem}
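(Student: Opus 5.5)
The proof has two directions, and the paper has already announced most of the machinery. We would prove the easy direction first and then the inverse direction, which carries most of the weight.

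For the forward direction, let $\mathfrak X$ be an effective Lee--Yang cycle whose reduced support $X$ is strict Lee--Yang, and let $L$ have positive maximal minors. Strictness makes $X(\T)$ smooth and transversal to the linear flow $\theta\mapsto\exp(2\pi i(Lx+\theta))$. The local intersection degrees are then computed component by component: the section measure is $\sum_j m_j\mu_{\Lambda(X_j,L)}$, where the $X_j$ are the irreducible components with coefficients $m_j$. Each $\mu_{\Lambda(X_j,L)}$ is an FQ-set measure by Theorem~\ref{thm:akkv-construction}, applied to the strict Lee--Yang component. The growth condition \eqref{eq:fq-measure-growth} is additive. Delone support follows from the strict case, because the union of finitely many uniformly discrete sets coming from the same smooth torus section is still uniformly discrete. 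Distinct components do not meet on $\T^n$, since strict Lee--Yang forces transversal and hence disjoint smooth real parts. For relative density, the torus section meets every translate: the real points of each fibre over a bounded region of parameters are nonempty, using the degree count for codimension-$d$ sections.

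For the converse, let $\mu$ be an integer-mass FQ measure. We would proceed in four steps.
\begin{enumerate}
\item Use the translation bound (Lemma~\ref{lem:ac-translation-bound}) and the toral-type result (Theorem~\ref{thm:all-fq-toral-cycles}) to realize the hull of $\mu$ as a finite-dimensional torus $\T^n$. The spectral group is then $L^t\Z^n$ for a real $n\times d$ matrix $L$, and $\mu$ is the section at $\theta=0$ of a canonical analytic set $\mathfrak X\subset\T^n$ carrying integer local multiplicities.
\item Prove that $\mathfrak X$ is algebraic. The plan is to use the support-height theorem (Theorem~\ref{thm:main-height}) to obtain a linear spectral bound. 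We then reconstruct the complex branches by taking fibre moments $\sum a_x x^k$ along translates; these are holonomic sequences whose Fourier coefficients are supported in a cone. This gives Laurent polynomial equations, and the collision data fixes the integer coefficients.
\item Deduce the Lee--Yang condition, and positivity of the minors of $L$ after reordering signs, from reality of all complex sections.
\item Match the masses with weighted local intersection degrees.
\end{enumerate}
These steps give the general representation. The Delone-specific content is to show that uniform discreteness forces strictness of the reduced support: if $X(\T)$ had a singular point or a non-transversal tangency, nearby sections would produce two atoms at arbitrarily small distance. Passing to limits of translates contradicts the uniform separation, which is inherited by all elements of the hull.

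For the final assertion, we must show that every nonzero integer-mass FQ measure has relatively dense support. Once the cycle representation holds, Lee--Yang plus positive minors makes every section $\Lambda(\mathfrak X,L)+$ shift nonempty, since its intersection number is a positive constant across the torus. A large empty ball would then produce, in the compact hull, a limit measure with an empty section somewhere. Hence "uniformly discrete'' already implies Delone.

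We expect the main obstacle to be the algebraicity step in the converse, namely controlling branched fibres where atoms collide. There we would first try to show that symmetric functions of the branch points over a small polydisc extend holomorphically with polynomial growth, and that the linear spectral bound forces them to be trigonometric polynomials.
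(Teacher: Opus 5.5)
The genuine gap is your step~3. Reality of all real-phase sections by the spectral basis matrix $L$ is equivalent only to $\cA(Y)\cap V\subseteq\{0\}$ for the single plane $V=L\R^d$. For the global variety $Y$, as opposed to the local germs of Theorem~\ref{thm:all-fq-toral-cycles}(e), even this still needs proof.

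The Lee--Yang condition instead asks that the amoeba avoid \emph{every} positive $d$-plane. Avoidance of one plane is not even an open condition, since the amoeba of the noncompact $Y$ can be asymptotic to $V$. Positive minors also cannot be produced by permuting and changing signs of coordinates. A spectral basis may have real-parallel rows, and for $d\ge3$ the sign pattern of the minors of a generic matrix is in general not equivalent to the all-positive one. Even if $L$ happened to have positive minors, avoidance of $V$ alone would not give the Lee--Yang condition.

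Two ingredients are missing:
\begin{enumerate}
\item A quantitative amoeba cone $\|P_{V^\perp}\Log z\|\ge\varepsilon\|\Log z\|$ on $Y$. The paper proves it in Lemma~\ref{lem:moment-amoeba} from the exponential growth of the Laurent fiber moments $p_{Nk+\widetilde\beta_i}$ over rational quotients $\pi_W$ with $W$ near $V$; the linear spectral height controls their Laurent supports. This gives avoidance on an open neighborhood $U$ of $V$ in $\Gr(d,n)$.
\item Compression. Lemma~\ref{lem:compression} supplies a nonsingular integer matrix $S$ with $SW\in U$ for all $W\in\Gr_{\ge0}(d,n)$ and $S^{-1}L$ having positive maximal minors. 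One then passes to the full preimage $f_S^{-1}(Y)$ rather than an image (Remark~\ref{rem:singular-isogeny}). This preserves sections and weighted local intersection degrees, and the closure satisfies ordinary sign variation (Proposition~\ref{prop:compression}).
\end{enumerate}

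Several further steps need repair.
\begin{itemize}
\item Your strictness argument is essentially Lemma~\ref{lem:integer-ud-smoothness}, but it only proves smoothness and transversality of the canonical germ $M^\C$. To conclude that $Y$ is smooth along $Y(\T)$ you need the germ equality $(Y^{\mathrm{an}},p)=(M^\C,p)$ of Lemma~\ref{lem:integer-germ-equality}. Smoothness must then be transported through the \'etale isogeny $f_S$, which your outline never introduces.
\item Your fiber moments must be taken over rational planes $W$ near $V$, since the quotient by the irrational physical plane is not an algebraic torus quotient.
\item In the forward direction, finite unions of uniformly discrete sections need not be uniformly discrete (Example~\ref{ex:two-circles}). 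The Delone property comes from Theorem~\ref{thm:akkv-construction} applied to the strict reduced union $|\mathfrak X|$ itself.
\item Your componentwise use of that theorem also needs each active component to be invariant under reciprocal conjugation. This holds because smoothness along the torus makes the torus loci of distinct components disjoint.
\item Relative density needs no cycle representation. Lemma~\ref{lem:ac-translation-bound}, which you already invoke in step~1, gives it directly from $\mu*\phi=C$.
\end{itemize}
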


\begin{maintheorem}[Classification and support equations]
\label{thm:main-equations}
An integer-mass measure $\mu$ is an FQ measure if and only if
$\mu=\nu^{\mathfrak X,L}_0$ for an effective Lee--Yang cycle $\mathfrak X$
of codimension $d$ in $(\PP^1)^n$ for some $n\ge d$, and a matrix
$L\in\R^{n\times d}$ with positive maximal minors. The support of the cycle can be required
to satisfy ordinary sign variation off the torus. The support of $\mu$
is the common complex zero set of finitely many trigonometric polynomials
real-valued on $\R^d$, with frequencies in its spectral group.

For unit masses the cycle can be chosen reduced, with local intersection
degree one at every physical root, equivalently smoothness and
transversality there.
\end{maintheorem}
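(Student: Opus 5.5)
We prove the two directions separately. The forward direction is the hard one and rests on the machinery announced in the introduction: toral type (Theorem~\ref{thm:all-fq-toral-cycles}), the support-height theorem (Theorem~\ref{thm:main-height}), and reconstruction of branches from fiber moments.

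\emph{Sufficiency.} Let $\mathfrak X=\sum_j m_jX_j$ be an effective Lee--Yang cycle of codimension $d$ in $(\PP^1)^n$, and let $L$ have positive maximal minors. By the amoeba computation above, $\Lambda(X_j,L)$ is real. We first reduce to the torus: the Lee--Yang condition at the projective boundary keeps real points of $L\R^d$ from meeting the boundary strata. We then write the section measure $\nu^{\mathfrak X,L}_0$ as $\sum_j m_j\nu^{X_j,L}_0$, where each summand is counted with local intersection degree. For each component we follow the AKKV argument of Theorem~\ref{thm:akkv-construction}, replacing the smooth transversal count by a residue-type integral over a small torus around each root. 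Concretely, the local intersection degree equals the number of roots of a perturbed section $\nu^{X_j,L}_{\theta+i\epsilon}$ near the point. Lee--Yang stability gives uniform root counts in unit balls, hence the growth bound \eqref{eq:fq-growth}. Fourier coefficients with $\varn(k)<d$ then arise from the Laurent expansion in a positive orthant direction, as in AKKV. Passing $\epsilon\to0$ with these uniform bounds yields an FQ measure whose masses are the weighted local degrees. For Theorem~\ref{thm:main-delone}, strictness of the reduced support bounds local degrees and separates roots, which gives Delone support.

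\emph{Necessity.} Start from an integer-mass FQ measure $\mu$. By Lemma~\ref{lem:ac-translation-bound} it is translation bounded. By Theorem~\ref{thm:all-fq-toral-cycles} the spectral group is finitely generated, say by the columns of $L^t$ for some $L\in\R^{n\times d}$, and the hull of translates is a torus $\T^n$ carrying an analytic set $Y$ with $\mu$ equal to the section at $\theta=0$. The steps are:
\begin{enumerate}
\item Show that $Y$ is algebraic.
\item Use the support-height theorem, Theorem~\ref{thm:main-height}, to bound the spectrum linearly. This confines the frequencies to finitely many sign-variation classes and bounds the degrees of the complex branches.
\item Recover the branches over each point of the parameter torus by fiber moments $\sum_x a_x x^\alpha$, which are trigonometric polynomials in $\theta$. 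Their symmetric functions give defining polynomials of the components, and the integer masses become the coefficients $m_j$.
\item Establish the Lee--Yang property. Amoeba avoidance comes from reality of all sections $\nu_\theta$ for $\theta$ in the complexified torus with imaginary part in $L\R^d$; any non-real root there would contradict positivity of the limiting measures. The sign-variation condition comes from the spectral constraint $\varn(k)<d$.
\end{enumerate}
If $L$ fails to have positive minors, we replace it after a monomial change of coordinates, using that the spectrum lies in a cone.

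\emph{Remaining claims.} The support equations are the elementary symmetric functions, or the moment generating polynomials, of the reduced support, specialized at $\theta=0$. They are real-valued by the reciprocal conjugation invariance, and their frequencies lie in the spectral group. We expect proving that the support is exactly their common zero set to need the local-degree bookkeeping, with Proposition~\ref{prop:integer-equation-obstruction} as the warning that only the support, not the masses, is cut out. For unit masses, mass one at a physical root forces $m_j=1$ and local degree one, hence smoothness and transversality, and we take the reduced cycle.

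\textbf{Main obstacle.} We expect the hardest step to be algebraicity of the hull's analytic set together with the finite-degree branch reconstruction. The approach we would try first is to use the linear spectral bound to show that the fiber-moment functions have finitely many Fourier modes. That makes them trigonometric polynomials, and Chow-type reasoning on $(\PP^1)^n$ would then give algebraicity.
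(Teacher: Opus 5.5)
\textbf{The decisive gap is step 4 of your necessity argument.} It also undermines your support equations and the positive-minor representation.

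\emph{Amoeba avoidance.} Reality and positivity of the hull measures cannot give amoeba avoidance. The measures $\mu_t$ see only the torus locus $M$, and Theorem~\ref{thm:all-fq-toral-cycles}(e) controls only the germ $M^\C$ in a small complex neighbourhood of it. A point $z\in Y$ with $0\ne\Log z\in V$, or a torus point of $Y$ outside $M$, is an atom of no hull measure, so positivity says nothing about it. A priori the Zariski closure $Y$ may contain such points on branches far from the torus. What you need is global: $Y(\T)=M$, and the quantitative cone $\|P_{V^\perp}\Log z\|\ge\varepsilon\|\Log z\|$ on all of $Y$.

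The paper gets both from the Fourier cone by projecting along rational $d$-planes $W$ near $V$:
\begin{itemize}
\item The pushforwards of $z^km$ to the base torus are Laurent polynomials.
\item Their Hermitian moment matrices define a finite algebra whose spectrum is $Y$, with real fibres exactly $M$ (Lemma~\ref{lem:moment-fiber-identification}).
\item Cauchy--Hadamard growth of $p_{Nk+\widetilde\beta_i}$ along rays bounds the amoeba (Lemma~\ref{lem:moment-amoeba}).
\end{itemize}
Your moments $\sum_x a_x x^\alpha$ of physical atoms are only local cluster moments and never become global trigonometric polynomials; the quotient moments of the flux measure are what do.

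\emph{Consequences for your remaining claims.} The equality $E_L^{-1}(Y)=\supp\mu$ behind the support equations rests on exactly the two facts above, not on local-degree bookkeeping. Positive minors are not reached by a monomial change of coordinates. Ordinary sign variation cannot be read off the spectrum, which in the basis $L$ obeys no condition $\varn(k)<d$. One needs avoidance on an open neighbourhood of $V$, then the full isogeny preimage $f_S^{-1}(Y)$ for a nonsingular integer $S$ (Proposition~\ref{prop:compression}); Remark~\ref{rem:singular-isogeny} shows why the full preimage matters. You also need to show that the masses give coefficients constant on each global irreducible component, which may carry several disjoint real patches. The paper does this with the rational weight $\varpi$ divided by the ratio of the flux and quotient forms (Lemma~\ref{lem:integer-global-coefficients}).

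\textbf{Your treatment of the main obstacle is circular.} Theorem~\ref{thm:main-height} comes from the support-height theorem, which needs $k\mapsto\widehat m(k)$ to be holonomic. That is proved (Theorem~\ref{thm:holonomic-flux}) only once $M$ is known to be semialgebraic with Zariski closure of dimension $c$. So algebraicity must come first, independently of any height bound. The paper compares an upper bound on character spaces from Fourier vanishing with a lower bound from a transverse tropical cone (Theorem~\ref{thm:automatic-algebraicity}). Once the cone is available, your Laurent-moment idea does yield the finite algebraic reconstruction, but it cannot be the source of algebraicity.

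\textbf{Your sufficiency argument also falls short.} The perturbed section $\nu_{\theta+i\epsilon}$ in general has non-real roots, so it is not a measure on $\R^d$, and a vague limit gives no control of the spectrum. Moreover, AKKV's Fourier-vanishing argument uses smoothness of $X(\T)$ and ordinary sign variation. The paper instead:
\begin{itemize}
\item proves $\widehat\sigma_{X,L}(k)=0$ for $\varn(k)\ge d$ by a holomorphic trace through a resolution and Stokes on a solid torus (Lemma~\ref{lem:singular-vanishing});
\item reaches ordinary sign variation for a general Lee--Yang cycle by the same compression;
\item passes from almost every phase to every phase using vague continuity coming from conservation of local degree.
\end{itemize}

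\textbf{Unit masses.} Your step here is essentially right. It still needs density of the root phases $L\Lambda$ in $M$, so that every component carries a root of $\mu$ on a smooth patch away from the other components.
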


\begin{remark}\label{rem:smooth-cycle-coefficients}
Smoothness alone does not force a weighted cycle to have coefficient one.
For a strict Lee--Yang variety $X$ and a matrix with positive maximal
minors, the reduced support of the cycle $2X$ is smooth and transverse
at every physical root, but each root has mass two.
In general, section masses need not be local colengths of equations for
the support; see Proposition~\ref{prop:integer-equation-obstruction}.
\end{remark}

\begin{maintheorem}[Singular weighted sections]
\label{thm:main-singular}
Let $\mathfrak X$ be an effective Lee--Yang cycle of codimension $d$ in
$(\PP^1)^n$ for some $n\ge d$, and let $L\in\R^{n\times d}$ have positive maximal minors.
For every real phase its section is real and locally finite. The measures
$\nu^{\mathfrak X,L}_\theta$, possibly zero for an empty section, form a
vaguely continuous, uniformly translation-bounded family of FQ measures.
Their spectra are contained in $L^t\Z^n$ and have spectral total variation
$O((1+R)^n)$ in $B_R$. Conversely, every integer-mass FQ measure is such a section.
\end{maintheorem}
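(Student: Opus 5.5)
The plan is to split the statement into its forward direction (sections of effective Lee--Yang cycles are FQ measures) and the converse, which is Theorem~\ref{thm:main-equations}. Everything hinges on one residue-type identity that turns weighted local intersection degrees into an exact Poisson formula. By linearity in the cycle, the forward direction reduces to a single irreducible Lee--Yang variety $X$ of codimension $d$ with coefficient $m_X$, since $\nu^{\mathfrak X,L}_\theta=\sum_X m_X\nu^{X,L}_\theta$.

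\emph{Reality and local finiteness.} For real $\theta$, a point $x\in\C^d$ with $\exp(2\pi i(Lx+\theta))\in X$ has $\Log$-image $-2\pi L\operatorname{Im}x$, because $\theta$ is real. Amoeba avoidance of positive planes therefore forces $\operatorname{Im}x=0$, exactly as in \S\ref{subsec:intro-lee-yang}. The projective boundary condition in the Lee--Yang definition handles coordinates equal to $0$ or $\infty$. To get discreteness I will use that $X$ has codimension $d$ and the section map has real $d$-dimensional source. A positive-dimensional component of the section would produce complex directions of $x$ inside the section, and hence nonreal points, which is a contradiction. So the section is discrete, and each point carries a finite local intersection degree.

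\emph{Fourier identity.} Here I follow the AKKV argument behind Theorem~\ref{thm:akkv-construction}, but replace the smooth count by the local degree. For a test function $f$, I would write $\sum_x \deg_x\widehat f(x)$ as a multidimensional logarithmic residue (a Grothendieck residue). Near each real root, this residue is given by the Bochner--Martinelli/Cauchy integral of the pullback of defining equations of $X$. Global equations come from a complete-intersection approximation: I cut $X$ by $d$ generic multihomogeneous Lee--Yang-compatible forms, so that locally the local degree equals the colength. Shifting the integration contours to imaginary parts in the positive cone then expands the resulting logarithmic derivatives as Laurent series in the monomials $e^{2\pi i\langle k,Lx\rangle}$ with $k\in\Z^n$. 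This yields coefficients supported on $L^t\Z^n$, and dependence on $\theta$ appears only through the phases $e^{2\pi i\langle k,\theta\rangle}$.

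\emph{Bounds, continuity, and the converse.} The spectral coefficients are bounded by intersection-theoretic counts of the number of monomials of degree at most $R$. This should give $O((1+R)^n)$ in $B_R$, and uniformity over $\theta$ gives uniform translation boundedness. Vague continuity in $\theta$ follows from continuity of intersection numbers, since roots can collide but local degrees add (conservation of number). Finally, the converse is exactly the first sentence of Theorem~\ref{thm:main-equations}, which I take from its proof.

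The main obstacle is the residue step at singular or nontransverse points. There the chosen equations may not cut out $X$ reducedly, so I must check that the colength of the generic complete intersection equals the weighted local intersection degree, with no spurious excess components. My first attempt would be to choose the cutting forms generically among Lee--Yang ones and invoke the excess intersection formula.
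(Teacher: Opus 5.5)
Your arguments for reality, for discreteness, and for reducing to components are fine; for the last, note that active components are individually reciprocal-invariant by Lemma~\ref{lem:integer-active-components} and inactive ones contribute nothing. Deriving the converse from Theorem~\ref{thm:main-equations} is also fine. The Fourier identity, however, has a genuine gap.

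A residue formula needs $d$ global Laurent equations whose common zeros on the physical plane are exactly the roots of $X$, each with local colength equal to its weighted local intersection degree. Nothing supplies such equations. Cutting $X$ by $d$ forms in its ideal generally yields a complete intersection $X\cup X'$. The residual $X'$ has no Lee--Yang property and its section need not be real, yet its roots enter the residue sum and block the contour shift. The excess intersection formula produces intersection classes, not the located weighted roots a Poisson identity requires. Whether $d$ equations cutting out such a section exist at all is essentially the paper's open Question~\ref{qu:number-equations}.

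The local weights also fail. Equations vanishing on $X$ generate an ideal inside the reduced one, and by Remark~\ref{rem:degree-length} the restricted colength of the reduced ideal exceeds $\deg_y\pi$ unless the local ring is Cohen--Macaulay. Where $X$ is not a local complete intersection, $d$ such equations define a scheme with extra or multiple components through the root, so the residue overcounts. Proposition~\ref{prop:integer-equation-obstruction} realizes this: every family of equations vanishing on that support has colength at least $3$ at a root of mass $2$. Splitting into components removes that particular example, but nothing in the hypotheses prevents a single component from having such a point on the torus.

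The missing idea is to let conservation of number, which you use only for continuity, carry the singular case, so that no equations are ever needed. The paper first compresses to ordinary sign variation by Proposition~\ref{prop:compression}. The singular torus locus has dimension at most $c-1$, so for almost every phase all roots in a compact window lie on $X_{\rm sm}(\T)$ with degree one. There the area formula gives $\widehat F_h(k)=\widehat\sigma_{X,L}(k)\widehat h(-L^tk)$ for the positive measure of Lemma~\ref{lem:singular-torus-measure} (Lemma~\ref{lem:singular-phase}).

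These coefficients vanish for $\varn(k)\ge d$: take the holomorphic trace of $z^{-k}\bigwedge_{i\in I}dz_i/z_i$ over a solid torus and apply Stokes (Lemmas~\ref{lem:singular-trace} and~\ref{lem:singular-vanishing}). They are bounded by $\sigma_{X,L}(\T^n)$, and \eqref{eq:singular-proper-cone} makes the series absolutely summable with $O((1+R)^n)$ spectral growth.

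Because $\theta\mapsto\nu_\theta(h)$ is continuous by conservation of number, the Fourier series equals it at every phase, collisions included. Periodicity under the isogeny kernel returns the spectrum to $L^t\Z^n$. Summing over reciprocal-conjugation orbits with their coefficients (Proposition~\ref{prop:integer-forward}) completes the forward direction.
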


\begin{remark}\label{rem:unweighted-sections}
Counting distinct points once can fail to give an FQ measure.
Example~\ref{ex:two-circles} and, in Section~\ref{sec:singular},
Proposition~\ref{prop:singular-example} exhibit this failure of unweighted counting.
\end{remark}

The spectral coordinates also satisfy a linear bound.

\begin{maintheorem}[Linear spectral height]
\label{thm:main-height}
The spectrum of every integer-mass FQ measure generates a free abelian
group of finite rank $n$. For every basis matrix $L$ of this group there
is $C_L<\infty$ such that
\begin{equation}\label{eq:main-height}
 \|k\|\le C_L(1+\|L^tk\|)
 \quad\text{whenever }L^tk\in\Lambda',\quad k\in\Z^n.
\end{equation}
Consequently $\#(\Lambda'\cap B_R)=O((1+R)^n)$.
\end{maintheorem}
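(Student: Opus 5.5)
The plan has three parts: show the spectral group is finitely generated, prove the linear height bound through a support-height theorem for holonomic sequences, and then count lattice points.

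\emph{Finite rank.} Let $\mu$ be an integer-mass FQ measure with spectrum $\Lambda'$. First I establish translation boundedness, as in Lemma~\ref{lem:ac-translation-bound}: positivity of the masses, together with the Fourier identity tested against a fixed bump, gives uniformly bounded mass in unit balls. This makes $\mu$ strongly almost periodic, so its hull $\Omega$, the vague closure of the translates $\mu(\cdot-t)$, is a compact abelian group. Its dual is the spectral group $\Gamma=\langle\Lambda'\rangle$.

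To see that $\Gamma$ is finitely generated, I exploit integrality. Pick a nonnegative test function $\varphi$. The map $t\mapsto\mu*\varphi(t)$ extends continuously to $\Omega$, and the local atom configuration of $\mu$ near $t$ varies analytically in a way that is constrained by the positive-integer masses. Concretely, consider the power sums $s_m(t)=\sum_{x}a_x\,\psi(x-t)\,x^m$ over a small window, where the atoms move but their multiplicities are locally constant on the hull up to collisions. These symmetric functions let me build local analytic equations for the atom positions as functions on $\Omega$. Because they are finitely many trigonometric series whose frequencies must suffice to separate points of the hull, $\Omega$ is a finite-dimensional torus $\T^n$. Hence $\Gamma\cong\Z^n$. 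I would only sketch this step and rely on the hull construction developed later in the paper.

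\emph{Linear height.} Fix a basis matrix $L$, so that $\Lambda'=\{L^tk\}$ with coefficients $c(k)$ supported on a subset $S\subset\Z^n$. The Fourier identity says that $\mu$ is the pullback under $x\mapsto Lx\bmod\Z^n$ of the distribution $F=\sum_k c(k)e^{2\pi i\langle k,z\rangle}$ on $\T^n$. By the previous step, the hull is an analytic set in $\T^n$ and $F$ is a current of integration on it, weighted by the integer coefficients. I then argue that $F$ generates a holonomic $D$-module: it is annihilated by the defining ideal of an algebraic (not merely analytic) set, and by the vector fields tangent to it. In Fourier coordinates this makes $c(k)$ a holonomic sequence on $\Z^n$. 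The support-height theorem for holonomic sequences then applies: the support of a holonomic sequence lies within bounded distance of a finite union of rational cones, and the directions in which $\|L^tk\|$ stays bounded while $\|k\|\to\infty$ are excluded. This exclusion holds because along such directions local finiteness of $\Lambda'$ would force infinitely many support points in a compact set. That yields \eqref{eq:main-height}.

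\emph{Counting.} The counting bound follows directly: $L^tk\in B_R$ forces $\|k\|\le C_L(1+R)$, and the number of lattice points in a ball of that radius is $O((1+R)^n)$.

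\emph{Main obstacle.} I expect the main obstacle to be algebraicity of the hull, that is, upgrading the analytic set to an algebraic one so that holonomicity holds. There I would first try a tropical argument. Local finiteness of the spectrum bounds the recession directions of the support of $c$. Combined with Lee--Yang-type amoeba avoidance coming from the real-rootedness of the sections, this should give polynomial growth of the defining functions, and hence algebraicity by a Chow-type theorem.
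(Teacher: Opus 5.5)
Your skeleton matches the paper's: finite generation, then a distribution on $\T^n$ with Fourier coefficients $c(k)$, then algebraicity, holonomicity and a support-height theorem. Your $F$ is exactly the flux measure $m$ of \eqref{eq:HF-4.1}, with $\widehat m(k)=c_{L^tk}$. It lives on the canonical set $M=\{t:0\in\supp\mu_t\}$, not on the hull, which is all of $\T^n$.

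\textbf{Algebraicity.} The step you flag as the main obstacle would not go through as proposed. Local finiteness of $\Lambda'$ only says that $\supp c$ is $L^t$-proper, and properness does not control recession directions. For example, $\{(j,j^2):j\in\Z\}$ is proper for $T(k)=k_1$, yet it has recession direction $(0,1)\in\ker T$. Keeping the recession directions of $\supp c$ away from $\ker L^t$ \emph{is} the linear height bound, so using it to obtain algebraicity is circular. Likewise, global amoeba avoidance for $Y=\overline M^{\mathrm{Zar}}$ is available in the paper only after the height bound: Section~\ref{sec:non-delone-lh} needs the Fourier cone to make the fiber moments Laurent polynomials. Before that you only have real-rootedness of sections near the real torus, and Chow's theorem does not apply to the germ $M^\C$ along $\T^n$.

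The missing idea is a two-sided count of character spaces that uses properness alone.
\begin{itemize}
\item Upper bound: Fourier vanishing and a finite-sheet model of $m$ over normal coordinates show that the characters $z^k$ with $\|L^tk\|\le R$ and normal coordinates in $[0,h]^c$ span a subspace of $L^2(m)$ of dimension at most $C_*(h+A_R)^c$ (Lemma~\ref{lem:alg-strip-upper}).
\item Lower bound: if $r=\dim Y>c$, there is a rational $r$-dimensional tropical cone projecting with rank $c$ onto $V^\perp$, obtained from a logarithmic form, Deligne's weight filtration and an iterated residue (Lemma~\ref{lem:tropical-rank}). It forces dimension at least $\kappa R^{r-c}h^c$.
\end{itemize}
Fixing $R$ large and letting $h\to\infty$ gives $r=c$.

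\textbf{Support height.} Your justification has the same flaw. Containment of $\supp c$ in a bounded neighborhood of finitely many rational cones does not exclude directions in $\ker T$. The parabola above lies inside the rational cone $\R_{\ge0}^2\ni(0,1)$, so containment alone does not force infinitely many support points with bounded $\|Tk\|$.

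What holonomicity actually provides, via Sabbah's localization, is propagation. Off finitely many lattice hyperplanes $\mathcal H$, a vector $F(k)$ built from $\mathscr D_nf$ obeys invertible first-order recurrences and is nonzero wherever $f$ is. From a support point you walk toward $\ker T$ in bounded inward lattice steps, keeping $F\neq0$. You stop when you reach one of two places:
\begin{itemize}
\item the bounded projected region, which contains only finitely many points with $F\ne0$ by properness; or
\item a thickening of $\mathcal H$, where Sabbah's restriction lemma gives holonomic sequences in $n-1$ variables and induction applies.
\end{itemize}
Counting the $O(\|Tk\|)$ steps then gives \eqref{eq:main-height}.

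\textbf{Two further gaps.}
\begin{itemize}
\item Finite generation: windowed power sums are not finite trigonometric sums, so your sketch lacks the mechanism that makes finitely many frequencies suffice. The paper approximates cluster moments by bandlimited kernels and inverts the moments-to-observations map by a contraction, which gives local rigidity (Lemma~\ref{lem:ac-rigidity}).
\item Holonomicity: being annihilated by $I(Y)$ and by tangent vector fields does not by itself yield holonomicity for the measure $\varkappa|\beta_L|$ on a singular real semialgebraic set. The paper handles this through Oaku's results for Heaviside functions, writing graph measures as derivatives of bounded semialgebraic functions.
\end{itemize}
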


The relation to the questions of AKKV is as follows.
\begin{enumerate}
\item The unit-mass specialization of Theorem~\ref{thm:main-delone}
answers Question~(1); the strict forward construction is
\cite[Theorem~2.3]{AKKV}, and the inverse is proved in
Section~\ref{sec:classification}.
\item Theorem~\ref{thm:main-singular}, together with the unit-mass part of
Theorem~\ref{thm:main-equations},
answers both parts of Question~(2) in the natural generality of positive
integer masses.
\item The equations in Theorem~\ref{thm:main-equations}
answer Question~(4); the unit-mass classification is also stated in
Corollary~\ref{cor:geometric-classification}.
\end{enumerate}
Question~(3) asks for another construction if one of the first two
answers is negative, so it does not arise. Question~(5), concerning
arbitrary real-rooted trigonometric zero sets, is not addressed here.

\subsection{A section with collisions}
\begin{example}\label{ex:two-circles}
Let $X$ be the union of the projective closures of
$z_1z_2=1$ and $z_1^2z_2=1$ in $(\PP^1)^2$, and put
$L=(1,\sqrt2)^t$. Both components are strict Lee--Yang curves: on them
the logarithmic moduli satisfy $u_2=-u_1$ and $u_2=-2u_1$, respectively,
with the same limiting signs at the projective boundary. Both are
preserved by reciprocal conjugation and are smooth, being graphs of
the morphisms $t\mapsto t^{-1}$ and $t\mapsto t^{-2}$ on $\PP^1$.
Their torus loci are circles meeting only at $(1,1)$, with distinct
tangent lines, so $X$ is Lee--Yang but not strict.

For $\theta\in\R^2$, direct substitution gives
\begin{equation}\label{eq:two-circles-section}
 \Lambda_\theta=
 \frac{\Z-\theta_1-\theta_2}{1+\sqrt2}
 \ \cup\
 \frac{\Z-2\theta_1-\theta_2}{2+\sqrt2}.
\end{equation}
Write $a=(1+\sqrt2)^{-1}$ and $b=(2+\sqrt2)^{-1}$, so $a/b=\sqrt2$.
Whenever the two cosets are disjoint, the counting measure of their union is the sum
of two lattice combs and hence is an FQ measure by Poisson summation.
For example, at $\theta=(0,1/2)$ a common point would give
$\sqrt2(m-1/2)=n-1/2$ with $m,n\in\Z$, which is impossible.
Since $a/b$ is irrational, $a\Z+b\Z$ is dense in $\R$, so the
distance between the cosets has infimum zero. Their union is relatively
dense but not uniformly discrete, and hence is not Delone.

Along the physical line the defining equations have derivatives
$2\pi i(1+\sqrt2)$ and $2\pi i(2+\sqrt2)$ at their roots. Both are
nonzero, so each component contributes local intersection degree one. At $\theta=0$
the two cosets meet only at zero. The measure with its local intersection
degrees is
\[
 \nu_0=\sum_{j\in\Z}\delta_{aj}+\sum_{j\in\Z}\delta_{bj},
 \qquad
 \widehat\nu_0=a^{-1}\sum_{j\in\Z}\delta_{j/a}
              +b^{-1}\sum_{j\in\Z}\delta_{j/b}.
\]
This measure has mass two at zero and unit masses elsewhere. The
unweighted counting measure $\nu_0-\delta_0$ has Fourier transform
$\widehat\nu_0-d\xi$, with a nonzero absolutely continuous part, so it
is not an FQ measure. Each lattice comb depends vaguely continuously
on the phase, and $L\R$ is dense in $\T^2$ since $1$ and $\sqrt2$
are rationally independent.
Translates of the disjoint section at $\theta=(0,1/2)$ along phases
approaching zero therefore converge vaguely to $\nu_0$. Two unit atoms
have collided in the hull, the vague closure of the translates.
Proposition~\ref{prop:singular-example} gives a
two-dimensional example of the same obstruction to unweighted counting.
\end{example}

\subsection{Two general tools}
Two results apply independently of quasicrystals: the support-height
theorem and a smooth case of automatic algebraicity, stated as
Theorems~\ref{thm:main-support} and~\ref{thm:main-algebraicity}.
The proof of the former in Section~\ref{sec:holonomic-height} applies
Sabbah's localization and restriction results
\cite[Theorem~3.1 and Lemmas~3.2, 3.4]{Sabbah1992} to the cyclic module
itself, including the elements annihilated by nonzero polynomials.

\begin{maintheorem}[Support height]\label{thm:main-support}
Let $f:\Z^n\to\C$ generate a holonomic module for the algebra of
polynomial coordinates $(s_jf)(k)=k_jf(k)$ and invertible shifts
$(\tau_jf)(k)=f(k+e_j)$, identified with the
algebra of differential operators on $(\C^*)^n$ by
$\tau_j=t_j$ and $s_j=-t_j\partial_{t_j}$.
If $T:\R^n\to\R^d$ is linear and
$\{k:f(k)\ne0,\ \|Tk\|\le R\}$ is finite for every $R$, then there is
a constant $C<\infty$ such that $\|k\|\le C(1+\|Tk\|)$ on the
entire support of $f$.
\end{maintheorem}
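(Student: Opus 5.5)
We would prove the support-height bound by induction on $n$, using the structure of holonomic $D$-modules on the torus $(\C^*)^n$ and reducing to a statement about the characteristic variety at infinity. Let $M=D\cdot f$ be the cyclic holonomic module generated by $f$, where $D$ is the algebra generated by the $s_j$ and $\tau_j^{\pm1}$, which we identify with differential operators on $(\C^*)^n$. The support $\supp f\subset\Z^n$ has the following key property: for a holonomic sequence, the asymptotic directions of $\supp f$ are governed by a rational polyhedral fan. Concretely, we aim to show that the set of directions $\omega\in S^{n-1}$ along which $\supp f$ is unbounded is contained in a finite union of rational cones $\sigma_1,\dots,\sigma_m$, and that $f$ restricted to a neighborhood of each cone is again holonomic in fewer effective variables. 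The hypothesis (finiteness of $\{f\neq0,\ \|Tk\|\le R\}$) would then force $\ker T\cap\sigma_i=\{0\}$ for every $i$. Since each $\sigma_i$ is closed and $\ker T\cap\sigma_i=\{0\}$, compactness gives $\|Tk\|\ge c\|k\|$ on $\sigma_i$. What remains is to pass from cones to the actual support up to bounded error.

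The steps, in order, are as follows. First, we use Sabbah's localization theorem \cite[Theorem~3.1]{Sabbah1992} to handle the torsion part. Elements of $M$ annihilated by a nonzero polynomial in the $s_j$ correspond to the support of $f$ lying on finitely many algebraic, hence affine-linear over $\Q$ after taking Zariski closures in $\Z^n$, subsets. On the torsion-free part, $f$ is determined by a linear recurrence system of holonomic rank outside a proper algebraic subset of $\Z^n$. Second, we decompose $\supp f$ as a finite union of sets of the form $(k_0+\Gamma)\cap P$, where $\Gamma\subset\Z^n$ is a sublattice and $P$ is a polyhedral region, up to a set lying in a lower-dimensional algebraic subset. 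Third, we apply the restriction lemmas \cite[Lemmas~3.2, 3.4]{Sabbah1992} to restrict $f$ to each such affine sublattice. The restriction is again holonomic in fewer variables, so induction on $n$ yields the bound there. Fourth, on each full-dimensional polyhedral piece with recession cone $\sigma$, we have $\|k\|\le C(1+\operatorname{dist}(k,\sigma))+\|\pi_\sigma k\|$. The finiteness hypothesis excludes nonzero vectors of $\ker T$ from $\sigma$, which gives the linear bound. Taking the maximum of the finitely many constants proves the theorem.

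The main obstacle is the second step: showing that the support of a holonomic sequence is, up to lower-dimensional algebraic pieces, a finite union of translated lattice points in polyhedra with the correct recession cones. The naive worry is that supports of holonomic functions can be quite irregular, for instance $f(k)=0$ exactly when some polynomial in $k$ vanishes. Our first attempt is the following. The torsion-free quotient is a module over the localization at all nonzero polynomials in $s$, which is a finite-dimensional vector space over $\C(s)$ with a difference-module structure. Hence $f$ is determined, off the zero set of finitely many polynomials, by finitely many initial values propagated along each $\tau_j^{\pm1}$. If $f$ vanishes at one point of a generic cone region, it vanishes identically on that region up to the exceptional polynomial hypersurfaces. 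These hypersurfaces meet $\Z^n$ in sets handled by the induction after intersecting with their rational linear spans; non-linear components contribute only integer points whose asymptotic directions lie in the tangent cones at infinity. Controlling these non-linear hypersurfaces, and ensuring their integer points do not escape along $\ker T$, is where we expect the real difficulty. For them we would use the tropicalization (the cone at infinity) of the hypersurface, together with the finiteness hypothesis applied along the exceptional sets.
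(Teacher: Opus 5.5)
There is a genuine gap, and it is where you place it. Step~2 is not merely unproved but false as stated, and the heuristic you offer for it is also wrong. Take $f(k)=k_1-2^{k_2}$ on $\Z^2$. It is holonomic: $(\tau_1-1)f=\mathbf 1$ and $s_1\mathbf 1-f=2^{k_2}$. So $\mathscr D_2f$ is free over $\C[s]$ on the constant sequence and $2^{k_2}$, which generate the delta modules at $t=(1,1)$ and $t=(1,2)$. No localization is needed, and the basis vector $F(k)=(1,2^{k_2})$ never vanishes. Yet $f$ vanishes exactly on $\{(2^j,j):j\ge0\}$. This set is Zariski dense in $\C^2$ and contains no three collinear points. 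Hence $f$ does not vanish on a whole region once it vanishes at a generic point. Moreover, $\supp f$ is not, even up to a proper algebraic subset, a finite union of sets $(k_0+\Gamma)\cap P$ with $P$ rational polyhedral. What propagates is the vector $F(k)=(g_i(k)/b_i(k))_i$ of values of a basis $g_i/b_i$ of the localized module. Along unit edges off the exceptional set the shift matrices are invertible, and $f(k)\ne0$ implies $F(k)\ne0$. One therefore only has the containment $\supp f\subset\mathcal H\cup\{F\ne0\}$. The properness hypothesis must then be transferred to the generators $g_i\in\mathscr D_nf$, which are finite sums of polynomial multiples of shifts of $f$ and hence $T$-proper. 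Your torsion remark (``algebraic, hence affine-linear over $\Q$'') fails for the same reason: integer points of an algebraic set need not lie on finitely many rational hyperplanes.

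Conversely, the non-linear exceptional hypersurfaces you expect to be the real difficulty never arise. Sabbah's localization theorem gives a multiplicative set generated by integer translates of affine forms $\ell(s)-a$ with $\ell$ primitive integral, and the torsion is killed by elements of this set. So the exceptional locus in $\Z^n$ is a finite union of lattice hyperplanes, which is exactly where the restriction lemma applies. Along non-linear hypersurfaces restriction does not preserve holonomicity, so a tropical analysis there would give your induction nothing to act on.

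The paper never describes the support. It restricts every $g_i$ to the hyperplanes of a thickening $\mathcal H^+$, where induction on $n$ gives the linear bound. From $k\notin\mathcal H^+$ with $f(k)\ne0$, it walks by a finite set of bounded integer steps, each lowering $\|P_{(\ker T)^\perp}k\|$ by a fixed amount. The step paths avoid $\mathcal H$, so $F$ stays nonzero. The walk ends either where $\|Tk\|$ is bounded, leaving finitely many endpoints by properness of $\bigcup_i\supp g_i$, or in $\mathcal H^+$. It takes $O(1+\|Tk\|)$ steps, and the triangle inequality gives $\|k\|\le C(1+\|Tk\|)$.
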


Theorem~\ref{thm:main-support} applies without positivity or a lower
bound on the nonzero values of $f$, and without regular holonomicity.
Applied to the holonomic flux measure, it proves
Theorem~\ref{thm:main-height}, as shown in
Corollary~\ref{cor:universal-linear-height}.

Bell--Smertnig \cite[Theorem~1.3]{BellSmertnig} give finite unions of
simple linear sets as supports of D-finite series whose nonzero
coefficients lie in a finitely generated multiplicative group.
Garoufalidis \cite[Theorem~1.1]{GaroufalidisNewton} obtains eventually
quasi-linear Newton polytopes for nonzero recurrent Laurent-polynomial
sequences over $\Q$ with index-independent recurrence coefficients.
Chen \cite[slide~5]{ChenDegree} announces, in joint work with Bell, Wan,
Wang and Yin, a linear degree bound for holonomic polynomial sequences.
Theorem~\ref{thm:main-support} allows arbitrary complex coefficients on
the full lattice and arbitrary proper real projections of the support.

The second result is the smooth transverse case of
Theorem~\ref{thm:automatic-algebraicity}, with $\T^n$ identified
with $(S^1)^n$.

\begin{maintheorem}[Algebraicity from Fourier support]
\label{thm:main-algebraicity}
Let $T:\R^n\to\R^d$ have rank $d$, and put $c=n-d$ and
$V=(\ker T)^\perp$. Let $M\subset\T^n$ be a nonempty compact
embedded real-analytic $c$-manifold with $T_pM\oplus V=\R^n$ at
every point. Let $m=a\,d\operatorname{vol}_M$, where $a$ is continuous
and strictly positive and the volume is induced by the flat torus metric.
If the Fourier support of $m$ is $T$-proper, that is,
\[
 \#\{k\in\Z^n:\widehat m(k)\ne0,\ \|Tk\|\le R\}<\infty
 \qquad(R>0),
\]
then the Zariski closure of $M$ in $(\C^*)^n$ is pure of
complex dimension $c$.
\end{maintheorem}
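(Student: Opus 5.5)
The proof will go through the Fourier coefficients of $m$ along the $T$-direction. I will set up a holonomic sequence, use Theorem~\ref{thm:main-support} to confine its support to a cone around $\ker T^{\perp}$-dual directions, and then use the real-analytic structure of $M$ to turn this confinement into polynomial equations.

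The first step is to make $\widehat m$ accessible to the support-height theorem. Transversality $T_pM\oplus V=\R^n$ means that $M$ is locally a graph over a $c$-dimensional coordinate chart complementary to $V$. Pulling $m$ back to a neighbourhood of $M$ and using real-analyticity, I would extend $M$ to a complex submanifold of a tubular neighbourhood of $\T^n$ in $(\C^*)^n$. I expect $\widehat m$ to be holonomic only if $M$ is already algebraic, so I will not assume that. Instead I will use only growth information. The measure $m$ is a finite positive measure, and non-stationary phase along the $V$-directions shows that $\widehat m(k)$ decays like $e^{-\varepsilon\,\mathrm{dist}(k,V^{\perp}\text{-cone})}$ away from the conormal directions of $M$. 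The analytic continuation of the defining functions of $M$ gives exponential decay outside the union of conormal cones $N^*M=\bigcup_p (T_pM)^\perp$.

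The second step uses $T$-properness. The condition $\#\{k:\widehat m(k)\ne0,\ \|Tk\|\le R\}<\infty$ forces the support of $\widehat m$ to stay away from $\ker T$ at large scale. Combined with transversality, since $(T_pM)^\perp\cap\ker T=0$, this suggests that the support is contained in a cone $\|k\|\le C(1+\|Tk\|)$. I would prove this directly by a Paley--Wiener argument. Consider the Laplace-type sums $\sum_k \widehat m(k)z^k$, which converge on a Reinhardt domain whose logarithmic image is the polar of the support cone. Their boundary values recover $m$, so $m$ extends to a current on $(\C^*)^n$ whose support $Z$ has amoeba contained in a cone $\Gamma$ meeting $V^{\perp}$ only at $0$.

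The third step, which I expect to be the main obstacle, is to show that $Z$ is an analytic set of pure dimension $c$ in all of $(\C^*)^n$ that contains the complexification of $M$. The aim is to show that it has polynomial volume growth in the logarithmic sense, so that it is algebraic by Bishop--Stoll type theorems. My first attempt would be an identity-theorem argument. The complexification $M_\C$ of $M$ lies in the zero set of $\sum_k\widehat m(k)\bar k$-twisted sums, and the cone condition gives uniform projection properties: the projection of $Z$ to $(\C^*)^n/\exp(\C V)$ is proper, with finite fibers of bounded cardinality controlled by the $T$-proper counting. A proper finite analytic cover of an algebraic torus quotient is algebraic by Riemann existence. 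Hence $Z$ is algebraic of pure dimension $c$, and since $Z\supset M$ and $M$ is totally real of real dimension $c$, the Zariski closure of $M$ is a union of $c$-dimensional components of $Z$. This closure is therefore pure of dimension $c$. Positivity of $a$ ensures that no cancellation makes $Z$ miss any part of $M$.
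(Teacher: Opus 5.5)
\textbf{There is a gap.} Your Step~2 is where the argument breaks, and everything after it depends on that step. $T$-properness only says that each strip $\{\|Tk\|\le R\}$ contains finitely many nonzero coefficients. It does \emph{not} confine the support to a cone $\|k\|\le C(1+\|Tk\|)$: the strip condition allows nonzero coefficients to drift away from $\ker T$ sublinearly, as the support $\{(j,j^2)\}$ in Remark~\ref{rem:height-scope} illustrates for general sequences.

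In the paper, the linear cone bound (Corollary~\ref{cor:universal-linear-height}) is a \emph{consequence} of algebraicity. Only after $Y$ is known to be pure of dimension $c$ does the flux measure become holonomic, and then Theorem~\ref{thm:holonomic-height} yields the cone. You rightly decline to assume holonomicity, but nothing replaces it. Your Paley--Wiener step presupposes the cone, since the sums $\sum_k\widehat m(k)z^k$ converge on a Reinhardt domain only once the support (or at least the decay) is already controlled; so the step is circular.

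Step~1 does not rescue this either. With $a$ merely continuous there is no non-stationary-phase decay at all: on a flat subtorus piece, $\widehat m(k)$ is just a Fourier coefficient of a continuous function. Even for analytic $a$, exponential smallness off the conormal cones says nothing about exact vanishing, and exact vanishing is what a support statement requires.

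Step~3 has further problems of its own. You do not show that the support of a boundary-value current is an analytic set. The quotient $(\C^*)^n/\exp(\C V)$ is not an algebraic torus, and not even Hausdorff, when $V$ is irrational; the paper passes to nearby rational planes $W$ for exactly this reason, and only after the cone bound is available. Riemann existence and Bishop--Stoll also need an algebraic base with algebraic branch locus, or volume-growth estimates, none of which you supply.

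The missing idea is that $T$-properness can be used directly, with no cone bound, through dimension counting of character spaces $\mathcal H(R,h)$ spanned by characters with $\|Tk\|\le R$ and normal coordinates in $[0,h]^c$. The argument has two halves.
\begin{enumerate}
\item[(i)] \emph{Upper bound.} Because $\widehat m$ vanishes off a finite set in each strip, suitably spaced translates $\chi_{k_j}\mathcal H(R,h)$ are mutually orthogonal in $L^2(m)$. Transversality gives a bounded finite-sheet model over the normal coordinates (Definition~\ref{def:alg-sheets}). Bessel's inequality in a box of Fourier modes then gives $\dim\mathcal H(R,h)\le C_*(h+A_R)^c$ (Lemma~\ref{lem:alg-strip-upper}).
\item[(ii)] \emph{Lower bound.} If the Zariski closure of a connected piece had dimension $r>c$, a rational $r$-dimensional tropical cone projecting with rank $c$ onto $V^\perp$ exists (Lemma~\ref{lem:tropical-rank}, via logarithmic forms and iterated residues). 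Tropical separation of characters then gives $\dim\mathcal H(R,h)\ge\kappa R^{r-c}h^c$ (Proposition~\ref{prop:alg-character-lower}).
\end{enumerate}
Fixing $R$ so large that $\kappa R^{r-c}>C_*$ and letting $h\to\infty$ contradicts the upper bound, so $r=c$.
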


\subsection{Strategy of the proof}
The construction begins with finite-band observations of the measure
(Section~\ref{sec:analytic-cycles}). Positivity bounds the local mass,
and integrality allows nearby atoms to be grouped into finite cycles.
Local finiteness of the spectrum makes each bandlimited observation a
finite exponential sum. A holomorphic local inverse recovers the
cluster moments from these observations, including through collisions.
The resulting finite-band rigidity implies that the spectral group
$\Gamma$ is finitely generated. Choosing its basis as the rows of $L$
parametrizes the hull by $\T^n$. The canonical set $M$ consists of
phases whose measure has an atom at the origin, and the flux measure
$m$ on $M$ relates the support $\Lambda=\supp\mu$ to the spectrum:
\begin{equation}\label{eq:intro-bridge}
 \Lambda=\{x\in\R^d:Lx\in M\},\qquad
 \widehat m(k)=
 \begin{cases}
 c_{L^tk},&L^tk\in\Lambda',\\
 0,&L^tk\notin\Lambda'.
 \end{cases}
\end{equation}
Pulling back the condition that a cycle contain the origin gives the
analytic germ of $M$ and its component multiplicities.

The next step is to prove algebraicity before using a spectral height
bound. In Section~\ref{sec:algebraicity}, Fourier vanishing gives an
upper bound for character spaces on $M$. If its Zariski closure had
dimension greater than $c=n-d$, logarithmic forms and their residues
would detect a transverse tropical cone and give a contradictory lower
bound. The algebraic description then makes the flux measure holonomic
(Section~\ref{sec:holonomic-flux}). Its Fourier support is proper under
the physical projection, so the support-height theorem in
Section~\ref{sec:holonomic-height} yields the linear spectral bound.

With this bound, the moments along nearby rational fibers are Laurent
polynomials. In Section~\ref{sec:non-delone-lh}, their Hermitian
matrices recover the complex branches, including exceptional fibers,
and their growth gives amoeba avoidance. Rational conditional weights
identify the local masses with constant integer coefficients on the
global irreducible components. The compression of
Section~\ref{sec:compression}, following \cite[\S8]{AKKV}, takes a
preimage under a finite torus isogeny to obtain a matrix with positive
maximal minors while preserving the sections and their weighted local
intersection degrees. This completes the inverse classification in
Section~\ref{sec:classification}.

The forward construction in Section~\ref{sec:singular} uses the
Lee--Yang inequality to force all roots to be real, and conservation of
local degree to pass through singular fibers. The canonical variety
also gives real-valued equations for the support.
Section~\ref{sec:q4-criterion} identifies its vanishing ideal when
arbitrary real frequencies are allowed and exhibits the obstruction
to realizing masses as local colengths.

Section~\ref{sec:singular} can be read independently of the inverse
construction, using the preliminaries, the compression proposition and
the cited results of AKKV. Section~\ref{sec:main-proofs} collects the
Delone and unit-mass specializations.

\section{Preliminaries}
\label{sec:preliminaries}

We fix the conventions for torus sections and holonomic sequences,
and recall the local degrees used to count singular intersections.
We distinguish spectral basis matrices from matrices with positive
maximal minors, which describe the resulting Lee--Yang sections.

\subsection{Tori, sections, and Fourier conventions}
We write $\T^n=\R^n/\Z^n$ and identify it with $(S^1)^n$ by
$\theta\mapsto\exp(2\pi i\theta)$. Exponentials, absolute values,
and logarithms of vectors are coordinatewise. For
$X\subset(\PP^1)^n$, put $X(\T)=X\cap(S^1)^n$. At the projective
boundary we use $\log0=-\infty$ and $\log\infty=+\infty$.
For $Y\subset(\C^*)^n$, its amoeba is
$\cA(Y)=\{\Log z:z\in Y\}$, where $\Log z=(\log|z_j|)_{j=1}^n$.
Unless specified otherwise, norms are Euclidean and $B_R$ is the
ball of radius $R$ centered at zero. If $W$ is a real linear subspace,
then $P_W$ is orthogonal projection onto $W$, and $W_\C$ is its
complexification. We put $c=n-d$.

A set is Delone if it is uniformly discrete and relatively dense:
distinct points have a common positive minimum separation, and every
ball of some fixed radius meets the set. A positive measure is
translation bounded if its mass on translates of the unit ball is
uniformly bounded. We use the translation convention
$\tau_y\delta_x=\delta_{x+y}$ and the FQ-set convention of
Definition~\ref{def:fourier-quasicrystal}.

\begin{remark}[Absolute convergence]\label{rem:weighted-schwartz}
The growth condition in Definition~\ref{def:fourier-quasicrystal}
ensures absolute convergence against Schwartz functions. For
$\phi\in\mathcal S(\R^d)$, if
$\mathcal V(R)=\sum_{|\xi|\le R}|c_\xi|\le C(1+R)^{N_0}$ and
$N_1>N_0$, summing over dyadic shells gives
\begin{equation}
 \sum_{\xi\in\Lambda'}|c_\xi|\,|\widehat\phi(\xi)|
 \le C_{\phi,N_1}\left(\mathcal V(1)+
 \sum_{j\ge0}2^{-jN_1}\mathcal V(2^{j+1})\right)<\infty.
 \label{eq:weighted-schwartz}
\end{equation}
The same bound holds with $c_\xi$ replaced by $c_\xi\omega_\xi$,
$|\omega_\xi|=1$. The physical half of the growth bound similarly implies
$\sum_x|a_x||\widehat f(x)|<\infty$ for every
$f\in\mathcal S(\R^d)$. For test functions with compact Fourier support,
the spectral sums are finite by local finiteness of the spectrum.
\end{remark}

\begin{definition}\label{def:physical-section}
For a real matrix $L$ of rank $d$, write
$E_L(x)=\exp(2\pi iLx)$ and call $V=L\R^d$ its physical plane.
A point of the section
$\{x:\exp(2\pi i\theta)E_L(x)\in X\}$ is a physical root at phase
$\theta$. A real $c$-dimensional submanifold $S\subset\R^n$ is
$V$-transverse at $p$ if $T_pS\oplus V=\R^n$.
For a linear map $T:\R^n\to\R^d$, a set $E\subset\Z^n$ is
$T$-proper if $\{k\in E:\|Tk\|\le R\}$ is finite for every $R$.
\end{definition}

\subsection{Sign variation and Lee--Yang varieties}
For $v\in(\R\cup\{\pm\infty\})^n$, let $\varn(v)$ be the number
of sign changes after removing zero entries, and let $\vbar(v)$ be the
largest number obtained by assigning signs to the zero entries.
For example, $\varn(1,0,1)=0$ whereas $\vbar(1,0,1)=2$.
The positive and nonnegative Grassmannians are denoted by $\Gr_+(d,n)$
and $\Gr_{\ge0}(d,n)$: their elements admit a basis matrix with positive,
respectively nonnegative, maximal minors.

\begin{definition}[Lee--Yang varieties]\label{def:lee-yang}
Following \cite[Definitions~2.2 and~4.1]{AKKV}, a reduced equidimensional
closed variety $X\subset(\PP^1)^n$ of codimension $d$ is Lee--Yang if it is invariant under reciprocal
conjugation $z\mapsto\overline z^{-1}$ and
$\vbar(\log|z|)\ge d$ for every $z\in X\setminus\T^n$.
It is strict Lee--Yang if $X$ is smooth at every point of $X(\T)$ and
the stronger inequality $\varn(\log|z|)\ge d$ holds off the torus.
\end{definition}

By the sign-variation criterion \cite[Lemma~3.2]{AKKV}, a
nonzero $v\in\R^n$ belongs to some $W\in\Gr_+(d,n)$ exactly when
$\vbar(v)<d$, and to some $W\in\Gr_{\ge0}(d,n)$ exactly when
$\varn(v)<d$. On $(\C^*)^n$, the Lee--Yang inequality therefore expresses
amoeba avoidance for all positive $d$-planes; the stronger inequality
in Definition~\ref{def:lee-yang} expresses avoidance for all nonnegative
$d$-planes. Both inequalities also hold at the projective boundary.
Sections by positive matrices are real by the calculation preceding
Theorem~\ref{thm:akkv-construction}.

\subsection{Local intersection degrees}
The weights in a singular section count sheets of finite analytic maps.
We recall conservation of degree before comparing these weights with
lengths of fibers.

\begin{lemma}[Local degree of a finite analytic map]
\label{lem:local-degree-general}
Let $A$ be a reduced pure $m$-dimensional analytic space, and let
$\pi:A\to B$ be a proper holomorphic map with finite fibers, where
$B$ is a connected complex manifold of dimension $m$.
\begin{enumerate}
\item[(i)] There is a nowhere dense analytic subset $D\subset B$ such
that $\pi$ is an unramified covering of constant degree $\deg\pi$ over
$B\setminus D$.
\item[(ii)] Every irreducible component of $A$ maps onto $B$.
\item[(iii)] Let $a\in A$, and let $U_a$ be a relatively compact open
neighborhood of $a$ with
$\pi^{-1}(\pi(a))\cap\overline{U_a}=\{a\}$.
There is a connected neighborhood $W$ of $\pi(a)$ such that
$U_a\cap\pi^{-1}(W)\to W$ is proper. Its degree, denoted by
$\deg_a\pi$, is independent of these choices.
\item[(iv)] For every $w\in B$,
\[
 \sum_{a\in\pi^{-1}(w)}\deg_a\pi=\deg\pi.
\]
\end{enumerate}
\end{lemma}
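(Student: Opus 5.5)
This is the classical theory of finite (branched) analytic coverings, and the plan is to reduce to it rather than reprove it. The overall structure: first get a finite map, then find its branch locus, then use that $B\setminus D$ is connected to get a constant degree, and finally obtain (iv) by counting sheets in a small ball.

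\emph{Step 1 (finiteness and the branch locus).} Since $\pi$ is proper with finite fibers, it is a finite holomorphic map. By Grauert's direct image theorem, $\pi_*\mathcal O_A$ is coherent, and by Remmert's proper mapping theorem the image of any analytic subset is analytic. Let $S\subset A$ be the analytic set consisting of $\operatorname{Sing}A$ together with the points of $A_{\mathrm{reg}}$ where $d\pi$ is not an isomorphism. Set $D=\pi(S)$. It is analytic by Remmert, and we must show it is nowhere dense.
\begin{itemize}
\item $\operatorname{Sing}A$ has dimension $<m$, and finite maps preserve dimension, so its image has dimension $<m$.
\item On each $m$-dimensional irreducible component of $A_{\mathrm{reg}}$, $\pi$ has finite fibers, hence generic rank $m$. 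Therefore the critical set has dimension $<m$, and so does its image.
\end{itemize}
Thus $D$ is a proper analytic subset of $B$. Over $B\setminus D$ the map $\pi$ is a proper local biholomorphism, hence a finite covering. Because $B$ is a connected manifold and $D$ is a proper analytic subset, $B\setminus D$ is connected, so the degree is constant. This proves (i).

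\emph{Step 2 (components surject).} Let $A_i$ be an irreducible component of $A$. Then $\pi(A_i)$ is an irreducible analytic subset of $B$ of dimension $m$, again because finite maps preserve dimension. Since $B$ is a connected $m$-manifold, an irreducible $m$-dimensional analytic subset must equal $B$. This proves (ii).

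\emph{Step 3 (local degree).} Fix $a$ and $U_a$ as in (iii). The set $\partial U_a\cap\pi^{-1}(\overline{\text{small ball}})$ is compact and disjoint from $\pi^{-1}(\pi(a))$. Therefore its image is a compact set missing $\pi(a)$, and we may choose a connected $W\ni\pi(a)$ disjoint from it. Then $U_a\cap\pi^{-1}(W)\to W$ is closed in $\pi^{-1}(W)$, hence proper.
\begin{itemize}
\item Its restriction to $\pi^{-1}(W)$ is again a finite map onto a connected manifold, with branch locus $D\cap W$.
\item Its degree is the number of preimages of a generic point $w\in W\setminus D$ lying in $U_a$.
\item Independence of choices: shrinking $U_a$ or $W$ does not change this count. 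For points of $W$ near $\pi(a)$, all preimages in $U_a$ already lie in any smaller neighborhood of $a$, by the same compactness argument. Two admissible choices can therefore be compared through a common refinement.
\end{itemize}

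\emph{Step 4 (conservation).} For any $w\in B$, choose pairwise disjoint admissible neighborhoods $U_a$, one for each $a\in\pi^{-1}(w)$. By properness, there is a connected $W\ni w$ with $\pi^{-1}(W)\subset\bigcup_a U_a$. Counting the preimages of a generic $w'\in W\setminus D$ then gives (iv).

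The main obstacle is justifying that $\pi(S)$ is nowhere dense, i.e.\ that a finite map from a pure $m$-dimensional space to an $m$-manifold is generically of full rank on every component. I would cite the standard fact that finite maps preserve dimension (Grauert--Remmert, \emph{Coherent Analytic Sheaves}, or Chirka, \emph{Complex Analytic Sets}, §§3, 11), after which everything else is point-set topology.
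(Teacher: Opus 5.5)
Your proposal is correct and follows essentially the same route as the paper: the proper mapping theorem plus dimension preservation under finite maps for (ii), and the image of the singular and critical loci with connected complement for (i). It also matches on (iii), a base neighborhood avoiding $\pi(\partial U_a)$ with comparison through a common smaller isolating neighborhood, and on (iv), disjoint isolating neighborhoods plus properness; the only differences are cosmetic, namely that you use global properness rather than compactness of $\overline{U_a}$ in (iii) and invoke Grauert's direct image theorem, which is not needed.
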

\begin{proof}
By the proper mapping theorem \cite[II, Theorem~8.8]{Demailly}, the
image of each irreducible component is analytic. Its dimension is $m$
because the map is finite, so it equals the connected manifold $B$,
proving (ii). Components are locally finite
\cite[II, Theorem~5.3]{Demailly}; each meets a fixed finite fiber,
and hence there are only finitely many.

The images of the singular and critical loci form a nowhere dense
analytic subset $D$; outside it the map is an unramified covering
\cite[II, Theorem~4.19]{Demailly}. The complement is connected by
\cite[II, Remark~4.2]{Demailly}, so the number of sheets is constant.
This proves (i), with degree interpreted as in
\cite[III, Example~9.2]{Demailly}.

Put $w_0=\pi(a)$. Since the compact set $\partial U_a$ misses the fiber
over $w_0$, we can choose a connected neighborhood $W$ of $w_0$
disjoint from $\pi(\partial U_a)$. The inverse image of a compact
subset of $W$ in $U_a$ is closed in the compact set $\overline{U_a}$;
thus the restricted map is proper, and its generic sheet count defines
$\deg_a\pi$. To compare two choices, take an isolating neighborhood
contained in both and shrink the base. Both localized fibers then lie
in the smaller neighborhood, since otherwise compactness would produce
another point in the central fiber. Their generic sheet counts agree,
proving (iii).

For (iv), choose disjoint isolating neighborhoods of the points of
$\pi^{-1}(w)$. By properness, after shrinking the base its full inverse
image lies in their union. At a common regular base point, the sheets
in each neighborhood are counted by its local degree. Their total is
$\deg\pi$, proving the identity at $w$.
\end{proof}

\begin{definition}\label{def:local-intersection-degree}
Let $A\subset\C^n$ be a reduced pure $c$-dimensional analytic germ at
$y$, where $c+d=n$, and let $\pi:(A,y)\to(\C^c,0)$ be finite.
Its local degree $\deg_y\pi$ is defined by
Lemma~\ref{lem:local-degree-general}(iii): it counts the points of a
nearby regular fiber in an isolating neighborhood of $y$, over all
local irreducible components.
For an isolated section of $A$ by a complex $d$-plane in $\C^n$, its
local intersection degree is the local degree of projection along that
plane. To obtain a finite proper local representative, choose an
isolating neighborhood whose boundary misses the central fiber and
shrink the base to avoid its boundary image. The fibers are then compact
analytic subsets of a domain in $\C^n$, hence finite. The degree is
independent of the linear coordinates on the quotient.
For a subvariety of $(\PP^1)^n$, we compute the section degree in
logarithmic coordinates $z=\exp(2\pi iw)$, where
$\exp(2\pi i(\theta+Lx))$ becomes the affine plane $\theta+L\C^d$.
\end{definition}

\begin{definition}[Effective Lee--Yang cycles]
\label{def:effective-lee-yang-cycle}
Let $1\le d\le n$ and $c=n-d$. An effective Lee--Yang cycle of
codimension $d$ is a finite nonempty formal sum
\[
 \mathfrak X=\sum_i\varkappa_iX_i,
 \qquad \varkappa_i\in\Z_{>0},
\]
where the $X_i\subset(\PP^1)^n$ are distinct irreducible reduced closed
subvarieties of codimension $d$, the reduced support
$|\mathfrak X|=\bigcup_iX_i$ is Lee--Yang in
Definition~\ref{def:lee-yang}, and reciprocal conjugation preserves the
cycle: exchanged components have equal coefficients. Its support is
\emph{strict} if $|\mathfrak X|$ is strict Lee--Yang.

For a real phase $\theta$ and a positive-minor matrix $L$, define
\[
 \deg^{\mathfrak X}_\theta(x)
   =\sum_i\varkappa_i\deg^{X_i}_\theta(x),
 \qquad
 \nu^{\mathfrak X}_\theta
   =\sum_{\substack{x\in\C^d:\,\exp(2\pi i(\theta+Lx))\in|\mathfrak X|}}
          \deg^{\mathfrak X}_\theta(x)\delta_x.
\]
When the matrix has to be indicated we write $\nu^{\mathfrak X,L}_\theta$.
\end{definition}

\begin{remark}\label{rem:effective-lee-yang-sections}
The realness calculation in Section~\ref{subsec:intro-lee-yang} uses only
$\vbar(\log|z|)\ge d$ and thus applies to each component $X_i$ and each
local branch. Since a complex analytic subset of $\C^d$ contained in
$\R^d$ is discrete, every section root is isolated. Projection along
the section plane has a finite proper local representative by
Definition~\ref{def:local-intersection-degree}, with degree defined by
Lemma~\ref{lem:local-degree-general}.
Here $\deg^{X_i}_\theta(x)$ is the local intersection degree
of Definition~\ref{def:local-intersection-degree}, and is zero when $X_i$
does not contain the image of $x$.
These degrees can differ from lengths of ideals
(Remark~\ref{rem:degree-length}). We also use this weighted notation for
finite local sections of effective analytic or algebraic cycles in the
canonical torus before compression.

A component is called \emph{active} when $X_i(\T)\ne\varnothing$.
Deleting inactive components leaves every real-phase section measure
unchanged; if all components are inactive, these sections are empty.
By Lemma~\ref{lem:integer-active-components}, each active component is
preserved by reciprocal conjugation and has a Zariski-dense torus locus
of real dimension $c$. Thus every nonempty torus locus has dimension $c$.
\end{remark}

\begin{remark}[Degree and length]\label{rem:degree-length}
Put $B=\mathcal O_{A,y}$ and $R=\C\{w_1,\ldots,w_c\}$, the local
ring of the smooth base. The finite projection makes $B$ a finite
$R$-module of generic rank $\deg_y\pi$. Because $B$ is reduced and
pure-dimensional, with every local component dominating the base,
$B$ is torsion-free over $R$. Put $\mathfrak q=(w_1,\ldots,w_c)B$.
Both residue fields are $\C$, so the lengths of $B/\mathfrak q^N$
over $R$ and $B$ agree. The multiplicity formula
\cite[\S14]{Matsumura} and $e_{(w)}(R)=1$ give
\[
 \deg_y\pi=e_{\mathfrak q}(B)
       \le \operatorname{length}_B(B/\mathfrak q).
\]
The inequality follows from \cite[\S14]{Matsumura}, with equality
precisely when $B$ is Cohen--Macaulay \cite[\S17]{Matsumura}.
Writing $\mathfrak m_R=(w_1,\ldots,w_c)$, the number
$\operatorname{length}_B(B/\mathfrak q)
=\dim_\C(B/\mathfrak m_RB)$ is the minimum number of generators of
$B$ as an $R$-module, so it is at least its rank $r=\deg_y\pi$.
If equality holds, the kernel of a surjection $R^r\to B$ has rank zero.
As a submodule of a free module it must vanish, so $B$ is free.
Over the regular local ring $R$, the finite module $B$ is free if and
only if $B$ is Cohen--Macaulay. In either case, $B/\mathfrak q$ is Artinian.
\end{remark}

The next lemma identifies a reduced germ of local degree one with its
smooth base. We use it for the unit-mass specialization in
Section~\ref{sec:main-proofs}.

\begin{lemma}[Local degree one]\label{lem:local-degree-one}
Let $A$ be a reduced analytic germ of pure dimension $c$ at $y$, and let
$\pi$ map $(A,y)$ finitely to a smooth $c$-dimensional germ.
If $\pi|_A$ has local degree one, then it is a local
biholomorphism; in particular $A$ is smooth at $y$.
\end{lemma}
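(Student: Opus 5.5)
Take a representative $\pi:A\to W\subset\C^c$ as in Lemma~\ref{lem:local-degree-general}(iii): $A$ is reduced, pure $c$-dimensional and closed in an isolating neighborhood $U$ of $y$, $\pi$ is proper with finite fibers, $\pi^{-1}(\pi(y))=\{y\}$, and $W$ is a connected neighborhood of $\pi(y)$ in the smooth base (identified with a ball in $\C^c$). By Lemma~\ref{lem:local-degree-general}(iv) applied to this representative, every fiber $\pi^{-1}(w)$, $w\in W$, consists of points whose local degrees sum to $\deg_y\pi=1$. Local degrees of points in a fiber are at least one, because by (ii) every irreducible component through the point dominates and so contributes at least one sheet. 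Hence every fiber is a single point, and $\pi$ is a continuous bijection $A\to W$. Being proper, it is closed, so it is a homeomorphism with continuous inverse $\sigma:W\to A\subset\C^n$.

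Next I will show that $\sigma$ is holomorphic. By Lemma~\ref{lem:local-degree-general}(i) there is a nowhere dense analytic subset $D\subset W$ such that $\pi$ is an unramified covering of degree one over $W\setminus D$. On $W\setminus D$ the set $A$ is therefore a complex manifold mapped biholomorphically onto its image, so $\sigma$ is holomorphic there. The coordinates of $\sigma$ are continuous and bounded on $W$ and holomorphic off the thin set $D$. The Riemann extension theorem then shows they are holomorphic on all of $W$. Thus $\sigma:W\to\C^n$ is a holomorphic map with $\pi\circ\sigma=\mathrm{id}$.

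Finally, $\sigma(W)=A$ is the image of a holomorphic section of the holomorphic map $\pi$, where $\pi$ extends to a neighborhood of $A$ in $\C^n$. It is therefore a closed complex submanifold: $\sigma$ is an injective immersion, since $d\pi\circ d\sigma=\mathrm{id}$, and it is a homeomorphism onto $A$. As reduced spaces, $A$ equals the graph-like manifold $\sigma(W)$, because $A$ is reduced and has the same underlying set. So $A$ is smooth at $y$, and $\pi|_A$ and $\sigma$ are mutually inverse holomorphic maps, i.e.\ $\pi|_A$ is a local biholomorphism.

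The main subtlety is the first step: the argument must exclude fibers with several points near $y$, or points of positive-dimensional fibers. Finiteness and properness of the representative handle this, and the reducedness hypothesis is what lets us identify $A$ with its set-theoretic image in the last step.
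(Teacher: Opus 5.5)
Your proof is correct, but it takes a different route from the paper's.

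The paper argues algebraically. Each local irreducible component is finite onto a base neighborhood and contributes at least one sheet, so degree one leaves a single component. Degree one then identifies the fraction fields, giving $\mathcal O_0\subset\mathcal O_1\subset\Frac(\mathcal O_0)$ with $\mathcal O_1$ integral over $\mathcal O_0$. Normality of the regular base ring forces $\mathcal O_1=\mathcal O_0$.

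You stay analytic throughout. Conservation of number (Lemma~\ref{lem:local-degree-general}(iv)), with every local degree at least one, makes a proper representative bijective. Properness upgrades this to a homeomorphism. The Riemann extension theorem across the branch locus $D$ then makes the inverse a holomorphic section, whose image is a submanifold equal to the reduced germ $A$.

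The Riemann extension step is the analytic counterpart of the normality of the smooth base that the paper invokes. The paper's version is shorter. However, it silently uses two facts: the sheet number of an irreducible finite germ equals the degree of its fraction-field extension, and an isomorphism of analytic local rings gives a biholomorphism of germs.

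Your version needs only Lemma~\ref{lem:local-degree-general} and Riemann extension. It gets the single-component conclusion for free from bijectivity, and it exhibits $A$ explicitly as the graph of a holomorphic section over the base. Your last step relies on $\pi$ extending holomorphically to an ambient neighborhood of $y$. You state this; it holds because holomorphic functions on a reduced germ are locally restrictions of ambient ones.
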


\begin{proof}
Each local irreducible component has dimension $c$ and maps finitely
onto a base neighborhood, contributing at least one to the degree.
There is therefore just one component. Let $\mathcal O_0$ be the local
ring of the smooth base and $\mathcal O_1$ that of $A$. Since degree
one identifies their fraction fields,
$\mathcal O_0\subset\mathcal O_1\subset\Frac(\mathcal O_0)$, with
$\mathcal O_1$ integral over $\mathcal O_0$. The base is normal, so
$\mathcal O_1=\mathcal O_0$.
\end{proof}

\subsection{Holonomic distributions and sequences}
A distribution on a smooth real algebraic manifold is algebraically
holonomic if the cyclic module it generates over the differential
operators of its complexification is holonomic. Here holonomic means
that the characteristic variety of a nonzero module has dimension equal
to that of the underlying smooth variety \cite[\S3]{GSSauthor}.
The zero module is holonomic by convention. We use this definition for
the flux measure in Section~\ref{sec:holonomic-flux}.

For sequences, the algebra $\mathscr D_n$ generated by coordinate multiplication
$s_j$ and invertible shifts $\tau_j$ acts by
$(s_jf)(k)=k_jf(k)$ and $(\tau_jf)(k)=f(k+e_j)$, with
$\tau_i s_j=(s_j+\delta_{ij})\tau_i$.
The sequence shifts $\tau_j$ are distinct from the translations
$\tau_y$ of measures.
The identification $\tau_j=t_j$, $s_j=-t_j\partial_{t_j}$ with differential
operators on $(\C^*)^n$ defines holonomicity of the cyclic sequence
module. This module retains its polynomial torsion, the elements
annihilated by nonzero polynomials, in the support-height theorem of
Section~\ref{sec:holonomic-height}.

\subsection{Notation}
Before compression we use a spectral basis matrix; afterwards we use
the resulting matrix with positive maximal minors. Our notation is
summarized below.
\begin{center}
\begin{tabular}{@{}p{0.25\textwidth}p{0.69\textwidth}@{}}
$\Gamma$, $L$ & Spectral group and its basis matrix.\\
$\widetilde L$ & Matrix with positive maximal minors obtained by compression.\\
$S$ & Nonsingular integer matrix with $\widetilde L=S^{-1}L$.\\
$E_L$ & Exponential map $E_L(x)=\exp(2\pi iLx)$.\\
$V$, $c$ & Physical plane $L\R^d$ and dimension $n-d$ of the canonical set.\\
$\mu_t$ & Hull measure at phase $t$.\\
$M$, $Y$ & Canonical set and its Zariski closure in the algebraic torus.\\
$m$ & Canonical flux measure on $M$.\\
$M^\C$, $\mathfrak M^\C$ & Reduced canonical analytic germ and its effective analytic cycle.\\
$\mathfrak Y$, $\varkappa_i$ & Effective cycle on $Y$ and its positive integer component coefficients.\\
$\mathfrak X$ & Effective Lee--Yang cycle; in Sections~\ref{sec:compression} and~\ref{sec:classification}, the one obtained by compression.\\
$\nu^{\mathfrak X}_\theta$, $\deg^{\mathfrak X}_\theta$ & Section measure and weighted local intersection degree at phase $\theta$.
\end{tabular}
\end{center}

\section{Finite toral type and analytic zero-cycles}
\label{toral-type-and-analytic-zero-cycles-without-uniform-discreteness}

\label{sec:analytic-cycles}

We recover a finite-dimensional torus from bandlimited observations
and use cluster moments to describe its sections through collisions.
A local inverse proves analytic dependence under a temporary
finite-generation hypothesis and then yields finite generation itself.
The resulting analytic set and its flux measure supply the hypotheses
for automatic algebraicity in the next section.

\paragraph{Standing notation.}
In Sections~\ref{sec:analytic-cycles}--\ref{sec:classification}, $L$
is the matrix whose rows form a $\Z$-basis of the spectral group
$\Gamma=\langle\Lambda'\rangle_\Z$, once its finite generation has
been established. It has rank $d$, with
\[
 T=L^t,\qquad V=L\R^d,\qquad \ker T=V^\perp;
\]
$T$ is injective on $\Z^n$. The hull is the vague closure of translates of the integer-mass FQ
measure $\mu$, parametrized on the torus by $\mu_t$, with $\mu_{Lx}=\tau_{-x}\mu$. The canonical set,
its Zariski closure, and its flux measure are denoted by
\[
 M=\{t\in\T^n:0\in\supp\mu_t\},\qquad
 Y=\overline M^{\mathrm{Zar}}\subset(\C^*)^n,\qquad m,
\]
respectively. The definition of $m$ in \eqref{eq:HF-4.1} uses the Fourier
convention of
\eqref{eq:intro-fourier-identity}. In Theorems~\ref{thm:main-delone}--\ref{thm:main-singular}
and Section~\ref{sec:singular}, $L$ denotes an arbitrary matrix with
positive maximal minors. Compression of a spectral basis matrix yields $\widetilde L=S^{-1}L$, where $S$ is
the nonsingular integer matrix of Proposition~\ref{prop:compression}.

\begin{theorem}[Finite toral type and analytic zero-cycles]
\label{thm:all-fq-toral-cycles}
Let $\mu$ be an integer-mass FQ measure on $\R^d$, with spectral
group $\Gamma=\langle\Lambda'\rangle_{\Z}$. Then this group is finitely
generated. Choose a basis $\gamma_1,\ldots,\gamma_n$, and let
$L\in\R^{n\times d}$ have these basis vectors as its rows. The
following statements hold.
\begin{enumerate}
\item[(a)] The hull is faithfully parametrized by $\mu_t$, $t\in\T^n$,
with positive integer masses, a common local mass bound, and
$\mu_{t+Lx}=\tau_{-x}\mu_t$.
\item[(b)] Locally in phase, all atoms can be grouped into uniformly
separated clusters of bounded total multiplicity. Their unordered
zero-cycles form holomorphic families in complex phase neighborhoods,
including collisions and irreducible branching. The simultaneous
multisymmetric moments form an $\ell^\infty$-valued holomorphic map,
each block taking values in the corresponding symmetric-product
variety $V_{s_b}$, where $s_b$ is the total multiplicity of cluster $b$.
All cluster counts include multiplicities.
\item[(c)] The canonical set $M=\{t:0\in\supp\mu_t\}$ is compact and
real analytic. Its reduced complex germ $M^\C$ is pure of codimension
$d$. There is, in addition, a well-defined effective analytic cycle
$\mathfrak M^\C$ supported on this germ. Its coefficients are positive
integers on local irreducible components and agree on overlaps.
\item[(d)] At a phase where the atom at zero has mass one, $M^\C$ is
smooth and transverse to $L\C^d$.
\item[(e)] For each real phase $t$ there is a neighborhood $U_t$ of $0$
in $\C^d$ such that
\[
 \{x\in U_t:t+Lx\in M^\C\}
       =\supp\mu_t\cap U_t\subset\R^d.
\]
If $t\in M$, then $\deg^{\mathfrak M^\C}_t(0)=\mu_t(\{0\})$;
by covariance, for every $x\in\supp\mu_t$,
\[
 \deg^{\mathfrak M^\C}_t(x)=\mu_t(\{x\}).
\]
\end{enumerate}
\end{theorem}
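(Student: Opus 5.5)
I would begin with the local mass bound and translation boundedness, which is the input for everything else. Take a nonnegative Schwartz function $f$ with $\widehat f\ge 1$ on the unit ball and $\supp f$ compact, as in Lemma~\ref{lem:ac-translation-bound}. Apply \eqref{eq:intro-fourier-identity} to the modulated test functions $f\,e^{2\pi i\langle y,\cdot\rangle}$. The spectral side is then a finite sum of uniformly bounded size, by local finiteness of $\Lambda'$. So $\mu(B_1(y))$ is bounded independently of $y$. Since the masses are positive integers, this bounds both the number of atoms and the total multiplicity in every unit ball.

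Next come the finite-band observations. For $\phi$ with $\widehat\phi$ compactly supported, the convolution $\mu*\phi(y)$ equals a finite exponential sum $\sum_{\xi\in F}c_\xi\widehat\phi(\xi)e^{2\pi i\langle\xi,y\rangle}$. These are almost periodic functions whose frequencies lie in $\Gamma$. By vague compactness, the translation bound makes the hull compact. Every hull element $\nu$ has the same $\widehat\nu$-support, with coefficients $c_\xi\chi(\xi)$ for a character $\chi$ of $\Gamma$. So the hull maps injectively into the dual group $\widehat\Gamma$. It is injective because the band observations determine $\nu$.

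For (b) I would group atoms locally. By the mass bound, in a box of side $\varepsilon$ one can find gaps of width comparable to $\varepsilon/N$ that separate clusters of bounded total multiplicity $s_b$. The power-sum moments $\sum a_x x^\alpha$ with $|\alpha|\le s_b$ over a cluster can then be read off from finitely many band observations: pair the measure against bump functions equal to $x^\alpha$ near the cluster, and approximate these by bandlimited functions. This expresses the moments as finite exponential sums in the phase, so they extend holomorphically to complex phases. The moments lie in the symmetric-product variety $V_{s_b}$, and the point map $V_{s_b}\to\mathrm{Sym}^{s_b}\C^d$ is a homeomorphism. This gives holomorphic families of zero-cycles, first under finite generation, where the phase lives on $\T^n$.

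Finite generation is the step I expect to be the main obstacle. My plan is a rigidity argument. A fixed finite set $F\subset\Lambda'$ of frequencies determines the cluster moments near the origin, via a holomorphic local inverse, and hence the whole measure by covariance. So the character $\chi$ restricted to $\langle F\rangle$ determines $\chi$ on all of $\Gamma$ wherever $\chi$ lies in the hull image. Each additional $\xi\in\Lambda'$ is then given by an analytic function of finitely many phases. Combining this with the fact that these functions are characters forces $\xi\in\langle F\rangle$ up to finite index. A torsion argument then yields finite generation, and freeness follows because $\Gamma\subset\R^d$ is torsion-free.

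Once the group is finitely generated, (a) follows with $\widehat\Gamma=\T^n$ via $L$. For (c), $M$ is the locus where some cluster contains $0$, that is, the pullback of the incidence "cycle contains $0$". Locally, the cycle is given by an analytic map to symmetric products, and membership of $0$ is an analytic condition; this makes $M$ real analytic. The codimension is $d$ because along $L\C^d$ the root sets are discrete. The component multiplicities are then the local degrees of Lemma~\ref{lem:local-degree-general}, and they agree on overlaps by conservation (iv). Parts (e) and (d) follow by comparing the degree of the cycle family with the atom masses, using Lemma~\ref{lem:local-degree-one} in the mass-one case.
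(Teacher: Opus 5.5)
Your step (b) does not work as stated, and everything after it depends on it. A compactly supported bump times $(x-y_b)^\alpha$ is never bandlimited. So pairing $\mu_t$ with a bandlimited approximant does not express the cluster moment as a finite exponential sum in the phase. It produces the moment plus an error, and that error depends on the unknown atom positions in that cluster and in all the infinitely many other clusters, because a bandlimited kernel sees the whole measure. Letting the bandwidth grow does not help. Uniform convergence on the real torus of trigonometric polynomials with unbounded frequencies gives no control at complex phases, where $e^{2\pi i k\cdot\theta}$ grows with $|k|$. So no holomorphic extension follows, and the ``holomorphic local inverse'' you later invoke for finite generation is never constructed.

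The missing idea is to fix one bandwidth and treat the error as a holomorphic function of the unknown moments themselves. In the paper, each kernel's trace $\sum_j h_{a,\alpha}(y_b+x_j)$ is holomorphic on the singular quotient $V_{s_b}$. It is extended to ambient moment coordinates with a uniform bound, using Cartan~B and the open mapping theorem (Lemma~\ref{lem:ac-extension}). The kernels are built by Gaussian smoothing and Fourier truncation. This makes the error small on a complex strip $D_b$ around each cluster and makes it decay like $(1+|y_a-y_b|)^{-\kappa}$ across clusters (Lemma~\ref{lem:ac-kernels}). Summing over the $\delta$-separated clusters gives a holomorphic map $\Psi(p)=p+\mathcal E(p)$ on an $\ell^\infty$ ball with $\|D\mathcal E\|<1/8$. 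This map is inverted by contraction, simultaneously for all clusters. The exact identity $\Psi(p(t))=\Phi(t)$, with $\Phi$ now a genuine finite exponential sum, makes $p=\Psi^{-1}\circ\Phi$ holomorphic in complex phase. The identity theorem then keeps each block in $V_{s_b}$.

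With this inverse, equivalently the local rigidity of Lemma~\ref{lem:ac-rigidity}, your finite-generation sketch becomes the paper's argument. Put $\Gamma_0=\langle\Lambda'\cap B(0,\rho_*)\rangle_{\Z}$. The kernel of restriction to $\operatorname{Hom}(\Gamma_0,S^1)$ is compact with isolated identity, hence finite, and its exponent $e_0$ gives $e_0\Gamma\subset\Gamma_0$.

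Your treatment of (c)--(e) also has gaps.
\begin{itemize}
\item \emph{Purity.} Discreteness of the sections along $L\C^d$ gives only $\dim M^\C\le n-d$; purity requires the reverse inequality on every local component. The paper extends covariance $p_b(\theta+Lu)=p_b(\theta)-u$ to complex $(\theta,u)$ by the identity theorem. The map $(\theta,x)\mapsto(\theta+Lx,x)$ then identifies the incidence $\{(\theta,x):x\in Z_b(\theta)\}$ with $M^\C\times\C^d$. That incidence is finite over the phase and pure of dimension $n$, which gives the lower bound.
\item \emph{The first identity in (e).} The same complex covariance shows that the complex points of $M^\C$ on $t+L\C^d$ near $t$ are exactly the real atoms of $\mu_t$. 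You need it even to know that the sections are discrete, and you do not mention it.
\item \emph{Coefficients.} The coefficients of $\mathfrak M^\C$ are not the reduced local degrees of Lemma~\ref{lem:local-degree-general}. For the family $2[x(\theta)]$ the reduced incidence is one graph of local degree one, while the coefficient is two (Remark~\ref{rem:ac-cycle-multiplicity}). The coefficients are the locally constant degrees of the split cycle on generic graphs (Lemma~\ref{lem:integer-incidence-coefficients}).
\item \emph{Overlaps.} Coefficients agree on overlaps because the identity theorem identifies the moment maps built at different reference phases, not by conservation of number.
\end{itemize}

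A smaller slip: for the mass bound you need $\widehat f\ge0$ on the physical side, for instance $f=g*\tilde g$. With only $f\ge0$, $\widehat f$ may change sign, and a bound on $\sum_x a_x\widehat f(x-y)$ does not bound $\mu(B_1(y))$.
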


\begin{remark}\label{rem:analytic-cycle-degree}
The reduced degree of $M^\C$ may differ from the mass. For example,
when $d=1$ and $\mu=2\sum_{k\in\Z}\delta_k$, the canonical set $M$
is one point and $\mathfrak M^\C$ is that point with coefficient two.
\end{remark}

Finite generation is assumed through
Section~\ref{the-canonical-analytic-toral-set} and proved in
Section~\ref{local-finite-band-rigidity}. The mass bound in
Lemma~\ref{lem:ac-translation-bound} holds without this assumption.
In Lemma~\ref{lem:ac-persistence}, phase dependence enters through
\eqref{eq:AC-1.3}; Lemma~\ref{lem:ac-rigidity} replaces this estimate
by uniform weak closeness. The cluster, quotient and kernel
constructions apply to a single positive integer atomic measure with
a uniform local mass bound.

In this section, \(t\) denotes a real phase and \(\theta\) its complex extension; later \(\theta_\ell\) will denote logarithmic lifts of sheets.
A cluster is a finite effective zero-cycle, with coincident points
counted according to their integer masses. We describe it by
multisymmetric moments.

With the chosen spectral basis, write
\[
 \widehat\mu=\sum_{\xi\in\Lambda'}c_\xi\delta_\xi,\qquad
 \xi=L^tk(\xi),\quad k(\xi)\in\mathbb Z^n.
\]

\subsection{The hull and its uniform topology}\label{the-toral-family-and-its-uniform-topology}

The coordinate \(k(\xi)\) is unique, since the rows of \(L\) form a
basis of the spectral group.

\subsubsection{Translation boundedness}\label{translation-boundedness}

\begin{lemma}[Mass bounds and relative density]\label{lem:ac-translation-bound}
There is an integer \(K\) such that every vague limit of translates of
\(\mu\) is a positive integer atomic measure with mass at most \(K\)
in every unit ball. Every measure in the hull is nonzero and has
relatively dense support, with a common relative-density radius.
\end{lemma}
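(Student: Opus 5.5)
The plan has three steps: a uniform mass bound on unit balls, passage of that bound to vague limits, and a common relative-density radius. Everything is obtained by testing the Fourier identity \eqref{eq:intro-fourier-identity} against a fixed pair of Schwartz functions and using positivity of the masses.

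\emph{Step 1: uniform mass bound.} Fix a nonnegative $\phi\in\mathcal S(\R^d)$ whose Fourier transform $\widehat\phi$ is nonnegative, at least $1$ on the unit ball, and compactly supported. We take $\phi=|\widehat\psi|^2$-type constructions, i.e.\ $\widehat\phi=\psi*\widetilde\psi$ with $\psi\in C_c^\infty$, suitably rescaled. Applying the Fourier identity to the modulated functions $f$ with $\widehat f=\widehat\phi(\cdot-y)$ gives
\[
 \mu(B_1(y))\le\sum_x a_x\widehat\phi(x-y)
   =\Bigl|\sum_\xi c_\xi\,\phi(\xi)e^{2\pi i\langle y,\xi\rangle}\Bigr|
   \le\sum_\xi|c_\xi|\,|\phi(\xi)|.
\]
The left inequality uses $a_x>0$. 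The right-hand sum is finite and independent of $y$ by Remark~\ref{rem:weighted-schwartz}. We call this sum $K$, or rather its integer part, since the masses are integers. This bound passes to vague limits $\nu$ of translates because $\widehat\phi$ is continuous and nonnegative: $\nu(B_1(y)^\circ)\le\liminf$ of the translate masses. Integrality and atomicity of limits follow from this bound. Locally, a translate restricted to a compact set is a sum of at most $K$ unit atoms. The configuration space of at most $K$ points in a compact set is compact, so limits are sums of at most $K$ unit point masses counted with collisions, hence positive integer atomic measures. Interior versus closed balls is handled by slightly enlarging the radius.

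\emph{Step 2: nonvanishing and common relative-density radius.} Here the spectrum must be used, and I expect this to be the main obstacle. The strategy is to show that every large ball contains mass, uniformly in the translate. Since $\mu\ne0$ is positive, $\widehat\mu$ has an atom $c_0=\lim_R \mu(B_R)/\operatorname{vol}(B_R)$ at $\xi=0$ (the density). The first task is to show $0\in\Lambda'$ with $c_0>0$. For this, test against $\widehat f\ge0$ with $f=\phi(\cdot/\varepsilon)$ concentrated near $0$ in the spectral variable. By local finiteness of $\Lambda'$, for small $\varepsilon$ only $\xi=0$ contributes, so $\sum a_x\varepsilon^d\widehat\phi(\varepsilon x)=c_0\phi(0)$ whenever $0\in\Lambda'$, and $=0$ otherwise. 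The latter is impossible for a nonzero positive measure with $\widehat\phi>0$ near the origin. Hence $c_0>0$.

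Next, take $\widehat g\ge0$ supported in $B_r$ with $g=\widehat{\ }$-inverse having $g(0)>0$, and choose the scale $\varepsilon$ so that $\Lambda'\cap B_\varepsilon=\{0\}$. Translating gives
\[
 \sum_x a_x\widehat{g_\varepsilon}(x-y)=c_0g(0)+\sum_{\xi\ne0}c_\xi g_\varepsilon(\xi)e^{2\pi i\langle y,\xi\rangle}.
\]
Choose $g$ with $\operatorname{supp} g\subset B_\varepsilon$ (i.e.\ $\widehat g_\varepsilon$ spreads on physical scale $1/\varepsilon$, nonnegative, supported in $B_{R_0}$). Then the error sum vanishes identically, so every ball $B_{R_0}(y)$ carries mass at least $c_0g(0)/\max\widehat g>0$ for every $y$. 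This requires a compactly supported nonnegative $\widehat g$ whose inverse transform is also compactly supported, which does not exist. The workable choice is instead $g$ compactly supported in $B_\varepsilon$ with $\widehat g\ge0$ (autocorrelation $g=h*\widetilde h$), accepting that $\widehat g$ is not compactly supported but is Schwartz. Then the $\mu$-mass of $B_{R_0}(y)$ dominates the left side minus a tail. The tail is bounded using Step 1's uniform bound $K$ and the decay of $\widehat g$ outside $B_{R_0}$, and is less than $c_0g(0)/2$ once $R_0$ is large. This gives the uniform relative density for all translates.

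\emph{Step 3: passage to the hull.} The statement passes to all vague limits of translates because the lower bound holds uniformly in $y$ for a fixed continuous test function with controlled tails. Such a limit is therefore nonzero with the same radius $R_0$.
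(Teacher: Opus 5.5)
Your proposal is correct in substance, and the final form of your Step~2 is the paper's argument. There one takes $\phi=|\check\chi_0|^2\ge0$ with $\widehat\phi$ supported in the spectral gap at the origin, so that $\mu*\phi$ is a constant $C$. One shows $C>0$ by evaluating at an atom, and gets a common relative-density radius from the uniform tail estimate (unit-ball mass bound plus Schwartz decay). The identity $\nu*\phi=C$ is then passed to every hull measure.

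The real difference is your Step~1. The paper reads the upper bound off the same identity: since $\phi\ge a_0>0$ on a small ball $B_r$, positivity gives $a_0\mu(y+B_r)\le C$, and coverings handle larger balls. This uses only local finiteness of $\Lambda'$, consistent with the paper's stated intention to use nothing beyond local finiteness of the spectrum once absolute convergence is settled. You instead use the dual test function, compactly supported on the physical side, and bound the whole spectral sum through the weighted growth condition \eqref{eq:fq-measure-growth} and Remark~\ref{rem:weighted-schwartz}. That is legitimate under Definition~\ref{def:fourier-quasicrystal} and gives an explicit constant. But it spends a hypothesis the paper avoids, and it is redundant: once you have the constant identity of Step~2, one test function gives the upper bound, the positivity of the constant, and relative density together.

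One step needs repair. In the first paragraph of Step~2 you test with $f=\phi(\cdot/\varepsilon)$ for the $\phi$ of Step~1. That $\phi$ is not compactly supported (only $\widehat\phi$ is), so local finiteness of $\Lambda'$ does not remove the terms with $\xi\ne0$. The growth bound makes them only $o(1)$, and an $o(1)$ right-hand side does not contradict $\mu\ne0$: the left side $\varepsilon^d\sum_x a_x\widehat\phi(\varepsilon x)$ also tends to zero for any positive measure of zero asymptotic density.

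Use instead the function you introduce afterwards, $g=h*\widetilde h$, with $0\le h\in C_c^\infty$ nonzero and supported so that $\operatorname{supp}g\cap\Lambda'\subset\{0\}$. Then $\widehat g=|\widehat h|^2\ge0$ and $\widehat g(0)>0$. The identity $\sum_x a_x\widehat g(x-y)=c_0g(0)$ for all $y$ (with $c_0=0$ if $0\notin\Lambda'$), evaluated at an atom $y$, gives $c_0g(0)>0$ at once. The abandoned attempt with both $g$ and $\widehat g$ compactly supported, and the unproved density formula for $c_0$, can simply be deleted.
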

\begin{proof}
Choose a nonnegative, nonzero smooth function \(\chi_0\) with compact
support sufficiently close to zero that \(\phi=|\check\chi_0|^2\)
has no nonzero point of \(\Lambda'\) in its Fourier support. Then
\(\phi\) is a nonnegative Schwartz function, and the positive integral
of \(\chi_0\) makes it positive at zero and hence near zero.
Fourier transformation gives
\[
 \mu*\phi=C.
\]
Evaluating the convolution at an atom shows that the constant is positive. If \(\phi\ge a_0>0\) on a small ball \(B_r\), positivity gives
\[
 a_0\mu(y+B_r)\le C\qquad(y\in\mathbb R^d).
\]
This bounds the total mass. Finite coverings extend the bound uniformly
to every fixed-radius ball.

The translates of \(\mu\) are vaguely relatively compact. On each
compact ball, extract the finitely many particle positions in a slightly
larger ball, repeated according to mass. Limits then have nonnegative
integer masses and are positive integer atomic measures. Choose an
integer \(K\) bounding every limit by \(K\) on unit balls, using
the original bound on balls of radius two.

Translation boundedness and Schwartz decay imply, as \(R\to\infty\),
\[
 \sup_{\nu\text{ in the hull}}\sup_{y\in\mathbb R^d}
 \int_{|x-y|\ge R}\phi(y-x)\,d\nu(x)\longrightarrow0.
\]
To obtain this estimate, cover by unit cubes and sum their Schwartz
suprema, using the uniform translated mass bounds. The same estimate
allows passage to the vague limit in
\(\nu*\phi=C\). Choose \(R\) so that the displayed tail is less than \(C/2\).
If \(\nu\) had no support point in \(y+B_R\), its convolution at \(y\) would
be less than \(C/2\), a contradiction. This proves both nonvanishing and
uniform relative density.
\end{proof}

\subsubsection{Construction from finite-band observations}\label{construction-from-finite-band-observations}

The flow \(x\mapsto Lx\bmod\mathbb Z^n\), which we call the physical flow, is dense in \(\mathbb T^n\): an integer character vanishing on its image would give a nontrivial integer relation among the basis vectors \(\gamma_j\).

For a Schwartz function \(h\) with compactly supported Fourier transform, define the trigonometric polynomial
\begin{equation}
 F_h(t)=
  \sum_{\xi\in\Lambda'}
   c_\xi\,\widehat h(-\xi)\,e^{2\pi ik(\xi)\cdot t}.               \label{eq:AC-1.1}
\end{equation}
The sum is finite, and covariance at a phase in the physical flow gives
\[
 F_h(Lx)=\int h(y)\,d(\tau_{-x}\mu)(y).
\]
Given \(Lx_j\to t\), choose a vaguely convergent subsequence of
\(\tau_{-x_j}\mu\). Testing its limit against any bandlimited Schwartz
function gives \(F_h(t)\), for every such \(h\).

To prove uniqueness, let \(g\in C_c^\infty\) and use smooth frequency
truncation to obtain bandlimited Schwartz functions \(g_N\to g\) in
the Schwartz topology. The common unit-ball mass bound gives
\[
 \left|\int(g_N-g)\,d\nu\right|
 \le C_K\sup_y(1+|y|)^{d+1}|g_N(y)-g(y)|
\]
for every translated hull measure \(\nu\). Agreement on bandlimited tests therefore extends to \(C_c^\infty\).

The limit is independent of the approximating sequence.
\begin{definition}\label{def:ac-hull-measures}
The limit \(\mu_t\) is called the hull measure at phase \(t\).
\end{definition}
Conversely, if \(\tau_{-x_j}\mu\to\nu\) vaguely, compactness of the
torus gives a subsequence with \(Lx_j\to t\). The finite-band
observations then identify \(\nu=\mu_t\). Thus the family
\((\mu_t)_{t\in\mathbb T^n}\) is exactly the hull of \(\mu\).
The construction gives
\begin{equation}\label{eq:ac-fourier-observation}
 \int h\,d\mu_t=F_h(t)
\end{equation}
for every bandlimited Schwartz function \(h\). The same approximation
argument proves vague continuity and covariance
\begin{equation}
 \mu_{t+Lx}=\tau_{-x}\mu_t.                                    \label{eq:AC-1.2}
\end{equation}
Each \(\mu_t\) has positive integer masses and the common local mass
bound. Equality of \(\mu_t,\mu_{t'}\) implies equality of their nonzero
spectral characters. Since these generate \(\mathbb Z^n\), the
parametrization is injective.

For any fixed compactly supported smooth \(g\), put
\[
 F_g(t)=\int g\,d\mu_t.
\]
The approximation argument proves continuity on the compact torus.
Combining this with covariance gives the uniform estimate
\begin{equation}
 \sup_x\left|
  \int g(y-x)\,d(\mu_t-\mu_{t'})(y)\right|
  =\sup_x|F_g(t+Lx)-F_g(t'+Lx)|
  \longrightarrow0\quad(t\to t').                             \label{eq:AC-1.3}
\end{equation}
The estimate \eqref{eq:AC-1.3} also holds uniformly for a compact family
of test functions modulo translation: approximate the family uniformly
on a common compact support by finitely many tests, then use the common
local mass bound.

Finally \(L\) has rank \(d\). Otherwise a nonzero vector \(v\) with \(Lv=0\), together with \eqref{eq:AC-1.2}, would make a nonzero discrete atomic measure invariant under all translations \(av\), which is impossible.

\subsection{Uniform clusters and continuity of their moments}\label{uniform-clusters-and-continuity-of-their-moments}

We group nearby atoms into clusters whose masses persist under small
phase changes. The local mass bound controls their diameters and the
number of moments needed for the holomorphic construction.

Fix a reference phase and translate the torus coordinate so that \(t_0=0\); write \(\nu=\mu_0\). Choose
\begin{equation}
 0<\delta<\frac1{10K}.
 \label{eq:AC-delta-choice}
\end{equation}
Join distinct support points at distance less than \(\delta\), and let \(A_b\) be the connected components, each carrying the masses of its points.

Each component has at most \(K\) distinct locations. Otherwise, a
connected subset of \(K+1\) locations could be built by adding one
adjacent point at a time. All would lie within \(K\delta<1\) of the
first, contradicting the local mass bound. The component has diameter at most \(K\delta\) and lies in a unit
ball, so its mass
\[
 s_b=\nu(A_b)
\]
also satisfies \(1\le s_b\le K\).

Choose a base point \(y_b\in A_b\). Then
\begin{equation}
 R=K\delta,\qquad
 \operatorname{diam}A_b\le R,\qquad
 \operatorname{dist}(A_b,A_{b'})\ge\delta\quad(b\ne b').             \label{eq:AC-2.1}
\end{equation}
The base points \(y_b\) are therefore \(\delta\)-separated and form a countable set.

Fix a small number
\begin{equation}
 0<\eta<\delta/100,\qquad \sigma=\eta/4.                           \label{eq:AC-2.2}
\end{equation}

\begin{lemma}[Uniform cluster persistence]
\label{lem:ac-persistence} For \(t\) sufficiently close to zero, assign
each atom of \(\mu_t\) to the unique reference component within
\(\eta/4\) of it. Every atom has such a component, the total mass
assigned to \(A_b\) is \(s_b\), and the centered power sums
\begin{equation}
 p_{b,\alpha}(t)=
   \sum_{\text{atoms assigned to }A_b}(x-y_b)^\alpha,
 \qquad 1\le|\alpha|\le s_b,                                  \label{eq:AC-2.3}
\end{equation}
are continuous at zero in the joint \(\ell^\infty\) norm, with all
sums counting multiplicities. We write
\(p_b^0=(p_{b,\alpha}(0))_{1\le|\alpha|\le s_b}\) for the centered
reference moment vector.

\end{lemma}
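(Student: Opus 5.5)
The argument has two halves. First, the uniform weak estimate \eqref{eq:AC-1.3} with bump tests shows that atoms stay near the reference components and that cluster masses persist. Second, continuity of the centered power sums follows from the fact that each is an integral of a fixed polynomial against a smooth cutoff.

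Fix a smooth cutoff $\psi$ with $0\le\psi\le1$, equal to $1$ on $B_{R+\eta/4}$ and supported in $B_{R+\eta/2}$. Since $R+\eta/2<\delta-\eta$, the translate $\psi(\cdot-y_b)$ sees only $A_b$ among the reference components, and points within $\eta/4$ of $A_b$ lie where it equals one. Fix also a bump $\beta\ge0$ supported in $B_{\eta/8}$ with $\beta(0)>0$, and a bump $\rho\ge 0$ supported in $B_{\eta/4}$, equal to $1$ on $B_{\eta/8}$.

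\emph{Step 1: every atom lies near some reference component.} Let $x$ be an atom of $\mu_t$. Since $\mu_t(\{x\})\ge1$, we get $\int\beta(y-x)\,d\mu_t\ge\beta(0)$. Apply \eqref{eq:AC-1.3} to the single test $\beta$. For $t$ small, this forces $\int\beta(y-x)\,d\nu>\beta(0)/2$, so $\nu$ has an atom within $\eta/8$ of $x$. The uniformity in $x$ is exactly what the supremum over translates in \eqref{eq:AC-1.3} provides. The component containing that atom is then within $\eta/4$ of $x$. It is unique, because distinct components are $\delta$-separated and $\delta>\eta/2$.

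\emph{Step 2: the assigned mass is $s_b$.} The assigned mass is $\int\psi(y-y_b)\,d\mu_t$ restricted to atoms in the $\eta/4$-neighbourhood. By Step~1 there are no other atoms in the support of $\psi(\cdot-y_b)$ except atoms within $\eta/4$ of $A_b$ itself, since any atom is within $\eta/4$ of some component and only $A_b$ is near that support. Hence the assigned mass equals $\int\psi(y-y_b)\,d\mu_t$. The family $\{\psi(\cdot-y_b)\}_b$ consists of translates of one test, so \eqref{eq:AC-1.3} gives uniform convergence to $\int\psi(y-y_b)\,d\nu=s_b$. Integer values then force equality for all $b$ once $t$ is small.

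\emph{Step 3: continuity of the moments.} In the same way,
\[
p_{b,\alpha}(t)=\int (y-y_b)^\alpha\psi(y-y_b)\,d\mu_t(y).
\]
The tests $y\mapsto y^\alpha\psi(y)$ with $|\alpha|\le K$ form a finite family, applied modulo translation by $y_b$. Therefore \eqref{eq:AC-1.3} yields $\sup_b|p_{b,\alpha}(t)-p_{b,\alpha}(0)|\to0$, which is continuity in the joint $\ell^\infty$ norm.

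The main obstacle is the uniformity over infinitely many clusters. This is why every estimate is phrased through the translation-uniform bound \eqref{eq:AC-1.3} applied to finitely many fixed test functions. The common local mass bound $K$ of Lemma~\ref{lem:ac-translation-bound} is also essential: it keeps the number of moment indices $|\alpha|\le s_b\le K$ finite.
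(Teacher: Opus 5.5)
Your Step~1 is correct and is essentially the paper's argument: a small bump, the translation-uniform estimate \eqref{eq:AC-1.3}, and $\delta$-separation for uniqueness.

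\textbf{Gap in Steps 2 and 3.} These steps rest on the inequality $R+\eta/2<\delta-\eta$, which is false. In the paper $R=K\delta\ge\delta$ is the bound on a cluster's diameter, while distinct clusters are only guaranteed to be $\delta$ apart. Consequently $\psi(\cdot-y_b)$ can equal one on atoms of neighbouring components. For instance, when $K\ge2$, two unit atoms at distance $\tfrac32\delta$ are distinct singleton components. Yet $\psi(\cdot-y_b)\equiv1$ at both, because $R+\eta/4\ge2\delta$. Hence $\int\psi(y-y_b)\,d\nu=2\ne s_b=1$, and your Step~3 integrals mix the moments of two clusters. Changing the radius does not help. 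A cluster can extend about $K\delta$ from its base point while another cluster sits only $\delta$ away in a different direction, so no ball about $y_b$ need separate $A_b$ from its neighbours. An isolating cutoff has to follow the shape of $A_b$.

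\textbf{What the paper does instead.} It uses $\chi_b=1-\prod_{a\in A_b}\bigl(1-\psi((x-a)/\eta)\bigr)$. This function equals one on the $\eta/2$-neighbourhood of $A_b$ and vanishes outside its $\eta$-neighbourhood, hence on every other cluster. These cutoffs are not translates of one fixed function, so finitely many tests no longer suffice. The missing ingredient is compactness of the family. After centring at $y_b$, the cutoffs form a continuous image of the compact space of configurations of at most $K$ points in $\overline B(0,R)$. They therefore lie in a compact subset of $C_c^\infty(B(0,R+\eta))$, and multiplying by centred monomials of degree at most $K$ preserves this. The estimate \eqref{eq:AC-1.3} extends to compact families modulo translation, via a finite $\epsilon$-net and the common local mass bound. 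The integer-rounding argument for the masses and the $\ell^\infty$ continuity of the moments then go through as you intended.

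\textbf{An alternative repair.} You could keep a single fixed test by using a partition of unity $\sum_{j\in\epsilon\Z^d}\phi(\cdot-j)=1$, where $\operatorname{supp}\phi$ has radius much smaller than $\eta$. You would then sum over the boundedly many grid points $j$ near $A_b$, binomially expanding $(y-y_b)^\alpha$ about $j$ for the moments. Some device of this kind is needed, and the proposal as written has none.
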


\begin{proof}
Choose a nonnegative bump \(\zeta\), supported in \(B(0,\eta/4)\), with
\(\zeta(0)=1\). By \eqref{eq:AC-1.3}, its convolutions with
\(\mu_t-\nu\) have supremum norm less than \(1/2\) when \(t\) is close
to zero. An atom of \(\mu_t\) at distance at least \(\eta/4\) from
\(\operatorname{supp}\nu\) would contribute at least one at its location,
where the reference convolution vanishes. This contradiction places
every atom in the required neighborhood, and the separation in
\eqref{eq:AC-2.1} makes its assignment unique.

For each component choose a smooth cutoff \(\chi_b\), equal to one on its \(\eta/2\)-neighborhood and zero outside its \(\eta\)-neighborhood. To choose these uniformly, fix \(\psi\) equal to one on \(B(0,1/2)\) and zero outside
\(B(0,1)\), and use \(\psi((x-a)/\eta)\) at each atom \(a\).
Set \(\chi_b\) equal to one minus the product of one minus these
rescaled bumps.

After translation by \(y_b\), the family lies in a compact subset of
\(C_c^\infty(B(0,R+\eta))\), being a finite union of continuous
images of compact spaces parametrizing at most \(K\) positions.
Multiplication by centered monomials of degree at most \(K\) preserves
compactness. Thus \eqref{eq:AC-1.3} and finite approximation of this compact family imply
\[
 \sup_b\left|\int\chi_b\,d(\mu_t-\nu)\right|\to0
\]
and the same limit for the moment tests. For \(t\) close enough, the
first integrals are integers of modulus less than \(1/2\), hence zero;
each cluster count is \(s_b\). The moment limits give the claimed
\(\ell^\infty\) continuity. \end{proof}

The moments therefore provide continuous coordinates through
collisions. Their holomorphic extension will be obtained from the
finite symmetric product.

\subsection{Finite zero-cycle varieties and uniform local extension}\label{finite-zero-cycle-varieties-and-uniform-local-extension}

The symmetric product supplies algebraic coordinates for each cluster.
To apply a local inverse in a Banach space, we extend the observation
functions to ambient moment coordinates with uniform bounds.

\subsubsection{A concrete affine quotient}\label{a-concrete-affine-quotient}

For fixed \(s\le K\), let
\[
 \mathcal A_s=\{\alpha\in\mathbb Z_{\ge0}^d:1\le|\alpha|\le s\},
 \qquad N_s=|\mathcal A_s|.
\]
The map
\begin{equation}
 \pi_s:(\mathbb C^d)^s\longrightarrow\mathbb C^{N_s},
 \qquad
 (x_1,\ldots,x_s)\longmapsto
 \left(\sum_{j=1}^s x_j^\alpha\right)_{\alpha\in\mathcal A_s}     \label{eq:AC-3.1}
\end{equation}
identifies unordered effective \(s\)-tuples with the affine quotient
\[
 V_s=\operatorname{Sym}^s(\mathbb C^d).
\]

The power sums in \eqref{eq:AC-3.1} generate the invariant ring.
Indeed, invariant polynomials are spanned by orbit sums of products of
monomials assigned to distinct point indices. Inclusion-exclusion on set partitions expresses these sums in power
sums \(p_\alpha\) of initially arbitrary degree. For \(s+1\) nonzero multiindices \(\beta_1,\ldots,\beta_{s+1}\), the orbit sum using \(s+1\) distinct indices is zero. The partition formula isolates
\[
 (-1)^s s!\,p_{\beta_1+\cdots+\beta_{s+1}}
\]
plus products of at least two lower-degree power sums. Split a multiindex
of degree greater than \(s\) into \(s+1\) nonzero parts; induction
reduces every generator to degree at most \(s\).

Each coordinate \(x_{j,\ell}\) is integral over this invariant ring: it satisfies the monic polynomial
\[
 \prod_{i=1}^s(\mathsf{x}-x_{i,\ell}),
\]
whose coefficients, by Newton's identities, are polynomials in
\(p_{ke_\ell}\), \(1\le k\le s\); hence \(\pi_s\) is finite and its
image \(V_s\) is closed. Interpolating on two distinct finite permutation
orbits and averaging over the group produces an invariant separating
them, so the fibers are precisely the unordered tuples.

To see analytic properness, bounded quotient coordinates bound the
monic coefficients and therefore every \(x_{j,\ell}\). Inverse images
of compact sets are thus closed and bounded. It follows that convergence
of moment vectors is equivalent, locally and uniformly on compact sets,
to convergence of unordered tuples in the matching metric
\begin{equation}
 d_{\mathrm{cyc}}(\{x_j\},\{y_j\})
   =\min_{\pi\in S_s}\max_j|x_j-y_{\pi(j)}|.               \label{eq:AC-3.2}
\end{equation}

The finite-group quotient is normal. In fact, invariants of a finite
group acting on a normal domain form a normal ring: an invariant
fraction integral over the invariant ring is integral over the full
polynomial ring, so it lies in that ring and remains invariant. Holomorphic symmetric functions on \((\mathbb C^d)^s\) descend
holomorphically to the analytic quotient
\cite[\S3, Theorem~2 and Proposition~4; \S4, Theorem~4]{CartanQuotient}. On the unramified locus this is the quotient property; normality
extends the locally bounded function across the branch locus.

More generally, let \(\Omega\subset\mathbb C^d\) be open and
\(h\in\mathcal O(\Omega)\). On the open subset
\(\operatorname{Sym}^s(\Omega)\subset V_s\), the trace
\begin{equation}
 \mathcal T_{s,h}(p)=\sum_{j=1}^s h(x_j),\qquad p=\pi_s(x_1,\ldots,x_s), \label{eq:AC-3.3}
\end{equation}
is holomorphic: the symmetric holomorphic function on \(\Omega^s\)
descends by the same quotient argument.

\begin{lemma}[Splitting finite cycles]\label{lem:ac-splitting}
Let \(\Theta\) be a complex domain, let
\(p:\Theta\to\operatorname{Sym}^s(\mathbb C^d)\) be holomorphic,
let \(\theta_0\in\Theta\), and let \(U\subset\mathbb C^d\) be
bounded and open. If
\(\operatorname{supp}p(\theta_0)\cap\partial U=\varnothing\),
then, near \(\theta_0\), the part of \(p(\theta)\) in \(U\)
is a holomorphic family in \(\operatorname{Sym}^{s'}(\mathbb C^d)\),
where \(s'\) is its degree at \(\theta_0\).
\end{lemma}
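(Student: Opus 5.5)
The plan is to realize the part of \(p(\theta)\) in \(U\) through traces of holomorphic functions and then to invoke the quotient description of \(V_{s'}\) from Section~\ref{a-concrete-affine-quotient}.

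\emph{Localization.} Write \(p(\theta_0)=\pi_s(x_1,\ldots,x_s)\), and let \(s'\) be the number of indices with \(x_j\in U\). Since the finite set \(\operatorname{supp}p(\theta_0)\) misses \(\partial U\), we can choose \(\epsilon>0\) such that the \(\epsilon\)-neighbourhood of every point of \(\operatorname{supp}p(\theta_0)\) lies either inside \(U\) or inside the complement of \(\overline U\). The map \(\pi_s\) is finite and analytically proper, so convergence of moment vectors is equivalent to convergence in the matching metric \eqref{eq:AC-3.2}. Continuity of \(p\) therefore gives a neighbourhood \(\Theta_0\) of \(\theta_0\) on which every point of \(p(\theta)\) lies within \(\epsilon\) of a point of \(p(\theta_0)\). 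Consequently exactly \(s'\) points of \(p(\theta)\), counted with multiplicity, lie in \(U\), and none lies on \(\partial U\). Write \(p_U(\theta)\) for this part, an element of \(\operatorname{Sym}^{s'}(\C^d)\).

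\emph{Holomorphy of the moments.} Let \(\Omega_1\) be the union of the \(\epsilon\)-balls about the points of \(\operatorname{supp}p(\theta_0)\cap U\), and \(\Omega_2\) the union of those about the remaining points. These are disjoint open sets, and on \(\Theta_0\) we have \(p(\theta)\in\operatorname{Sym}^s(\Omega_1\cup\Omega_2)\). Define \(h_\alpha\in\mathcal O(\Omega_1\cup\Omega_2)\) by \(h_\alpha(x)=x^\alpha\) on \(\Omega_1\) and \(h_\alpha=0\) on \(\Omega_2\), for \(\alpha\in\mathcal A_{s'}\). By \eqref{eq:AC-3.3} the trace \(\mathcal T_{s,h_\alpha}\) is holomorphic on \(\operatorname{Sym}^s(\Omega_1\cup\Omega_2)\), which is open in \(V_s\). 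So \(\theta\mapsto\mathcal T_{s,h_\alpha}(p(\theta))\) is holomorphic on \(\Theta_0\). Its value is exactly the power sum \(\sum_{x_j(\theta)\in U}x_j(\theta)^\alpha\), that is, the \(\alpha\)-coordinate of \(\pi_{s'}(p_U(\theta))\).

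\emph{Conclusion.} The map \(\theta\mapsto\pi_{s'}(p_U(\theta))\) has holomorphic coordinates in \(\C^{N_{s'}}\) and takes values in the closed subvariety \(V_{s'}\). Hence it is a holomorphic map into \(V_{s'}=\operatorname{Sym}^{s'}(\C^d)\).

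The only delicate step is the holomorphic descent of the symmetric function built from \(h_\alpha\). That function is holomorphic and symmetric on \((\Omega_1\cup\Omega_2)^s\), and by the cited result of Cartan it descends to a holomorphic function on the open image \(\operatorname{Sym}^s(\Omega_1\cup\Omega_2)\), as already noted for \eqref{eq:AC-3.3}. Everything else reduces to the properness and continuity argument.
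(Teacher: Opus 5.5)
Your proposal is correct and follows essentially the same route as the paper: you localize in neighbourhoods avoiding $\partial U$ using continuity in the matching metric, take traces of $x^\alpha\mathbf 1_U$ on $\Omega$ through the holomorphic descent \eqref{eq:AC-3.3}, and read off holomorphic coordinates in the closed moment variety $V_{s'}$. The paper also states separately that for $s'=0$ the restricted cycle is the constant empty cycle; your argument covers this case trivially, since $\mathcal A_0=\varnothing$.
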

\begin{proof}
Choose disjoint small neighborhoods of the support points of
\(p(\theta_0)\), avoiding \(\partial U\), and denote their union
by \(\Omega\). Continuity in the matching metric makes
\(\operatorname{supp}p(\theta)\) stay in \(\Omega\) after shrinking
\(\Theta\) around \(\theta_0\); the degree in \(U\) is then
constant. Each function \(x^\alpha\mathbf1_U\) is holomorphic on
\(\Omega\). Its trace in \eqref{eq:AC-3.3} is the corresponding
power sum of the restricted cycle. These power sums are its holomorphic
coordinates in the closed moment variety
\(V_{s'}\).
For \(s'=0\) the restricted cycle is the constant empty cycle.
\end{proof}

\subsubsection{The local extension bound}\label{the-local-extension-bound}

\begin{lemma}[Bounded extension on a smaller compact set]
\label{lem:ac-extension} Let \(W\subset\mathbb C^N\) be a reduced closed analytic subset,
let \(U\subset\mathbb C^N\) be Stein and open, and let
\(K_0\Subset U\) be compact. There are a compact \(K_1\subset W\cap U\) and a constant \(C\) with the following property: whenever \(g\in\mathcal O(W\cap U)\) and \(b_0>0\) satisfy \(\sup_{K_1}|g|\le b_0\), there exists \(\widetilde g\in\mathcal O(U)\), restricting to \(g\), with
\begin{equation}
 \sup_{K_0}|\widetilde g|\le Cb_0.                                         \label{eq:AC-3.4}
\end{equation}

\end{lemma}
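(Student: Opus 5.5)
The plan is to combine Cartan's Theorem~B on the Stein manifold \(U\) with the open mapping theorem for Fréchet spaces. Let \(\mathcal I\) be the ideal sheaf of \(W\cap U\) in \(\mathcal O_U\). It is coherent by Oka--Cartan, since \(W\) is a reduced closed analytic subset. We have the exact sequence \(0\to\mathcal I\to\mathcal O_U\to\mathcal O_U/\mathcal I\to0\), and \(H^1(U,\mathcal I)=0\) by Theorem~B. Hence the restriction map
\[
 \rho:\mathcal O(U)\longrightarrow\mathcal O(W\cap U)
\]
is surjective. Here \(\mathcal O(W\cap U)\) means the global sections of \(\mathcal O_U/\mathcal I\), which coincide with the holomorphic functions on the reduced space \(W\cap U\).

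Both sides carry natural Fréchet topologies of uniform convergence on compact sets. For \(\mathcal O(W\cap U)\) one uses the sup-norms on compact subsets of \(W\cap U\). Completeness of this space, and the fact that it agrees with the quotient topology, follow from the closedness of ideals in \(\mathcal O_U\) (Cartan's closure theorem). Equivalently, a locally uniform limit of holomorphic functions on a reduced space is holomorphic, by the Riemann extension/normalization argument.

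The map \(\rho\) is continuous, linear and surjective between Fréchet spaces. By the open mapping theorem, the image of the neighborhood \(\{\widetilde g:\sup_{K_0}|\widetilde g|<1\}\) of zero in \(\mathcal O(U)\) is a neighborhood of zero in \(\mathcal O(W\cap U)\). So it contains a basic set \(\{g:\sup_{K_1}|g|<\varepsilon\}\) for some compact \(K_1\subset W\cap U\) and some \(\varepsilon>0\). Scaling by homogeneity then gives the claim. If \(\sup_{K_1}|g|\le b_0\), apply the above to \(g\varepsilon/(2b_0)\), find an extension with \(\sup_{K_0}<1\), and rescale. This yields \eqref{eq:AC-3.4} with \(C=2/\varepsilon\).

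One degenerate case needs care. If \(\sup_{K_1}|g|=0\), then \(g\) need not vanish, since \(K_1\) is only a compact set. This is not a problem here because \(b_0>0\) is assumed, so the rescaling is legitimate for every \(b_0>0\).

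The main obstacle is justifying that the quotient topology on \(\mathcal O(W\cap U)\) coincides with the compact-open sup-norm topology, so that the neighborhoods produced by the open mapping theorem are of the sup-over-\(K_1\) form. The first thing to try is the standard fact that for coherent sheaves on Stein spaces the canonical Fréchet topology on sections is unique. Applied to the reduced structure, this topology is the topology of uniform convergence on compacta, using the closure-of-modules theorem (Grauert--Remmert, Theory of Stein Spaces, Ch.~V). Alternatively, one can note that the sup-norm topology is Fréchet and \(\rho\) is continuous into it. The open mapping theorem then applies directly to \(\rho\) as a continuous surjection onto this Fréchet space, and this bypasses the comparison of topologies altogether.
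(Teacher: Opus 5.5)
Your proposal is correct and is essentially the paper's proof. Theorem~B makes restriction $\mathcal O(U)\to\mathcal O(W\cap U)$ a continuous surjection of Fr\'echet spaces, and the open mapping theorem yields a compact $K_1\subset W\cap U$ and $\varepsilon>0$; rescaling by $\varepsilon/(2b_0)$ then gives $C=2/\varepsilon$, with $b_0>0$ covering a vanishing seminorm, and your extra remarks on the compact-convergence topology of $\mathcal O(W\cap U)$ only justify a point the paper takes for granted.
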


\begin{proof}
By Cartan's theorem B \cite[IX, Corollary~4.11]{Demailly}, the restriction map
\[
 \mathcal R:\mathcal O(U)\longrightarrow\mathcal O(W\cap U)
\]
is a continuous surjection of Fréchet spaces. The open mapping theorem
therefore implies that the image of \(\{\widetilde f:\sup_{K_0}|\widetilde f|<1\}\) contains a neighborhood of zero defined by finitely many compact seminorms in the target. Taking their union gives a compact \(K_1\), and some \(\varepsilon>0\), such that
\[
 \sup_{K_1}|g|<\varepsilon
 \ \Longrightarrow\
 g=\mathcal R \widetilde g,\quad \sup_{K_0}|\widetilde g|<1.
\]
Apply this to \(\varepsilon g/(2b_0)\). Then \eqref{eq:AC-3.4} holds with \(C=2/\varepsilon\). The scaling is valid even if the seminorm of \(g\) on \(K_1\) is zero, because \(b_0>0\). \end{proof}

We apply this estimate chart by chart to the trace functions below.
Their extensions define the map from moments to observations.

\subsubsection{Uniform charts for all reference clusters}\label{uniform-charts-for-all-reference-clusters}

For each \(s\le K\), the set \(K_s\subset V_s\) of cycles with all
\(s\) points real and in \(\overline B(0,R)\) is compact. Properness of
the quotient and \eqref{eq:AC-3.2} allow us to cover it by finitely many
charts with uniform extension bounds.

\begin{lemma}[Uniform quotient charts]\label{lem:ac-uniform-charts}

There exist finitely many conjugation-invariant Stein polydisks \(U_{s,j}\), centered at real moment vectors, smaller polydisks, called inner charts, covering \(K_s\), and constants \(r_*>0\), \(C_{\mathrm{ext}}<\infty\), common to all \(s,j\), with the following properties.

Every centered reference moment vector \(p_b^0\) belongs to one of the
inner charts, and the corresponding chart contains the ambient
\(\ell^\infty\)-ball of radius \(4r_*\) about \(p_b^0\). Every cycle in
\(V_{s_b}\cap U_{s_b,j}\), translated by \(y_b\), has its points in
\begin{equation}
 D_b=\{x:\operatorname{dist}(\operatorname{Re}x,A_b)<\eta,
                 \ |\operatorname{Im}x|<\sigma\}.
 \label{eq:AC-3.5}
\end{equation}
Every bounded holomorphic function \(g\) on
\(V_{s_b}\cap U_{s_b,j}\) has a holomorphic extension
\(G\in\mathcal O(U_{s_b,j})\) satisfying
\[
 \sup_{\|p-p_b^0\|_\infty\le4r_*}|G(p)|
 \le C_{\mathrm{ext}}\sup_{V_{s_b}\cap U_{s_b,j}}|g|.
\]

\end{lemma}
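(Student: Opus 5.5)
The plan is a compactness argument on the compact sets $K_s$, $s\le K$, with Lemma~\ref{lem:ac-extension} supplying the extension constants. First we fix $s$. Every point of $K_s$ is a real moment vector $q=\pi_s(x_1,\dots,x_s)$ with $x_j\in\overline B(0,R)\cap\R^d$. By properness of $\pi_s$ and the equivalence with the matching metric \eqref{eq:AC-3.2}, there is a radius $\rho_q>0$ with the following property: every cycle in $V_s$ whose moment vector lies in the closed polydisk $\overline P(q,\rho_q)$ has all points within $\min(\eta,\sigma)/2$ of the tuple $(x_j)$, in the matching metric. Properness gives a uniform bound on the points over a compact neighborhood, and the local homeomorphism property then yields $\rho_q$. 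Take $U_q=P(q,\rho_q)$. This is a polydisk centered at a real vector, hence Stein and conjugation invariant. The inner chart is $P(q,\rho_q/2)$. By compactness of $K_s$, finitely many inner charts $P(q_j,\rho_{q_j}/2)$ cover $K_s$. Doing this for each $s\le K$ gives finitely many charts $U_{s,j}$ in total.

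Next we fix the uniform constants. Set $r_*=\min_{s,j}\rho_{q_j}/8$. Take $K_0^{s,j}=\overline P(q_j,\rho_{q_j}/2+4r_*)\Subset U_{s,j}$, and apply Lemma~\ref{lem:ac-extension} with $W=V_s$, $U=U_{s,j}$, $K_0=K_0^{s,j}$. This produces a compact $K_1\subset V_s\cap U_{s,j}$ and a constant $C_{s,j}$. For bounded $g$ we have $\sup_{K_1}|g|\le\sup_{V_s\cap U_{s,j}}|g|=:b_0$. When $b_0=0$ we take $G=0$; otherwise the lemma gives an extension $G$ with $\sup_{K_0}|G|\le C_{s,j}b_0$. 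Set $C_{\mathrm{ext}}=\max C_{s,j}$.

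Now we handle the reference clusters. Each centered reference moment vector $p_b^0$ comes from a cycle whose points are real and satisfy $|x-y_b|\le\operatorname{diam}A_b\le R$ by \eqref{eq:AC-2.1}. So $p_b^0\in K_{s_b}$, and it lies in some inner chart $P(q_j,\rho_{q_j}/2)$. The $\ell^\infty$-ball of radius $4r_*$ about $p_b^0$ is then contained in $K_0^{s_b,j}\subset U_{s_b,j}$, which gives both the containment claim and the extension estimate.

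It remains to show localization in $D_b$; I expect this to be the main subtle point. A cycle in $V_{s_b}\cap U_{s_b,j}$ is matched, within $\min(\eta,\sigma)/2$, to the chart-center tuple $(x^{(j)}_i)$. The reference cycle $p_b^0$ is matched to that same tuple within $\min(\eta,\sigma)/2$ as well, because it lies in the chart. Its points are $A_b-y_b$, which are real. So after translation by $y_b$, each point $x$ of the cycle satisfies $|x-a|<\min(\eta,\sigma)$ for some $a\in A_b$. Taking real and imaginary parts gives $\operatorname{dist}(\operatorname{Re}x,A_b)<\eta$ and $|\operatorname{Im}x|<\sigma$, so $x\in D_b$ as in \eqref{eq:AC-3.5}. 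The key is to choose $\rho_q$ small relative to $\min(\eta,\sigma)$ through the matching-metric continuity, with a bound that holds uniformly over the whole polydisk and not only near $q$. This is why the polydisk is taken inside the neighborhood given by properness.
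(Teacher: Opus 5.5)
Your proposal is correct and follows essentially the paper's route. You take polydisks centered at real moment vectors, small enough that all their cycles are matching-metric close to the center cycle; you use the triangle inequality in \eqref{eq:AC-3.2} for \eqref{eq:AC-3.5}; you apply Lemma~\ref{lem:ac-extension} on an intermediate compact polydisk; and you take a finite subcover with the minimum radius and maximum extension constant.

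The only slip is arithmetic. With $r_*=\min\rho_{q_j}/8$, the set $K_0^{s,j}$ is the closed polydisk of radius exactly $\rho_{q_j}$ for a minimizing chart, so it is not compactly contained in $U_{s,j}$ as Lemma~\ref{lem:ac-extension} requires. Taking $r_*=\min\rho_{q_j}/16$ repairs this.
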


\begin{proof}

Choose \(U\) so small that its cycles are close, in the matching
metric, to the real cycle at its center. Shrink the inner chart to
ensure the same for every reference cycle it contains. The triangle
inequality in \eqref{eq:AC-3.2} then implies \eqref{eq:AC-3.5}.
Choose a compact intermediate polydisk \(K_0\Subset U\) containing a
\(4r_{*,j}\)-ball about every point of the inner chart, and apply
Lemma~\ref{lem:ac-extension}. Take a finite subcover of each \(K_s\),
then the minimum of the \(r_{*,j}\) and maximum of the extension
constants. The compact set controlling the original function in
Lemma~\ref{lem:ac-extension} lies in \(V_s\cap U\), where its bound
is available.

\end{proof}

\subsection{Bandlimited kernels with complex cluster control}\label{bandlimited-kernels-with-complex-cluster-control}

We construct a bandlimited test for each cluster moment, so that its
observation has only finitely many frequencies. Gaussian approximation
and Fourier truncation give errors controlled on complex neighborhoods
and summable over all clusters.

\begin{lemma}[Uniform cluster kernels]
\label{lem:ac-kernels} Fix the geometric constants in \eqref{eq:AC-2.1}--\eqref{eq:AC-2.2}, an integer \(\kappa>d\), and \(\varepsilon>0\). For each cluster \(A_a\) and each multiindex \(1\le|\alpha|\le s_a\), there is a bandlimited Schwartz function \(h_{a,\alpha}\), real-valued
on \(\mathbb R^d\) and extending to an entire function on
\(\mathbb C^d\), such that
\begin{equation}
 \sup_{x\in D_b}
 \left|h_{a,\alpha}(x)
       -\mathbf1_{\{a=b\}}(x-y_a)^\alpha\right|
 \le \varepsilon(1+|y_a-y_b|)^{-\kappa}.                            \label{eq:AC-4.1}
\end{equation}
All functions have Fourier support in one fixed ball. After translation by \(y_a\), they have common bounds in every Schwartz seminorm. The bandwidth and seminorm bounds depend on the displayed uniform constants and \(\varepsilon\), but not on the cluster configuration or indices.

\end{lemma}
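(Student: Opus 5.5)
The plan is to build each kernel as a Gaussian-localized monomial and then truncate its Fourier transform. Fix a Gaussian width \(\lambda>0\) and set
\[
 g_{a,\alpha}(x)=(x-y_a)^\alpha\,e^{-\lambda|x-y_a|^2/ R_0^2},
\]
with \(R_0\) of order \(R+\eta\). This function is entire, real on \(\R^d\), and a translate of a fixed Schwartz function \(g_\alpha\). I will then truncate in frequency:
\[
 h_{a,\alpha}=\tau_{y_a}\bigl(g_\alpha*\check\rho_\Omega\bigr),
\]
where \(\rho_\Omega(\xi)=\rho(\xi/\Omega)\). Here \(\rho\) is an even real bump equal to one near zero, so \(\check\rho_\Omega\) is real, even and entire of exponential type. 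The Fourier support then lies in the fixed ball \(B(0,\Omega)\). Because \(h_{a,\alpha}\) is a translate of one fixed function, the seminorm bounds are uniform after translation by \(y_a\). Realness follows because \(g_\alpha\) and \(\check\rho_\Omega\) are both real. Since the number of multiindices \(|\alpha|\le K\) is finite, all constants can be taken uniform over the indices.

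The estimate \eqref{eq:AC-4.1} splits into two errors. The first is the Gaussian error \(g_{a,\alpha}-\mathbf 1_{\{a=b\}}(x-y_a)^\alpha\) on \(D_b\). For \(a=b\), a point \(x\in D_b\) satisfies \(|x-y_a|\le R+\eta+\sigma\), so \(|1-e^{-\lambda(x-y_a)^2/R_0^2}|=O(\lambda)\) uniformly. Since the monomial is bounded there, choosing \(\lambda\) small makes this error at most \(\varepsilon/2\). For \(a\ne b\), separation \eqref{eq:AC-2.1} gives \(|\operatorname{Re}(x-y_a)|\ge\delta-\eta\gtrsim\delta\) for \(x\in D_b\). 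Meanwhile \(|\operatorname{Im}x|<\sigma\), so \(\operatorname{Re}(x-y_a)^2\ge|\operatorname{Re}(x-y_a)|^2-\sigma^2\). This gives Gaussian decay \(e^{-c\lambda|y_a-y_b|^2}\), times a polynomial factor. That decay is at most \(\varepsilon(1+|y_a-y_b|)^{-\kappa}/2\) only when \(|y_a-y_b|\) exceeds some threshold \(\rho_1(\lambda)\).

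I expect this to be the main obstacle. With a fixed small \(\lambda\), the neighboring clusters at distance between \(\delta\) and \(\rho_1\) do not get the small factor \(\varepsilon\) from the Gaussian alone. The fix I would try first is to replace the Gaussian by a product \(\phi(x-y_a)\) with \(\phi\) entire, equal to \(1+O(\varepsilon)\) on the complex \(\sigma\)-strip over \(B(0,R+\eta)\), and \(O(\varepsilon)\) with rapid decay on the strip outside \(B(0,R+2\eta)\). Such a \(\phi\) can be obtained by convolving the indicator of \(B(0,R+3\eta/2)\) with a narrow Gaussian of width \(w\ll\eta\) and normalizing. On a strip of height \(\sigma=\eta/4\), the complex Gaussian kernel contributes the factor \(e^{\sigma^2/w^2}\), which cannot be made small as \(w\to0\). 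Therefore I would instead use a kernel with compact Fourier support, for instance a Fejér- or Jackson-type kernel of high degree. Its exponential type is \(2\pi\Omega\), so on the strip it grows at most like \(e^{2\pi\Omega\sigma}\). Against this growth it gives approximation error \(O(\Omega^{-N})\) at distance \(\eta/2\) from the ball's boundary. Then \(\phi\) is itself bandlimited, and the product \(\phi\cdot(x-y_a)^\alpha\) remains bandlimited, with Fourier transform supported in \(B(0,2\pi\Omega)\) and derivatives of a delta at zero. The decay in \(|y_a-y_b|\) comes from choosing the kernel with \((1+|x|)^{-\kappa-K-1}\) decay on the strip. This requires a power of a sinc-type product of degree greater than \(\kappa+K+1\), and power decay suffices here.

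Finally, combining the Fourier-support bound with the strip bounds yields \eqref{eq:AC-4.1} with constants depending only on \(\delta,\eta,K,\kappa,\varepsilon\). This version needs no separate truncation step, since the kernel is already bandlimited.
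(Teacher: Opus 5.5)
There is a genuine gap, and it sits exactly at the obstacle you identified. Your replacement kernel $\phi(x-y_a)(x-y_a)^\alpha$ is still organized around a ball centered at $y_a$. Because $\phi\approx1$ on the strip over $B(0,R+\eta)$, it reproduces the monomial on the whole real ball $B(y_a,R+\eta)$. You are forced to use this radius, since the real part of $D_a$ reaches distance $R+\eta$ from $y_a$.

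But \eqref{eq:AC-2.1} only gives $\operatorname{dist}(A_a,A_b)\ge\delta$, and $\delta\le K\delta=R$, so other clusters can lie well inside that ball. For instance, take $d=1$, $K\ge2$, $A_a=\{0\}$ and $A_b=\{1.01\delta\}$, both of mass one. On $D_b$ your $h_{a,1}$ is approximately $x\approx1.01\delta$. Yet \eqref{eq:AC-4.1} demands at most $\varepsilon(1+1.01\delta)^{-\kappa}$, and the lemma is needed with $\varepsilon$ small, as in \eqref{eq:AC-5.7}.

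Sharpening the radial transition cannot repair this. The set where the kernel reproduces $(x-y_a)^\alpha$ must follow the shape of $A_a$, not a ball around $y_a$. The paper takes $\chi^{\mathrm{ker}}_a$ (one minus a product of rescaled bumps at the points of $A_a$), equal to one on the $2\eta$-neighborhood of $A_a$ and supported in its $3\eta$-neighborhood. Every $D_b$ with $b\ne a$ then lies at real distance at least $\delta-4\eta>\delta/2$ from that support. The kernels now depend on the whole configuration of $A_a$, not only on $y_a$. Your argument that $h_{a,\alpha}$ is a translate of one fixed function is therefore lost. Uniform bandwidth and seminorm bounds must instead come from compactness of the configurations of at most $K$ points in $\overline B(0,R)$, which form a bounded $C_c^\infty$ family with common support.

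Your reason for abandoning the Gaussian is also mistaken, and the estimate you substitute is the unsupported one. At real distance at least $\eta/2$ from the smoothed region and with $|\operatorname{Im}x|\le\sigma$, a Gaussian of width $w$ contributes $e^{(\sigma^2-\eta^2/4)/w^2}$. This tends to zero because $\sigma=\eta/4<\eta/2$. That is how the paper proves \eqref{eq:AC-4.3}--\eqref{eq:AC-4.4}. It uses the inverse-heat correction $e^{-\upsilon\Delta}P_{a,\alpha}$ so that the heat convolution reproduces the monomial exactly where the cutoff is one. It then truncates in frequency, controlled by the weighted strip estimate \eqref{eq:AC-4.5}.

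For a fixed bandlimited profile rescaled to bandwidth $\Omega$, a polynomial gain $O(\Omega^{-N})$ does not control the strip growth $e^{2\pi\Omega\sigma}$ that you note yourself. The approximation bound you state therefore does not follow. Making that route work would require, for example, letting the degree grow with the bandwidth, and you have not supplied that estimate.
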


\begin{proof}
We first approximate the moment by Gaussian smoothing and then truncate
its Fourier transform. Both steps are uniform in the at most \(K\)
point positions, their multiplicities, and \(|\alpha|\le K\).
\smallskip\noindent\emph{Step 1: Gaussian approximation.}

Choose a fixed smooth bump equal to one on \(B(0,2)\), zero outside \(B(0,3)\), and lying in \([0,1]\). Define \(\chi^{\mathrm{ker}}_a\) as one minus the product of one minus its translates and rescalings at the points of \(A_a\), with scale \(\eta\). Thus
\[
 \chi^{\mathrm{ker}}_a=1\text{ on the }2\eta\text{-neighborhood of }A_a,\qquad
 \operatorname{supp}\chi^{\mathrm{ker}}_a\subset
       \text{the }3\eta\text{-neighborhood of }A_a.
\]
All translated derivatives are bounded uniformly, and the support lies in \(B(y_a,R+3\eta)\).

Let \(P_{a,\alpha}(x)=(x-y_a)^\alpha\). For \(0<\upsilon\le1\), set
\[
 Q_{a,\alpha,\upsilon}=e^{-\upsilon\Delta}P_{a,\alpha}
   =\sum_{j=0}^{\lfloor|\alpha|/2\rfloor}
             \frac{(-\upsilon)^j}{j!}\Delta^j P_{a,\alpha},
 \qquad f_{a,\alpha,\upsilon}=\chi^{\mathrm{ker}}_a Q_{a,\alpha,\upsilon}.
\]
This is a finite inverse-heat correction. Let
\[
 G_\upsilon(x)=(4\pi \upsilon)^{-d/2}\exp\!\left(-\frac{x\cdot x}{4\upsilon}\right),
 \qquad H_{a,\alpha,\upsilon}=G_\upsilon*f_{a,\alpha,\upsilon}.
\]
Compact support makes the convolution entire. The inverse-heat
correction ensures that convolving the Gaussian with the polynomial
\(Q_{a,\alpha,\upsilon}\) recovers \(P_{a,\alpha}\).

For \(x=u+iv\),
\begin{equation}
 |G_\upsilon(x-w)|=(4\pi \upsilon)^{-d/2}
       \exp\!\left(\frac{|v|^2-|u-w|^2}{4\upsilon}\right).             \label{eq:AC-4.2}
\end{equation}
If \(x\in D_a\), every point where \(\chi^{\mathrm{ker}}_a-1\ne0\) is at real distance at least \(\eta\) from \(u\). Hence
\[
 H_{a,\alpha,\upsilon}(x)-P_{a,\alpha}(x)
  =\int G_\upsilon(x-w)(\chi^{\mathrm{ker}}_a(w)-1)Q_{a,\alpha,\upsilon}(w)\,dw.
\]
The polynomial has a bound \(C(1+|w-u|)^K\), because \(u\) remains in a fixed-radius ball about \(y_a\). Splitting the Gaussian exponent as
\[
 e^{-|u-w|^2/(4\upsilon)}
 \le e^{-\eta^2/(8\upsilon)}e^{-|u-w|^2/(8\upsilon)}
\]
and using \(|v|<\sigma=\eta/4\) gives
\begin{equation}
 \sup_{D_a}|H_{a,\alpha,\upsilon}-P_{a,\alpha}|
        \le C e^{-a_1/\upsilon},                                      \label{eq:AC-4.3}
\end{equation}
for positive constants \(C,a_1\) independent of \(a,\alpha,\upsilon\). For the remaining Gaussian integral, rescaling by \(\sqrt \upsilon\)
cancels the normalization; the resulting polynomial moments are
uniformly bounded.

Now let \(b\ne a\), \(x=u+iv\in D_b\), and write \(d_{ab}=|y_a-y_b|\). The real distance \(\varrho\) from \(u\) to \(\operatorname{supp}\chi^{\mathrm{ker}}_a\) satisfies
\[
 \varrho\ge\delta-4\eta>\delta/2,\qquad
 d_{ab}\le \varrho+2R+4\eta.
\]
Formula \eqref{eq:AC-4.2}, the bounded support and amplitude of \(f\), and a second splitting of the Gaussian show
\begin{equation}
 \begin{split}
 (1+d_{ab})^\kappa|H_{a,\alpha,\upsilon}(x)|
 &\le C_\kappa \upsilon^{-d/2} e^{-(\delta-4\eta)^2/(8\upsilon)}
       e^{\sigma^2/(4\upsilon)}
       \sup_{\varrho\ge0}(1+\varrho)^\kappa e^{-\varrho^2/(8\upsilon)}\\
 &\le C'_\kappa e^{-a_2/\upsilon}.
 \end{split}                                                  \label{eq:AC-4.4}
\end{equation}
The supremum is uniformly bounded for \(\upsilon\le1\). Absorbing
the factor \(\upsilon^{-d/2}\) into a slightly weaker exponential,
choose \(\upsilon\) small enough that \eqref{eq:AC-4.3}--\eqref{eq:AC-4.4} are less than half of the required bounds.

\smallskip\noindent\emph{Step 2: Fourier truncation.}

We now truncate the Fourier transform while retaining the complex-strip
estimate. The transform is
\[
 \widehat H(\xi)=e^{-4\pi^2\upsilon|\xi|^2}\widehat f(\xi).
\]
Choose a real even smooth frequency cutoff \(\vartheta\), equal to one on the unit ball and zero outside the ball of radius two, and define
\[
 \widehat h_\rho(\xi)=
   \vartheta(\xi/\rho)e^{-4\pi^2\upsilon|\xi|^2}\widehat f(\xi).
\]
For fixed \(\upsilon\), we claim
\begin{equation}
 \sup_{\substack{u\in\mathbb R^d\\|v|\le\sigma}}
  (1+|u-y_a|)^\kappa
    |H(u+iv)-h_\rho(u+iv)|\longrightarrow0
 \quad(\rho\to\infty),                                           \label{eq:AC-4.5}
\end{equation}
uniformly over the cluster family.

Translate \(y_a\) to zero. The functions \(f(\,\cdot+y_a)\) are
bounded in \(C_c^\infty\) with common compact support, so their Fourier
transforms and the required derivatives are uniformly bounded. The
tail integrand is
\[
 (1-\vartheta(\xi/\rho))e^{-4\pi^2\upsilon|\xi|^2}
       \widehat f(\xi)e^{-2\pi v\cdot\xi}.
\]
For \(|v|\le\sigma\), every fixed finite number of its \(\xi\)-derivatives is bounded, on its support \(|\xi|\ge \rho\), by
\[
 C_{\upsilon,\sigma,N}(1+|\xi|)^{C_N}e^{-2\pi^2\upsilon|\xi|^2}
\]
once \(\rho\) is large enough. Derivatives of the cutoff contribute only bounded inverse powers of \(\rho\); the linear exponential from \(v\) is absorbed into half of the Gaussian exponent. These derivative bounds tend to zero in \(L^1\) on \(|\xi|\ge \rho\). Applying \((1-\Delta_\xi)^N\) under the Fourier integral, with \(2N\ge \kappa\), proves \eqref{eq:AC-4.5}.

For \(x=u+iv\in D_b\), the relation
\[
 |y_a-y_b|\le|u-y_a|+R+\eta
\]
converts the weight in \eqref{eq:AC-4.5} to that in
\eqref{eq:AC-4.1}. Choose \(\rho\) large enough that truncation
contributes at most the remaining half of the error; the Fourier
support lies in \(B(0,2\rho)\).

For fixed \(\upsilon,\rho\), the compactly supported Fourier functions
and their translated derivatives are uniformly bounded. Integration by
parts therefore bounds the translated Schwartz seminorms of \(h_\rho\).
Since \(f\) and the even cutoff are real, the kernels are real on
\(\mathbb R^d\). \end{proof}

\subsection{The map from moments to observations}\label{a-holomorphic-banach-map}

The kernel errors are summable over clusters. Using the extension bound,
we obtain a holomorphic map from moments to observations on an ambient
\(\ell^\infty\) ball, where we can apply a contraction argument.

Let
\[
 \mathcal J=\{(b,\alpha):\alpha\in\mathcal A_{s_b}\},
 \qquad \mathcal B=\ell^\infty(\mathcal J;\mathbb C),
\]
and let \(p^0=(p_b^0)\) be the reference moment vector.

Since the base points are \(\delta\)-separated and \(\kappa>d\),
\begin{equation}
 S_\kappa=\sup_a\sum_b(1+|y_a-y_b|)^{-\kappa}
       \le S^*(d,\delta,\kappa)<\infty.                     \label{eq:AC-5.1}
\end{equation}
Packing bounds give \(O(\delta^{-d}2^{jd})\) base points in a dyadic annulus of radius \(2^j\). Summing against \(2^{-j\kappa}\) proves \eqref{eq:AC-5.1}.

On the quotient chart belonging to cluster \(b\), define
\begin{equation}
 g_{a,\alpha;b}(p_b)
   =\sum_{j=1}^{s_b}h_{a,\alpha}(y_b+x_j)
       -\mathbf1_{\{a=b\}}p_{b,\alpha},                        \label{eq:AC-5.2}
\end{equation}
where \(p_b=\pi_{s_b}(x_1,\ldots,x_{s_b})\). When \(a=b\), the last term is a coordinate of this moment vector; otherwise it is omitted.

By the quotient construction in Section~\ref{a-concrete-affine-quotient},
the trace is holomorphic. Every chart cycle lies in \(D_b\), so
\eqref{eq:AC-4.1} gives
\begin{equation}
 |g_{a,\alpha;b}|
     \le K\varepsilon(1+|y_a-y_b|)^{-\kappa}.                       \label{eq:AC-5.3}
\end{equation}
Lemma~\ref{lem:ac-uniform-charts}, applied to each function
\eqref{eq:AC-5.2}, gives an ambient holomorphic extension \(G_{a,\alpha;b}\) on a neighborhood of the \(4r_*\)-ball about \(p_b^0\), with
\begin{equation}
 |G_{a,\alpha;b}|
    \le C_{\mathrm{ext}}K\varepsilon
             (1+|y_a-y_b|)^{-\kappa}.                              \label{eq:AC-5.4}
\end{equation}
Choose the extensions independently of phase. Since the kernels are real
on \(\mathbb R^d\), the traces
satisfy \(g_{a,\alpha;b}(\overline p_b)=\overline{g_{a,\alpha;b}(p_b)}\).
The polydisks and moment varieties are conjugation-invariant. Averaging
with the complex-conjugate reflection therefore preserves the extension
of \(g_{a,\alpha;b}\) and its bound while making it real on real
moment coordinates.

Set
\begin{equation}
 \mathcal E(p)_{a,\alpha}
     =\sum_bG_{a,\alpha;b}(p_b),
 \qquad
 \Psi(p)=p+\mathcal E(p).                                \label{eq:AC-5.5}
\end{equation}
Each row sum converges absolutely, with uniform row bounds on the
\(\ell^\infty\) ball of radius \(4r_*\).

\begin{lemma}[Holomorphy of the error map]
\label{lem:ac-banach} The map \(\mathcal E\) is holomorphic on the \(\mathcal B\)-ball of radius \(2r_*\) about \(p^0\). On the smaller ball of radius \(r_*\),
\begin{equation}
 \|D\mathcal E(p)\|
    \le C_1K\varepsilon S_\kappa/r_*,                                \label{eq:AC-5.6}
\end{equation}
where \(C_1\) depends only on the fixed extension constant.

\end{lemma}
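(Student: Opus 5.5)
The argument rests on the standard fact that a map into $\ell^\infty$ is holomorphic when it is locally bounded and each coordinate is holomorphic. Derivative bounds then follow from Cauchy estimates on polydisks of radius $r_*$.

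\emph{Coordinates and boundedness.} For fixed $(a,\alpha)$, on the ball $\|p-p^0\|_\infty<2r_*$ each block $p_b$ lies within $2r_*<4r_*$ of $p_b^0$. So every $G_{a,\alpha;b}(p_b)$ is defined and holomorphic, and \eqref{eq:AC-5.4} gives $|G_{a,\alpha;b}(p_b)|\le C_{\mathrm{ext}}K\varepsilon(1+|y_a-y_b|)^{-\kappa}$. Summing over $b$ and using \eqref{eq:AC-5.1}, each coordinate $\mathcal E(p)_{a,\alpha}$ is a uniformly convergent series of holomorphic functions of finitely many coordinates. It is therefore holomorphic on every finite-dimensional affine slice, i.e.\ G\^ateaux holomorphic, and it satisfies
\[
 \sup_{\|p-p^0\|_\infty<2r_*}\|\mathcal E(p)\|_\infty\le C_{\mathrm{ext}}K\varepsilon S_\kappa .
\]
A locally bounded G\^ateaux holomorphic map between Banach spaces is Fr\'echet holomorphic, by the standard Hartogs-type theorem for Banach spaces. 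Strictly, G\^ateaux holomorphy needs holomorphy along complex lines $p+\lambda w$ with $w\in\mathcal B$ arbitrary. Each term $G_{a,\alpha;b}(p_b+\lambda w_b)$ is holomorphic in $\lambda$, and the series converges uniformly in $\lambda$ by the bound above, so each coordinate is holomorphic in $\lambda$. The full $\ell^\infty$-valued map is then weakly holomorphic along lines. To get this, pair with elements of $\ell^1(\mathcal J)$, which is separating and norming, and use uniform boundedness; then Cauchy's integral formula along lines holds in norm. This is the one delicate point, since $\ell^\infty$ is nonseparable and the coordinate functionals do not form the full dual. I would handle it by proving Cauchy's formula directly: each coordinate obeys
\[
 \mathcal E(p+\lambda w)=\frac1{2\pi i}\oint\frac{\mathcal E(p+\zeta w)}{\zeta-\lambda}\,d\zeta ,
\]
with uniform bounds, and that yields norm differentiability.

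\emph{Derivative bound.} For $\|p-p^0\|_\infty\le r_*$ and $\|w\|_\infty\le1$, the disk $\{p+\zeta w:|\zeta|\le r_*\}$ stays in the $2r_*$-ball. Cauchy's estimate coordinatewise gives
\[
 |(D\mathcal E(p)w)_{a,\alpha}|\le r_*^{-1}\sup_{|\zeta|=r_*}|\mathcal E(p+\zeta w)_{a,\alpha}|\le C_{\mathrm{ext}}K\varepsilon S_\kappa/r_* .
\]
Taking the supremum over $(a,\alpha)$ yields \eqref{eq:AC-5.6} with $C_1=C_{\mathrm{ext}}$. The derivative is legitimately computed coordinatewise because coordinate projections are continuous linear maps. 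The main obstacle is the holomorphy step on nonseparable $\ell^\infty$. The uniform coordinate bounds from \eqref{eq:AC-5.1} and \eqref{eq:AC-5.4} are exactly what make the vector-valued Cauchy argument work.
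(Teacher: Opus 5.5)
Your proof is correct, but it takes a different route from the paper's. You first sum the block bounds to get the global estimate $\sup\|\mathcal E\|_\infty\le C_{\mathrm{ext}}K\varepsilon S_\kappa$ on the $2r_*$-ball. You then get G\^ateaux holomorphy coordinatewise and upgrade it to Fr\'echet holomorphy with the Banach-space Hartogs-type theorem (locally bounded plus G\^ateaux holomorphic). Finally you read off \eqref{eq:AC-5.6} from one Cauchy estimate on the whole map, with $C_1=C_{\mathrm{ext}}$.

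The paper uses no infinite-dimensional holomorphy theorem. It applies Cauchy estimates block by block: both $DG_{a,\alpha;b}$ and the second-order Taylor remainder of $G_{a,\alpha;b}$ along $p_b+z\dot p_b$ are bounded by multiples of $(1+|y_a-y_b|)^{-\kappa}$. These bounds are then summed with the packing bound \eqref{eq:AC-5.1}. This exhibits $D\mathcal E(p)$ explicitly as the bounded operator $\dot p\mapsto\bigl(\sum_bDG_{a,\alpha;b}(p_b)\dot p_b\bigr)_{a,\alpha}$ with a remainder that is $O(\|\dot p\|_\infty^2)$ uniformly in the coordinates. So Fr\'echet differentiability and the derivative bound come out together, and continuity of $D\mathcal E$ follows from the same estimate on a smaller ball.

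Your version is shorter and gives a clean constant, at the price of citing the abstract theorem. The paper's version is self-contained and avoids the nonseparability of $\ell^\infty$ altogether, because the remainder is uniform in $(a,\alpha)$ from the start.

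That same device is also the right way to settle your ``delicate point''. Along a line, the coordinatewise Cauchy bound on the second derivative, uniform in $(a,\alpha)$, gives norm differentiability directly. Your coordinatewise formulation does this. An $\ell^\infty$-valued contour integral, by contrast, would presuppose the norm continuity you are trying to prove, so the phrase ``Cauchy's integral formula along lines holds in norm'' should be read only in the coordinatewise sense.
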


\begin{proof}
We combine Cauchy estimates on each row with the uniform summability
in \eqref{eq:AC-5.1}.

For \(p\) in the \(2r_*\)-ball, apply Cauchy's estimate on each
finite-dimensional block, using the distance to the \(4r_*\)-boundary:
\[
 \|DG_{a,\alpha;b}(p_b)\|
   \le C_1K\varepsilon r_*^{-1}
                    (1+|y_a-y_b|)^{-\kappa}.
\]
The same one-variable Cauchy estimate along the complex line
\(p_b+z\dot p_b\) gives, uniformly for \(\|\dot p\|_\infty<r_*/2\),
\[
 \begin{split}
 &|G_{a,\alpha;b}(p_b+\dot p_b)-G_{a,\alpha;b}(p_b)
                     -DG_{a,\alpha;b}(p_b)\dot p_b|\\
 &\qquad\le
 C_2K\varepsilon r_*^{-2}
       (1+|y_a-y_b|)^{-\kappa}\|\dot p\|_\infty^2.
 \end{split}
\]
By \eqref{eq:AC-5.1}, the derivative row sums define a bounded
complex-linear operator on \(\mathcal B\). Summing the remainders and
taking the supremum gives a uniform \(O(\|\dot p\|_\infty^2)\) remainder
in \(\mathcal B\), proving complex Fr\'echet differentiability and
\eqref{eq:AC-5.6}. The same Cauchy estimate on a smaller ball proves
continuity of the derivative and hence holomorphy. \end{proof}

\paragraph{Choice of constants.}
Fix \(\delta,\eta,\sigma\) as functions of \((d,K)\), satisfying
\eqref{eq:AC-delta-choice} and \eqref{eq:AC-2.2}, and take
\(\kappa=d+1\). Lemma~\ref{lem:ac-uniform-charts} then fixes \(r_*\)
and \(C_{\mathrm{ext}}\), while \eqref{eq:AC-5.1} fixes the packing
bound \(S^*\). Let \(C_1\) be the Cauchy-estimate constant for these
extension charts, and choose \(\varepsilon>0\) so that
\(C_1K\varepsilon S^*/r_*<1/8\). In particular,
\begin{equation}
 \varepsilon<\frac{r_*}{8C_1KS_\kappa}.
 \label{eq:AC-5.7}
\end{equation}
In the proof of Lemma~\ref{lem:ac-kernels}, choose the heat time
\(\upsilon\) and then the frequency cutoff \(\rho\) for this
\(\varepsilon\). Finally choose \(\rho_*>2\rho\), so the open
ball \(B(0,\rho_*)\) contains all kernel Fourier supports. These
constants depend only on \((d,K)\).

\subsection{Holomorphic extension of cluster cycles}\label{finite-band-data-recover-the-entire-cycle-family}

The kernel observations are finite exponential sums in the phase.
Inverting the map from moments to observations extends all cluster
moments holomorphically; the identity theorem ensures that they remain
in the symmetric products.

\begin{proposition}[Holomorphic cluster cycles]\label{prop:ac-holomorphic-clusters}
Under the temporary finite-generation hypothesis, the cluster moments
extend holomorphically to a complex phase neighborhood, with values
in the corresponding symmetric products. For a fixed measure and
spectral basis, the radius can be chosen uniformly over all real
reference phases.
\end{proposition}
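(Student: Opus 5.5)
The plan is to invert the map \(\Psi=\mathrm{id}+\mathcal E\) of \eqref{eq:AC-5.5} on the \(\ell^\infty\)-ball about \(p^0\), and then to feed it the complexified finite-band observations. For a real phase \(t\) near zero, Lemma~\ref{lem:ac-persistence} gives actual cluster moments \(p(t)=(p_b(t))\) with \(\|p(t)-p^0\|_\infty\) small. I would first verify that \(\Psi(p(t))\) is the vector of kernel observations
\[
 \mathcal O(t)_{a,\alpha}=\int h_{a,\alpha}(\,\cdot+y_a\text{-centering})\,d\mu_t
 =F_{h_{a,\alpha}}(t),
\]
suitably interpreted as in \eqref{eq:ac-fourier-observation}. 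Indeed, summing over the atoms of \(\mu_t\) cluster by cluster, the integral of \(h_{a,\alpha}\) equals \(\sum_b\bigl(g_{a,\alpha;b}(p_b(t))+\mathbf 1_{\{a=b\}}p_{b,\alpha}(t)\bigr)\). Each \(g\) coincides with its extension \(G\) on the moment variety, so this sum is \(\Psi(p(t))_{a,\alpha}\). Absolute convergence follows from \eqref{eq:AC-5.3} and \eqref{eq:AC-5.1}.

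Second, I would show that \(\mathcal O\) extends holomorphically to complex \(\theta\). By Lemma~\ref{lem:ac-kernels}, all kernels have Fourier support in \(B(0,\rho_*)\), so each \(F_{h_{a,\alpha}}\) in \eqref{eq:AC-1.1} is a trigonometric polynomial whose frequencies \(k(\xi)\) range over the finite set \(\{\xi\in\Lambda'\cap B(0,\rho_*)\}\). This set is independent of \((a,\alpha)\); finiteness is where local finiteness of the spectrum and the temporary finite-generation hypothesis enter. The coefficients \(c_\xi\widehat h_{a,\alpha}(-\xi)\) are uniformly bounded in \((a,\alpha)\), since \(\widehat h\) is bounded by the translated seminorms. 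The phase factor \(e^{-2\pi i\xi\cdot y_a}\) from centering has modulus one. Hence \(\theta\mapsto\mathcal O(\theta)\in\mathcal B\) is a finite sum of \(\ell^\infty\)-valued vectors times \(e^{2\pi ik\cdot\theta}\), and so is holomorphic on all of \(\C^n\). It is also uniformly continuous, with modulus depending only on the finite frequency set and the fixed constants, and not on the reference phase.

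Third, I would apply the inverse function theorem quantitatively. By Lemma~\ref{lem:ac-banach} and \eqref{eq:AC-5.7}, \(\|D\mathcal E\|\le1/8\) on the \(r_*\)-ball. A contraction argument, solving \(p=w-\mathcal E(p)\), shows that \(\Psi\) is biholomorphic from a neighborhood in that ball onto a set containing the ball of radius about \(r_*/2\) around \(\Psi(p^0)\), with holomorphic inverse. For \(|\theta|<r_0\), where \(r_0\) is chosen so that \(\|\mathcal O(\theta)-\mathcal O(0)\|_\infty<r_*/4\), define \(P(\theta)=\Psi^{-1}(\mathcal O(\theta))\), which is holomorphic. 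On real \(t\) near zero, uniqueness in the contraction ball gives \(P(t)=p(t)\), because \(p(t)\) is close to \(p^0\) by Lemma~\ref{lem:ac-persistence}. Since \(r_*,\varepsilon\) and the frequency bound depend only on \((d,K)\), \(\rho_*\) and the fixed measure, \(r_0\) is uniform over real reference phases.

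The main obstacle is showing that each block \(P_b(\theta)\) stays in \(V_{s_b}\), that is, in the symmetric product, for complex \(\theta\). I would argue via the identity theorem. \(V_{s_b}\) is a closed algebraic subvariety cut out by polynomials \(q\) in finitely many coordinates. Then \(q(P_b(\theta))\) is holomorphic on the complex ball and vanishes on the real slice \(t\in\R^n\) near zero, a totally real set of maximal dimension, so it vanishes identically. Working blockwise keeps this a finite-dimensional statement, so the infinite product poses no problem. Finally, reality on real points and the conjugation symmetry of the extensions ensure consistency with the real moments.
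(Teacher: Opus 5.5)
Your proposal is correct and follows the paper's proof essentially step for step: the observations form a single finite $\mathcal B$-valued exponential sum, hence are entire in the complex phase; at real phases $\Psi(p(t))$ equals the observation vector; a $1/8$-contraction gives the holomorphic inverse; and the identity theorem keeps each block in $V_{s_b}$. The one point the paper makes explicit and you omit is that the persistence of Lemma~\ref{lem:ac-persistence}, and with it the identification $P(t)=p(t)$ at real phases, must also hold on a real neighborhood whose radius does not depend on the reference phase. This follows because the centered cluster tests form one compact family and \eqref{eq:AC-1.3} comes from uniform continuity on the compact torus.
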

\begin{proof}
\emph{Step 1: observations as trigonometric polynomials.}

Let
\[
 \Phi(t)_{a,\alpha}=\int h_{a,\alpha}(x)\,d\mu_t(x).
\]
The kernels have Fourier support in a common compact ball, so only
one finite subset \(\Lambda'_\rho\subset\Lambda'\) occurs. By
\eqref{eq:ac-fourier-observation},
\begin{equation}
 \Phi(t)=
   \sum_{\xi\in\Lambda'_\rho}
       v_\xi e^{2\pi ik(\xi)\cdot t},
 \qquad
 (v_\xi)_{a,\alpha}
       =c_\xi\widehat h_{a,\alpha}(-\xi).                     \label{eq:AC-6.1}
\end{equation}
The translated Schwartz bounds give a uniform \(L^1\) bound on the
kernels and hence \(v_\xi\in\mathcal B\). Thus \eqref{eq:AC-6.1}
extends to an entire \(\mathcal B\)-valued finite exponential sum in
the complex phase variables.

For real phases near zero, the moment vector \(p(t)\) of
Lemma~\ref{lem:ac-persistence} is continuous at \(t=0\). The ambient
extensions restrict to their traces on the cycle varieties, so summing
over clusters gives
\begin{equation}
 \Psi(p(t))=\Phi(t).                              \label{eq:AC-6.2}
\end{equation}
Translation boundedness and Schwartz decay give absolute convergence
of the kernel sum. The row estimate \eqref{eq:AC-5.4} gives absolute
convergence of the error sum, justifying the regrouping by clusters.

\smallskip\noindent\emph{Step 2: inversion and cycle relations.}

To invert the map from moments to observations, write
\(\mathfrak a_0=\Phi(0)=\Psi(p^0)\).
For \(\|\mathfrak a-\mathfrak a_0\|<r_*/2\), seek \(p=p^0+u\), \(\|u\|\le r_*\), from
\begin{equation}
 u=(\mathfrak a-\mathfrak a_0)-\bigl(\mathcal E(p^0+u)-\mathcal E(p^0)\bigr).       \label{eq:AC-6.3}
\end{equation}
By \eqref{eq:AC-5.6}--\eqref{eq:AC-5.7}, the right side maps the ball
into itself with contraction constant at most \(1/8\). The iterates are
holomorphic in \(\mathfrak a\) and converge uniformly on smaller
\(\mathfrak a\)-balls to the unique holomorphic inverse.

For complex \(\theta\) sufficiently close to zero, the extension
\(\Phi(\theta)\) of \eqref{eq:AC-6.1} lies in this
\(\mathfrak a\)-ball, so \eqref{eq:AC-6.3} defines a holomorphic
vector \(p^*(\theta)\). For real \(t\) close to zero, Lemma~\ref{lem:ac-persistence} places
the moment vector \(p(t)\) in the same neighborhood. It solves
\eqref{eq:AC-6.2}, so uniqueness gives
\[
 p^*(t)=p(t)\quad\text{for real }t.
\]

Every polynomial relation defining \(V_{s_b}\) vanishes on \(p_b^*(t)\) for
real \(t\) in an open phase neighborhood. By the identity theorem in
the complex phase variables, it vanishes on the holomorphic extension:

\[
 p_b^*(\theta)\in V_{s_b}
\]
throughout a sufficiently small complex neighborhood.

The centered cluster tests in Lemma~\ref{lem:ac-persistence}, over all
reference phases, lie in one compact family. Uniform continuity on
the torus therefore makes persistence uniform in the reference phase.
The radius \(r_*\), contraction bound and kernel \(L^1\) bounds are
uniform as well. For the fixed spectral basis, \eqref{eq:AC-6.1}
involves one finite set of indices \(k(\xi)\); the kernel bounds
control its coefficient norms uniformly, while real phase translations
contribute only factors of modulus one. Thus the complex phase
derivative of \(\Phi\) is uniformly bounded on a fixed tube about the
real torus. A sufficiently small common radius keeps \(\Phi\) in the
\(r_*/2\)-ball required by \eqref{eq:AC-6.3}, so the inverse and
identity-theorem arguments apply at every real reference phase. This
radius may depend on the spectral data.
\end{proof}

\subsection{The canonical set and its analytic germ}\label{the-canonical-analytic-toral-set}

We first record how the coefficients of a holomorphic finite cycle
are carried by its reduced incidence.

\begin{lemma}[Coefficients of a holomorphic finite cycle]
\label{lem:integer-incidence-coefficients}
Let $p:U\to\operatorname{Sym}^s(\C^d)$ be a holomorphic family on a
connected complex phase neighborhood, with its particles confined to a
fixed relatively compact target neighborhood. Let
\[
 \mathcal I=\{(\theta,x):x\in\supp p(\theta)\}_{\mathrm{red}}.
\]
After shrinking $U$ to a sufficiently small connected neighborhood of a
given point, the projection $\mathcal I\to U$ is finite and
$\mathcal I$ is pure of dimension $\dim U$. It carries a unique effective analytic cycle
$\mathfrak I$ with constant positive integer component coefficients
and weighted fibers $p(\theta)$. Namely, if $\mathcal I_j$ are the components,
with coefficients $\varkappa_j$, then for every $(\theta,x)\in U\times\C^d$,
a term being zero when $(\theta,x)\notin\mathcal I_j$,
\[
 \operatorname{mult}_x p(\theta)
   =\sum_j\varkappa_j
       \deg_{(\theta,x)}(\mathcal I_j\longrightarrow U).
\]
Here $\operatorname{mult}_x p(\theta)$ denotes the coefficient of $x$
in the finite cycle $p(\theta)$. The coefficients and the identity are
compatible with restrictions to isolating neighborhoods and with
identities of holomorphic cycle families.
\end{lemma}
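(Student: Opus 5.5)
The plan is to realize $\mathcal I$ as an analytic subset of $U\times\C^d$, verify finiteness and purity of the projection, and then read off the coefficients $\varkappa_j$ from the generic fibers.

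\emph{Step 1: analyticity and finiteness.} Let $\Omega\subset\C^d$ be the fixed relatively compact target neighborhood. The incidence set is the zero locus of the polynomials
\[
 \prod_{i=1}^s \ell(x_i)-\ell(x)\text{-type relations, i.e. } P_\ell(\theta,x)=\prod_{i=1}^s\bigl(\ell(x)-\ell(x_i(\theta))\bigr),
\]
where $\ell$ ranges over finitely many generic linear forms. Each coefficient of $P_\ell$ in $\ell(x)$ is a multisymmetric polynomial, hence a polynomial in the moment coordinates $p(\theta)$. So each $P_\ell$ is holomorphic on $U\times\C^d$. Interpolation over enough generic forms, as in the separation argument of Section~\ref{a-concrete-affine-quotient}, shows that the common zero set is exactly $\{x\in\supp p(\theta)\}$. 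Hence $\mathcal I$ is a closed analytic set. Its projection to $U$ is proper, since particles stay in a compact subset of $\Omega$ after shrinking $U$ by continuity in the matching metric \eqref{eq:AC-3.2}. The projection also has finite fibers of cardinality at most $s$.

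\emph{Step 2: purity.} A proper finite map has image dimension at least the dimension of each component, so every component has dimension at most $\dim U$. For the reverse inequality, near any point $(\theta_0,x_0)$ apply Lemma~\ref{lem:ac-splitting} with a small ball $U'$ around $x_0$ isolating it in $p(\theta_0)$. The restricted family has positive degree $s'$, so every $\theta$ near $\theta_0$ has a particle in $U'$. Thus the local projection of $\mathcal I\cap(U\times U')$ is surjective onto a neighborhood. By properness this forces some local component of dimension $\dim U$ through $(\theta_0,x_0)$. This alone does not exclude small embedded components. To exclude them, I would use that the local zero set is the support of a finite branched covering defined by the monic-in-$\ell(x)$ Weierstrass-type polynomials $P_\ell$. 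The discriminant locus $D$ of $P_\ell$ for a fixed generic $\ell$ is a proper analytic subset, outside it $\mathcal I$ is an unramified covering, and $\mathcal I$ equals the closure of its part over $U\setminus D$. The closure claim holds because every point over $D$ is a limit of particles over nearby regular $\theta$, by continuity of roots of the monic polynomial. Shrinking $U$ to be connected gives purity.

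\emph{Step 3: coefficients.} On a connected component of the covering over $U\setminus D$, which corresponds to an irreducible component $\mathcal I_j$ restricted to the complement of a nowhere dense set, the multiplicity $\operatorname{mult}_x p(\theta)$ is locally constant. This follows from Lemma~\ref{lem:ac-splitting} applied with a small ball around one unramified sheet, which gives constant local degree. The set of regular points of $\mathcal I_j$ over $U\setminus D$ is connected because $\mathcal I_j$ is irreducible, so the multiplicity is a constant, which we call $\varkappa_j\ge1$. We also arrange $D$ to contain all points where two components meet, so that regular points carry a well-defined single component.

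At an arbitrary point $(\theta,x)$, take an isolating ball $U_x$ and apply Lemma~\ref{lem:ac-splitting}. The degree of the restricted cycle, which is $\operatorname{mult}_xp(\theta)$, is constant for nearby $\theta$. At a nearby generic $\theta'$ this degree equals $\sum_j\varkappa_j\cdot\#(\text{sheets of }\mathcal I_j\text{ in }U_x)$. The sheet count is $\deg_{(\theta,x)}(\mathcal I_j\to U)$ by Lemma~\ref{lem:local-degree-general}(iii), which gives the identity. Uniqueness follows because the identity at generic points determines each $\varkappa_j$. Compatibility with restrictions and with identities of families follows from the same generic computation.

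The main obstacle I expect is Step 2: showing that the reduced incidence has no lower-dimensional pieces and that components are determined by closure from the unramified locus. For this I would lean on the monic-polynomial description via $P_\ell$ for a generic $\ell$.
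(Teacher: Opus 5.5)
Your outline matches the paper's proof. It has an analytic incidence with finite proper projection, purity from persistence of particles under small phase changes, one multiplicity on the connected regular locus of each component minus a thin set, and the identity by conservation of number via Lemma~\ref{lem:local-degree-general}. Describing $\mathcal I$ by the $P_\ell$ instead of by base change of the finite universal incidence is fine. You need at least $s(d-1)+1$ forms in general position, so that $s$ hyperplanes of the dual space cannot contain all of them.

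The step that fails is your exceptional set $D$, the discriminant locus of $P_\ell$. Suppose the cycle has a repeated point at every phase, e.g.\ $p(\theta)=2[x(\theta)]$ as in Remark~\ref{rem:ac-cycle-multiplicity}. Then $P_\ell=\bigl(\ell(x)-\ell(x(\theta))\bigr)^2$ has identically vanishing discriminant for every $\ell$. So $D=U$, and Step~3 has no unramified locus on which to define $\varkappa_j$. This is exactly the case in which some $\varkappa_j>1$. As written, your argument only covers generically reduced cycles, where all coefficients are one.

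The repair is the paper's choice of $D$. Shrink $U$ so that there are finitely many components, and take the union of the images under the finite proper projection of three sets: $\operatorname{Sing}\mathcal I$, the critical locus of $\mathcal I_{\mathrm{reg}}\to U$, and the pairwise intersections of components. By Remmert's theorem this is a proper analytic subset.

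Alternatively, let $r$ be the maximal number of distinct particles. For generic $\ell$, take the zero set of the Hankel determinant $\det\bigl(\sum_{i=1}^s\ell(x_i(\theta))^{a+b}\bigr)_{0\le a,b<r}$. It is holomorphic in the moments and not identically zero, and off it there are exactly $r$ distinct, separated particles.

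Off either set the distinct particles are holomorphic graphs, and Lemma~\ref{lem:ac-splitting} makes each multiplicity locally constant. The rest of your Step~3 then goes through.

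Separately, the first half of your Step~2 already proves purity. An irreducible component of smaller dimension would contain points lying on no other component, and no top-dimensional local component could pass through them. So the discriminant is not needed there.
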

\begin{proof}
The universal incidence map
\[
 (\mathbb C^d)^s/S_{s-1}\longrightarrow(\mathbb C^d)^s/S_s
\]
is finite. Its reduced base change is the incidence, and the particles
remain in a fixed relatively compact target neighborhood.
Shrink the connected base around a reference point so that the finitely
many local components meeting its finite fiber exhaust the incidence.
Let $\mathcal D_{\mathrm{exc}}\subset U$ be the union of the images of
components not dominating $U$. Since the map is finite and proper,
these images are proper analytic subsets
\cite[II, Theorem~8.8]{Demailly}. For any $(\theta_0,x_0)\in\mathcal I$, choose
$\theta_j\notin\mathcal D_{\mathrm{exc}}$ tending to $\theta_0$.
Continuity in $\operatorname{Sym}^s$ supplies particles $x_j\to x_0$.
The incidence is therefore the closure of its part over
$U\setminus\mathcal D_{\mathrm{exc}}$. The finitely many dominating
components are closed and contain this closure, excluding vertical
components. Each finite dominating component has dimension
$\dim U$.

Remove from $U$ the images of the singular, ramification and
component-intersection loci. Their union is proper analytic, and over
its complement the distinct particles form local holomorphic graphs. Split the cycle in disjoint neighborhoods
of these particles, using Lemma~\ref{lem:ac-splitting}. Each split cycle has locally constant positive integer degree. Its
support is one graph, whose coefficient is that degree.

The regular locus of each irreducible incidence component is connected
\cite[II, Proposition~4.24]{Demailly} and remains so after removing the
proper analytic exceptional preimage \cite[II, Remark~4.2]{Demailly}.
The coefficient consequently has one value $\varkappa_j$ on that
component of the reduced support.

At an arbitrary $(\theta,x)$, split the family in an isolating neighborhood
of $x$. Its total degree is the fixed integer
$\operatorname{mult}_x p(\theta)$. At a nearby phase outside the
exceptional set its particles on component $\mathcal I_j$ have coefficient
$\varkappa_j$, and the number of these particles is the local reduced
degree of that component, by conservation of number
(Lemma~\ref{lem:local-degree-general}). Summation proves the identity, and the same
argument proves independence of the isolating neighborhood. Since the
generic graphs determine the coefficients uniquely, equal holomorphic
families induce equal cycles on their common domain.
\end{proof}

Pulling back the condition that a cluster contain the origin constructs
the canonical germ. Complex covariance identifies the incidence locally
with a product, determining the codimension, transporting component
coefficients and identifying the real fibers.

\begin{proposition}[Canonical analytic germ]\label{prop:ac-canonical-germ}
Under the temporary finite-generation hypothesis, the conclusions in
Theorem~\ref{thm:all-fq-toral-cycles}(c)--(e) hold.
Moreover, the local projection of \(M^{\mathbb C}\) along
\(L\mathbb C^d\) is finite at every point of \(M\) and has the following real-fiber property:
in real complementary logarithmic coordinates its full inverse image
of the real base is its real locus.
\end{proposition}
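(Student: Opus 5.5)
The plan is to build $M^\C$ as the pullback of the condition ``the cluster near the origin contains $0$'' under the holomorphic cluster family, and then to use complex covariance to show that this pullback is locally a product. Fix a real phase $t_0\in M$ and translate it to zero. By Lemma~\ref{lem:ac-persistence}, exactly one reference cluster $A_{b_0}$ contains the origin, with total mass $s=s_{b_0}$. Proposition~\ref{prop:ac-holomorphic-clusters} gives a holomorphic family $\theta\mapsto p_{b_0}(\theta)\in\operatorname{Sym}^s(\C^d)$ on a complex phase ball $U$, and Lemma~\ref{lem:ac-splitting} isolates the part in a small ball $U_0\ni0$. Lemma~\ref{lem:integer-incidence-coefficients} then yields the incidence $\mathcal I\subset U\times U_0$: it is finite and pure of dimension $n$ over $U$, and it carries the effective cycle $\mathfrak I$ with integer coefficients $\varkappa_j$ satisfying $\operatorname{mult}_xp(\theta)=\sum_j\varkappa_j\deg_{(\theta,x)}(\mathcal I_j\to U)$. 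We define $M^\C=\{\theta:(\theta,0)\in\mathcal I\}$ and push the cycle forward.

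The key structural step is complex covariance. From \eqref{eq:AC-1.2} and the identity theorem, the holomorphic cluster cycles satisfy $p(\theta+Lz)=p(\theta)-z$ for small complex $z$, with the translation on cycles defined by analytic continuation. Because $L$ has rank $d$, we may choose a complementary linear subspace $W$ with $\C^n=W_\C\oplus L\C^d$, writing $\theta=w+Lz$. The map $(w,x)\mapsto(w+Lx,x)$ then identifies $\mathcal I$ locally with $\{(w+Lx,x):(w,x)\in\mathcal I\}$ and carries $\mathcal I\cap\{x=0\}$ onto $\{(w+Lx,x)\}$. Concretely, $\theta\in M^\C$ exactly when $\theta-Lx$ has $0$ in its cycle, where $x$ is the particle with $(\theta,x)\in\mathcal I$. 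Hence $\mathcal I\cong M^\C\times U_0$ through $(\theta,x)\mapsto(\theta-Lx,x)$, with the inverse given by $(\omega,x)\mapsto(\omega+Lx,x)$. Since $\mathcal I$ is pure of dimension $n$, it follows that $M^\C$ is pure of dimension $n-d$, i.e.\ of codimension $d$. Under this product structure, the projection $\mathcal I\to U$ corresponds to $(\omega,x)\mapsto\omega+Lx$, and its fiber degree over $\theta=Lx+\omega$ at a point equals the local intersection degree of $M^\C$ with $\theta+L\C^d$ at $x$. This correspondence transports the components $\mathcal I_j$ to components of $M^\C$ with the same coefficients $\varkappa_j$. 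It also gives finiteness of the projection along $L\C^d$, because a positive-dimensional fiber would produce infinitely many particles in $U_0$, contradicting finiteness of $\mathcal I\to U$.

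Part (e) follows directly. For real $t$, the points $x\in U_t$ with $t+Lx\in M^\C$ are exactly the particles of $p(t)$ in $U_0$, which lie in $\supp\mu_t\subset\R^d$. The degree identity of Lemma~\ref{lem:integer-incidence-coefficients} at $(t,0)$ reads $\mu_t(\{0\})=\deg^{\mathfrak M^\C}_t(0)$, and covariance extends it to every $x$. The coefficients agree on overlaps because the identity is compatible with restrictions and with equal families, which gives the well-definedness in (c). Compactness of $M$ follows from vague continuity together with integrality, since atoms cannot disappear under limits (Lemma~\ref{lem:ac-persistence}), so $M$ is closed. Real analyticity holds because $M=M^\C\cap\T^n$ locally, by the real-fiber property. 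For that property, choose $W$ real. A point $w\in W_\C$ lies in $M^\C$ precisely when $0$ lies in the cycle of $w$. If $w$ is real, every particle of that cycle is real by (e), so the points of $M^\C$ over real base points of the projection are the real phases $w+Lx$ with $x$ real. This shows that the full inverse image of the real base is the real locus.

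Part (d): if $\mu_t(\{0\})=1$, then the local degree is one, and Lemma~\ref{lem:local-degree-one} makes the projection along $L\C^d$ a local biholomorphism. Therefore $M^\C$ is smooth and transverse to $L\C^d$.

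The main obstacle is justifying complex covariance rigorously. We must check that analytic continuation of $\tau_{-z}$ on $\operatorname{Sym}^s$ is compatible with the contraction inverse, and that the cluster around the origin at complex phase $\theta$ corresponds to the right reference cluster at $\theta-Lx$. The approach is to note that both sides are holomorphic in $(\theta,z)$, that they agree for real arguments by \eqref{eq:AC-1.2}, and to apply the identity theorem on a connected domain. The uniform radius from Proposition~\ref{prop:ac-holomorphic-clusters} keeps all the sets involved inside one chart.
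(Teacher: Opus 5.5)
Your outline follows the paper's own proof. You define \(M^\C\) as the pullback of the condition that the cluster at the origin contains \(0\), and extend covariance to complex phases by the identity theorem. You then identify the incidence with \(M^\C\times\C^d\) to obtain purity and to transport the coefficients of Lemma~\ref{lem:integer-incidence-coefficients}. Finally, you read off real sections and the real-fiber property from reality of the cycles at real phases.

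The central identification has a sign error, which you need to fix:
\begin{itemize}
\item From \(p(\theta+Lz)=p(\theta)-z\) one gets \((\theta,x)\in\mathcal I\iff\theta+Lx\in M^\C\). The correct map is therefore \((\theta,x)\mapsto(\theta+Lx,x)\), and under it the phase projection becomes \((p,x)\mapsto p-Lx\).
\item Your map \((\theta,x)\mapsto(\theta-Lx,x)\) sends \(\mathcal I\) onto \(\{(\omega,x):\omega+2Lx\in M^\C\}\), not onto \(M^\C\times U_0\).
\item Your sentence that \(\theta\in M^\C\) exactly when \(\theta-Lx\) has \(0\) in its cycle should be replaced by the equivalence above.
\end{itemize}

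You also assert without justification that the local degree of \(\mathcal I\to U\) equals the local intersection degree of \(M^\C\). The paper makes this explicit. It takes a complexified real complement \(N\) of \(L\R^d\) and writes \(p=t_0+p_N+Ly(p)\). In the source coordinates \((p,y(p)-x)\) the projection is \(\pi|_{M^\C}\times\mathrm{id}_{\C^d}\). This gives finiteness of \(\pi|_{M^\C}\) and equality of weighted degrees at once; your separate finiteness argument is fine but does not give the degrees.

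Two smaller corrections:
\begin{itemize}
\item The equality of \(M\) with the real locus of \(M^\C\) near \(t_0\) does not come from the real-fiber property. It comes from persistence: at nearby real phases the atoms of the other clusters stay at distance \(>\delta/2\) from the origin. Real analyticity then follows because the real locus is cut out by the real and imaginary parts of holomorphic defining functions.
\item Gluing across reference phases needs a step before Lemma~\ref{lem:integer-incidence-coefficients} applies. You must first use the identity theorem to identify the two restricted holomorphic families in a common physical window.
\end{itemize}

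For (d) your route differs from the paper's, and it is valid.
\begin{itemize}
\item \emph{Your route.} By (e) the weighted degree is one. Since coefficients and branch degrees are positive integers, this forces a single local branch with coefficient one and reduced degree one; state this step explicitly. Lemma~\ref{lem:local-degree-one} then makes \(\pi|_{M^\C}\) a local biholomorphism, giving smoothness and \(T_{t_0}M^\C\cap L\C^d=0\).
\item \emph{The paper's route.} It isolates the unit atom as a single holomorphic position \(\mathfrak x(\theta)\) with \(\mathfrak x(\theta+Lu)=\mathfrak x(\theta)-u\). Hence \(D\mathfrak x\,L=-I_d\), and \(M^\C=\{\mathfrak x=0\}\) is the zero set of a submersion.
\item \emph{Comparison.} The paper's argument is independent of (e) and of the normality argument, and it gives an explicit defining map. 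Yours reuses the degree machinery you have already built.
\end{itemize}
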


\label{analyticity-and-purity}

\begin{proof}\emph{Analyticity and purity.}
The hull, persistence, quotient charts, kernels and local inverse apply
to positive integer cluster masses bounded by \(K\), as provided by
Lemma~\ref{lem:ac-translation-bound}. Proposition~\ref{prop:ac-holomorphic-clusters}
therefore gives holomorphic symmetric-product families with these
multiplicities.

Let
\[
 M=\{t\in\mathbb T^n:0\in\operatorname{supp}\mu_t\}.
\]
An atom of mass at least one at zero persists in a vague limit, so this
set is closed and therefore compact.

At a phase in \(M\), take the cluster containing the origin and write
\[
 Z_b(\theta)=y_b+\operatorname{supp}p_b^*(\theta).
\]
The set \(Z_b(\theta)\) records positions in physical space; \(p_b^*\)
records them relative to \(y_b\). The condition that a cycle
contain \(-y_b\) is algebraic on \(V_{s_b}\): it is the image under
the finite quotient of the union of the coordinate subspaces
\(x_j=-y_b\). Define \(M^{\mathbb C}\) to be the reduction of the
analytic pullback of this incidence condition by the holomorphic moment
map. At nearby real phases, Lemma~\ref{lem:ac-persistence} and the
separation in \eqref{eq:AC-2.1} keep all atoms from the other clusters
at distance greater than \(\delta/2\) from the origin. Hence
\(0\in\operatorname{supp}\mu_t\) if and only if \(0\in Z_b(t)\),
so its real locus is \(M\). The incidence condition has equations
with real coefficients, and the moment map commutes with conjugation
by the identity theorem, since its real-phase values are real.
Their pullbacks therefore give real-analytic defining functions for
\(M\); reduction does not change this zero set. At a phase outside
\(M\), uniform support continuity makes \(M\) locally empty.

Let \(U_0\subset\mathbb C^n\) be a small connected phase ball. For a cluster of degree \(s=s_b\), its reduced support incidence
\[
 \mathcal I=\{(\theta,x):x\in Z_b(\theta)\}
\]
is finite over \(U_0\), since the universal incidence
\[
 (\mathbb C^d)^s/S_{s-1}
       \longrightarrow(\mathbb C^d)^s/S_s
\]
is finite and our incidence is its reduced base change. The particles
remain in a fixed compact target neighborhood.

By Lemma~\ref{lem:integer-incidence-coefficients}, the reduced incidence
is pure of dimension \(n\) and carries an effective analytic cycle
\(\mathfrak I\) with positive integer component coefficients.

\smallskip\noindent\emph{Complex covariance and codimension.}\label{complex-covariance-and-codimension}

Real covariance \eqref{eq:AC-1.2} implies, for small real phase and physical increments,
\[
 p_b(t+Lu)=p_b(t)-u
\]
as unordered cycles, on neighborhoods small enough to retain the same
cluster. Translation is polynomial in the moment coordinates, so both
sides extend holomorphically in phase and physical increment. The
identity theorem extends the relation to small complex
\((\theta,u)\).

The local incidence is therefore invariant under
\[
 (\theta,x)\longmapsto(\theta+Lu,x-u).
\]
The biholomorphic coordinate change
\[
 (\theta,x)\longmapsto(\theta+Lx,x)
\]
therefore identifies its germ at a point with \(x=0\) with
\[
 M^{\mathbb C}\times\mathbb C^d.
\]
Purity of the incidence proves that \(M^{\mathbb C}\) is pure of codimension \(d\).
Transport the component coefficients of \(\mathfrak I\) through this
identification. The components of the product are exactly the products
of the local components of \(M^{\mathbb C}\) with \(\mathbb C^d\);
the transported coefficients define \(\mathfrak M^{\mathbb C}\).

To glue germs arising from different reference phases, fix a real
phase in their overlap and choose a physical neighborhood \(W\) of the
origin, contained in \(B(0,\delta/2)\), whose boundary misses both
cycle supports at that phase. Translate each centered cycle by its own
center to put both families in physical coordinates. Lemma~\ref{lem:ac-splitting} restricts
these families to the common neighborhood. At nearby real phases these
restricted cycles are the same part of \(\mu_t\), so they have the
same degree and moment coordinates. The identity theorem identifies their holomorphic moment maps and
reduced incidences; by the last assertion of
Lemma~\ref{lem:integer-incidence-coefficients}, the effective cycles agree as well. A finite cover of the compact
real torus and shrinking give a common complex neighborhood.

\smallskip\noindent\emph{Unit-mass phases and real sections.}\label{smooth-unit-mass-phases-and-local-real-rootedness}

If the atom at zero has mass one, isolate it from the other atoms in
its cluster. Lemma~\ref{lem:ac-splitting} then gives a degree-one
local cycle, hence a single holomorphic position \(\mathfrak x(\theta)\). Covariance gives
\[
 \mathfrak x(\theta+Lu)=\mathfrak x(\theta)-u,\qquad D\mathfrak x(\theta)L=-I_d.
\]
It follows that \(\mathfrak x\) is a submersion. Its zero set
\(M^{\mathbb C}\) is smooth there and transverse to the physical
\(d\)-plane.

Let \(t_0\in M\) and let \(s_0=\mu_{t_0}(\{0\})\).
At this phase, keep the unordered local cycle.
For small complex \(x\), covariance gives
\[
 \theta+Lx\in M^{\mathbb C}
 \quad\Longleftrightarrow\quad
 x\in Z_b(\theta).
\]
For real \(\theta\), the cycle has only real points, so the small
complex sections along the physical plane have only real zeros.

Isolate the origin from the other atoms of its cluster.
Lemma~\ref{lem:ac-splitting} gives a holomorphic cycle of degree
\(s_0\), with support \(Z_0(\theta)\) in a small ball
\(B\subset\mathbb C^d\), for \(\theta\) in a connected complex
ball \(U\) about \(t_0\). Its reduced incidence
\[
 \mathcal I_0=\{(\theta,x)\in U\times B:x\in Z_0(\theta)\}
\]
has a finite projection \(\rho:\mathcal I_0\to U\), and the
fiber over \(t_0\) consists of the single point \((t_0,0)\).
By Lemma~\ref{lem:integer-incidence-coefficients}, the associated
weighted incidence cycle has weighted local degree over \(t_0\) equal to
\(s_0\).

As germs at \((t_0,0)\), complex covariance gives
\[
 \theta+Lx\in M^{\mathbb C}
 \quad\Longleftrightarrow\quad x\in Z_0(\theta).
\]
To obtain this identity, shrink the phase and physical neighborhoods
so that translation retains the same local cycle. The map
\((\theta,x)\mapsto(\theta+Lx,x)\) identifies \(\mathcal I_0\)
with the germ of \(M^{\mathbb C}\times\mathbb C^d\), under which
\(\rho\) becomes \((p,x)\mapsto p-Lx\).
Choose a real linear complement to \(L\mathbb R^d\) and let \(N\)
be its complexification, a complement to \(L\mathbb C^d\),
and write \(p=t_0+p_N+Ly(p)\). In the coordinates
\((p,x)\mapsto(p,y(p)-x)\), this map is
\(\pi|_{M^{\mathbb C}}\times\mathrm{id}_{\mathbb C^d}\),
where \(\pi(p)=p_N\) is the projection along the physical plane.
Consequently \(\pi|_{M^{\mathbb C}}\) is finite, and its weighted
local degree on \(\mathfrak M^{\mathbb C}\) is \(s_0\): it is the
sum of each positive integer component coefficient times the local
intersection degree of Definition~\ref{def:local-intersection-degree}.
For a real phase \(t\) and a point
\(x\in\operatorname{supp}\mu_t\cap U_t\), apply this argument
at \(t_0=t+Lx\). Covariance \eqref{eq:AC-1.2} gives
\(\mu_{t_0}(\{0\})=\mu_t(\{x\})\).

For \(y\in M^{\mathbb C}\) whose complementary coordinate is real,
write \(y=t_0+u+Lx\), with \(u\) real in the complement. By complex
covariance, \(x\) is a particle of the cycle at the real phase
\(t_0+u\). These particles are real, so \(x\) and \(y\) are real
as required.

If \(t\notin M\), the closed analytic set \(M^\C\), whose real locus
is \(M\), misses \(t\). Choose \(U_t\) small enough that its physical
section is empty. Since \(0\notin\operatorname{supp}\mu_t\), we may
shrink \(U_t\) further so that \(\operatorname{supp}\mu_t\cap U_t\)
is empty as well.
\end{proof}
\begin{remark}\label{rem:ac-cycle-multiplicity}
The universal incidence need not be flat when $d\ge2$. For $s=d=2$,
its scheme-theoretic fiber over $2[0]$ is
\[
 \C[w_1,w_2]/(w_1^2,w_1w_2,w_2^2),
\]
which has length three, whereas the generic degree is two. Reduction
can also lose cycle coefficients when $d=1$: for
$p(\theta)=2[x(\theta)]$, the reduced incidence is a single graph.
Accordingly, the coefficients of $\mathfrak M^\C$ are obtained from
the generic graphs of the reduced incidence; its weighted local degrees
recover the fiber masses.
\end{remark}

\begin{lemma}[Dense real sheets of a local finite projection]
\label{lem:integer-real-sheets}
Let $(A,p)$ be a reduced pure $c$-dimensional analytic germ in logarithmic
complex coordinates, with $p$ real. Let a real linear projection to
$\C^c$ be finite on this germ, and suppose that its full inverse image
of the real base consists of real points. Then, in every local irreducible
branch at every real point, smooth unramified real graphs occur arbitrarily
near that point. These graphs have real dimension $c$ and are maximal
totally real in that branch. In particular the smooth transverse real
locus is dense in the real locus, and a holomorphic function vanishing on
that real locus vanishes on every local branch.
\end{lemma}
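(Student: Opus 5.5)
Write the germ as $A$ in coordinates $w=(w',w'')$, with $\pi(w)=w'\in\C^c$ the given real linear projection. Real linear means that $\pi$ commutes with conjugation, so the real points of $A$ lie over the real base $\R^c$. The plan has three steps: find generic real base points inside the unramified locus, show that the sheets over them are real-analytic real graphs, and then deduce density and the identity property.

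\emph{Step 1: generic real base points.} After shrinking, $\pi:A\cap U\to W$ is proper and finite onto a connected polydisc $W\ni\pi(p)$. Lemma~\ref{lem:local-degree-general}(i) gives a nowhere dense analytic set $D\subset W$ away from which $\pi$ is an unramified covering. We may enlarge $D$ by the images of the singular locus and of the pairwise intersections of local irreducible branches; these images are again proper analytic subsets. A proper analytic subset of a connected open set $W\subset\C^c$ cannot contain a nonempty open subset of $W\cap\R^c$: otherwise each defining function would vanish on a real open set, hence identically by the identity theorem. So $W\cap\R^c\setminus D$ is dense in $W\cap\R^c$.

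\emph{Step 2: real graphs.} Fix a local irreducible branch $A_j$ and a point $q\in A_j$ that is real. By Lemma~\ref{lem:local-degree-general}(ii) applied to $A_j$, the branch maps onto a neighborhood of $\pi(q)$. Shrink the isolating neighborhood of $q$ so that $\pi$ stays finite on $A_j$ near $q$. Choose $b\in\R^c\setminus D$ close to $\pi(q)$. Its fiber in $A_j$ near $q$ is nonempty, and all its points are unramified. Near each such point $a$, the branch is the graph $w''=g(w')$ of a holomorphic map over a small polydisc around $b$. By hypothesis, the full inverse image of the real base is real, so $g$ sends real $w'$ near $b$ to real $w''$. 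Hence $\{(w',g(w')):w'\in\R^c\}$ is a real-analytic real $c$-dimensional graph, and it is exactly the real locus of the branch near $a$. It is maximal totally real, because its complexification is the whole branch: $g|_{\R^c}$ extends uniquely to $g$. Points of such graphs approach $q$ as $b\to\pi(q)$, because finiteness and properness of $\pi$ on a shrinking isolating neighborhood force fiber points into that neighborhood. This gives the density of the smooth transverse real locus in the real locus; the transversality is to the fibers of $\pi$, which is what the graph description says.

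\emph{Step 3: identity property.} Let $f$ be holomorphic and vanish on the real locus. On each real graph, $w'\mapsto f(w',g(w'))$ is holomorphic on a polydisc and vanishes on a real open set, so it vanishes identically. Thus $f$ vanishes on a nonempty open subset of the regular part of the branch $A_j$. The regular locus of an irreducible germ is connected, by the same result cited in the proof of Lemma~\ref{lem:integer-incidence-coefficients}, so the identity theorem gives $f\equiv0$ on $A_j$.

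The main obstacle is Step 2 near $q$: we must ensure that unramified real fibers of the \emph{given} branch exist arbitrarily close to $q$, and not merely close to some other branch. This is handled by excluding branch intersections in $D$ and applying properness to each branch separately.
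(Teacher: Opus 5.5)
Your proof is correct and follows essentially the same route as the paper. Both arguments remove the proper analytic images of the singular, critical and branch-intersection loci, which meet the real base in a nowhere dense set. Both take real regular values tending to the central value, use the real-fiber hypothesis and the inverse function theorem to get real holomorphic graphs on each branch inside any prescribed neighborhood, and conclude by the identity theorem together with connectedness of the regular locus of an irreducible branch. The paper additionally disposes of the trivial case $c=0$ explicitly.
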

\begin{proof}
For $c=0$ this is immediate. Otherwise isolate a point of a finite fiber
and shrink the base and source as in
Lemma~\ref{lem:local-degree-general}(iii).
Each local branch is proper and finite over the smaller base; its image
is analytic of dimension $c$ and therefore fills that connected base.
Lemma~\ref{lem:local-degree-general}(ii)--(iii) applies to each local
branch inside any prescribed neighborhood of the isolated point.

The singular, critical, and pairwise intersection loci have dimension
less than $c$ \cite[II, Proposition~4.26]{Demailly}. Their images form
a proper analytic subset by finiteness and
\cite[II, Theorem~8.8]{Demailly}. Remove this subset from the base.
Its intersection with the real base is nowhere dense: a holomorphic equation vanishing on a real open
ball vanishes identically. Choose real regular values tending to the central
value. The branch has preimages in the prescribed neighborhood, all real
by hypothesis. The inverse function theorem gives a holomorphic graph
near each preimage, with real restriction over a real base ball. The
complexification of its real tangent space is the complex tangent
space of the graph, proving maximal total reality.

The identity theorem in the base variables shows that a holomorphic
function vanishing on the real locus vanishes on each such graph.
Analytic continuation extends this vanishing over the irreducible
branch. Since the prescribed neighborhood was arbitrary, the same
construction proves density.
\end{proof}

\subsection{Finite generation}\label{local-finite-band-rigidity}

Let \(\mathcal C_K\) be the positive integer atomic measures on \(\mathbb R^d\) having mass at most \(K\) in each unit ball. The uniform weak topology is defined by the pseudometrics
\[
 d_g(\mu,\nu)=\sup_x\left|\int g(y-x)\,d(\mu-\nu)(y)\right|,
 \qquad g\in C_c^\infty(\mathbb R^d).
\]

\begin{lemma}[Local finite-band rigidity]
\label{lem:ac-rigidity} There is \(\rho_*=\rho_*(d,K)<\infty\) such that, for every \(\nu\in\mathcal C_K\), some neighborhood \(\mathcal U\subset\mathcal C_K\) of \(\nu\) in the relative uniform-weak topology satisfies
\begin{equation}
 \nu'\in\mathcal U,\quad
 \widehat{\nu'}|_{B(0,\rho_*)}=\widehat\nu|_{B(0,\rho_*)}
 \quad\Longrightarrow\quad \nu'=\nu.                           \label{eq:HF-1.1}
\end{equation}
Fourier equality is distributional; the radius \(\rho_*\) is chosen
so that this open ball contains the closed Fourier supports of the
constructed kernels.

\end{lemma}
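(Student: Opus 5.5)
We run the cluster machinery of Sections~\ref{uniform-clusters-and-continuity-of-their-moments}--\ref{a-holomorphic-banach-map} for the single measure $\nu$, with no phase variable. Everything there except Lemma~\ref{lem:ac-persistence} and Proposition~\ref{prop:ac-holomorphic-clusters} uses only the mass bound $K$, so it applies to any $\nu\in\mathcal C_K$. Fix $\delta,\eta,\sigma,\kappa,r_*,C_{\mathrm{ext}},\varepsilon,\upsilon,\rho$ as in the paragraph ``Choice of constants''. These depend only on $(d,K)$, so we may take $\rho_*>2\rho$ as there. Then form the clusters $A_b$ of $\nu$, the base points $y_b$, the reference moment vector $p^0$, the kernels $h_{a,\alpha}$ of Lemma~\ref{lem:ac-kernels}, and the map $\Psi=\mathrm{id}+\mathcal E$ of \eqref{eq:AC-5.5} on the $\ell^\infty$ ball of radius $2r_*$ about $p^0$.

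Next we choose the neighborhood $\mathcal U$. The proof of Lemma~\ref{lem:ac-persistence} used the phase only through \eqref{eq:AC-1.3}, that is, through smallness of finitely many pseudometrics $d_g$. These are the bump $\zeta$, and finitely many tests approximating the compact family of translated cutoffs $\chi_b$ times centered monomials of degree at most $K$. The common bound $K$ on unit-ball masses for $\nu'\in\mathcal C_K$ controls the approximation error. So there is a neighborhood $\mathcal U$, cut out by finitely many $d_g<\epsilon'$, with the following property for every $\nu'\in\mathcal U$. Every atom of $\nu'$ lies within $\eta/4$ of a unique cluster $A_b$. The mass assigned to $A_b$ is exactly $s_b$, since integrals of $\chi_b$ are integers close to $s_b$. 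Finally, the centered moment vector $p'$ of $\nu'$ satisfies $\|p'-p^0\|_\infty<r_*$. Moreover each block $p'_b$ lies in $V_{s_b}$ inside the same inner chart, since its points are real and within $\eta$ of $A_b$.

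For $\nu'\in\mathcal U$ we then repeat Step~1 of Proposition~\ref{prop:ac-holomorphic-clusters}. The ambient extensions $G_{a,\alpha;b}$ restrict to the traces on $V_{s_b}$, and the sums converge absolutely by translation boundedness and \eqref{eq:AC-5.3}. Regrouping by clusters therefore gives
\[
 \Psi(p')_{a,\alpha}=\int h_{a,\alpha}\,d\nu',\qquad
 \Psi(p^0)_{a,\alpha}=\int h_{a,\alpha}\,d\nu .
\]
Each $h_{a,\alpha}$ is Schwartz with $\widehat h_{a,\alpha}$ compactly supported in $B(0,2\rho)\Subset B(0,\rho_*)$. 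Hence $\int h_{a,\alpha}\,d\nu'$ is the pairing of the tempered distribution $\widehat{\nu'}$ with a test function supported in the open ball. The hypothesis $\widehat{\nu'}|_{B(0,\rho_*)}=\widehat\nu|_{B(0,\rho_*)}$ thus gives $\Psi(p')=\Psi(p^0)$. By \eqref{eq:AC-5.6}--\eqref{eq:AC-5.7}, $\mathcal E$ is $1/8$-Lipschitz on the $r_*$-ball. Consequently
\[
\|p'-p^0\|\le\|\mathcal E(p')-\mathcal E(p^0)\|\le\tfrac18\|p'-p^0\|,
\]
so $p'=p^0$.

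Finally, the moments determine each cluster. The fibers of $\pi_{s_b}$ are exactly the unordered $s_b$-tuples, so the part of $\nu'$ assigned to $A_b$ equals $\nu|_{A_b}$ as a cycle, masses included. Every atom of $\nu'$ is assigned to some cluster, so $\nu'=\nu$.

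The step I expect to need the most care is the reduction of the neighborhood to finitely many pseudometrics $d_g$. The persistence tests are indexed by infinitely many clusters, but after centering at $y_b$ they form one compact family modulo translation. Since $d_g$ already takes a supremum over translations, a finite $\epsilon$-net of this family in $C_c^\infty$ with common support, together with the mass bound $K$, should suffice. The same compactness must also give the $\ell^\infty$ smallness of $p'-p^0$ uniformly over all clusters.
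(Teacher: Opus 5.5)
Your proposal is correct and follows essentially the same route as the paper's proof. Both arguments build the clusters of \(\nu\) with constants depending only on \((d,K)\), and cut out \(\mathcal U\) by finitely many pseudometrics \(d_g\) coming from the compact family of centered cutoffs and moment tests. Both then obtain \(\Psi(p')=\Psi(p^0)\) from equality of the band-limited Fourier data, and conclude with the \(1/8\)-contraction and the fact that multisymmetric moments separate unordered cycles.
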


\begin{proof}
The preceding construction partitions \(\nu\) into
\(\delta\)-connected clusters with masses \(s_b\le K\), uniformly bounded
diameters and separated centers. Under small uniform weak perturbations,
positivity and integrality preserve every cluster mass, while the
centered moment vectors remain uniformly close to \(p^0\). The quotient
charts, extension constants and kernel bandwidth depend only on \(d,K\) and the fixed geometric constants.

The kernels \(h_{a,\alpha}\) have Fourier support in this ball, so
\eqref{eq:HF-1.1} implies equality of their integrals. Both moment
vectors solve
\[
 \Psi(p)=\Phi,\qquad
 \Psi(p)=p+\mathcal E(p),\quad \|D\mathcal E\|<1/8
\]
in the same small \(\ell^\infty\) ball. By contraction, \(p=p^0\);
since multisymmetric coordinates separate unordered cycles, the clusters
and hence the measures agree.
\end{proof}

The normalized cluster and moment cutoffs form a compact family.
Finite approximation by finitely many tests and their translates
determines a neighborhood preserving integer counts and the required
moment bound.

\subsubsection{The compact dual hull}\label{a-compact-hull-without-assuming-finite-generation}

For an integer-mass FQ measure \(\mu\), put
\[
 \Gamma=\langle\Lambda'\rangle_{\mathbb Z}
       \subset\mathbb R^d,\qquad
 \mathcal K=\operatorname{Hom}(\Gamma,S^1),
\]
where \(S^1\subset\C^*\) is the unit circle and \(\Gamma\) has the
discrete topology. Local finiteness of the spectrum implies countability,
so \(\mathcal K\) is a compact metrizable abelian group, realized as a
closed subgroup of \((S^1)^\Gamma\); write \(e\) for its identity.
The homomorphism
\[
 \iota(x)(\xi)=e^{2\pi i\xi\cdot x}
\]
has dense image by Kronecker's theorem: on each finite set of
frequencies, the characters of the compact dual satisfy the same
integer relations as the image.

By positivity and the spectral gap around zero, the uniform local mass
bound places all vague limits of translates in one
\(\mathcal C_K\). For a bandlimited Schwartz \(h\), define
\begin{equation}
 F_h(\omega)=\sum_{\xi\in\Lambda'}
       c_\xi\widehat h(-\xi)\omega(\xi).                       \label{eq:HF-2.1}
\end{equation}
This finite sum, evaluated at \(\omega=\iota(x)\), is the integral of
\(h\) against \(\tau_{-x}\mu\).

Vague limits of translates define a unique measure
\(\mu_\omega\) for each \(\omega\in\mathcal K\). For uniqueness,
approximate compactly supported smooth tests by smooth frequency
truncations; the common local mass bound controls the error
uniformly.
The family is then vaguely continuous, lies in \(\mathcal C_K\), and
satisfies
\begin{equation}
 \mu_{\omega\iota(x)}=\tau_{-x}\mu_\omega.                      \label{eq:HF-2.2}
\end{equation}
For every \(g\in C_c^\infty\), \(F_g(\omega)=\int g\,d\mu_\omega\)
is continuous by the same approximation. Uniform continuity on the
compact group implies convergence in the topology of
Lemma~\ref{lem:ac-rigidity}:
\begin{equation}
 \begin{split}
 &\sup_x\left|\int g(y-x)\,d(\mu_\omega-\mu_e)(y)\right|\\
 &\qquad\le
 \sup_{\omega'\in\mathcal K}
      |F_g(\omega\omega')-F_g(\omega')|
 \longrightarrow0\quad(\omega\to e).
 \end{split}                                                  \label{eq:HF-2.3}
\end{equation}

To prove injectivity, suppose \(\mu_\omega=\mu_e\) and test the
Fourier transform near an isolated spectral atom \(\xi\). By
\eqref{eq:HF-2.1}, \(c_\xi\omega(\xi)=c_\xi\). Since every
\(c_\xi\ne0\) and the spectrum generates \(\Gamma\), we have \(\omega=e\).

\subsubsection{Finite generation from the local inverse}\label{finite-generation-from-the-local-inverse}

Restriction to a finite band maps the compact dual hull onto a torus.
Its kernel is discrete by local rigidity and finite by compactness.
Duality places the spectral group inside a finite extension of the
group generated by the band.

\begin{proposition}[Finite generation of the spectrum]
\label{prop:finite-spectral-group}
Every integer-mass FQ measure has a finitely generated spectral group.
\end{proposition}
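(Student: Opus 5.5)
The plan follows the outline just given: restrict the compact dual hull to a finite band, use local rigidity to show the restriction has finite kernel, and then dualize.

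Fix $\rho_*=\rho_*(d,K)$ from Lemma~\ref{lem:ac-rigidity}, where $K$ is the mass bound of Lemma~\ref{lem:ac-translation-bound}. Let $\Lambda'_*=\Lambda'\cap B(0,\rho_*)$, which is finite by local finiteness, and let $\Gamma_*=\langle\Lambda'_*\rangle_\Z$, a finitely generated subgroup of $\Gamma$. Restriction of characters gives a continuous surjective homomorphism $r:\mathcal K=\operatorname{Hom}(\Gamma,S^1)\to\mathcal K_*=\operatorname{Hom}(\Gamma_*,S^1)$. Surjectivity holds because $S^1$ is divisible, hence injective. Let $H=\ker r$.

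\textbf{Step 1: $H$ is discrete.} Take $\omega\in H$. By \eqref{eq:HF-2.1}, $F_h(\omega)=F_h(e)$ for every Schwartz $h$ with $\widehat h$ supported in $B(0,\rho_*)$, since only frequencies in $\Lambda'_*$ occur and $\omega$ is trivial on them. Hence $\widehat{\mu_\omega}$ and $\widehat{\mu_e}$ agree distributionally on $B(0,\rho_*)$. This needs a short approximation argument: test functions with Fourier support in the open ball are limits of bandlimited Schwartz functions, and the uniform mass bound controls the errors.

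Apply Lemma~\ref{lem:ac-rigidity} at $\nu=\mu_e$. It gives a uniform-weak neighborhood $\mathcal U$ of $\mu_e$ in which Fourier agreement on the band forces equality. By \eqref{eq:HF-2.3}, there is a neighborhood $O$ of $e$ in $\mathcal K$ with $\mu_\omega\in\mathcal U$ for $\omega\in O$. Therefore every $\omega\in H\cap O$ satisfies $\mu_\omega=\mu_e$, and injectivity of $\omega\mapsto\mu_\omega$ gives $\omega=e$. So $H\cap O=\{e\}$, and $H$ is discrete.

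\textbf{Step 2: $H$ is finite.} $H$ is closed in the compact group $\mathcal K$, hence compact, and it is discrete by Step 1, so it is finite, of order $N$ say.

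\textbf{Step 3: duality.} Pontryagin duality for discrete $\Gamma$ identifies $H$ with $\operatorname{Hom}(\Gamma/\Gamma_*,S^1)$, the annihilator of $\Gamma_*$. Since $\Gamma/\Gamma_*$ is discrete and $\widehat{\Gamma/\Gamma_*}\cong H$ is finite, duality gives $|\Gamma/\Gamma_*|=|H|=N$. Thus $N\Gamma\subset\Gamma_*$. The subgroup $\Gamma_*$ is finitely generated in $\R^d$, hence free of finite rank. Then $\Gamma\cong N\Gamma\subset\Gamma_*$ is a subgroup of a finitely generated free abelian group, so it is finitely generated; it is torsion-free as a subgroup of $\R^d$.

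The main obstacle is Step 1: checking that the uniform-weak closeness supplied by \eqref{eq:HF-2.3} really meets the neighborhood required in Lemma~\ref{lem:ac-rigidity}. That neighborhood is determined by finitely many test functions together with their translates, as noted after that lemma. Since \eqref{eq:HF-2.3} is uniform in translations, this matches, but it is the point to verify carefully. A secondary point is making sure the band-equality hypothesis of the lemma is exactly what bandlimited tests detect. That works because $\rho_*$ was chosen so the open ball contains all kernel Fourier supports, and the lemma's proof uses only those kernels.
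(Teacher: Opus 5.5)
Your proposal is correct and follows the paper's proof. You restrict $\mathcal K$ to the dual torus of the band subgroup generated by $\Lambda'\cap B(0,\rho_*)$, then use \eqref{eq:HF-2.3}, Lemma~\ref{lem:ac-rigidity} and injectivity of $\omega\mapsto\mu_\omega$ to make the kernel discrete and hence finite, and finally dualize; the differences are cosmetic: you conclude via $\ker r\cong\operatorname{Hom}(\Gamma/\Gamma_*,S^1)$ and Pontryagin duality where the paper uses the exponent $e_0$ and integer monomials to get $\Gamma\subseteq e_0^{-1}\Gamma_0$, and the approximation you flag in Step~1 is unnecessary, because for $\phi\in C_c^\infty(B(0,\rho_*))$ the function $\widehat\phi$ is already a bandlimited Schwartz test.
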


\begin{proof}
Let
\[
 \Gamma_0=\langle\Lambda'\cap B(0,\rho_*)\rangle_{\mathbb Z}.
\]
By local finiteness of the spectrum, \(\Lambda'\cap B(0,\rho_*)\) is
finite, so \(\Gamma_0\) is finitely generated. Since the group is
torsion-free, its compact dual is a torus, and restriction is a
continuous surjection
\[
 \pi:\mathcal K\longrightarrow\operatorname{Hom}(\Gamma_0,S^1):
\]
its compact image contains the dense image of the homomorphism in that torus.

Let \(H_0=\ker\pi\). For \(\omega\in H_0\), the Fourier data of
\(\mu_\omega\) and \(\mu_e\) agree on the chosen band. If \(\omega\)
is sufficiently close to \(e\), the measures lie in the neighborhood
of Lemma~\ref{lem:ac-rigidity} by \eqref{eq:HF-2.3}. It follows that
\(\mu_\omega=\mu_e\), and injectivity implies \(\omega=e\).
The identity \(e\) is therefore isolated in the compact group \(H_0\).
By translation every point is isolated, so \(H_0\) is finite.

Let \(e_0\) be its exponent. For \(\xi\in \Gamma\), the character \(\omega\mapsto\omega(e_0\xi)\) is trivial on \(H_0\), hence descends to
\[
 \mathcal K/H_0\cong\operatorname{Hom}(\Gamma_0,S^1).
\]
Since characters of a torus are integer monomials, some
\(g_0\in \Gamma_0\) satisfies \(\omega(e_0\xi)=\omega(g_0)\) for
all \(\omega\in\mathcal K\). Evaluation on \(\iota(x)\) for all
real \(x\) yields \(e_0\xi=g_0\), and hence
\begin{equation}
 \Gamma_0\subseteq \Gamma\subseteq e_0^{-1}\Gamma_0.                            \label{eq:HF-3.1}
\end{equation}
The quotient on the right by \(\Gamma_0\) is finite, so generators
of \(\Gamma_0\) and finitely many coset representatives generate
\(\Gamma\).
\end{proof}

\begin{proof}[Proof of Theorem~\ref{thm:all-fq-toral-cycles}]
The polynomial growth in Definition~\ref{def:fourier-quasicrystal}
implies temperedness. Proposition~\ref{prop:finite-spectral-group}
removes the temporary finite-generation assumption. The compact-dual
hull construction and the construction in
Section~\ref{construction-from-finite-band-observations} prove (a),
Proposition~\ref{prop:ac-holomorphic-clusters} proves (b), and
Proposition~\ref{prop:ac-canonical-germ} proves (c)--(e).
\end{proof}

\subsection{The flux measure}\label{positive-toral-flux}

We represent the spectral coefficients by a measure supported on
\(M\). Its disintegration in normal coordinates yields the finite-sheet
model for the next section. Here \(\theta\) denotes a real logarithmic
coordinate on the torus.

Choose a basis of the spectral group, finitely generated by
Proposition~\ref{prop:finite-spectral-group}, and let \(L\) be the
matrix whose rows form this basis. Identify the hull with
\(\mathbb T^n\), and put
\[
 M=\{t:0\in\operatorname{supp}\mu_t\}.
\]
\begin{definition}[Flux measure]\label{def:ac-flux}
For \(\varphi\ge0\) Schwartz with \(\int\varphi=1\), the flux
measure is defined by
\begin{equation}
 m(f)=\int_{\mathbb T^n}
       \sum_{x\in\mu_t}\varphi(x)f(t+Lx)\,dt.                    \label{eq:HF-4.1}
\end{equation}
Atoms are counted with their masses. The local mass bound and Schwartz
decay ensure a finite positive measure.

\end{definition}

\begin{proposition}\label{prop:ac-flux-support}
The measure \(m\) is independent of \(\varphi\) and has support \(M\).
For \(k\in\mathbb Z^n\), put \(\xi=L^tk\). Then
\begin{equation}
 \widehat m(k)=
 \begin{cases}
 c_\xi,&\xi\in\Lambda',\\
 0,&\xi\notin\Lambda'.
 \end{cases}                                                 \label{eq:HF-4.2}
\end{equation}
\end{proposition}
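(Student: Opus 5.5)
The plan is to compute the Fourier coefficients of $m$ directly by unfolding the torus integral along the physical flow and comparing with the spectral data through the Fourier identity. Once the Fourier coefficients are shown to be independent of $\varphi$, independence of $m$ follows, because a finite measure on $\T^n$ is determined by its Fourier coefficients. The support claim will be handled separately, using positivity and the relative density of the hull measures.

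\textbf{Computing $\widehat m(k)$.} Fix $k\in\Z^n$ and test \eqref{eq:HF-4.1} with $f(s)=e^{-2\pi i k\cdot s}$. This gives
\[
 \widehat m(k)=\int_{\T^n}\sum_{x\in\mu_t}\varphi(x)e^{-2\pi ik\cdot(t+Lx)}\,dt
 =\int_{\T^n}e^{-2\pi ik\cdot t}\,G(t)\,dt,
\]
where
\[
 G(t)=\int\varphi(x)e^{-2\pi i\langle L^tk,x\rangle}\,d\mu_t(x).
\]
The integrand of $G$ is the test function $\varphi e_{-\xi}$ with $\xi=L^tk$. The natural step is to express $G(t)$ through the Fourier identity. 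If $\varphi$ is bandlimited, \eqref{eq:ac-fourier-observation} extends to this modulated test, whose Fourier transform is a translate of $\widehat\varphi$. This yields
\[
 G(t)=\sum_{\eta\in\Lambda'}c_\eta\,\widehat\varphi(\xi-\eta)\,e^{2\pi ik(\eta)\cdot t},
\]
up to sign conventions that I will fix against \eqref{eq:AC-1.1}. Integrating against $e^{-2\pi ik\cdot t}$ keeps only the terms with $k(\eta)=k$. Since $T=L^t$ is injective on $\Z^n$, the only possible surviving term is $\eta=\xi$, and it is present exactly when $\xi\in\Lambda'$. Its value is $c_\xi\widehat\varphi(0)=c_\xi\int\varphi=c_\xi$, which gives \eqref{eq:HF-4.2}.

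\textbf{Removing the bandlimited assumption.} For a general Schwartz $\varphi\ge0$ with $\int\varphi=1$, I would argue by linearity and approximation. The map $\varphi\mapsto m_\varphi(f)$ is linear and continuous in weighted sup norms, by the uniform local mass bound of Lemma~\ref{lem:ac-translation-bound}. Bandlimited Schwartz approximants $\varphi_N\to\varphi$ then suffice; they need not be positive. The computation above gives $\widehat{m_{\varphi_N}}(k)=c_\xi\int\varphi_N$ (or $0$ when $\xi\notin\Lambda'$), and $\int\varphi_N\to1$. Passing to the limit proves \eqref{eq:HF-4.2} for $\varphi$. A cleaner alternative is an unfolding argument: averaging $t$ over $\T^n$ together with covariance \eqref{eq:AC-1.2} makes $m_\varphi$ invariant under replacing $\varphi$ by a translate. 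Either route makes the coefficients, and hence $m$, independent of $\varphi$.

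\textbf{Support.} The point $t+Lx$ lies in $M$ exactly when $x\in\supp\mu_t$, because covariance gives $\mu_{t+Lx}=\tau_{-x}\mu_t$. Hence $m$ is carried by $M$, and since $M$ is closed, $\supp m\subset M$. For the reverse inclusion, take $\varphi>0$ everywhere, for instance a Gaussian, which is legitimate now that independence is known. Let $s_0\in M$ and let $U$ be a neighbourhood of $s_0$. For $t$ near $s_0$, the persistence Lemma~\ref{lem:ac-persistence} provides an atom $x$ of $\mu_t$ near $0$ with $t+Lx\in U$. Integrating over this open set of phases gives $m(U)>0$, so $s_0\in\supp m$.

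\textbf{Main obstacle.} I expect the delicate step to be the justification of the Fourier identity for the non-bandlimited test $\varphi e_{-\xi}$ at every hull phase $t$. Exchanging the sum and the integral requires either the bandlimited approximation with the uniform mass bound, or the absolute convergence furnished by Remark~\ref{rem:weighted-schwartz}. I will take the approximation route, since it needs only local finiteness of the spectrum.
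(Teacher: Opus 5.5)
Your proposal is correct and follows the paper's proof essentially step for step. You compute $\widehat m(k)$ from the bandlimited observation identity applied to $\varphi(x)e^{-2\pi i\xi\cdot x}$, then extend to Schwartz $\varphi$ by frequency truncation and the uniform local mass bound. You obtain the support from covariance, together with a positivity and integrality lower bound near a phase in $M$. The only difference is that you cite the persistence lemma for an atom of $\mu_t$ near $0$. Its counting part only places atoms within $\eta/4$ of the cluster $A_b$, so you would also need the moment continuity and the matching-metric equivalence; applying vague continuity directly to a bump at $0$, as the paper does, is the simpler justification.
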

\begin{proof}
Take \(f(t)=e^{-2\pi ik\cdot t}\) in \eqref{eq:HF-4.1}.
For a bandlimited \(\varphi\), formula \eqref{eq:HF-4.2} follows from
\eqref{eq:ac-fourier-observation}, applied to
\(\varphi(x)e^{-2\pi i\xi\cdot x}\); its Fourier transform at
\(-\xi\) is \(\int\varphi=1\). Smooth frequency truncation and the
common local mass bound extend the identity to every Schwartz
\(\varphi\) of integral one, proving independence of \(m\) from
\(\varphi\).

We next prove that its support is \(M\). By covariance, each phase
\(t+Lx\) in \eqref{eq:HF-4.1} belongs to \(M\), so
\(\operatorname{supp}m\subseteq M\). Conversely, suppose \(t_0\in M\),
and take \(f\ge0\) positive near \(t_0\) and \(\varphi\) positive
near zero. For every \(t\) in a sufficiently small open neighborhood
of \(t_0\), vague continuity and the atom at zero ensure that there
are particles \(x\) close to zero. Positivity and integrality bound
their contribution to \(\varphi(x)f(t+Lx)\) below by a positive
constant. Integrating over this open set yields \(m(f)>0\).

\end{proof}

\subsubsection{Finite-sheet disintegration}\label{direct-finite-sheet-disintegration}

Write
\[
 V=\operatorname{im}L,\qquad V^\perp=\ker L^t,\qquad c=n-d,
\]
and let \(B:\mathbb R^c\to V^\perp\) have orthonormal columns. In logarithmic coordinates,
\begin{equation}
 \theta=Bu+Lv,\qquad
 J=|\det[B,L]|=\sqrt{\det(L^tL)}.                              \label{eq:HF-5.1}
\end{equation}

\begin{lemma}[Normal-sheet disintegration]
\label{lem:ac-flux} If \(U\subset\mathbb R^n\) is a bounded torus logarithm chart and \(f\) is continuous and compactly supported in it, then
\begin{equation}
 m(f)=J\int_{\mathbb R^c}
        \sum_{\substack{y\in\mu_{Bu}\\Bu+Ly\in U}}
             f(Bu+Ly)\,du.                                   \label{eq:HF-5.2}
\end{equation}
The total number of summands, counted with multiplicity, has a finite bound \(D_U\) independent of \(u\).

\end{lemma}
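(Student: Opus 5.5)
The plan is to start from the definition \eqref{eq:HF-4.1} and change variables on the torus using the linear decomposition \eqref{eq:HF-5.1}. Since $\varphi$ is arbitrary by Proposition~\ref{prop:ac-flux-support}, we will choose it conveniently, but the cleanest route is to unfold the torus integral. Write $\int_{\T^n}$ as an integral over a fundamental domain and parametrize $\R^n$ by $(u,v)\in\R^c\times\R^d$ via $\theta=Bu+Lv$. The Jacobian of this parametrization is $J=|\det[B,L]|$; because the columns of $B$ are orthonormal and orthogonal to $V$, this equals $\sqrt{\det(L^tL)}$.

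First, lift $f$ to a compactly supported function on $U\subset\R^n$ and regard $m(f)$ as an integral over $\R^n/\Z^n$. Since $f$ is supported in a chart, only one lift contributes in each term. We then rewrite $m(f)$ as
\[
 \int_{\R^n}\mathbf 1_{\mathcal F}(\theta)\sum_{x\in\mu_\theta}\varphi(x)\,\tilde f(\theta+Lx)\,d\theta ,
\]
where $\mathcal F$ is a fundamental domain and $\tilde f$ is the periodization restricted appropriately. A cleaner formulation is to integrate over all of $\R^n$ the unperiodized $f$ composed with the phase, using periodicity $\mu_{\theta+k}=\mu_\theta$. The approach is to interchange the sum over atoms and the integral (Tonelli, justified by positivity, the local mass bound and Schwartz decay of $\varphi$). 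Then substitute $\theta=Bu+Lv$, and use covariance \eqref{eq:AC-1.2}, $\mu_{Bu+Lv}=\tau_{-v}\mu_{Bu}$, so that each atom $x$ of $\mu_{Bu+Lv}$ corresponds to an atom $y=x+v$ of $\mu_{Bu}$ with the same mass. The integrand becomes $\varphi(y-v)f(Bu+Ly)$.

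Next, integrate in $v$ first. For fixed $u$ and fixed atom $y$, $\int_{\R^d}\varphi(y-v)\,dv=1$, which leaves $J\int_{\R^c}\sum_{y\in\mu_{Bu}}f(Bu+Ly)\,du$. Here $f$ is treated as a function on $\R^n$ supported in $U$, so only atoms with $Bu+Ly\in U$ contribute, giving \eqref{eq:HF-5.2}. The bookkeeping between the torus and $\R^n$ is that a fundamental domain integral of a $\Z^n$-periodic sum equals the $\R^n$ integral of one lift. This works because $f$, viewed in the chart, is the restriction of the periodic function to a single sheet of the logarithm. We will make this precise by writing the periodic function as $\sum_{k}f(\cdot+k)$ and unfolding.

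For the counting bound, note that $Bu+Ly\in U$ with $U$ bounded forces $|u|$ and $|y|$ to be bounded, since $[B,L]$ is invertible. Hence the relevant atoms of $\mu_{Bu}$ lie in a fixed ball in $\R^d$, and Lemma~\ref{lem:ac-translation-bound} bounds their total mass by a constant $D_U$ depending only on the radius. The main obstacle is the justification of the Fubini interchange and the unfolding from $\T^n$ to $\R^n$. The Fubini step is handled by dominating $\sum_x\varphi(x)$ uniformly using the common unit-ball mass bound and Schwartz decay, exactly as in Definition~\ref{def:ac-flux}.
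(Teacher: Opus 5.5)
Your proposal is correct and follows the paper's proof. It uses the same substitution $\theta=Bu+Lv$ with $d\theta=J\,du\,dv$, the covariance $\mu_{Bu+Lv}=\tau_{-v}\mu_{Bu}$, Tonelli with $\int\varphi(y-v)\,dv=1$, and the counting bound from invertibility of $[B,L]$ together with the uniform local mass bound. The only difference is the passage from $\T^n$ to $\R^n$. The paper uses independence of $\varphi$ to take $\varphi$ compactly supported in a small ball, so that every nonzero integrand has a unique lift in a slightly larger chart $\widetilde U$. Your unfolding via $\sum_k\tilde f(\cdot+k)$ and $\mu_{\theta+k}=\mu_\theta$ instead works for any Schwartz $\varphi$ and does not need that choice.
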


\begin{proof}
By independence of \(\varphi\), we may choose a nonnegative
\(C_c^\infty\) function of integral one supported in a sufficiently
small physical ball. The support of \(f\), shifted by the vectors
\(-Lx\) with \(x\in\operatorname{supp}\varphi\), then lies in a
slightly larger logarithm chart \(\widetilde U\).

Every nonzero integrand in \eqref{eq:HF-4.1} has a unique lift \(t\in\widetilde U\). Write \(t=Bu+Lv\), so \(dt=J\,du\,dv\). By covariance,
\[
 \mu_{Bu+Lv}=\tau_{-v}\mu_{Bu}.
\]
Its atoms are \(x=y-v\), \(y\in\mu_{Bu}\), and
\[
 \varphi(x)f(t+Lx)=\varphi(y-v)f(Bu+Ly).
\]
Whenever this expression is nonzero, \(t\) lies in the larger chart,
so the \(v\)-integration extends to all of \(\mathbb R^d\). Tonelli's
theorem and \(\int\varphi(y-v)\,dv=1\) prove \eqref{eq:HF-5.2} for
nonnegative \(f\), and linearity completes the proof of the identity.

If \(\theta=Bu+Ly\in U\), then
\[
 u=B^t\theta,\qquad
 y=(L^tL)^{-1}L^t\theta.
\]
Since the chart is bounded, \(y\) lies in a fixed bounded window;
the uniform local mass bound then yields \(D_U\).
\end{proof}

To use \eqref{eq:HF-5.2} globally, we enumerate its finite fibers
and combine a finite collection of logarithm charts.

To enumerate the finite multisets in \eqref{eq:HF-5.2}, use vague
continuity of \(u\mapsto\mu_{Bu}\): restriction to a bounded window
defines a Borel relation with finite fibers. A countable hierarchy of
rational boxes enumerates these fibers measurably. Repeat each atom
according to its integer mass and pad with zero weights to obtain at
most \(D_U\) sheets.

Cover \(M\) by finitely many logarithm charts and choose a subordinate
Borel partition. Restricting \eqref{eq:HF-5.2} to its pieces, we obtain
a representation on a bounded box in \(\mathbb R^c\):
\begin{equation}
 m=\sum_{\ell=1}^{D}w_\ell(u)\,\delta_{\theta_\ell(u)}\,du,              \label{eq:HF-5.3}
\end{equation}
where \(D<\infty\), the logarithmic lifts \(\theta_\ell(u)\) are
bounded, and the weights \(w_\ell\ge0\) are measurable. Since atoms
are repeated according to their masses, the weights are \(J\) times
indicator functions.

It follows that
\[
 f\longmapsto
   \left(\sqrt{w_\ell(u)}\,f(\theta_\ell(u)\bmod\mathbb Z^n)\right)_{\ell=1}^D
\]
is an isometric embedding
\begin{equation}
 L^2(m)\hookrightarrow L^2(\text{box};\mathbb C^D).             \label{eq:HF-5.4}
\end{equation}
Multiplication by the torus character indexed by \(k\in\mathbb Z^n\) becomes multiplication on sheet \(\ell\) by \(e^{2\pi ik\cdot\theta_\ell(u)}\).

Since the support of \(m\) is \(M\), a Laurent polynomial vanishes
in \(L^2(m)\) if and only if it vanishes on \(M\). Character spans
in \(L^2(m)\) therefore have the same dimensions as their images in
the coordinate ring of the Zariski closure.

\subsubsection{Transverse analytic pieces}\label{finitely-many-transverse-real-analytic-pieces}

\begin{lemma}[Transverse analytic pieces]\label{lem:ac-transverse-strata}\label{par:finite-transverse-strata}
The set \(M\) is the closure of finitely many connected real-analytic
manifolds of dimension \(c=n-d\), each transverse to \(V\).
Moreover, \(L\Lambda\bmod\mathbb Z^n\) is dense in \(M\).
\end{lemma}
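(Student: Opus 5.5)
We will build the transverse pieces from the local structure in Proposition~\ref{prop:ac-canonical-germ} and Lemma~\ref{lem:integer-real-sheets}, and then use compactness of $M$.

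\textbf{Local structure.} Fix $t_0\in M$. By Proposition~\ref{prop:ac-canonical-germ}, the projection $\pi$ of $M^\C$ along $L\C^d$ is finite at $t_0$. It also has the real-fiber property: the full inverse image of the real base consists of real points. Isolate $t_0$ in its fiber and shrink as in Lemma~\ref{lem:local-degree-general}(iii), so that $\pi:M^\C\cap U\to W$ is proper and finite over a connected base ball $W$ centred at a real point. Let $D\subset W$ be the proper analytic subset made up of the images of the singular, ramification and pairwise-intersection loci. It is proper by \cite[II, Theorem~8.8]{Demailly}, and its real trace $D_\R=D\cap\R^c$ is a closed real-analytic subset of the real base ball of dimension $<c$.

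Over the real base with $D_\R$ removed, the argument of Lemma~\ref{lem:integer-real-sheets} applies: every preimage is real, and near each one $M^\C$ is a smooth holomorphic graph over the base. The real locus is therefore a finite union of real-analytic graphs over $W_\R\setminus D_\R$, and each graph is transverse to $V$ because it is a graph over a complement of $V$. The set $W_\R\setminus D_\R$ has finitely many connected components. This holds because $D_\R$ is semianalytic, and after shrinking $W$ to a relatively compact ball we may invoke local finiteness of components of semianalytic complements (\L ojasiewicz). Over each such component the finite covering splits into finitely many connected sheets.

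\textbf{Density and finiteness.} Lemma~\ref{lem:integer-real-sheets} states that the smooth transverse real locus is dense in the real locus near $t_0$, so $M\cap U$ is the closure of these finitely many sheets. Cover the compact set $M$ by finitely many such charts. The resulting finite family of connected real-analytic $c$-manifolds is transverse to $V$ and has closure $M$.

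One technical point remains: sheets from different charts overlap, so they are not globally disjoint pieces. The statement only asks that $M$ be the closure of finitely many such manifolds, not that they be disjoint, so a finite union of possibly overlapping open sheets suffices. I expect the main obstacle to be the finiteness of connected components of $W_\R\setminus D_\R$. The first thing I will try is to replace $D$ by a hypersurface $\{g=0\}$ with $g$ real-analytic on a compact neighborhood, such as a discriminant given by the Weierstrass polynomials of the finite map, and then apply finiteness of components of the complement of a real-analytic hypersurface on a compact semianalytic set. If that fails, I can avoid counting components altogether. In that case I take, on each chart, the single open manifold $\pi^{-1}(W_\R\setminus D_\R)\cap M$, which may be disconnected, and use only that it is a finite union of graphs. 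I would then note that connectedness can be dropped or recovered by the semianalytic finiteness.

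\textbf{Density of $L\Lambda$.} By covariance \eqref{eq:AC-1.2}, $\mu_{Lx}=\tau_{-x}\mu$, so $Lx\in M$ exactly when $x\in\Lambda$; that is, $L\Lambda\bmod\Z^n=M\cap(L\R^d\bmod\Z^n)$. Take a transverse sheet $S$ through a point $t\in M$. Because $S$ is transverse to $V$, the map $S\times\R^d\to\T^n$, $(s,x)\mapsto s+Lx$, is a local diffeomorphism near $(t,0)$. The physical flow $L\R^d$ is dense in $\T^n$, so its image meets every small open neighborhood of $t$. Pulling back through the local diffeomorphism gives points $s+Lx=Ly$ with $s\in S$ near $t$, and then $s=L(y-x)\in M$ lies in the orbit through the origin. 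Hence every point of $S$ is a limit of points of $L\Lambda$. Density of the sheets in $M$, established above, completes the argument.
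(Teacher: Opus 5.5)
Your proposal is correct, but it reaches both conclusions by a different route than the paper.

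\textbf{The pieces.} The paper works globally. It defines $M^\circ$ as the set of points near which $M$ is a $c$-dimensional real-analytic submanifold transverse to $V$, and uses Proposition~\ref{prop:ac-canonical-germ} and Lemma~\ref{lem:integer-real-sheets} to see that $M^\circ$ is relatively open and dense. It then takes the connected components of $M^\circ$, which are finitely many because $M^\circ$ is a relatively compact subanalytic set (Bierstone--Milman). You work chart by chart instead: you take the finite projection, remove the real trace $D_\R$ of the discriminant, count the components of $W_\R\setminus D_\R$ by \L{}ojasiewicz, and split the resulting finite unramified real coverings into connected sheets. Your route needs only finiteness of components of the complement of a real-analytic set in a relatively compact ball, not subanalyticity of the regular transverse locus of $M$, and it gives an explicit local model. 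The cost is that your pieces overlap across charts and are not canonical, which Definition~\ref{def:alg-strata} explicitly allows.

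\textbf{Your fallback.} Keeping one possibly disconnected manifold per chart would not be enough. The statement asks for connected pieces, and Corollary~\ref{cor:transverse-tropical-rank} uses connectedness to make each piece's Zariski closure irreducible. So the semianalytic component count is essential, not optional.

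\textbf{Density of $L\Lambda\bmod\Z^n$.} The paper argues directly from vague continuity. It approximates $t_0\in M$ by $Lx_j$; the atom of $\mu_{t_0}$ at the origin then forces atoms $\lambda_j-x_j\to0$ of $\tau_{-x_j}\mu$, so $L\lambda_j\to t_0$. Your argument uses the identity $L\Lambda\bmod\Z^n=M\cap(L\R^d\bmod\Z^n)$ and the local diffeomorphism $(s,x)\mapsto s+Lx$ on a transverse sheet. It is valid, but it depends on the first part, namely density of the transverse sheets. The paper's argument applies at every point of $M$ with no structure theory at all.
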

\begin{proof}

The phases \(L\Lambda\), where
\(\Lambda=\operatorname{supp}\mu\), are dense in \(M\): for
\(t_0\in M\), choose \(Lx_j\to t_0\). Vague continuity of
\(\mu_{Lx_j}=\tau_{-x_j}\mu\) and the positive atom at zero imply
that there are atoms \(y_j=\lambda_j-x_j\to0\), so
\[
 L\lambda_j=Lx_j+Ly_j\longrightarrow t_0.
\]
Proposition~\ref{prop:ac-canonical-germ}
and Lemma~\ref{lem:integer-real-sheets} give density of the smooth
\(V\)-transverse real locus in \(M\).

Let \(M^\circ\) be the set of points near which \(M\) is a
real-analytic submanifold of dimension \(c=n-d\), transverse to
\(V\), or equivalently with projection onto \(V^\perp\) of full rank.
This relatively open subset of \(M\) is dense by the preceding
paragraph. It is subanalytic and relatively compact, since \(M\) is
compact, and hence has finitely many connected components
\cite[Corollary~2.11, Lemma~3.4 and Remark~7.3]{BierstoneMilman}.
These components are the required pieces, and their closures cover
\(M\).

\end{proof}

\section{Automatic algebraicity}
\label{sec:automatic-algebraicity}\label{sec:algebraicity}

We prove that the canonical set and its Zariski closure have the same
dimension. This identifies the algebraic support used to prove
holonomicity of the flux measure and to reconstruct its branches.
We compare an upper bound for character spaces obtained from Fourier
vanishing with a lower bound obtained from a transverse tropical cone.

\subsection{Logarithmic forms and tropical cones}
\label{sec:tropical-detection}

We use tropicalization over the trivially valued field \(\C\).
For a Laurent polynomial \(F\), its initial form
\(\operatorname{in}_w(F)\) is the sum of its terms of least
\(w\)-weight. We have \(w\in\operatorname{Trop}(Y)\) precisely when
no nonzero Laurent polynomial vanishing on \(Y\) has a unique term
of least \(w\)-weight; see \cite[\S\S1--2]{BJST2007}.
An analytic arc lying in \(Y\) away from its central point determines
such a weight by its coordinate orders.

To prove \(\dim Y=c\), we first detect a transverse tropical cone
using a nonzero logarithmic form. Deligne's strictness theorem places
its class in top weight, where a nonzero iterated residue determines
integral boundary valuations with the required rank. This argument
allows irrational coefficients in the form.

\begin{lemma}[Full transverse rank of a tropical cone]
\label{lem:tropical-rank}
Let \(Y\subset(\mathbb C^*)^n\) be irreducible of complex dimension
\(r\), and let \(c\ge1\).  Let
\(\eta_1,\ldots,\eta_c\in\mathbb R^n\) be linearly
independent, and put
\[
 P_\eta(v)=(\eta_1\cdot v,\ldots,\eta_c\cdot v),\qquad
 \beta=\bigwedge_{b=1}^{c}
       \left(\sum_{j=1}^{n}\eta_{bj}\frac{dz_j}{z_j}\right).
\]
Suppose that \(\beta|_{Y_{\mathrm{reg}}}\) is not identically zero.
Then some maximal rational cone \(\sigma\) in a pure tropical fan for
\(Y\) satisfies
\[
 \dim\sigma=r,
 \qquad \operatorname{rank}
       \bigl(P_\eta|_{\operatorname{span}_{\mathbb R}\sigma}\bigr)=c.
\]
Tropicalization is taken over the trivially valued field
\(\mathbb C\), with the minimum-weight convention.
\end{lemma}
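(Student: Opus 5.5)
The plan is to detect the cone through a nonvanishing iterated residue of \(\beta\) along a boundary stratum of a good compactification of \(Y\). Deligne's mixed Hodge theory will force such a residue to exist, and each residue is a determinant of the pairings \(\eta_b\cdot v\) against integral valuation vectors \(v\) that lie in \(\operatorname{Trop}(Y)\).

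First fix a resolution \(\widetilde Y\to Y\) that is an isomorphism over \(Y_{\mathrm{reg}}\). Then choose a smooth projective compactification \(\overline Y\supset\widetilde Y\) with simple normal crossing boundary \(D=\sum_i D_i\). Each monomial \(z_j\) pulls back to a rational function on \(\overline Y\) that is invertible on \(\widetilde Y\), so \(dz_j/z_j\) is a global logarithmic \(1\)-form. Along \(D_i\) its residue is the integer \(\operatorname{ord}_{D_i}(z_j)\). Write \(v_i=(\operatorname{ord}_{D_i}z_j)_j\in\Z^n\). A generic arc meeting \(D_i\) transversally has coordinate orders \(v_i\), so \(v_i\in\operatorname{Trop}(Y)\). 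More generally, an arc through a point of a nonempty stratum \(D_{i_1}\cap\dots\cap D_{i_c}\), with contact orders \(a_1,\ldots,a_c\in\Z_{>0}\), has weight \(\sum a_k v_{i_k}\). Hence the rational cone \(\tau=\operatorname{cone}(v_{i_1},\ldots,v_{i_c})\) lies in \(\operatorname{Trop}(Y)\), since \(\operatorname{Trop}(Y)\) is closed.

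The pullback \(\beta\) is a global section of \(\Omega^c_{\overline Y}(\log D)\). It is nonzero because \(\beta|_{Y_{\mathrm{reg}}}\not\equiv0\). Deligne's degeneration at \(E_1\) makes \(H^0(\overline Y,\Omega^c(\log D))\to H^c(\widetilde Y,\C)\) injective, so \([\beta]\ne0\).

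Next comes the weight argument. The class \([\beta]\) is the pullback of \(\bigwedge_b\sum_j\eta_{bj}[dz_j/z_j]\in H^c((\C^*)^n,\C)\), which is pure of weight \(2c\). Morphisms of mixed Hodge structures are strict, and this strictness persists after tensoring with \(\C\), so the real coefficients \(\eta\) cause no difficulty. Therefore the image of a pure weight \(2c\) class meets \(W_{2c-1}H^c(\widetilde Y)\) only in zero, and \([\beta]\) is nonzero in \(\operatorname{Gr}^W_{2c}H^c(\widetilde Y)\). By Deligne's description of the weight spectral sequence, \(\operatorname{Gr}^W_{2c}H^c\) injects into \(\bigoplus H^0(D_{i_1}\cap\dots\cap D_{i_c})\) via iterated Poincar\'e residues. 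So some \(c\)-fold residue of \(\beta\) is nonzero on some nonempty stratum. The residue of a wedge of logarithmic forms \(\sum_j\eta_{bj}\,dz_j/z_j\) along \(D_{i_1},\ldots,D_{i_c}\) is the constant \(\pm\det(\eta_b\cdot v_{i_k})_{b,k}\). Consequently \(P_\eta\) has rank \(c\) on \(\operatorname{span}\tau\), and \(\dim\tau=c\).

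Finally choose a pure \(r\)-dimensional rational fan structure on \(\operatorname{Trop}(Y)\), which exists by Bieri--Groves as presented in \cite{BJST2007}. Take a generic point of the relative interior of \(\tau\). It lies in some cone of the fan, and that cone lies in a maximal cone \(\sigma\) with \(\dim\sigma=r\). The main obstacle is this step: we need \(\operatorname{span}\sigma\supseteq\operatorname{span}\tau\), which is not automatic for an arbitrary fan structure. We would get it by refining the fan so that \(\tau\) is a union of cones, replacing \(\tau\) if needed by a full-dimensional subcone on which \(P_\eta\) still has rank \(c\); openness of the rank condition makes the replacement harmless. Then \(\operatorname{rank}(P_\eta|_{\operatorname{span}\sigma})\ge c\), and equality holds because \(P_\eta\) maps to \(\R^c\).

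If producing the good compactification with the stated arc behaviour proves delicate, we would instead use a tropical compactification in a toric variety, following Tevelev, and resolve it so that boundary strata map to torus orbits. The rest of the argument is unchanged.
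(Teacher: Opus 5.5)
Your proposal is correct and follows essentially the same route as the paper. Both arguments use logarithmic forms on an SNC compactification of a resolution, Deligne's degeneration and weight strictness to place $[\beta]$ in top weight, a nonzero $c$-fold residue equal to $\det(\eta_b\cdot v_a)$, arcs showing that the cone lies in $\operatorname{Trop}(Y)$, and Bieri--Groves purity. The step you flag as the main obstacle needs no refinement: the finitely many maximal cones cover $\tau$, so some $\sigma\cap\tau$ has nonempty relative interior in $\tau$, which already gives $\operatorname{span}_{\mathbb R}\sigma\supseteq\operatorname{span}_{\mathbb R}\tau$ (your refinement argument also works).
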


\begin{proof}
\emph{Resolution and compactification.}
Choose a smooth quasiprojective algebraic resolution
\(\varrho:U\to Y\), and then a smooth projective compactification
\(\overline U\) with simple normal crossings boundary
\(\mathcal D=\overline U\setminus U\), using projective resolution and
embedded resolution of a projective compactification
\cite[Theorems~1.0.1--1.0.2 and Lemma~2.5.2]{Wlodarczyk2005}.
Each pullback \(\varrho^*z_j\) is a regular unit on \(U\), also along
exceptional divisors over \(Y\). Its rational extension to
\(\overline U\) therefore has divisor supported on \(\mathcal D\).
The pullback, still denoted by \(\beta\), belongs to
\begin{equation}
 H^0\bigl(\overline U,
           \Omega^c_{\overline U}(\log \mathcal D)\bigr)
 \quad\hbox{and is nonzero.}                         \label{eq:tropical-1}
\end{equation}
Nonvanishing follows by restriction to the nonempty smooth open set
where the resolution is an isomorphism.

\emph{Nonvanishing in cohomology.}
Deligne's logarithmic
Hodge-to-de Rham degeneration implies that a global logarithmic
\(c\)-form on a smooth projective normal-crossings compactification
is closed and that its class in \(H^c(U,\mathbb C)\) vanishes only
when the form itself is zero
\cite[Corollaries~3.2.13(ii) and~3.2.14]{DeligneII}. Thus
\begin{equation}
 [\beta]\ne0\quad\hbox{in }H^c(U,\mathbb C).             \label{eq:tropical-2}
\end{equation}

\emph{Top weight.}
Let \(g:U\to(\mathbb C^*)^n\) be the resolution followed by the
inclusion.  The class in \eqref{eq:tropical-2} lies in the image of
\[
 g^*:H^c((\mathbb C^*)^n,\mathbb C)
       \longrightarrow H^c(U,\mathbb C).
\]
The source is pure of weight \(2c\). Algebraic pullback is a morphism
of mixed Hodge structures, strict for the weight filtration
\cite[Theorems~3.2.5(iii) and~2.3.5(iii)]{DeligneII}.
It follows that
\begin{equation}
 \operatorname{im}g^*\cap
       W_{2c-1}H^c(U,\mathbb C)=\{0\}.                  \label{eq:tropical-3}
\end{equation}
Complexifying the rational pullback map proves \eqref{eq:tropical-3}
for the real coefficients \(\eta_{bj}\) and every complex linear
combination in its image. In particular, \([\beta]\) is nonzero in
\(\operatorname{Gr}^{W}_{2c}H^c(U,\mathbb C)\).

\emph{An iterated residue.}
Write \(\mathcal D^{(p)}\) for the disjoint union of all
\(p\)-fold intersections of distinct irreducible components of
\(\mathcal D\), with the alternating orientation convention.  The weight
spectral sequence, in cohomological weight indexing, is
\begin{equation}
 E_1^{-p,s}
   =H^{s-2p}(\mathcal D^{(p)},\mathbb C)(-p)
   \ \Longrightarrow\ H^{s-p}(U,\mathbb C).             \label{eq:tropical-4}
\end{equation}
It degenerates at \(E_2\) and is identified with the logarithmic
complex by Poincar\'e residues
\cite[Equation~(3.1.5.2), Equation~(3.2.4.1), and
Corollary~3.2.13(i)]{DeligneII}. For weight \(2c\)
in \(H^c\), the incoming differential has source
\(H^{-2}(\mathcal D^{(c+1)},\mathbb C)(-c-1)=0\).  Hence
\begin{equation}\begin{aligned}
 \operatorname{Gr}^{W}_{2c}H^c(U,\mathbb C)
 &\simeq
 \ker\bigl(
 H^0(\mathcal D^{(c)},\mathbb C)(-c)
 \longrightarrow H^2(\mathcal D^{(c-1)},\mathbb C)(-c+1)
 \bigr)\\
 &\subset H^0(\mathcal D^{(c)},\mathbb C)(-c).
                                                               \label{eq:tropical-5}
\end{aligned}\end{equation}
The image of a global logarithmic \(c\)-form in \eqref{eq:tropical-5}
consists of its \(c\)-fold residues. By
\eqref{eq:tropical-2}--\eqref{eq:tropical-5}, one residue of \(\beta\)
is nonzero.

\emph{The residue formula.}
Let \(\mathcal D_{i_1},\ldots,\mathcal D_{i_c}\) be components having such a
nonempty intersection.  Put
\[
 v_a=\bigl(\operatorname{ord}_{\mathcal D_{i_a}}z_1,\ldots,
              \operatorname{ord}_{\mathcal D_{i_a}}z_n\bigr)
       \in\mathbb Z^n,\qquad 1\le a\le c.
\]
At a general point of the intersection, only these boundary components
occur. In local coordinates \(t_1,\ldots,t_r\), with
\(\mathcal D_{i_a}=\{t_a=0\}\), one has
\[
 z_j=u_j\prod_{a=1}^{c}t_a^{v_{aj}},
 \qquad u_j\text{ holomorphic and nowhere zero}.
\]
The residue, up to its orientation sign, equals
\begin{equation}
 \operatorname{Res}_{i_1,\ldots,i_c}\beta
      =\det(\eta_b\cdot v_a)_{b,a=1}^{c}\ne0.             \label{eq:tropical-6}
\end{equation}
In particular \(P_\eta(v_1),\ldots,P_\eta(v_c)\) are independent.

\emph{The tropical cone.}
The cone \(\mathscr C=\sum_a\mathbb R_{\ge0}v_a\) lies in
\(\operatorname{Trop}(Y)\).  For positive
integers \(N_1,\ldots,N_c\), take the analytic arc
\(t_a=\zeta^{N_a}\), keeping generic values of the other coordinates fixed.
Its image in \(Y\) has coordinate orders
\(\sum_aN_av_a\).  A Laurent polynomial vanishing on that arc
cannot have a unique monomial of smallest order, so this vector
belongs to \(\operatorname{Trop}(Y)\). Rescaling and taking the closure
allow, successively, positive rational and nonnegative real
coefficients. By \eqref{eq:tropical-6}, \(\mathscr C\) has dimension \(c\).

By the Bieri--Groves purity theorem~\cite[Theorem~A]{BieriGroves},
there is a finite rational fan for
\(\operatorname{Trop}(Y)\), all of whose maximal cones have
dimension \(r\); see also the beginning of
\cite[\S3]{BJST2007}. Its maximal cones intersect \(\mathscr C\) in a
finite cover of \(\mathscr C\), so one intersection has nonempty
relative interior in \(\mathscr C\). The span of that maximal cone
therefore contains \(\operatorname{span}_{\mathbb R}\mathscr C\), on which
\(P_\eta\) has rank \(c\) by \eqref{eq:tropical-6}.
\end{proof}

\begin{corollary}[Detection from a transverse analytic branch]
\label{cor:transverse-tropical-rank}
Let \(V\subset\mathbb R^n\) have dimension \(d<n\), and put
\(c=n-d\).  Let \(S_0\subset\mathbb T^n\) be a connected
real-analytic \(c\)-manifold that is \(V\)-transverse at some point,
and let \(Y\)
be its Zariski closure in the algebraic torus. Then \(Y\) is irreducible and, with
\(r=\dim_\C Y\), there is an \(r\)-dimensional rational cone
\(\sigma\subset\operatorname{Trop}(Y)\) whose span projects with
rank \(c\) onto \(V^\perp\).
In particular \(r\ge c\).
\end{corollary}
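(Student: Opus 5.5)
The plan is to reduce to Lemma~\ref{lem:tropical-rank} by finding linearly independent real vectors $\eta_1,\ldots,\eta_c$ for which the logarithmic form $\beta$ does not vanish identically on $Y_{\mathrm{reg}}$, and for which the projection $P_\eta$ is essentially the orthogonal projection onto $V^\perp$.

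\emph{Irreducibility.} The Zariski closure of an irreducible real-analytic set is irreducible. Suppose $Y=Y_1\cup Y_2$ with both $Y_i$ proper closed subvarieties. Then $S_0=(S_0\cap Y_1)\cup(S_0\cap Y_2)$ is a union of two real-analytic subsets. Since $S_0$ is a connected analytic manifold, the identity theorem (along real-analytic charts) shows that one of these subsets, say $S_0\cap Y_1$, has nonempty interior and hence equals $S_0$. But then $Y\subseteq Y_1$, contradicting properness. Here I identify $\T^n$ with $(S^1)^n\subset(\C^*)^n$.

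\emph{Choice of $\eta$.} Take $\eta_1,\ldots,\eta_c$ to be an orthonormal basis of $V^\perp$; these are the columns of $B$. In logarithmic coordinates $z=\exp(2\pi i\theta)$ we have $dz_j/z_j=2\pi i\,d\theta_j$, so
\[
 \beta=(2\pi i)^c\,d(\eta_1\cdot\theta)\wedge\cdots\wedge d(\eta_c\cdot\theta).
\]
Restricted to the real tangent space $T_pS_0$, this form is a nonzero multiple of the determinant of $P_{V^\perp}|_{T_pS_0}$. That determinant is nonzero exactly when $T_pS_0\oplus V=\R^n$, which is the $V$-transversality assumed at some point $p$. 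By openness, $\beta|_{S_0}\ne0$ on a nonempty open set, and we may assume it contains regular points of $Y$.

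\emph{Passage to $Y_{\mathrm{reg}}$.} This is the step I expect to be the most delicate. The singular locus $Y_{\mathrm{sing}}$ is a proper subvariety, so $S_0\not\subseteq Y_{\mathrm{sing}}$ by Zariski density. By the irreducibility argument above, $S_0\cap Y_{\mathrm{sing}}$ is then nowhere dense in $S_0$, so the transverse open set contains a point $q\in Y_{\mathrm{reg}}$. Near $q$, $S_0$ is a real submanifold of the complex manifold $Y_{\mathrm{reg}}$, and its complex span $T_qS_0\otimes\C$ injects into $T_qY_{\mathrm{reg}}$.

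If $\beta|_{Y_{\mathrm{reg}}}$ vanished identically, then for every $y\in Y_{\mathrm{reg}}$ and all $w_1,\ldots,w_c\in T_yY_{\mathrm{reg}}$ we would have $\beta_y(w_1,\ldots,w_c)=0$, since $\beta|_{Y_{\mathrm{reg}}}$ is the pullback of $\beta$ to this complex manifold. In particular $\beta$ would vanish on real $c$-tuples in $T_qS_0\subset T_qY_{\mathrm{reg}}$. This contradicts the previous step, because $\beta$ is $\C$-multilinear and its pullback to $S_0$ is nonzero at $q$. One should check carefully that the pullback of a holomorphic form to a real submanifold agrees with evaluation on real tangent vectors; this is immediate.

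\emph{Conclusion.} Lemma~\ref{lem:tropical-rank} now applies with $r=\dim_\C Y$. It produces a maximal rational cone $\sigma$ of dimension $r$ in a tropical fan for $Y$ such that $P_\eta$ has rank $c$ on $\operatorname{span}\sigma$. Since the $\eta_b$ form an orthonormal basis of $V^\perp$, $P_\eta$ is the orthogonal projection onto $V^\perp$ written in that basis. Hence the span of $\sigma$ projects with rank $c$ onto $V^\perp$. Finally, $r=\dim\sigma\ge\operatorname{rank}(P_\eta|_{\operatorname{span}\sigma})=c$.
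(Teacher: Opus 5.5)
Your proposal is correct and follows essentially the paper's proof: irreducibility from the real-analytic identity theorem on the connected manifold $S_0$, a $V$-transverse point of $S_0$ lying in $Y_{\mathrm{reg}}$ (found because $S_0\cap Y_{\mathrm{sing}}$ is nowhere dense in $S_0$), and nonvanishing of $\beta$ there because its restriction to $S_0$ is $(2\pi i)^c$ times the transverse volume form, after which Lemma~\ref{lem:tropical-rank} applies. The differences are cosmetic: you phrase irreducibility through a decomposition $Y=Y_1\cup Y_2$ rather than primality of the vanishing ideal, and you deduce $r\ge c$ from the rank of $P_\eta$ on the $r$-dimensional span of $\sigma$, whereas the paper reads it off from complex-linear independence of the real tangent vectors of $S_0$.
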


\begin{proof}
If a product of two Laurent polynomials vanishes on \(S_0\), one
factor vanishes on a nonempty open subset and hence, by the
real-analytic identity theorem, on the whole connected manifold.
The vanishing ideal of \(S_0\) is prime, so \(Y\) is irreducible.
The nonempty open set of \(V\)-transverse points of \(S_0\)
meets the regular locus of \(Y\):
any algebraic equation vanishing on this real open set vanishes on
all of \(S_0\), and hence on \(Y\). We may thus choose a
\(V\)-transverse point in \(S_0\cap Y_{\mathrm{reg}}\).

At this point, the \(c\) independent real tangent vectors of \(S_0\)
are complex-linearly independent, since angle coordinates identify them
with real vectors in \(\R^n\subset\C^n\). In particular
\(\dim_\C Y\ge c\). For a basis \(\eta_1,\ldots,\eta_c\)
of \(V^\perp\), the restriction of the logarithmic form \(\beta\)
to \(S_0\) is \((2\pi i)^c\) times the corresponding transverse
volume form. It is nonzero at the chosen point, and
Lemma~\ref{lem:tropical-rank} applies.
\end{proof}

\subsection{The upper bound from Fourier vanishing}
\label{sec:fourier-vanishing-bound}

We bound character spaces using gaps in Fourier support and a finite
description of the measure over normal coordinates. Fix a rank-\(d\) linear map \(T:\R^n\to\R^d\), put
\(c=n-d\ge1\), set \(V=(\ker T)^\perp\), and let \(B\in\R^{n\times c}\) have orthonormal
columns spanning \(V^\perp=\ker T\).  For a finite positive measure \(m\)
on \(\T^n\), write
\[
 \mathcal S=\{k\in\Z^n:\widehat m(k)\ne0\}.
\]
The spectral hypothesis is that \(\mathcal S\) is \(T\)-proper, namely
\begin{equation}
 \#\{k\in\mathcal S:\|Tk\|\le R\}<\infty\qquad(R<\infty).
 \label{eq:alg-proper}
\end{equation}

\begin{definition}[Bounded finite-sheet model]\label{def:alg-sheets}
We say that \(m\) has a \emph{bounded finite-sheet model in the normal
coordinates} if there are a positive integer \(D\), a cube
\(I=[-S/2,S/2]^c\) of positive integer side length \(S\), bounded
measurable lifts \(\theta_\ell(t)\in\R^n\) with a common bound
\(\|\theta_\ell(t)\|\le C_0\), and nonnegative measurable
weights \(w_\ell(t)\), such that for every nonnegative Borel function
\(f\) on \(\T^n\),
\begin{equation}
 \begin{split}
 \int_{\T^n} f\,dm
   &=\int_I\sum_{\ell=1}^D
       w_\ell(t)f(\theta_\ell(t)\bmod\Z^n)\,dt,\\
 B^t\theta_\ell(t)&=t\qquad\text{where }w_\ell(t)>0.
 \end{split}
 \label{eq:alg-sheets}
\end{equation}
Missing sheets have zero weight; the weights are integrable but may be
unbounded.
\end{definition}

We use two instances of this model. For a compact real-analytic
transversal with positive continuous density, the proof of
Theorem~\ref{thm:main-algebraicity} uses local inverses of the normal
projection. For the canonical flux measure, we use \eqref{eq:HF-5.3}:
enlarge its bounded
box to a cube of integer side length and pad the sheets with zero weights.
The lifts there have the form $\theta_\ell(u)=Bu+Ly$, so
$B^t\theta_\ell(u)=u$. The measure identity \eqref{eq:HF-5.3} yields
\eqref{eq:alg-sheets} for every nonnegative Borel function.

For \(R>0\), \(h\ge0\), define the finite set
\begin{equation}
 Q(R,h)=\{k\in\Z^n:\|Tk\|\le R,\ B^tk\in[0,h]^c\}
 \label{eq:alg-index-box}
\end{equation}
and the space
\[
 \mathcal H(R,h)=\operatorname{span}_{\C}
       \{\chi_k:k\in Q(R,h)\}\subset L^2(m),
 \qquad \chi_k(\theta)=e^{2\pi ik\cdot\theta}.
\]
Finiteness of \(Q(R,h)\) follows because
\(x\mapsto(Tx,B^tx)\) is a real-linear isomorphism.

Fourier vanishing makes suitably spaced translates of the character
space orthogonal. On the sheets, their images can be approximated in a
single box of Fourier modes. Comparing its dimension with the number
of orthogonal vectors gives the upper bound.

\begin{lemma}[Upper bound for character spaces]
\label{lem:alg-strip-upper}
Assume \eqref{eq:alg-proper} and \eqref{eq:alg-sheets}.  There is a
constant \(C_*\), independent of \(R,h\), and for every \(R\) a
finite constant \(A_R\), such that for all \(R>0\) and \(h\ge0\),
\begin{equation}
 \dim\mathcal H(R,h)\le C_*(h+A_R)^c.
 \label{eq:alg-rank-upper}
\end{equation}
One can take \(C_*=DS^c\), so \(C_*\) depends only on the sheet model.
\end{lemma}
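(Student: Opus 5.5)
The plan is to compare two quantities attached to the character space $\mathcal H(R,h)$. The first is an \emph{orthogonality count}, which uses the Fourier vanishing \eqref{eq:alg-proper}. The second is a \emph{mode count} on the finite-sheet model \eqref{eq:alg-sheets}, which bounds every subspace of $L^2(m)$ whose sheet images are nearly bandlimited in the normal variable $t\in I$. I have not checked that the pieces fit together, and I flag the weakest step below.

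\emph{Step 1 (Gram structure).} In $L^2(m)$,
\[
 \langle\chi_k,\chi_{k'}\rangle=\int\chi_{k-k'}\,dm=\widehat m(k'-k).
\]
This is zero unless $k'-k\in\mathcal S$. For $k,k'\in Q(R,h)$ we have $\|T(k-k')\|\le 2R$, so by \eqref{eq:alg-proper} only the finite set $F_R=\{j\in\mathcal S:\|Tj\|\le 2R\}$ of differences can occur. Put
\[
 \beta_R=\max_{j\in F_R}\|B^tj\|,
\]
which is finite.

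\emph{Step 2 (orthogonal translates).} Choose shifts $a_1,\dots,a_N\in\Z^n$ whose pairwise differences have $T$-component larger than $4R$ plus the $T$-size of $F_R$. For such shifts, every cross inner product between $\chi_{a_i}\mathcal H(R,h)$ and $\chi_{a_j}\mathcal H(R,h)$ with $i\ne j$ is an entry $\widehat m(j')$ with $j'\notin\mathcal S$, hence zero. Multiplication by $\chi_a$ preserves dimension, so
\[
 N\dim\mathcal H(R,h)\le\dim\bigl(\text{span of all the translated spaces}\bigr).
\]
One could also push in the opposite direction, choosing shifts with $B^ta$ spread and $\|Ta\|$ bounded. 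Which direction is needed depends on how Step 3 turns out.

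\emph{Step 3 (mode count on sheets).} Use the isometry \eqref{eq:HF-5.4}. On sheet $\ell$,
\[
 k\cdot\theta_\ell(t)=(B^tk)\cdot t+k\cdot P_V\theta_\ell(t),
\]
because $B^t\theta_\ell(t)=t$. The second term involves only $Tk$ paired with a lift component bounded by $C_0$. I would expand $e^{2\pi i k\cdot P_V\theta_\ell(t)}$ in the finitely many bounded $V$-frequencies allowed by $\|Tk\|\le R$. The aim is to show that the sheet images of $\mathcal H(R,h)$ lie, up to an arbitrarily small $L^2$ error, in a space of dimension at most $D$ times the number of exponentials $e^{2\pi i\xi\cdot t}$ on $I$ with period $S$ whose frequencies lie in $[-A_R,h+A_R]^c$. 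That number is at most $DS^c(h+A_R)^c$, after absorbing constants into $A_R$. Here $A_R$ absorbs $\beta_R$, the $V$-modulation bandwidth determined by $R$ and $C_0$, and the tails of the approximation. Since an exact subspace of dimension $r$ cannot lie within small distance of a space of smaller dimension, this gives \eqref{eq:alg-rank-upper} with $C_*=DS^c$.

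\emph{Main obstacle.} Step 3 is where I expect the difficulty: the lifts $\theta_\ell$ are only measurable, so the $V$-phase factor is not smooth in $t$. The approximation therefore cannot come from smoothness. I would first try to avoid it by working at the level of the Gram matrix. By Step 1, the Gram matrix is a finite-band block Toeplitz matrix in the $B^t$-direction, with symbol equal to the sheet-averaged density. I would then bound its rank by the dimension of $L^2$ of the period-$S$ box with $D$ sheets, truncated to frequency window $h+\beta_R$, using Step 2 only to decouple the $T$-direction. If that fails, I would fall back on approximating $P_V\theta_\ell$ in measure by step functions on a fine grid of $I$ and paying a larger $A_R$.
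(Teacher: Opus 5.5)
There is a genuine gap. It sits in Step~3, where you expected it, but Step~2 is also wrong as you primarily set it up.

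\textbf{Step~2: wrong direction of separation.} Your preferred choice in Step~2, shifts whose differences are large in the $T$-direction, gives no orthogonality. Properness \eqref{eq:alg-proper} only makes $\mathcal S$ finite in each slab $\|Tk\|\le R'$. The set $\mathcal S$ itself (in the application, the whole spectrum) contains indices of arbitrarily large $T$-norm. So $\|T(a_j-a_i+b-a)\|>2R+\cdots$ only shows that this difference lies outside $F_R$, not outside $\mathcal S$.

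Orthogonality has to come from the normal direction, with bounded $T$-component. Round $Bj_0j$, $j\in\Z^c$, to integer vectors $k_j$. Then $\|Tk_j\|\le\rho=\|T\|\sqrt n/2$ while $\|B^tk_j-j_0j\|_\infty\le\sqrt n/2$. Put $H_R=\max\{\|B^tk\|_\infty:k\in\mathcal S,\ \|Tk\|\le2(R+\rho)\}$; your $\beta_R$ must be enlarged to absorb the rounding. Then any integer $j_0>h+H_R+\sqrt n$ makes the spaces $\chi_{k_j}\mathcal H(R,h)$ pairwise orthogonal.

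\textbf{Step~3: the mode count fails as formulated.} No choice of $A_R$ rescues it.
\begin{itemize}
\item The factor $e^{2\pi i(P_Vk)\cdot P_V\theta_\ell(t)}$ is an arbitrary unimodular measurable function of $t$. It is not a combination of finitely many $t$-frequencies, and nothing forces the sheet images of $\mathcal H(R,h)$ to lie uniformly near a window of width $h+A_R$.
\item Approximating each $\chi_k$ separately bounds no dimension, because the Gram matrix of the $\chi_k$ may be badly conditioned.
\item Approximating a fixed orthonormal basis $\psi_1,\dots,\psi_{N_*}$ does work, but the tail radius $W_{\mathrm{tail}}$ then depends on $R$, $h$ and the basis, so it cannot go into $A_R$.
\item Bandedness of the Gram matrix in the $B^t$-direction likewise bounds no rank.
\end{itemize}

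\textbf{The missing idea is amplification.} The orthogonal translates are there to amortize $W_{\mathrm{tail}}$, not to decouple the $T$-direction. Write $\mathcal U\psi=(\sqrt{w_\ell}\,\psi(\theta_\ell\bmod\Z^n))_{\ell}$ and $\delta_j=k_j-Bj_0j$. Then
\[
(\mathcal U(\chi_{k_j}\psi_i))_\ell=e^{2\pi ij_0j\cdot t}\,e^{2\pi i\delta_j\cdot\theta_\ell(t)}\,(\mathcal U\psi_i)_\ell .
\]
Because the lifts are bounded by $C_0$, the map $\delta\mapsto e^{2\pi i\delta\cdot\theta_\ell}\mathcal U\psi_i$ is Lipschitz into $L^2(I;\C^D)$ on the compact cube containing all $\delta_j$. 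Hence a single $W_{\mathrm{tail}}$ captures a fraction $1-\epsilon$ of every such norm, and no regularity of $\theta_\ell$ is used. Since $S$ and $j_0$ are integers, the first factor shifts the period-$S$ modes exactly by $Sj_0j$.

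Bessel's inequality for the orthonormal family $\{\chi_{k_j}\psi_i\}$, $j\in[-N,N]^c$, gives
\[
(1-\epsilon)N_*(2N+1)^c\le D(2Sj_0N+2W_{\mathrm{tail}}+1)^c .
\]
Letting $N\to\infty$ and then $\epsilon\to0$ gives $N_*\le D(Sj_0)^c\le DS^c(h+H_R+\sqrt n+1)^c$. This is the lemma with $A_R=H_R+\sqrt n+1$, and it is how the paper proves it.
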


\begin{proof}
\emph{Spacing and rounding.}
Let \(\eta_0=\sqrt n/2\) and \(\rho=\|T\|\eta_0\).  By
\eqref{eq:alg-proper},
\[
 H_R=\max\{\|B^tk\|_\infty:k\in\mathcal S,\
                         \|Tk\|\le2(R+\rho)\}<\infty,
\]
with zero used if the set is empty, and set
\[
 A_R=H_R+\sqrt n+1.
\]
Choose a positive integer \(j_0\)
such that
\begin{equation}
 h+H_R+2\eta_0<j_0\le h+H_R+2\eta_0+1.
 \label{eq:alg-block-spacing}
\end{equation}
For each \(j\in\Z^c\), round \(Bj_0j\) coordinatewise to an integer
vector \(k_j\), and put \(\delta_j=k_j-Bj_0j\).  Then
\begin{equation}
 \|\delta_j\|\le\eta_0,\qquad
 \|Tk_j\|\le\rho,\qquad
 \|B^tk_j-j_0j\|_\infty\le\eta_0.
 \label{eq:alg-rounding}
\end{equation}

\emph{Orthogonality.}
If \(j\ne j'\) and \(a,b\in Q(R,h)\), the difference
\(k=k_j+a-k_{j'}-b\) satisfies \(\|Tk\|\le2(R+\rho)\).
In a coordinate where \(j-j'\) is nonzero, the corresponding normal
coordinate has absolute value at least \(j_0-h-2\eta_0>H_R\).
It follows that \(\pm k\notin\mathcal S\), so the character inner
product vanishes. The subspaces
\begin{equation}
 \chi_{k_j}\mathcal H(R,h),\qquad j\in\Z^c,
 \label{eq:alg-orthogonal-blocks}
\end{equation}
are pairwise orthogonal. Character multiplication is unitary on
\(L^2(m)\), since \(m\) is positive and characters have modulus one.

\emph{Transfer to the sheets.}
Put \(N_*=\dim\mathcal H(R,h)\) and fix an orthonormal basis
\(\psi_1,\ldots,\psi_{N_*}\). The family
\(\{\chi_{k_j}\psi_i\}_{j,i}\) is orthonormal. By the sheet model,
the map
\begin{equation}
 \begin{split}
 \mathcal U:L^2(m)&\longrightarrow L^2(I;\C^D),\\
 (\mathcal U\psi)_\ell(t)&=\sqrt{w_\ell(t)}\,
                         \psi(\theta_\ell(t)\bmod\Z^n).
 \end{split}
 \label{eq:alg-isometry}
\end{equation}
is an isometric embedding. The normal-coordinate identity gives
\begin{equation}
 (\mathcal U(\chi_{k_j}\psi_i))_\ell(t)
   =e^{2\pi ij_0j\cdot t}
      e^{2\pi i\delta_j\cdot\theta_\ell(t)}
      (\mathcal U\psi_i)_\ell(t).
 \label{eq:alg-modulation}
\end{equation}

\emph{Uniform frequency localization.}
After removal of the first exponential factor, the displayed vectors
are contained in a compact subset of \(L^2(I;\C^D)\).
The errors lie in a compact cube, there are finitely many
\(i\), and boundedness of the lifts gives
\[
 \|e^{2\pi i\delta\cdot\theta_\ell}
       -e^{2\pi i\delta'\cdot\theta_\ell}\|_\infty
       \le2\pi C_0\|\delta-\delta'\|,
\]
where \(C_0\) bounds the lifts. The images of the whole error cube
for these finitely many indices form a compact family. The finite
Fourier projections on \(I\) converge uniformly in norm on this family. For each
\(\epsilon>0\), there is a radius \(W_{\mathrm{tail}}\) for which the
projected squared norms are at least \(1-\epsilon\).

\emph{Counting modes.}
Use the Fourier modes \(e^{2\pi iu\cdot t/S}\), \(u\in\Z^c\),
in each of the \(D\) channels.  The side length \(S\) and spacing
\(j_0\) are integers, so the first factor in
\eqref{eq:alg-modulation} shifts the Fourier index by exactly \(Sj_0j\).
For all \(j\in[-N,N]^c\cap\Z^c\), project onto the channel modes
in \([-Sj_0N-W_{\mathrm{tail}},Sj_0N+W_{\mathrm{tail}}]^c\cap\Z^c\).  Each corresponding orthonormal
vector has projected norm squared at least \(1-\epsilon\).  The
projection rank is at most \(D(2Sj_0N+2W_{\mathrm{tail}}+1)^c\).  Bessel's inequality
therefore gives
\[
 (1-\epsilon)N_*(2N+1)^c\le D(2Sj_0N+2W_{\mathrm{tail}}+1)^c.
\]
Letting \(N\to\infty\) and then \(\epsilon\downarrow0\), we obtain
\[
 N_*\le D(Sj_0)^c\le DS^c(h+H_R+2\eta_0+1)^c,
\]
as required.
The compact family and \(W_{\mathrm{tail}}\) depend on \(R,h\) and
the basis \(\psi_i\), which stay fixed as \(N\) tends to infinity.
\end{proof}

\subsection{A lower bound from a transverse tropical cone}

Characters with distinct images under an integer projection associated
to a rational tropical cone are linearly independent on the variety.
We first count these images for the box \(Q(R,h)\), then prove
independence using initial forms.

\begin{lemma}[Lattice quotient count]
\label{lem:alg-lattice-lower}
Let \(P\in\Z^{r\times n}\) have rank \(r\), and assume
\(P|_{V^\perp}\) is injective.  Then \(r\ge c\), and there are constants
\(\kappa_P>0\), \(R_0,h_0<\infty\), such that
\begin{equation}
 \#P(Q(R,h))\ge \kappa_P R^{r-c}h^c
 \qquad(R\ge R_0,\ h\ge h_0).
 \label{eq:alg-quotient-lower}
\end{equation}
\end{lemma}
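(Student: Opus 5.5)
The inequality $r\ge c$ is immediate: $P|_{V^\perp}$ is an injective linear map from the $c$-dimensional space $V^\perp$ into $\R^r$. For the count, I would replace the lattice box $Q(R,h)$ by the convex region
\[
 K(R,h)=\{x\in\R^n:\|Tx\|\le R,\ B^tx\in[0,h]^c\},
\]
so that $Q(R,h)=K(R,h)\cap\Z^n$. The plan has three steps: push a slightly shrunken real region forward by $P$, show that lattice points of the image lift to $Q(R,h)$, and then count lattice points in the image from below.

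\emph{Step 1: lifting.} Set $\Lambda_P=P(\Z^n)\subset\Z^r$. Since $P$ has rank $r$, this is a lattice of rank $r$ in $\R^r$. Let $\Lambda_0=\ker P\cap\Z^n$, a lattice of full rank $n-r$ in the rational subspace $\ker P$, and let $\rho_P$ be the diameter of a fundamental domain for $\Lambda_0$. Take $y\in\Lambda_P$ and any real $x$ with $Px=y$. Choose an integer $k_0$ with $Pk_0=y$. Then $x-k_0\in\ker P$, so some $\lambda\in\Lambda_0$ gives $k=k_0+\lambda$ with $\|k-x\|\le\rho_P$ and $Pk=y$. Now define the shrunken region
\[
 K'=\{x:\|Tx\|\le R-\|T\|\rho_P,\ B^tx\in[\rho_P,h-\rho_P]^c\}.
\]
If $x\in K'$, the nearby integer point $k$ lies in $K(R,h)$. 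Hence
\[
 \#P(Q(R,h))\ge\#\bigl(\Lambda_P\cap P(K')\bigr).
\]

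\emph{Step 2: shape of $P(K')$.} Put $W_1=P(V^\perp)$; it has dimension $c$ by injectivity. Because $P(V)+P(V^\perp)=\R^r$, I can choose a subspace $W_2\subset V$ of dimension $r-c$ such that $P|_{W_2}$ is injective and $P(W_2)\oplus W_1=\R^r$. The set $K'$ contains the Minkowski sum of a box $B[\rho_P,h-\rho_P]^c$ in $V^\perp$ and a ball of radius $R'\asymp R$ in $W_2$. On $W_2\subset V$ the map $T$ is injective, which is why the $W_2$-ball has radius comparable to $R$. Consequently $P(K')$ contains $G\bigl([\rho_P,h-\rho_P]^c\times B^{r-c}(0,R')\bigr)$, where $G$ is a fixed linear isomorphism $\R^c\times\R^{r-c}\to\R^r$ independent of $R$ and $h$.

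\emph{Step 3: lattice count.} Tile $\R^r$ by translates of a fundamental cell of $\Lambda_P$, each of diameter at most some $\rho'$. Every cell contained in the body above contributes at least one lattice point. For $h\ge h_0$ and $R\ge R_0$, the inner parallel body at distance $\rho'$ contains $G$ of a product of a $c$-box of side $\ge h/2$ and an $(r-c)$-ball of radius $\ge R/C$. That inner body has volume $\ge\kappa' h^cR^{r-c}$, and every cell meeting it lies inside the original body. Dividing by the covolume of $\Lambda_P$ gives
\[
 \#P(Q(R,h))\ge\kappa_P R^{r-c}h^c .
\]

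The main point needing care is Step 1, because lattice points of $P(K')$ must lift to integer points inside $K(R,h)$ rather than merely to real points. That is exactly what the bounded fundamental domain of $\ker P\cap\Z^n$ provides, using that $\ker P$ is a rational subspace. The remaining steps are routine convex-geometry estimates with constants depending only on $P$, $T$ and $B$.
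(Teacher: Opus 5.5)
Your argument is correct, but it takes a genuinely different route from the paper's.

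\textbf{The paper's route.} The paper never looks at the image directly. It first shows $\#Q(R,h)\ge a_1R^dh^c$ by tiling with fundamental cells of the full lattice $(T,B^t)\Z^n\subset\R^d\times\R^c$. It then bounds the fibers of $P$ on $Q(R,h)$ from above. Two preimages of one value differ by some $\ell\in\ker P\cap\Z^n$ with $\|T\ell\|\le 2R$. Since $\ker P\cap\ker T=\{0\}$ (recall $V^\perp=\ker T$), $T$ is injective on $\ker P$. Hence there are at most $a_2R^{n-r}$ such $\ell$, independently of $h$. Dividing gives the exponent $d-(n-r)=r-c$.

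\textbf{Your route.} You instead exhibit an explicit region inside the image: $G$ of a $c$-box of side about $h$ times an $(r-c)$-ball of radius about $R$. You then use the bounded covering radius of $\ker P\cap\Z^n$ in the rational subspace $\ker P$ to lift every point of $P(\Z^n)$ in that region back into $Q(R,h)$.

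\textbf{How the hypothesis enters.} The two proofs use injectivity of $P|_{V^\perp}$ in dual forms. In the paper it makes the fiber count independent of $h$. In your proof it makes $P(V^\perp)$ a full $c$-dimensional factor of the image.

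\textbf{What each buys.} The paper's version is shorter. It needs only one lattice count in the source and a crude upper bound for lattice points of $\ker P$ in a bounded set, which requires only discreteness. It needs no choice of complement $W_2$ and no inner-parallel-body estimate in the image. Your version is more constructive and shows geometrically where the factors $h^c$ and $R^{r-c}$ come from. The cost is the lifting step, which genuinely needs $\ker P\cap\Z^n$ to have full rank in $\ker P$.

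\textbf{A small correction.} In Step~2, the radius $R'=R/\|T\|-\rho_P$ comes from $\|Tw\|\le\|T\|\,\|w\|$ together with $TB=0$ and $B^tw=0$ for $w\in W_2\subset V$. Injectivity of $T$ on $W_2$ plays no role there. What matters is that choosing $W_2\subset V$ leaves the normal coordinates $B^tx$ undisturbed.
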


\begin{proof}
The invertible map \(x\mapsto(Tx,B^tx)\) sends \(\Z^n\) to a full
lattice in \(\R^d\times\R^c\). Choose a bounded fundamental cell.
Translates by lattice points in \(B_R^d\times[0,h]^c\)
cover the region obtained by reducing the ball radius and moving each
face inward by the cell diameter. Dividing its volume by the cell
volume gives, for sufficiently large \(R,h\),
\begin{equation}
 \#Q(R,h)\ge a_1R^dh^c,
 \label{eq:alg-total-count}
\end{equation}
with constants independent of both parameters.

The group \(\Gamma_P=\ker P\cap\Z^n\) is a full lattice in the
\((n-r)\)-dimensional real kernel.  The assumption implies
\(\ker P\cap\ker T=\{0\}\), so \(T\) is injective on
\(\ker P\) and \(T\Gamma_P\) is a rank-\((n-r)\) lattice in
its real span.  Fix one value of \(Pk\) and one preimage \(k_0\)
in \(Q(R,h)\).  All other preimages differ by elements \(\ell\)
of \(\Gamma_P\) with \(\|T\ell\|\le2R\).  There are at most
\(a_2R^{n-r}\) such elements for \(R\ge1\), independently of \(h\).
Divide \eqref{eq:alg-total-count} by this fiber bound; the exponent is
\(d-(n-r)=r-c\).
\end{proof}

\begin{lemma}[Tropical separation]\label{lem:tropical-separation}
Let \(Y\subset(\C^*)^n\) be irreducible, let \(\sigma\) be a
rational cone contained in \(\operatorname{Trop}(Y)\), and let the
rows of an integer matrix \(P\) be a basis of
\(\operatorname{span}_{\R}\sigma\). For every finite
\(K\subset\Z^n\),
\[
 \dim_\C\operatorname{span}\{z^k|_Y:k\in K\}\ge\#P(K).
\]
\end{lemma}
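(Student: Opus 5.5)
The plan is to pick a single weight $w$ in the relative interior of $\sigma$ and to use initial forms with respect to $w$. I first choose representatives. Partition $K$ into the fibers of $P$, and from each fiber take one exponent $k_q$, one for each value $q\in P(K)$. It then suffices to show that the monomials $z^{k_q}|_Y$, for distinct $q$, are linearly independent on $Y$. Indeed, their span is contained in $\operatorname{span}\{z^k|_Y:k\in K\}$, which gives the inequality.

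To reach a contradiction, suppose there is a nontrivial relation
\[
 F=\sum_q a_q z^{k_q},
\]
with $F|_Y=0$, and let $Q_0=\{q:a_q\ne0\}$. The aim is to choose $w\in\operatorname{relint}\sigma$ so that the values $w\cdot k_q$, for $q\in Q_0$, are pairwise distinct. The rows of $P$ span $\operatorname{span}_\R\sigma$, so we can write $w=P^t\lambda$ with $\lambda$ ranging over an open subset of $\R^{\dim\sigma}$. This gives $w\cdot k_q=\lambda\cdot Pk_q=\lambda\cdot q$. Because the points $q$ are distinct, each condition $\lambda\cdot(q-q')=0$ defines a hyperplane in $\lambda$-space. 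Only finitely many pairs occur, so these hyperplanes miss some point of the open set of $\lambda$ with $P^t\lambda\in\operatorname{relint}\sigma$. That set is open and nonempty, since it is the relative interior of a full-dimensional cone in $\operatorname{span}\sigma$, expressed in the coordinates $\lambda$.

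With this choice, $F$ has a unique term of least $w$-weight. Since $w\in\sigma\subset\operatorname{Trop}(Y)$, the characterization recalled at the start of Section~\ref{sec:tropical-detection} says that no nonzero Laurent polynomial vanishing on $Y$ has a unique term of least $w$-weight. This contradicts $F|_Y=0$, so no such relation exists.

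The only delicate point is the genericity step. We must check that the relative interior of $\sigma$ is parametrized by an open set of $\lambda$, which is where the hypothesis that $P$ has rows forming a basis of the span is used. We must also confirm that membership of $w$ in $\operatorname{Trop}(Y)$ applies to real, not necessarily rational, weights. The latter holds because $\sigma$ itself lies in $\operatorname{Trop}(Y)$, so every point of it, including irrational ones, is covered by the cited characterization. Irreducibility of $Y$ is not needed beyond making the tropical variety well defined.
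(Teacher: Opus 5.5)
Your proposal is correct and is essentially the paper's argument: you pick one exponent for each value of $Pk$, choose $w=P^t\lambda\in\operatorname{relint}\sigma$ so that the weights $w\cdot k=\lambda\cdot Pk$ of these representatives are pairwise distinct, and then contradict $w\in\operatorname{Trop}(Y)$ through the unique-least-term criterion. The only cosmetic difference is how $\lambda$ is chosen: the paper takes $\lambda$ with $\Q$-linearly independent coordinates, so a single weight works for all integer indices at once, whereas you avoid the finitely many hyperplanes $\lambda\cdot(q-q')=0$ determined by the given relation.
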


\begin{proof}
Choose \(w\) in the relative interior of \(\sigma\) whose
coefficients in the row basis of \(P\) are linearly independent over
\(\Q\). Such choices are dense, and for integer indices,
\begin{equation}
 w\cdot k=w\cdot k'\quad\Longleftrightarrow\quad Pk=Pk'.
 \label{eq:alg-generic-weight}
\end{equation}
Choose one character for each distinct value of \(Pk\), \(k\in K\).
A nonzero relation among them would have pairwise distinct weights and
hence a unique term of least weight, contradicting
\(w\in\operatorname{Trop}(Y)\) by the initial-form criterion
\cite[\S\S1--2]{BJST2007}.
\end{proof}

\begin{proposition}[Character count from a transverse cone]
\label{prop:alg-character-lower}
Let \(Y\subset(\C^*)^n\) be irreducible of dimension \(r\), and
suppose an \(r\)-dimensional rational cone
\(\sigma\subset\operatorname{Trop}(Y)\) projects with rank
\(c\) onto \(V^\perp\). Then, for some
constant \(\kappa_P>0\) depending only on \(\sigma\), \(T\), and \(B\),
and all sufficiently large \(R,h\),
\[
 \dim_\C\operatorname{span}\{z^k|_Y:k\in Q(R,h)\}
       \ge\kappa_P R^{r-c}h^c.
\]
\end{proposition}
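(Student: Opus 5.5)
The plan is to combine Lemma~\ref{lem:tropical-separation} with Lemma~\ref{lem:alg-lattice-lower}, applied to an integer matrix $P$ whose rows form a basis of $\operatorname{span}_\R\sigma$. Since $\sigma$ is a rational cone of dimension $r$, its span is a rational subspace of dimension $r$. We can therefore pick $P\in\Z^{r\times n}$ of rank $r$ whose rows are a basis of $\operatorname{span}_\R\sigma$ consisting of lattice vectors. Lemma~\ref{lem:tropical-separation}, with $K=Q(R,h)$, then gives
\[
 \dim_\C\operatorname{span}\{z^k|_Y:k\in Q(R,h)\}\ge\#P(Q(R,h)).
\]
The rest of the proof is to verify the hypothesis of Lemma~\ref{lem:alg-lattice-lower}, namely that $P|_{V^\perp}$ is injective. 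That lemma then yields $\#P(Q(R,h))\ge\kappa_PR^{r-c}h^c$ for $R\ge R_0$ and $h\ge h_0$, which is the claim.

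The main step is translating the rank-$c$ hypothesis into injectivity of $P$ on $V^\perp=\ker T$. Write $W=\operatorname{span}_\R\sigma$ and let $P_{V^\perp}$ be orthogonal projection onto $V^\perp$. By hypothesis $P_{V^\perp}$ maps $W$ onto $V^\perp$, so $W+V=\R^n$. For $v\in V^\perp$ we have $Pv=0$ exactly when $v$ is orthogonal to every row of $P$, that is, $v\in W^\perp$. So injectivity amounts to $W^\perp\cap V^\perp=\{0\}$, which is the orthogonal complement of the statement $W+V=\R^n$. The required injectivity therefore holds.

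A subtle point is whether $P$ is meant to act on $V^\perp$ as the matrix itself or through identification with $W$. The statement of Lemma~\ref{lem:alg-lattice-lower} uses $P$ as a linear map $\R^n\to\R^r$, and this matches the argument above. The lemma also yields $r\ge c$, which agrees with the rank hypothesis. The constant $\kappa_P$ depends only on $P$, which is determined by $\sigma$ together with $T$ and $B$ through $Q(R,h)$, as required.

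The only step needing care is the orthogonality bookkeeping in the injectivity claim. Everything else is a direct composition of the two preceding lemmas.
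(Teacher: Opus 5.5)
Your proposal is correct and follows the paper's proof. You choose an integer matrix $P$ whose rows span $\operatorname{span}_\R\sigma$, observe that the rank-$c$ hypothesis is equivalent to injectivity of $P$ on $V^\perp$, and compose Lemma~\ref{lem:tropical-separation} (with $K=Q(R,h)$) with Lemma~\ref{lem:alg-lattice-lower}. Your check via $W^\perp\cap V^\perp=(W+V)^\perp=\{0\}$ spells out what the paper states in one line. The paper uses a saturated integral basis of $\operatorname{span}_\R\sigma\cap\Z^n$, but any integer basis of the span works for both lemmas, as your argument uses.
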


\begin{proof}
Take the rows of \(P\) to be an integral basis of
\(\operatorname{span}_{\R}\sigma\cap\Z^n\). The subscript on the
constant refers to this projection. The matrix has rank
\(r\), and the transverse rank condition says exactly that
\(P|_{V^\perp}\) is injective. Apply
Lemma~\ref{lem:tropical-separation} to \(K=Q(R,h)\), followed by
Lemma~\ref{lem:alg-lattice-lower}.
\end{proof}

\subsection{The algebraicity theorem}

\begin{definition}[Transverse analytic pieces]\label{def:alg-strata}
A compact set \(M\subset\T^n\) has \emph{transverse analytic pieces}
if there are finitely many
connected real-analytic \(c\)-dimensional embedded submanifolds
\(S_\alpha\subset M\) of \(\T^n\) such that
\begin{equation}
 M=\overline{\bigcup_\alpha S_\alpha},
 \label{eq:alg-strata}
\end{equation}
and each \(S_\alpha\) is \(V\)-transverse at at least one point.
The pieces may overlap and need not be closed.
\end{definition}
For a compact embedded real-analytic \(V\)-transverse manifold, its
connected components are such pieces.

On each piece, Corollary~\ref{cor:transverse-tropical-rank} supplies
a transverse tropical cone, and the two estimates bound the same
character space. If the Zariski closure has larger dimension, they
contradict one another when we fix the strip width and let its height
tend to infinity.

\begin{theorem}[Automatic algebraicity]
\label{thm:automatic-algebraicity}
Let \(T:\R^n\to\R^d\) have rank \(d<n\), put \(c=n-d\ge1\), and
let \(B\) be as in Section~\ref{sec:fourier-vanishing-bound}, with
\(V=(\ker T)^\perp\). Suppose \(m\) is a
finite positive measure on \(\T^n\) with support \(M\), satisfying
\begin{enumerate}
\item[(i)] the \(T\)-properness condition \eqref{eq:alg-proper};
\item[(ii)] \(m\) has a bounded finite-sheet model with respect to \(B\)
(Definition~\ref{def:alg-sheets});
\item[(iii)] \(M\) has transverse analytic pieces
(Definition~\ref{def:alg-strata}).
\end{enumerate}
Then the Zariski closure in the algebraic torus
\[
 Y=\overline M^{\mathrm{Zar}}\subset(\C^*)^n
\]
is pure of complex dimension \(c\).

\end{theorem}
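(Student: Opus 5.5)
The plan is to compare, on each irreducible component of \(Y\), the upper bound of Lemma~\ref{lem:alg-strip-upper} with the lower bound of Proposition~\ref{prop:alg-character-lower}. Fix the pieces \(S_\alpha\) of Definition~\ref{def:alg-strata}. Let \(Y_\alpha\) be the Zariski closure of \(S_\alpha\). Since \(M\) is the closure of \(\bigcup_\alpha S_\alpha\) and Zariski closed sets are closed, \(Y=\bigcup_\alpha Y_\alpha\) is a finite union. By Corollary~\ref{cor:transverse-tropical-rank}, each \(Y_\alpha\) is irreducible of dimension \(r_\alpha\ge c\), and it carries an \(r_\alpha\)-dimensional rational cone whose span projects with rank \(c\) onto \(V^\perp\). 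The irreducible components of \(Y\) are among the \(Y_\alpha\). Purity of dimension \(c\) will therefore follow once we show \(r_\alpha\le c\) for every \(\alpha\).

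To transfer the lower bound from \(Y_\alpha\) to \(L^2(m)\), I would use the remark after \eqref{eq:HF-5.4}, which holds in the present abstract setting. Since \(\supp m=M\), a Laurent polynomial vanishes in \(L^2(m)\) exactly when it vanishes on \(M\), hence on \(Y\). Consequently \(\dim\mathcal H(R,h)=\dim\operatorname{span}\{z^k|_Y:k\in Q(R,h)\}\). Restricting from \(Y\) to \(Y_\alpha\) is a surjection of spans, so this dimension is at least \(\dim\operatorname{span}\{z^k|_{Y_\alpha}:k\in Q(R,h)\}\). Proposition~\ref{prop:alg-character-lower} then bounds the latter below by \(\kappa_\alpha R^{r_\alpha-c}h^c\) for \(R,h\) large. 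For this I would check that the rank condition \(\operatorname{rank}(P_\eta|_{\operatorname{span}\sigma})=c\) for a basis \(\eta\) of \(V^\perp\) is equivalent to injectivity of \(P|_{V^\perp}\), where \(P\) has rows spanning \(\operatorname{span}\sigma\). Both statements say that \(\operatorname{span}\sigma\) pairs nondegenerately with \(V^\perp\), i.e.\ its orthogonal projection onto \(V^\perp\) is onto.

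Combining with \eqref{eq:alg-rank-upper} gives
\[
 \kappa_\alpha R^{r_\alpha-c}h^c\le C_*(h+A_R)^c .
\]
Now fix \(R\ge R_0\), divide by \(h^c\), and let \(h\to\infty\). This gives \(\kappa_\alpha R^{r_\alpha-c}\le C_*\), where \(C_*=DS^c\) is independent of \(R\). Since \(R\) is arbitrary, the inequality is impossible when \(r_\alpha>c\). Hence \(r_\alpha=c\) for every \(\alpha\), and \(Y\) is pure of dimension \(c\).

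The main point needing care is the order of limits. \(A_R\) may grow with \(R\), so we must send \(h\to\infty\) first with \(R\) fixed; only the \(R\)-independence of \(C_*\) then drives the contradiction. A second check is that the threshold \(h_0\) in Lemma~\ref{lem:alg-lattice-lower} does not depend on \(R\). The orders of limits are compatible as long as \(h_0\) is at most uniform for \(R\ge R_0\), which the volume argument in that lemma provides. The remaining step is the identification of vanishing in \(L^2(m)\) with vanishing on \(Y\), which uses only \(\supp m=M\) and the fact that Laurent polynomials are continuous.
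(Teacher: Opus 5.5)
Your proposal is correct and follows the paper's proof essentially step for step. Like the paper, you take the Zariski closures \(Y_\alpha\) of the pieces, obtain the transverse tropical cone from Corollary~\ref{cor:transverse-tropical-rank}, and transfer the lower bound of Proposition~\ref{prop:alg-character-lower} to \(L^2(m)\) via \(\supp m=M\); the contradiction with Lemma~\ref{lem:alg-strip-upper} then comes from letting \(h\to\infty\) at fixed \(R\), using only that \(\kappa_P\) and \(C_*\) are independent of \(R\) and \(h\). Your extra check on \(h_0\) is unnecessary: for each fixed \(R\), any threshold in \(h\) suffices, even one depending on \(R\).
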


\begin{proof}
Let \(Y_\alpha\) be the Zariski closure of a connected
piece \(S_\alpha\), and put \(r=\dim_\C Y_\alpha\).
Corollary~\ref{cor:transverse-tropical-rank} shows that
\(Y_\alpha\) is irreducible, \(r\ge c\), and an \(r\)-dimensional
rational cone \(\sigma\subset\operatorname{Trop}(Y_\alpha)\) projects
with rank \(c\) onto \(V^\perp\).

The character-span dimension on \(Y_\alpha\) is at most
\(\dim\mathcal H(R,h)\). By continuity, a trigonometric polynomial
vanishes in \(L^2(m)\) exactly when it vanishes on the support \(M\);
it then vanishes on \(S_\alpha\) and its Zariski closure. Therefore
Proposition~\ref{prop:alg-character-lower} gives
\[
 \dim\mathcal H(R,h)\ge\kappa_P R^{r-c}h^c
\]
for sufficiently large \(R,h\). On the other hand,
Lemma~\ref{lem:alg-strip-upper} gives
\[
 \dim\mathcal H(R,h)\le C_*(h+A_R)^c.
\]
If \(r>c\), fix \(R\) large enough that
\(\kappa_P R^{r-c}>C_*\). As \(h\to\infty\), the constant \(A_R\)
stays fixed and the two bounds contradict each other. Hence \(r=c\).

The argument applies to every \(Y_\alpha\), and the finite union of
these Zariski closures is exactly \(Y\), by \eqref{eq:alg-strata}.
Its irreducible components all have dimension \(c\).
\end{proof}

\begin{remark}[The hypotheses of the algebraicity theorem]
\label{rem:algebraicity-hypotheses}
Fourier properness alone does not imply algebraicity in the stated
dimension. Haar measure on $\T^n$ has Fourier support $\{0\}$,
which is $T$-proper, but its support has Zariski closure $(\C^*)^n$.
It satisfies neither geometric hypothesis: finitely many graphs over
the normal coordinates have zero ambient Lebesgue measure, and finitely
many closures of embedded $c$-manifolds, each nowhere dense for $c<n$,
cannot cover $\T^n$.
\end{remark}

\begin{proof}[Proof of Theorem~\ref{thm:main-algebraicity}]
If $c=0$, then $M$ is finite and the conclusion follows. Assume $c>0$,
and choose an orthonormal basis matrix $B$ for $V^\perp$.
By transversality, normal projection restricts to a local
diffeomorphism on a real logarithmic lift. Compactness gives finitely
many bounded local inverses $\theta_\ell:\Omega_\ell\to\R^n$ covering $M$, with
$B^t\theta_\ell(t)=t$ and each image $\theta_\ell(\Omega_\ell)$ of
diameter less than $1$, so the charts are injective modulo $\Z^n$.
Choose a Borel partition
$M=\bigsqcup_\ell E_\ell$ subordinate to these charts. Set
$p_\ell(t)=\theta_\ell(t)\bmod\Z^n$ and, on $\Omega_\ell$, put
\[
 w_\ell(t)=\mathbf1_{E_\ell}(p_\ell(t))a(p_\ell(t))
 \sqrt{\det\bigl(D\theta_\ell(t)^tD\theta_\ell(t)\bigr)}.
\]
All chart domains lie in a cube $[-S/2,S/2]^c$ with $S$ a positive
integer. Extend the weights and lifts by zero outside their domains.
Change of variables proves \eqref{eq:alg-sheets} for every nonnegative
Borel function, with bounded lifts. Since $a$ is positive,
$\supp m=M$. The finitely many connected components of $M$ satisfy
\eqref{eq:alg-strata}, and the assumed Fourier support condition is
\eqref{eq:alg-proper}, so Theorem~\ref{thm:automatic-algebraicity} applies.
\end{proof}

\begin{corollary}[Semialgebraicity of the analytic set]
\label{cor:selected-semialgebraic}
Under the hypotheses of Theorem~\ref{thm:automatic-algebraicity},
assume additionally that \(M\) is a real-analytic subset of \(\T^n\).
Then \(M\) is semialgebraic in \((S^1)^n\).  More precisely, it is
the closure of a union of connected components of
\(Y_{\mathrm{reg}}\cap\T^n\), where
\(Y=\overline M^{\mathrm{Zar}}\), and \(M\cap Y_{\mathrm{reg}}\) is dense
in \(M\).
\end{corollary}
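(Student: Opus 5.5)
We prove the corollary by combining Theorem~\ref{thm:automatic-algebraicity} with the real-analytic identity theorem.

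\emph{Setting up.} By the theorem, $Y=\overline M^{\mathrm{Zar}}$ is pure of complex dimension $c$. Write $M=\overline{\bigcup_\alpha S_\alpha}$ with transverse analytic pieces as in Definition~\ref{def:alg-strata}. For each piece, Corollary~\ref{cor:transverse-tropical-rank} shows that $Y_\alpha=\overline{S_\alpha}^{\mathrm{Zar}}$ is an irreducible component of $Y$ of dimension $c$.

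\emph{Density of $M\cap Y_{\mathrm{reg}}$.} The key local fact is that near a point $p\in S_\alpha\cap Y_{\mathrm{reg}}$, the set $Y\cap\T^n$ is a real-analytic manifold of dimension $c$. Indeed, $S_\alpha$ is a real $c$-manifold inside the complex $c$-manifold $Y_{\mathrm{reg}}$ near $p$, and it is totally real there: its real tangent vectors are complex-independent, as in the proof of Corollary~\ref{cor:transverse-tropical-rank}. A totally real $c$-dimensional submanifold of a complex $c$-manifold has real dimension equal to the complex dimension. Reciprocal conjugation $z\mapsto\overline z^{-1}$ fixes $\T^n$ pointwise, so near $p$ it is an antiholomorphic involution of $Y_{\mathrm{reg}}$ whose fixed set contains $S_\alpha$. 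Its fixed locus near $p$, namely $Y_{\mathrm{reg}}\cap\T^n$, is therefore a real $c$-manifold. Since $S_\alpha$ is open in that fixed locus by dimension count, $S_\alpha$ and $Y_{\mathrm{reg}}\cap\T^n$ coincide near $p$.

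One point needs justifying: that $Y$ is preserved by reciprocal conjugation. This holds because $M\subset\T^n$ is fixed by it, so its Zariski closure is preserved.

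As in the proof of Corollary~\ref{cor:transverse-tropical-rank}, $S_\alpha\cap Y_{\mathrm{reg}}$ is dense in $S_\alpha$. Otherwise a polynomial vanishing on $Y_{\mathrm{sing}}$ but not on $Y_\alpha$ would vanish on an open subset of $S_\alpha$, hence on all of $S_\alpha$ and on $Y_\alpha$, which is impossible. Together with \eqref{eq:alg-strata}, this gives density of $M\cap Y_{\mathrm{reg}}$ in $M$.

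\emph{Unions of components.} Let $C$ be a connected component of $Y_{\mathrm{reg}}\cap\T^n$, which is a real-analytic $c$-manifold wherever it is nonempty by the same fixed-point argument. We show $C\cap M$ is open and closed in $C$. Closedness follows from compactness of $M$.

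Openness is the main obstacle, and here the extra hypothesis that $M$ is real analytic is used. Take $q\in C\cap M$, and suppose $M$ has real dimension $c$ at $q$ (this is the hard point, addressed next). Then the real-analytic set $M\cap C$ germ at $q$ has dimension $c$ inside the connected $c$-manifold $C$. By the identity theorem for real-analytic sets in a manifold, if this germ is a proper subset it is defined near $q$ by a nonzero analytic function on $C$, whose zero set has dimension less than $c$, a contradiction.

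The dimension claim at $q$ is where effort is needed. We would use density of $\bigcup_\alpha S_\alpha$ in $M$, which puts points of some $S_\alpha$ near $q$. Those points lie in $C$ near $q$, and there $S_\alpha$ is open in $C$ by the local identification above. Hence $M\cap C$ contains a nonempty open subset of $C$ arbitrarily near $q$. The set $\{q\in C: M\supset\text{a neighborhood of }q\text{ in }C\}$ is open and nonempty. It is also closed in $C$, because a limit point $q$ has $M\cap C$ of full dimension $c$ at $q$, and the analytic-subset argument above then gives a neighborhood. By connectedness it equals $C$, so $C\subset M$ whenever $C$ meets $M$.

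\emph{Conclusion.} Density of $M\cap Y_{\mathrm{reg}}$ shows that $M$ is the closure of the union of those components $C$ that meet $M$. There are finitely many such components, since $Y_{\mathrm{reg}}\cap\T^n$ is semialgebraic. The closure of a finite union of semialgebraic sets is semialgebraic, so $M$ is semialgebraic in $(S^1)^n$.
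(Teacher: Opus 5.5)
Your proof is correct and follows essentially the same route as the paper. The shared steps are: invariance of $Y$ under reciprocal conjugation, so that $Y_{\mathrm{reg}}\cap\T^n$ is a real $c$-manifold; density of $M\cap Y_{\mathrm{reg}}$ from Zariski density of each piece in its component; relative openness of $M\cap Y_{\mathrm{reg}}$ via the identity theorem for the real-analytic defining functions of $M$ on a connected chart; relative closedness by compactness; and finiteness of the semialgebraic components. Your detour through local dimension and the connectedness argument on the interior is harmless but unnecessary, since the open patches you produce near each $q\in C\cap M$ already give openness of $C\cap M$ directly, as in the paper.
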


\begin{proof}
Reciprocal conjugation fixes \(M\), so it preserves \(Y\).
Its fixed locus on the smooth complex \(c\)-fold
\(Y_{\mathrm{reg}}\) is a smooth real \(c\)-manifold.
In particular, \(Y_{\mathrm{reg}}\cap\T^n\) is a
semialgebraic smooth real manifold of that dimension.

The intersection \(M\cap Y_{\mathrm{reg}}\) is dense in
\(M\). Each connected analytic piece in
\eqref{eq:alg-strata} is Zariski dense in its irreducible
\(c\)-dimensional component; its intersection with the singular
locus or another component is a proper real-analytic subset and has
empty interior. Their closures cover \(M\), proving density.

Fix \(p\in M\cap Y_{\mathrm{reg}}\), and take a connected small
chart of the real manifold \(Y_{\mathrm{reg}}\cap\T^n\) around
\(p\), where \(M\) has real-analytic defining functions.
Density supplies a nonempty open \(c\)-dimensional patch of \(M\)
in this chart. Each defining function vanishes on the patch, hence on
the connected chart. Thus \(M\cap Y_{\mathrm{reg}}\) is relatively
open and, by compactness of \(M\), relatively closed. It is a
union of connected components of \(Y_{\mathrm{reg}}\cap\T^n\).
There are finitely many such components, all semialgebraic. Their
union has semialgebraic closure, which equals the original set by density.
\end{proof}

\begin{corollary}[Algebraicity of the canonical set]
\label{cor:all-fq-algebraicity}
Let $\mu$ be an integer-mass FQ measure on $\R^d$,
$n=\operatorname{rank}\Gamma$, $c=n-d$, and let $M$ be its canonical
set. Its Zariski closure in the algebraic torus
\[
 Y=\overline M^{\mathrm{Zar}}\subset(\C^*)^n
\]
is reduced and pure of dimension $c$. Moreover $M$ is semialgebraic.
Near every $p\in M$,
\[
 (M^\C,p)\subseteq(Y^{\mathrm{an}},p),
\]
and equality holds whenever $Y$ is smooth at $p$.
\end{corollary}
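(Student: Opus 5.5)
The plan is to apply Theorem~\ref{thm:automatic-algebraicity} to the canonical flux measure $m$ with $T=L^t$, and then read off the remaining claims from Corollary~\ref{cor:selected-semialgebraic} and Lemma~\ref{lem:integer-real-sheets}. If $c=0$, then $M$ is a compact real-analytic set of dimension zero, hence finite, so $Y=M$ is a finite reduced set of points; the germ statements are then trivial. Assume therefore $c\ge1$.

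First we check the three hypotheses. By Proposition~\ref{prop:ac-flux-support}, $m$ is a finite positive measure with support $M$. Its Fourier coefficients vanish unless $L^tk\in\Lambda'$, and $T=L^t$ is injective on $\Z^n$. Local finiteness of $\Lambda'$ then gives (i): there are only finitely many $k$ with $\widehat m(k)\ne0$ and $\|Tk\|\le R$. Hypothesis (ii) is the sheet representation \eqref{eq:HF-5.3}, padded to an integer cube as explained after Definition~\ref{def:alg-sheets}. Hypothesis (iii) is Lemma~\ref{lem:ac-transverse-strata}. Note also that $V=(\ker L^t)^\perp=L\R^d$ is the physical plane. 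Theorem~\ref{thm:automatic-algebraicity} now gives that $Y$ is pure of dimension $c$. It is reduced because it is defined as a Zariski closure, i.e.\ by its vanishing ideal. Since $M$ is real analytic by Theorem~\ref{thm:all-fq-toral-cycles}(c), Corollary~\ref{cor:selected-semialgebraic} gives semialgebraicity.

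For the germ inclusion, take $p\in M$ and a Laurent polynomial $F$ vanishing on $Y$. Then $F$ vanishes on $M$, the real locus of $M^\C$ near $p$. Proposition~\ref{prop:ac-canonical-germ} supplies a finite projection along $L\C^d$ with the real-fiber property, so Lemma~\ref{lem:integer-real-sheets} applies. It shows that $F$, viewed in logarithmic coordinates, vanishes on every local branch of $M^\C$. Hence $(M^\C,p)\subseteq(Y^{\mathrm{an}},p)$.

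For equality at a smooth point $p$ of $Y$, the germ $(Y^{\mathrm{an}},p)$ is an irreducible complex manifold germ of dimension $c$. By Theorem~\ref{thm:all-fq-toral-cycles}(c), $M^\C$ is pure of dimension $c$ and nonempty at $p$. So $(M^\C,p)$ is a closed analytic subgerm of the same dimension inside an irreducible germ, and the two must coincide. The main obstacle is the branch inclusion: we need every local branch of $M^\C$ to be detected by real points. This is exactly what the real-fiber property together with Lemma~\ref{lem:integer-real-sheets} supplies, and it is the reason we cannot argue from $M$ alone.
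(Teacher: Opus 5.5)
Your proposal is correct and follows the paper's proof essentially step for step. You verify the three hypotheses of Theorem~\ref{thm:automatic-algebraicity} for the flux measure in the same way, get semialgebraicity from Corollary~\ref{cor:selected-semialgebraic}, prove the germ inclusion from the real-fiber property (Proposition~\ref{prop:ac-canonical-germ} with Lemma~\ref{lem:integer-real-sheets}), and get equality at smooth points by placing a pure $c$-dimensional subgerm inside an irreducible $c$-dimensional germ. The only cosmetic difference is that you handle the germ statements directly when $c=0$, whereas the paper runs the real-sheet argument in every dimension.
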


\begin{proof}
If $c=0$, then $M$ is compact and zero-dimensional by
Theorem~\ref{thm:all-fq-toral-cycles}. It is finite and
semialgebraic, and so is its reduced Zariski closure.

For $c>0$, Lemma~\ref{lem:ac-transverse-strata} supplies the finitely
many connected transverse analytic pieces whose closures cover $M$.
By Proposition~\ref{prop:ac-flux-support}, the flux measure has support
$M$ and satisfies~\eqref{eq:HF-4.2}.
Its bounded normal-sheet model is \eqref{eq:HF-5.3}, as explained after
Definition~\ref{def:alg-sheets}.
Since $L^t$ is injective on $\Z^n$, local finiteness of the physical
spectrum gives
\[
 \#\{k\in\Z^n:\widehat m(k)\ne0,\ |L^tk|\le R\}<\infty
 \qquad(R<\infty).
\]
Theorem~\ref{thm:automatic-algebraicity} now proves that the reduced
Zariski closure is pure of dimension $c$.
Theorem~\ref{thm:all-fq-toral-cycles}(c) gives real analyticity of $M$
and purity of its complex germ. Together with
Corollary~\ref{cor:selected-semialgebraic}, this gives semialgebraicity.

In every dimension, Proposition~\ref{prop:ac-canonical-germ} gives the
finite local projection and real-fiber property required by
Lemma~\ref{lem:integer-real-sheets}. A Laurent polynomial vanishing on
$M$ therefore vanishes on every local branch of $M^\C$. This proves
the germ inclusion. At a smooth point of $Y$, its germ is irreducible
of dimension $c$. A nonempty pure $c$-dimensional analytic subgerm is the
whole germ, proving equality there.
\end{proof}

\section{Algebraic holonomicity of the canonical flux}
\label{sec:holonomic-flux}

We prove that the flux measure is holonomic by decomposing its support
into Nash graphs and expressing each graph measure as a derivative of
a bounded semialgebraic function. Oaku's results apply to these functions.
By Lemma~\ref{lem:fourier-holonomic} and
Theorem~\ref{thm:holonomic-height}, this yields the spectral bound of
Corollary~\ref{cor:universal-linear-height}.

We use algebraic holonomicity in the sense of
Section~\ref{sec:preliminaries}.

We recall the required distribution results. Put
$\mathrm H(t)=1$ for $t>0$ and $\mathrm H(t)=0$ for $t\le0$.
Finite products of functions $\mathrm H(P)$, with $P$ a real polynomial,
are algebraically holonomic
\cite[Propositions~1--2 and Lemma~2]{Oaku2013}. Integration along
coordinate fibers preserves holonomicity when the projection is proper
on the support of the distribution
\cite[Theorem~1]{Oaku2013}, with the properness hypothesis as in
\cite{OakuErrata}. Under a closed smooth algebraic embedding,
extension by zero preserves and reflects holonomicity for
tempered distributions \cite[Lemma~3.1(i)]{GSSauthor}.
We use the implication from the ambient distribution to its restriction.
For $\iota:X\hookrightarrow Z$ with $\iota_*u$ holonomic, the cyclic
module $\mathscr D_Z\cdot\iota_*u$ is supported on $X$. Kashiwara's
equivalence identifies it with $\iota_+N$, where the holonomic module
$N$ contains $\mathscr D_X\cdot u$, proving the implication.
Holonomicity is preserved by finite sums and subquotients
\cite[\S3]{GSSauthor}.
It is also preserved by algebraic differential operators, since the
resulting cyclic module is a submodule of the original one.

\subsection{Semialgebraic functions and graph measures}
We first integrate the indicator of a bounded subgraph to obtain a
function, then differentiate in the normal variables to obtain graph measures.

\begin{lemma}[Bounded semialgebraic functions]
\label{lem:bounded-semialgebraic-holonomic}
Every bounded complex-valued semialgebraic function on \(\R^N\),
regarded as a locally integrable distribution, is algebraically
holonomic.
\end{lemma}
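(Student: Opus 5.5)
We reduce to the basic holonomic building blocks recalled just above: products of Heaviside functions of real polynomials, integration along proper coordinate fibers, finite sums, and the action of algebraic differential operators. First we split into real and imaginary parts, so it suffices to treat a bounded real semialgebraic function $g$ on $\R^N$. Its graph is semialgebraic, and boundedness lets us pick $C$ with $|g|<C$. The plan is to write $g$ as a fiber integral of an indicator function:
\[
 g(x)+C=\int_{0}^{2C}\mathbf 1_{\{(x,s):\,0<s<g(x)+C\}}(x,s)\,ds .
\]
The subgraph $G=\{(x,s)\in\R^N\times\R:0<s<g(x)+C\}$ is semialgebraic. The constant $C$ is a polynomial, hence holonomic, so holonomicity of $g$ follows from that of $g+C$.

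Next we show that $\mathbf 1_G$ is algebraically holonomic on $\R^{N+1}$. By the finiteness theorem for semialgebraic sets, $G$ is a finite union of basic sets of the form $\{P_1=\dots=P_a=0,\ Q_1>0,\dots,Q_b>0\}$. By inclusion--exclusion on a disjoint refinement (or by a cell decomposition), $\mathbf 1_G$ agrees almost everywhere with a finite signed sum of indicators of basic open sets $\{Q_1>0,\dots,Q_b>0\}=\prod_j\mathrm H(Q_j)$. The pieces of positive codimension contribute only null sets. Since $\mathbf 1_G$ is viewed as a locally integrable distribution, those pieces may be discarded. Each term $\prod_j\mathrm H(Q_j)$ is holonomic by \cite[Propositions~1--2 and Lemma~2]{Oaku2013}, and finite sums preserve holonomicity, so $\mathbf 1_G$ is holonomic.

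Finally we integrate out $s$ using \cite[Theorem~1]{Oaku2013} together with \cite{OakuErrata}. The support of $\mathbf 1_G$ lies in $\R^N\times[0,2C]$, so the coordinate projection $(x,s)\mapsto x$ is proper on it. The fiber integral is exactly $g+C$ as a distribution; this follows from Fubini against test functions. Hence $g+C$, and therefore $g$, is algebraically holonomic. The complex case follows by taking the sum of the real part and $i$ times the imaginary part.

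The main obstacle is the reduction of $\mathbf 1_G$ to Heaviside products up to null sets. The subtle point is that $\mathrm H$ is defined with a strict inequality, so sets of the form $\{Q\ge0\}$ must be handled. We would write $\mathbf 1_{\{Q\ge0\}}=1-\mathrm H(-Q)$, which is again a finite combination. We also need to verify that the semialgebraic decomposition can be taken with open basic pieces modulo measure zero. This holds because the boundaries of semialgebraic sets have lower dimension and hence Lebesgue measure zero.
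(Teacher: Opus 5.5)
Your proof is correct and follows the paper's route. Both arguments first show that $\mathbf 1_S$ of a semialgebraic set agrees, modulo Lebesgue-null sets, with a finite combination of products of polynomial Heaviside functions, and then recover a bounded function as a proper fiber integral of the indicator of its subgraph. The differences are cosmetic: you shift by the constant $C$ where the paper splits into positive and negative parts, and you use inclusion--exclusion over basic open sets where the paper partitions by strict sign cells of the nonzero polynomials.
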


\begin{proof}
For a semialgebraic set \(S\), remove identically zero polynomials
from a finite Boolean description by polynomial signs. The remaining
zero hypersurfaces partition their complement into strict sign cells.
The indicator of \(S\) agrees almost everywhere with a finite sum
of disjoint cell indicators, each a product of polynomial Heaviside
functions. The omitted hypersurfaces have Lebesgue measure zero, so
the equality holds as distributions. The Heaviside theorem and closure
under finite sums prove holonomicity of \(1_S\).

Splitting a bounded real semialgebraic function into positive and
negative parts reduces to \(0\le g\le C\). The bounded subgraph
\[
 S_g=\{(x,t)\in\R^N\times\R:0<t<g(x)\}
\]
is semialgebraic, so \(1_{S_g}\) is holonomic, and
\[
 g(x)=\int_\R 1_{S_g}(x,t)\,dt
\]
as functions and distributions. The support lies in
\(\R^N\times[0,C]\), so projection onto \(x\) is proper there.
Proper integration proves holonomicity of \(g\).
Taking real and imaginary parts treats the complex-valued case.
\end{proof}

\begin{lemma}[Graph measures]
\label{lem:semialgebraic-graph-measure}
Let \(D\subset\R^c\) be semialgebraic, let
\(\phi:D\to\R^r\) be semialgebraic, and let
\(h:D\to\C\) be bounded semialgebraic. Define
\[
 \nu(\psi)=\int_D h(x)\psi(x,\phi(x))\,dx
\]
for compactly supported test functions. Then \(\nu\) is a locally
finite measure and is algebraically holonomic on \(\R^{c+r}\).
\end{lemma}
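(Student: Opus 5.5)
The plan is to write $\nu$ as a normal derivative of a bounded semialgebraic function on $\R^{c+r}$. Lemma~\ref{lem:bounded-semialgebraic-holonomic} makes that function holonomic, and differentiation by an algebraic operator preserves holonomicity.

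Local finiteness of $\nu$ comes first. For a test function $\psi$ supported in a compact set, $|\nu(\psi)|\le\sup|h|\,\sup|\psi|$ times the Lebesgue measure of the projection of the support to $\R^c$. So $\nu$ is the pushforward of the bounded density $h\,\mathbf 1_D\,dx$ under the graph map $x\mapsto(x,\phi(x))$. It is a locally finite complex measure, and it has total variation at most $\sup|h|$ times Lebesgue measure on compact $x$-sets.

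Next I construct the primitive. For $(x,y)\in\R^c\times\R^r$ define
\[
 g(x,y)=h(x)\,\mathbf 1_D(x)\prod_{j=1}^r\mathrm H\bigl(y_j-\phi_j(x)\bigr).
\]
This function is bounded and semialgebraic, since products and compositions of semialgebraic functions with the Heaviside sign conditions remain semialgebraic. By Lemma~\ref{lem:bounded-semialgebraic-holonomic} it is algebraically holonomic. For a test function $\psi$, Fubini gives
\[
 \langle\partial_{y_1}\cdots\partial_{y_r}g,\psi\rangle
 =(-1)^r\int_D h(x)\int_{\{y_j>\phi_j(x)\}}\partial_{y_1}\cdots\partial_{y_r}\psi\,dy\,dx .
\]
Integrating successively in $y_1,\dots,y_r$ over half-lines, each integration produces $-\psi$ evaluated at $y_j=\phi_j(x)$. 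The signs cancel, so this distribution equals $\nu(\psi)$. Fubini is justified by boundedness of $h$ and compact support of $\psi$. Measurability of $\phi$ is automatic for semialgebraic maps.

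Finally, $\nu=\partial_{y_1}\cdots\partial_{y_r}g$. Its cyclic module is a submodule of the cyclic module of $g$, hence holonomic, as recorded at the start of Section~\ref{sec:holonomic-flux}.

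The main obstacle is only bookkeeping. We must check that the half-line integrations are legitimate for every $x$, with no issue at points where the strict versus nonstrict Heaviside convention matters; the boundary sets have measure zero, so this causes no trouble. We also need the case $r=0$, which is just Lemma~\ref{lem:bounded-semialgebraic-holonomic} applied to $h\mathbf 1_D$.
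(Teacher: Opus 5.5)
Your proposal is correct and matches the paper's proof: you use the same primitive $\mathbf 1_D(x)h(x)\prod_j\mathrm H(y_j-\phi_j(x))$, which is holonomic by the bounded-semialgebraic lemma, and you recover $\nu$ as $\partial_{y_1}\cdots\partial_{y_r}$ of it by successive half-line integrations whose signs cancel. Local finiteness follows, as in the paper, from the bound by $\|h\|_\infty$ times the Lebesgue measure of the projection, and the case $c=0$ (a Dirac mass), which the paper lists separately, is already covered by your computation.
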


\begin{proof}
For every compact $K\subset\R^{c+r}$,
$|\nu|(K)\le\|h\|_\infty\operatorname{Leb}(\operatorname{pr}K)$,
where $\operatorname{pr}$ is projection onto the first $c$ coordinates.
Thus the measure is locally finite. Define
\[
 G_{\phi,h}(x,y)=1_D(x)h(x)\prod_{j=1}^r \mathrm H(y_j-\phi_j(x)),
\]
with value zero outside \(D\). This bounded semialgebraic function is
holonomic by the preceding lemma, and distributional differentiation gives
\begin{equation}
 \nu=\partial_{y_1}\cdots\partial_{y_r}G_{\phi,h}.
 \label{eq:graph-measure-derivative}
\end{equation}
To verify the identity, test against a compactly supported smooth
function and integrate successively from \(\phi_j(x)\) to \(+\infty\).
The integration signs cancel those of distributional differentiation,
leaving the formula for \(\nu\). Differentiation occurs only in
\(y\), with zero contribution on \(\partial D\).
Holonomicity follows from preservation by differential operators.
The cases \(r=0\) and \(c=0\) give the preceding lemma and a Dirac
mass, respectively.
\end{proof}

\begin{lemma}[Bounded-slope graph partition]
\label{lem:bounded-slope-partition}
Let $E\subset\R^N$ be a compact semialgebraic set of pure dimension
$c$. There is a semialgebraic subset $E'\subset E$, with
$\dim E'<c$, such that $E\setminus E'$ is a disjoint union of finitely
many Nash graphs over coordinate $c$-planes. Their graph derivatives
have norm at most $\binom Nc^{1/2}$. On each graph, the coefficient
of the pullback of a polynomial $c$-form is bounded and semialgebraic.
\end{lemma}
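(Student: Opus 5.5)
Our plan is to obtain the partition from semialgebraic stratification, then refine it by choosing, piece by piece, the coordinate $c$-plane onto which the tangent space projects best. First we take a Nash (or $C^1$ semialgebraic) stratification of $E$ (Bierstone--Milman, or Bochnak--Coste--Roy), and let $E'_0$ be the union of the strata of dimension $<c$. Since $E$ has pure dimension $c$, the remaining strata are finitely many connected Nash submanifolds $S$ of dimension $c$, pairwise disjoint. On each such $S$, the Gauss map $p\mapsto T_pS\in\Gr(c,N)$ is Nash.

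To get bounded slopes, we use Plücker coordinates. For a $c$-plane $W$, let $\Delta_I(W)$ be the maximal minors of an orthonormal basis matrix, indexed by coordinate subsets $I$ with $\#I=c$. Cauchy--Binet gives $\sum_I\Delta_I^2=1$, so some $|\Delta_I|\ge\binom Nc^{-1/2}$. We partition $S$ into the semialgebraic sets $S_I$ of points where $I$ is the first index, in a fixed order, maximizing $|\Delta_I(T_pS)|$. We then put the lower-dimensional boundaries of these pieces, together with the non-open parts, into $E'$, and restratify so that each remaining piece is an open Nash submanifold. On a piece with $|\Delta_I|$ maximal, the projection to the coordinates $x_I$ is a local diffeomorphism. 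Writing $T_pS$ as the graph of a linear map $A$ over $\R^I$, Cramer's rule gives each entry of $A$ as a ratio $\Delta_J/\Delta_I$ with $\#(J\triangle I)=2$. This ratio has modulus at most one, which yields a Frobenius bound on the graph derivative. To reach exactly the stated constant $\binom Nc^{1/2}$, we would bound $\|A\|^2\le\sum_{J\ne I}\Delta_J^2/\Delta_I^2\le(1-\Delta_I^2)/\Delta_I^2<\binom Nc$, using $\sum_J\Delta_J^2=1$ and $\Delta_I^2\ge\binom Nc^{-1}$ (operator norm bounded by Frobenius norm, and Frobenius norm bounded by the Plücker sum).

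The main obstacle is global injectivity: a local diffeomorphism onto $\R^I$ need not be a graph. To handle it, we apply Hardt triviality, or cylindrical decomposition adapted to the projection $\pi_I$, to each piece. This splits the piece, off a lower-dimensional set, into finitely many sections of $\pi_I$ over open connected cells. Each section is a Nash graph $x_{I^c}=\phi(x_I)$ with $D\phi=A$ bounded as above. All discarded sets are semialgebraic of dimension $<c$, and their union is $E'$. Disjointness is inherited from the construction.

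Finally, for a polynomial $c$-form $\omega=\sum_J f_J\,dx_J$, its pullback under $x_I\mapsto(x_I,\phi(x_I))$ is $\bigl(\sum_J f_J\det(\partial x_J/\partial x_I)\bigr)dx_I$. Each determinant is a minor of the block matrix $[I;D\phi]$, which is polynomial in the entries of $D\phi$. These entries are bounded semialgebraic (indeed Nash) functions. The $f_J$ are bounded because $E$ is compact, and they are semialgebraic. Hence the coefficient is bounded and semialgebraic, which completes the plan.
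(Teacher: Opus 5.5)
Your proposal is correct and follows the paper's proof essentially step by step. The shared steps are a Nash stratification of the top-dimensional part, the choice of a coordinate $c$-plane via Cauchy--Binet with ties broken in a fixed order, a cell decomposition adapted to $\pi_I$ to pass from local diffeomorphisms to finitely many Nash graphs, and boundedness of the pulled-back form coefficients from compactness. The only variation is the slope bound: you use Cramer's rule and the Pl\"ucker sum to get $\|A\|^2\le\Delta_I^{-2}-1$, hence $1+\|A\|^2\le\Delta_I^{-2}\le\binom Nc$ for the full graph map. The paper instead notes that the singular values of $\pi_I$ on the tangent plane are at most one with product at least $\binom Nc^{-1/2}$, so the inverse tangent map has norm at most $\binom Nc^{1/2}$; both routes give the same constant.
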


\begin{proof}
If \(c=0\), each point of the finite set is a graph over \(\R^0\).
For \(c>0\), choose a finite Nash stratification of the top-dimensional part.
At a point of one such stratum choose an \(N\times c\) matrix
\(Q\) with orthonormal columns spanning its tangent plane.
Cauchy--Binet gives
\[
 \sum_{|I|=c}\det(Q_I)^2=1.
\]
For some coordinate projection \(\pi_I\), the tangent determinant
has absolute value at least \(\kappa=\binom{N}{c}^{-1/2}\).
Partition by a choice of \(I\), breaking ties in a fixed order, and
take a compatible Nash cell decomposition. On each retained
\(c\)-dimensional piece, \(\pi_I\) is a local diffeomorphism.
Its singular values are at most one, and their product is at least
\(\kappa\), so the smallest is at least \(\kappa\) and the inverse
tangent map has norm at most \(\kappa^{-1}\).

The fibers are discrete semialgebraic sets, hence finite. A cell
decomposition over the projected coordinates gives finitely many
graphs, whose functions are Nash by the implicit-function theorem.
Discard their lower-dimensional boundaries. The derivatives satisfy
the stated bound, while compactness bounds the domains and graph values.
The pullback of a polynomial \(c\)-form has a semialgebraic coefficient,
bounded by the bounds on the form coefficients and graph derivatives.
\end{proof}

\begin{proposition}[Polynomial-form measures on semialgebraic strata]
\label{prop:semialgebraic-flux-holonomic}
Let $E\subset\R^N$ be compact semialgebraic of pure dimension $c$,
and let $S_1,\ldots,S_q$ be disjoint top-dimensional Nash strata of
$E$. Let $\omega$ be a polynomial $c$-form, and let $a_j$ be bounded
locally constant semialgebraic weights on $S_j$. The measure
\[
 \psi\longmapsto\sum_{j=1}^q\int_{S_j}a_j\psi\,|\omega|,
\]
with zero mass on the remaining strata, is algebraically holonomic.
The same holds with $\omega$ in place of $|\omega|$ on oriented strata.
\end{proposition}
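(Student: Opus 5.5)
We reduce the statement to Lemma~\ref{lem:semialgebraic-graph-measure} by cutting each stratum into bounded-slope Nash graphs and writing the restricted measure on each graph as a graph measure with a bounded semialgebraic density. Holonomicity is preserved by finite sums. So it suffices to treat one stratum $S_j$ and one graph piece at a time. Pieces of lower dimension carry zero mass for $|\omega|$, since $\omega$ is a $c$-form, and can be discarded.

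\emph{Step 1: graph decomposition.} Apply Lemma~\ref{lem:bounded-slope-partition} to the compact pure $c$-dimensional set $E$. We first refine the Nash cell decomposition used there so that it is compatible with the given strata $S_1,\ldots,S_q$; the lemma's construction already allows passing to a compatible cell decomposition. We obtain $E\setminus E'=\bigsqcup_\beta G_\beta$ with $\dim E'<c$. Each $G_\beta$ is a Nash graph $\{(x,\phi_\beta(x)):x\in D_\beta\}$ over a coordinate $c$-plane $\R^c_I$, after permuting coordinates, and each lies either inside a single $S_j$ or inside $E\setminus\bigcup_jS_j$. The exceptional set $E'$ has dimension less than $c$, hence zero $|\omega|$-measure, since the pullback of $\omega$ to a set of dimension less than $c$ vanishes. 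The graphs lying outside the strata receive weight zero.

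\emph{Step 2: density on one graph.} On $G_\beta\subset S_j$, pull $\omega$ back by the parametrization $x\mapsto(x,\phi_\beta(x))$. This gives $\Phi_\beta^*\omega=h_\beta(x)\,dx$, where $h_\beta$ is bounded and semialgebraic by the last assertion of Lemma~\ref{lem:bounded-slope-partition}. The weight $a_j$ is locally constant on $S_j$ and takes finitely many values; it is semialgebraic, so its pullback $a_j\circ\Phi_\beta$ is bounded semialgebraic. The restricted measure is therefore
\[
 \psi\longmapsto\int_{D_\beta}a_j(\Phi_\beta(x))\,|h_\beta(x)|\,\psi(x,\phi_\beta(x))\,dx .
\]
Here $|h_\beta|$ is again bounded semialgebraic, and the product $(a_j\circ\Phi_\beta)|h_\beta|$ is bounded semialgebraic. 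Lemma~\ref{lem:semialgebraic-graph-measure} then gives holonomicity on $\R^{c}\times\R^{N-c}$. Undoing the coordinate permutation is a linear algebraic change of coordinates and preserves holonomicity.

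In the oriented case we replace $|h_\beta|$ by $\epsilon_\beta h_\beta$, where $\epsilon_\beta=\pm1$ records whether the parametrization is compatible with the orientation. We refine the cells so that $G_\beta$ is connected; then $\epsilon_\beta$ is constant, and the density remains bounded semialgebraic.

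\emph{Main obstacle.} The delicate point is Step 1: we need the partition of Lemma~\ref{lem:bounded-slope-partition} to refine the prescribed strata and connected components simultaneously. We must also check that $a_j$, being merely locally constant on a Nash stratum, is semialgebraic with finitely many values. This holds because a semialgebraic set has finitely many connected components, each of them semialgebraic. We handle both by taking a single semialgebraic cell decomposition of $\R^N$ adapted to $E$, to every $S_j$, and to the coordinate-choice partition, and then proceeding as in the proof of that lemma. The boundedness of $h_\beta$ needs $\omega$ to have bounded coefficients on the compact set $E$ together with the slope bound $\binom Nc^{1/2}$, both of which the lemma supplies.
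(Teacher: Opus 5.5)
Your proposal is correct and follows essentially the paper's proof. Both decompose $E$ by Lemma~\ref{lem:bounded-slope-partition}, apply Lemma~\ref{lem:semialgebraic-graph-measure} on each Nash graph with the bounded semialgebraic density combining the form coefficient, its absolute value or orientation sign, and the weight, discard the lower-dimensional null set, and sum. Your refinement of the cells to be compatible with the $S_j$ and connected, so that the orientation sign is constant, makes explicit what the paper leaves to its ``compatible Nash refinement.''
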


\begin{proof}
Apply Lemma~\ref{lem:bounded-slope-partition}. On each graph,
Lemma~\ref{lem:semialgebraic-graph-measure} applies to the bounded
semialgebraic coefficient obtained from the form, the orientation sign
or absolute value, and the stated weight. Summing proves holonomicity.
The omitted set has \(c\)-dimensional measure zero and receives zero
mass. When \(c=0\), we have a finite sum of Dirac measures, also
covered by Lemma~\ref{lem:semialgebraic-graph-measure}.
\end{proof}

\subsection{The flux density}
We identify the flux density on the smooth transverse part of the
canonical set $M$. To apply the preceding proposition, we realize
the logarithmic form as the restriction of a polynomial form.

Put $V=\operatorname{im}L$ and $c=n-d$, and choose an oriented
orthonormal basis $b_1,\ldots,b_c$ of $V^\perp$. The form is
\begin{equation}
 J=\sqrt{\det(L^tL)},\qquad
 \beta_L=J\bigwedge_{j=1}^{c}(b_j\cdot d\theta).
 \label{eq:transverse-flux-form}
\end{equation}
For $c=0$ the empty wedge is $1$.

Consider the closed smooth real algebraic variety
\[
 \mathcal T=\{(\mathsf u_1,\mathsf v_1,\ldots,\mathsf u_n,\mathsf v_n):
                   \mathsf u_j^2+\mathsf v_j^2=1\}\subset\R^{2n},
\]
and let $\iota:\mathcal T\hookrightarrow\R^{2n}$ be the inclusion.
Its real points are the compact torus, and its complexification is
$(\C^*)^n$ under
\[
 z_j=\mathsf u_j+i\mathsf v_j,\qquad
 \mathsf u_j=\frac{z_j+z_j^{-1}}2,\qquad
 \mathsf v_j=\frac{z_j-z_j^{-1}}{2i}.
\]
Since $d\theta_j=(\mathsf u_j\,d\mathsf v_j-
\mathsf v_j\,d\mathsf u_j)/(2\pi)$, the form $\beta_L$ is the
restriction of the polynomial form
\[
 J\bigwedge_{a=1}^c\left(
  \sum_{j=1}^n b_{aj}
  \frac{\mathsf u_j\,d\mathsf v_j-\mathsf v_j\,d\mathsf u_j}{2\pi}
 \right).
\]

\begin{lemma}[Density of the flux measure]\label{lem:flux-density}
For an integer-mass FQ measure, put
\[
 \mathcal S_{\mathrm{tr}}=M\cap(Y_{\mathrm{reg}}\setminus\mathrm{Crit}),
 \qquad
 \mathrm{Crit}=\{p\in Y_{\mathrm{reg}}:
 T^{\log}_pY\cap V_\C\ne0\}.
\]
Then $\mathcal S_{\mathrm{tr}}$ is a semialgebraic $c$-dimensional
submanifold, dense in $M$, and
\[
 \dim(M\setminus\mathcal S_{\mathrm{tr}})<c,
 \qquad m(M\setminus\mathcal S_{\mathrm{tr}})=0.
\]
The flux measure $m$ satisfies
\[
 dm=\varkappa\,|\beta_L|
\]
on $\mathcal S_{\mathrm{tr}}$, where $\varkappa(p)=\mu_p(\{0\})$ is
the coefficient of $\mathfrak M^\C$ on the branch through $p$.
The function $\varkappa$ is positive integer-valued, bounded, locally
constant, and semialgebraic, also on a finite compatible semialgebraic
stratification.
For $c=0$ the formula is $m=J\sum_{p\in M}\varkappa(p)\delta_p$,
where $\varkappa(p)=\mu_p(\{0\})$.
\end{lemma}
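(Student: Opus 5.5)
The plan is to combine the algebraicity of Corollary~\ref{cor:all-fq-algebraicity} with the normal-sheet disintegration of Lemma~\ref{lem:ac-flux}, and then to identify the local sheet count with the weighted local degree of Theorem~\ref{thm:all-fq-toral-cycles}(e).

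\emph{Structure of $\mathcal S_{\mathrm{tr}}$.} By Corollary~\ref{cor:all-fq-algebraicity}, $Y$ is pure of dimension $c$ and $M$ is semialgebraic. The set $\mathrm{Crit}$ is Zariski closed in $Y_{\mathrm{reg}}$: it is the degeneracy locus of the composition $T^{\log}Y\to\C^n\to\C^n/V_\C$, defined by minors with polynomial entries. Hence $\mathcal S_{\mathrm{tr}}$ is semialgebraic. At $p\in\mathcal S_{\mathrm{tr}}$, Corollary~\ref{cor:all-fq-algebraicity} gives $(M^\C,p)=(Y^{\mathrm{an}},p)$, which is smooth. The real locus is then a $c$-manifold, by the real-fiber property of Proposition~\ref{prop:ac-canonical-germ} and Lemma~\ref{lem:integer-real-sheets}.

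\emph{Density and smallness of the complement.} Lemma~\ref{lem:integer-real-sheets} shows that smooth $V$-transverse real graphs are dense in every branch, and Corollary~\ref{cor:selected-semialgebraic} shows that $M\cap Y_{\mathrm{reg}}$ is dense. Transverse real points of $M^\C$ are transverse points of $Y$. So $\mathcal S_{\mathrm{tr}}$ is dense and relatively open in $M$, and $M\setminus\mathcal S_{\mathrm{tr}}$ is a semialgebraic subset with empty interior in the pure $c$-dimensional semialgebraic set $M$; therefore its dimension is less than $c$.

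For $m(M\setminus\mathcal S_{\mathrm{tr}})=0$, I use \eqref{eq:HF-5.2}. The projection $B^t$ of a semialgebraic set of dimension $<c$ has dimension $<c$, hence zero Lebesgue measure in $\R^c$. Each sheet lies over its normal coordinate $u$, so the integral of the indicator vanishes.

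\emph{Density formula.} Near $p\in\mathcal S_{\mathrm{tr}}$, $M$ is a single transverse graph $\theta(u)=Bu+Ly(u)$. In \eqref{eq:HF-5.2}, the atoms of $\mu_{Bu}$ at $y(u)$ carry mass $\mu_{\theta(u)}(\{0\})$ by covariance. The mass is locally constant along the sheet: the weighted local degree is a single component coefficient times degree one at a smooth transverse point, by Theorem~\ref{thm:all-fq-toral-cycles}(e) and Lemma~\ref{lem:integer-incidence-coefficients}. So locally $m=J\varkappa\,\theta_*(du)$. Since $B^t\theta(u)=u$, the pullback of $\bigwedge_j b_j\cdot d\theta$ to the graph is $du_1\wedge\cdots\wedge du_c$, which gives $dm=\varkappa|\beta_L|$. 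Boundedness of $\varkappa$ follows from the mass bound $K$ of Lemma~\ref{lem:ac-translation-bound}.

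\emph{Main obstacle: semialgebraicity of $\varkappa$.} The coefficient is constant on each connected component of $\mathcal S_{\mathrm{tr}}$, and the same holds on each top stratum of any compatible stratification. Semialgebraicity follows because there are finitely many such components (Corollary~\ref{cor:selected-semialgebraic} and finiteness of the components of semialgebraic sets), and a function that is constant on finitely many semialgebraic pieces is semialgebraic.

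The delicate point is whether coefficients can differ between branches of $M^\C$ lying in the same component of $Y_{\mathrm{reg}}$. This is excluded because equality of germs at smooth points forces a single branch, and the coefficients of $\mathfrak M^\C$ agree on overlaps by Theorem~\ref{thm:all-fq-toral-cycles}(c).

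\emph{Case $c=0$.} Here $M$ is finite, and \eqref{eq:HF-5.2} with $\R^0$ gives atoms of weight $J\mu_p(\{0\})$.
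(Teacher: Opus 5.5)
Your proposal is correct and follows essentially the paper's proof. The one variation is in bounding $\dim(M\setminus\mathcal S_{\mathrm{tr}})<c$: you use the semialgebraic fact that a subset with empty relative interior in a $c$-dimensional semialgebraic set has dimension less than $c$, whereas the paper notes that every component of $Y$ meets $\mathcal S_{\mathrm{tr}}$, bounds the singular and critical loci of $Y$ by complex dimension $c-1$, and passes to their real loci.

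One small correction concerns the null-mass step. The normal projection $B^t$ acts on logarithmic lifts, which are not semialgebraic in $(S^1)^n$, so the image of $M\setminus\mathcal S_{\mathrm{tr}}$ under $B^t$ need not be a semialgebraic set. Argue instead, as the paper does, that normal projection is Lipschitz on bounded logarithmic charts, so a set of real dimension less than $c$ has Lebesgue-null image in $\R^c$.
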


\begin{proof}
Suppose $c>0$. A real-analytic
stratum in the totally real torus has a complexification of the same
dimension. By the identity theorem, every complex analytic set
containing the stratum contains its complexification.

The set $\mathcal S_{\mathrm{tr}}$ is semialgebraic, since failure of
transversality is defined by Jacobian minors in torus coordinates.
By Corollary~\ref{cor:selected-semialgebraic}, $M\cap Y_{\mathrm{reg}}$
is dense in $M$, as is the transverse real locus by
Lemma~\ref{lem:ac-transverse-strata}. Both are relatively open in $M$,
so $\mathcal S_{\mathrm{tr}}$ is dense in $M$.
By Corollary~\ref{cor:all-fq-algebraicity}, its canonical complex germ
is the smooth germ of $Y$, with real locus a $c$-dimensional
submanifold. Every component of $Y$ contains such a point. Its
singular and critical loci therefore have dimension at most $c-1$;
the preceding dimension principle gives the same bound on their real loci.

At $p\in\mathcal S_{\mathrm{tr}}$, the germ equality in
Corollary~\ref{cor:all-fq-algebraicity} identifies the canonical germ
with one smooth branch of $Y$. Its coefficient is locally constant by
Theorem~\ref{thm:all-fq-toral-cycles}(c) and
Lemma~\ref{lem:integer-incidence-coefficients}. Since the projection
along $V$ has local intersection degree one,
Theorem~\ref{thm:all-fq-toral-cycles}(e) identifies the coefficient as
$\varkappa(p)=\mu_p(\{0\})$. It is bounded by the uniform local mass
bound $K$ of Lemma~\ref{lem:ac-translation-bound}. The semialgebraic
set $\mathcal S_{\mathrm{tr}}$ has finitely many connected components,
on each of which the coefficient is constant. Its graph is a finite
union of semialgebraic sets $S\times\{a\}$,
which proves semialgebraicity.

Put $B=[b_1\ \cdots\ b_c]$. On a normal chart $\theta(u)$ with
$B^t\theta(u)=u$, the pullback of
$\beta_L$ is $J\,du_1\wedge\cdots\wedge du_c$.
The normal-sheet formula~\eqref{eq:HF-5.2}, counting the local atom
$\varkappa$ times, gives $dm=\varkappa|\beta_L|$.
Normal projection is Lipschitz on bounded logarithmic charts, so the
exceptional set, of real dimension less than $c$, has null image.
The normal-sheet formula assigns it zero mass. Take a finite
compatible Nash refinement for
Proposition~\ref{prop:semialgebraic-flux-holonomic}.

For $c=0$, normal disintegration gives the stated finite sum with
the factor $J$.
\end{proof}

\begin{theorem}[Holonomicity of the flux measure]
\label{thm:holonomic-canonical-flux}
\label{thm:holonomic-flux}
The flux measure $m$ of an integer-mass FQ measure is an
algebraically holonomic distribution on the compact real torus,
with complexification $(\C^*)^n$.
\end{theorem}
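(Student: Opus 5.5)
We will realize $m$ as a measure on the closed smooth real algebraic variety $\mathcal T\subset\R^{2n}$ and prove holonomicity of the pushforward $\iota_*m$ on $\R^{2n}$. The restriction principle recalled at the start of this section then transfers holonomicity to $\mathcal T$: Kashiwara's equivalence and \cite[Lemma~3.1(i)]{GSSauthor} for tempered distributions. The complexification of $\mathcal T$ is $(\C^*)^n$. Since $m$ is a finite measure on a compact set, temperedness is automatic.

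\emph{Case $c=0$.} Lemma~\ref{lem:flux-density} gives $m=J\sum_{p\in M}\varkappa(p)\delta_p$ with $M$ finite. Each Dirac mass is holonomic, by the case $c=0$ of Lemma~\ref{lem:semialgebraic-graph-measure}, and finite sums preserve holonomicity.

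\emph{Case $c>0$.} We carry out the following steps in order.

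\emph{Step 1: semialgebraic setting.} Regard $M$ as a compact subset $E\subset\mathcal T\subset\R^{2n}$. By Corollary~\ref{cor:all-fq-algebraicity}, $M$ is semialgebraic, and it has pure dimension $c$: by Lemma~\ref{lem:flux-density}, the $c$-manifold $\mathcal S_{\mathrm{tr}}$ is dense in $M$ and its complement has dimension less than $c$.

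\emph{Step 2: stratification.} Choose a finite Nash stratification of $E$ compatible with the semialgebraic set $\mathcal S_{\mathrm{tr}}$ and with the finitely many level sets of $\varkappa$. Its top-dimensional strata $S_1,\ldots,S_q$ are then contained in $\mathcal S_{\mathrm{tr}}$, up to a set of dimension less than $c$, and $\varkappa$ is constant on each $S_j$.

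\emph{Step 3: identification of $m$.} Lemma~\ref{lem:flux-density} gives $dm=\varkappa|\beta_L|$ on $\mathcal S_{\mathrm{tr}}$ and $m(M\setminus\mathcal S_{\mathrm{tr}})=0$. Moreover $\beta_L$ is the restriction of the displayed polynomial $c$-form $\omega$ on $\R^{2n}$. Hence
\[
 \iota_*m(\psi)=\sum_{j=1}^q\int_{S_j}\varkappa_j\,\psi\,|\omega|,
\]
where the lower-dimensional strata receive zero mass.

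\emph{Step 4: conclusion.} Proposition~\ref{prop:semialgebraic-flux-holonomic} applies with $N=2n$, the polynomial form $\omega$, and the bounded locally constant weights $a_j=\varkappa_j$. It yields holonomicity of $\iota_*m$, and restriction finishes the proof.

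The step we expect to be the main obstacle is Step 3. One must check that $|\omega|$ restricted to $S_j$, computed as a measure in $\R^{2n}$, agrees with $|\beta_L|$ in torus coordinates. This should follow from the identity $d\theta_j=(\mathsf u_j\,d\mathsf v_j-\mathsf v_j\,d\mathsf u_j)/(2\pi)$ on $\mathcal T$: pullbacks of forms commute with restriction, and $|\cdot|$ of a top form on an oriented manifold is intrinsic.

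A second point to check is the direction of the extension-by-zero step. We need holonomicity on $\mathcal T$, and the restriction implication recalled before Lemma~\ref{lem:bounded-semialgebraic-holonomic} provides exactly this. It holds provided $\mathscr D_{\R^{2n}}\cdot\iota_*m$ is supported on $\mathcal T$, which is immediate because $\supp\iota_*m\subset\mathcal T$.
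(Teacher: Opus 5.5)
Your proposal is correct and follows the paper's proof. Lemma~\ref{lem:flux-density} writes $m=\varkappa|\beta_L|$, where $\beta_L$ is the restriction of a polynomial form. Proposition~\ref{prop:semialgebraic-flux-holonomic} then gives holonomicity of $\iota_*m$ on $\R^{2n}$, and the closed-embedding equivalence \cite[Lemma~3.1(i)]{GSSauthor} transfers it to $\mathcal T_\C\simeq(\C^*)^n$; Dirac masses handle $c=0$. Your Steps~2--3 only spell out the compatible Nash refinement that the paper records at the end of the proof of Lemma~\ref{lem:flux-density}. The two points you flag are routine: $|\omega|$ agrees with $|\beta_L|$ because pullback of forms commutes with restriction, and $\mathscr D\cdot\iota_*m$ is supported on $\mathcal T$.
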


\begin{proof}
For $c=0$, the assertion follows from holonomicity of Dirac masses.
For $c>0$, Lemma~\ref{lem:flux-density} expresses the measure using the
polynomial form $\beta_L$ and bounded locally constant semialgebraic
coefficients $\varkappa$. By
Proposition~\ref{prop:semialgebraic-flux-holonomic}, the extension
$\iota_*m$ is holonomic. Compact support implies temperedness.
Holonomicity of $\iota_*m$ therefore implies that of $m$ by the
closed-embedding equivalence \cite[Lemma~3.1(i)]{GSSauthor}, on
$\mathcal T_\C\simeq(\C^*)^n$.
\end{proof}

\begin{remark}\label{rem:flux-euler}
In these coordinates $z_j\partial_{z_j}=(2\pi i)^{-1}
\partial_{\theta_j}$. These invariant derivatives and multiplication
by $z_j^{-1}$ give the Mellin operators on Fourier coefficients,
as made explicit in Lemma~\ref{lem:fourier-holonomic} below.
Transposition with respect to densities identifies the right
distribution modules of \cite[\S3]{GSSauthor} with the left modules of
Section~\ref{sec:preliminaries}; holonomicity is unchanged.
\end{remark}

\section{A support-height theorem for holonomic sequences}
\label{sec:holonomic-height}

We prove the linear height bound of Theorem~\ref{thm:main-support}
for holonomic sequences whose support projects properly. Applied to
the Fourier coefficients of the flux measure, it gives the spectral
bound needed for reconstruction.

\subsection{Difference modules and affine restriction}
We use the difference algebra $\mathscr D_n$ and the Mellin identification
of Section~\ref{sec:preliminaries}. Localization will treat the cyclic
module $\mathscr D_nf$ and its $\C[s]$-torsion.

A lattice hyperplane is the nonempty set of integer points satisfying
one rational affine equation. Sabbah's localization theorem yields
first-order recurrences away from finitely many such hyperplanes.
We follow lattice paths toward the kernel of the projection. A path
either reaches a bounded projected region, where properness applies,
or approaches an exceptional hyperplane, where restriction reduces
the argument to induction on the dimension.

\begin{theorem}[Support height; Theorem~\ref{thm:main-support}]
\label{thm:holonomic-height}
Let \(f:\Z^n\to\C\) be algebraically holonomic and let
\(T:\R^n\to\R^d\) be a real linear map.  Suppose that
\begin{equation}
 \#\{k\in\Z^n:f(k)\ne0,\ \|Tk\|\le R\}<\infty
 \qquad(R<\infty).
 \label{eq:height-properness}
\end{equation}
Then there exists \(C<\infty\) such that
\begin{equation}
 \|k\|\le C(1+\|Tk\|)
 \qquad\text{for every }k\in\operatorname{supp}f.
 \label{eq:height-conclusion}
\end{equation}
\end{theorem}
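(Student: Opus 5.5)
We argue by induction on $n$, following the outline at the start of this section. When $n=0$ there is nothing to prove. Suppose $n\ge1$. Consider the cyclic module $\mathscr D_nf$, keeping its $\C[s]$-torsion. Sabbah's localization theorem \cite[Theorem~3.1]{Sabbah1992} then provides finitely many lattice hyperplanes $H_1,\ldots,H_N$ and nonzero polynomials $q_j,r_j\in\C[s]$ with the following property: off $\bigcup_iH_i$, every shift satisfies a first-order relation
\[
 q_j(k)\,f(k+e_j)=r_j(k)\,f(k)\qquad(1\le j\le n).
\]
The analogous relations hold for $\tau_j^{-1}$. The zero loci of the $q_j,r_j$ on $\Z^n$ are contained in finitely many further algebraic hypersurfaces. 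We refine these to finitely many lattice hyperplanes, absorbing any nonlinear factors by passing to a Zariski-dense open set and treating its complement as well. Call the resulting finite union the exceptional set $\mathcal H$. Off $\mathcal H$, the support of $f$ is a union of connected components of the lattice graph with $\mathcal H$ removed. Along any edge of such a component, $f(k)\ne0$ forces $f(k\pm e_j)\ne0$.

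\emph{Paths toward $\ker T$.} Let $k\in\operatorname{supp}f\setminus\mathcal H$ with $\|k\|$ large compared to $\|Tk\|$. Choose an integer direction $v$ (a short lattice vector in a fixed finite set) that approximately decreases the component of $k$ in $(\ker T)^\perp$, or that moves $k$ along $\ker T$ toward the origin while keeping $\|T\cdot\|$ controlled. Follow the lattice path $k,k+v,k+2v,\ldots$ for roughly $\|k\|$ steps. Each step changes $\|Tk\|$ by at most $\|Tv\|$, and we choose $v$ so that the path stays in the tube $\{\|T\cdot\|\le\|Tk\|+C'\}$. By the first-order relations, $f$ stays nonzero along the path until the path enters $\mathcal H$. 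If the path never meets $\mathcal H$ within about $\|k\|/2$ steps, we obtain arbitrarily many support points in a bounded tube. More precisely, walking along $\ker T$ from a point with bounded $\|Tk\|$ produces infinitely many support points with $\|T\cdot\|\le R$, contradicting \eqref{eq:height-properness}. This is where the linear bound comes from: the number of admissible steps is proportional to $\|k\|-C\|Tk\|$.

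\emph{Exceptional hyperplanes.} Otherwise the path meets some hyperplane $H_i$ at a point $k'$ with $\|k-k'\|$ at most a fixed multiple of the distance from $k$ to $H_i$. In that case we also need control of $f$ on $H_i$. By Sabbah's restriction lemmas \cite[Lemmas~3.2, 3.4]{Sabbah1992}, the restriction of $f$ to each lattice hyperplane $H_i\cong\Z^{n-1}$ (after an affine unimodular change of coordinates) is holonomic. The properness hypothesis \eqref{eq:height-properness} restricts to $H_i$ with the linear map $T|_{H_i}$. The induction hypothesis, adjusted by the affine translation, therefore gives $\|k'\|\le C_i(1+\|Tk'\|)$ on $\operatorname{supp}f\cap H_i$.

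What remains is to transport this bound from $k'$ back to $k$. For this we must argue that the nonvanishing of $f$ propagates along the path right up to $H_i$, so that $f(k')\ne0$ or some neighbor of $k'$ has $f\ne0$. Alternatively, we can run the path argument inside a slab near $H_i$, at bounded distance, using the restrictions to the finitely many parallel translates $H_i+j$, $|j|\le J$, which are again holonomic.

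We expect the main obstacle to be exactly this transfer: showing that the support cannot approach an exceptional hyperplane without actually reaching it in a controlled way. The first-order relations may degenerate at $H_i$, where $q_j$ vanishes, so $f$ may vanish on $H_i$ while being nonzero just beside it. Our first attempt is as follows. Choose the path direction $v$ transverse to $H_i$ only when heading toward $H_i$; otherwise choose $v$ parallel to $H_i$, so the path moves inside the slab, where the parallel translates $H_i+j$ are handled by induction. A fixed finite set of directions together with a compactness argument over the finitely many hyperplanes should yield a uniform constant $C$. Lattice points lying in the intersection of several hyperplanes are treated by the same induction.
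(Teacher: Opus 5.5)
\textbf{Gap 1: the propagation step is wrong as stated.} Sabbah's localization theorem \cite[Theorem~3.1]{Sabbah1992} makes $\mathcal N_{\mathcal S}$, for $\mathcal N=\mathscr D_nf$, finite free over $\C[s]_{\mathcal S}$ of some rank $r$, usually larger than one. It does not yield scalar relations $q_j(k)f(k+e_j)=r_j(k)f(k)$ with $q_j,r_j\ne0$.

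A counterexample is $f(k)=1+(-1)^k$ on $\Z$, which is holonomic because $(\tau^2-1)f=0$. Any such relation forces $r_j$ to vanish at every even integer and $q_j$ at every odd one, so both are zero. Moreover, every edge of $\Z$ joins a support point to a non-support point. So the support is not a union of components of the lattice graph with $\mathcal H$ removed.

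What does propagate is the vector $F(k)=(g_i(k)/b_i(k))_{i=1}^r$, built from a basis $g_i/b_i$ of the localization with $g_i\in\mathscr D_nf$. Off $\mathcal H$, $f(k)\ne0$ implies $F(k)\ne0$, and $F(k+e_j)=\mathsf A_j(k)F(k)$ with inverse $\mathsf B_j(k+e_j)$. You must then check that $\bigcup_i\operatorname{supp}g_i$ is $T$-proper; it is, since each $g_i$ is a finite sum of polynomial multiples of shifts of $f$. All later steps must use the $g_i$, not $f$. Also, no nonlinear factors arise, because Sabbah's multiplicative set is generated by affine factors. Restriction to a nonlinear hypersurface would not fit your induction anyway.

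\textbf{Gap 2: the mechanism meant to give linearity fails.} A fixed lattice step $v$ keeps the path in the tube $\{\|T\cdot\|\le\|Tk\|+C'\}$ for about $\|k\|$ steps only if $Tv=0$. But $\ker T$ may contain no nonzero lattice vector, which is exactly the case $T=L^t$ of Corollary~\ref{cor:universal-linear-height}. Moreover, \eqref{eq:height-properness} is only qualitative. Many support points in a tube of radius $\|Tk\|+C'$ contradict nothing once $\|Tk\|$ is unbounded, and a step count proportional to $\|k\|-C\|Tk\|$ gives no estimate.

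The paper walks the other way, toward $\ker T$. Put $E_T=(\ker T)^\perp$ and $\upsilon=P_{E_T}k/\|P_{E_T}k\|$. At each step it picks, from a fixed finite set of integer vectors of norm at most $R_{\rm step}$, a $w$ with $\upsilon\cdot w\le-\gamma$, so $\|P_{E_T}k\|$ drops by $\gamma/2$ per step (Lemma~\ref{lem:height-inward-steps}). After at most $2\|P_{E_T}k\|/\gamma+1$ steps, one of two things happens:
\begin{itemize}
\item the walk reaches the fixed region $\|P_{E_T}\cdot\|\le R_*$, where the support of $F$ is a fixed finite set by properness; or
\item the walk enters the thickening $\mathcal H^+$, the finitely many parallel lattice hyperplanes $|\ell_j(k)-a_j|\le\lceil\|\ell_j\|R_{\rm step}\rceil+1$.
\end{itemize}
Outside $\mathcal H^+$ a full step avoids $\mathcal H$, so $F$ stays nonzero along the walk. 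At a terminal point in $\mathcal H^+$ some $g_i$ is nonzero. The bound there comes from induction applied to the restrictions of the $g_i$ to these hyperplanes; they are holonomic by \cite[Lemma~3.4]{Sabbah1992} and have $T\iota$-proper support.

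This removes the obstacle you flag. The walk stops at bounded distance before reaching $\mathcal H$, so nothing needs to be nonzero on $\mathcal H$ itself, and no walking parallel to $H_i$ is required. Your slab idea is the right one, but it must be applied to the $g_i$ and combined with stopping. The triangle inequality then gives $\|k\|\le\|k_{\rm end}\|+R_{\rm step}(2\|P_{E_T}k\|/\gamma+1)\le C(1+\|Tk\|)$.
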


\begin{remark}\label{rem:height-scope}
The theorem allows noninjective maps $T$ on $\Z^n$ and irregular
holonomic modules. In dimension $n=1$, holonomicity is unnecessary:
if $T=0$, the support is finite by properness;
otherwise $\|Tk\|=\|T(1)\||k|$.
In higher dimension, the indicator of
$\{(j,j^2):j\in\Z\}$ has proper support for $T(k_1,k_2)=k_1$, but
$\|(j,j^2)\|\ge j^2$ contradicts any bound $C(1+|j|)$.
Thus holonomicity is essential, and this indicator is not holonomic.
For the corresponding power series with nonnegative indices, see
\cite[Example~4.6]{BellChen}.
\end{remark}

We first apply Sabbah's restriction theorem to a rational affine
hyperplane.

\begin{lemma}[Rational affine slices]
\label{lem:holonomic-slice}
Let \(H\subset\R^n\) be a rational affine hyperplane with
\(H\cap\Z^n\ne\varnothing\), and suppose that \(f\) is algebraically
holonomic. Its restriction to \(H\cap\Z^n\) is algebraically
holonomic in \(n-1\) variables. If the hyperplane is $\ell(k)=a$,
with $\ell$ primitive integral and $a\in\Z$, its lattice points are
$k_0+\iota\Z^{n-1}$, where $\ell(k_0)=a$ and the columns of
$\iota$ form a basis of $\ker\ell\cap\Z^n$.
This also holds for the
restriction of every sequence in \(\mathscr D_n f\).
\end{lemma}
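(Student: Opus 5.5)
Under the Mellin identification, the statement is Sabbah's restriction theorem for holonomic $\mathscr D$-modules on $(\C^*)^n$, pulled back along a subtorus inclusion. First I will normalize coordinates. Since $\ell$ is primitive, it extends to a unimodular integer matrix $A\in GL_n(\Z)$ whose first row is $\ell$. I then compose with the translation $k\mapsto k+k_0$. Put $g(k')=f(k_0+A^{-1}k')$. The unimodular substitution acts on $(\C^*)^n$ by the monomial automorphism $t\mapsto t^{A}$. The shift $\tau^{k_0}$ is the invertible multiplication by a monomial. Both are algebra automorphisms of $\mathscr D_n$, so $\mathscr D_n g$ is the twist of $\mathscr D_n f$ by an automorphism of the ring of differential operators coming from an automorphism of the variety. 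Holonomicity is preserved, because the characteristic variety is transported by the induced symplectic automorphism of $T^*(\C^*)^n$. In the new coordinates the hyperplane becomes $k'_1=0$, and its lattice points are $k_0+\iota\Z^{n-1}$, where $\iota$ is given by the last $n-1$ columns of $A^{-1}$. These columns form a basis of $\ker\ell\cap\Z^n$ because $A$ is unimodular. This also proves the parametrization claim.

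Next I will identify restriction to $\{k_1=0\}$ with a $\mathscr D$-module operation. Write $N=\mathscr D_n g$ and consider the submodule $(s_1)N$, whose elements vanish at $k_1=0$ up to the recurrence relations. The restricted sequence $g|_{k_1=0}$ generates a $\mathscr D_{n-1}$-module, and this module is a quotient of $N/s_1N$. Indeed, $s_1=-t_1\partial_{t_1}$ annihilates the values at $k_1=0$ in the sense that $(s_1u)(0,k'')=0$. Moreover, $s_j$ and $\tau_j$ for $j\ge2$ commute with $s_1$ and with evaluation at $k_1=0$. So the map $u\mapsto u|_{k_1=0}$ from $N$ to sequences on $\Z^{n-1}$ is $\mathscr D_{n-1}$-linear and kills $s_1N$. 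Its image is $\mathscr D_{n-1}\cdot(N|_{k_1=0})$, which contains the restriction of every element of $\mathscr D_n f$ after transport. This gives the final assertion once holonomicity is known.

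The remaining point, and the main obstacle, is that $N/s_1N$, as a $\mathscr D_{n-1}$-module in the variables $t_2,\dots,t_n$, is holonomic. In the Mellin picture, $N/t_1\partial_{t_1}N$ is the cokernel of the Euler operator in $t_1$. This is essentially the degree-zero cohomology of the direct image under the projection $(\C^*)^n\to(\C^*)^{n-1}$ forgetting $t_1$: $\int_{p}N$ is computed by the relative de Rham complex $N\xrightarrow{\partial_{t_1}}N$, and since $t_1$ is invertible, $\operatorname{coker}\partial_{t_1}\cong\operatorname{coker}(t_1\partial_{t_1})$. Direct images of holonomic modules under algebraic morphisms are holonomic (Bernstein; this is what Sabbah's \cite[Lemmas~3.2, 3.4]{Sabbah1992} encode for the discrete Mellin setting). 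This identification is the step I expect to need the most care. I would cite Sabbah's restriction lemma directly for the statement that the Mellin restriction to a coordinate hyperplane of a holonomic $\C[s]\langle\tau^{\pm}\rangle$-module has holonomic cokernel. I would then conclude with closure of holonomicity under quotients \cite[\S3]{GSSauthor}, which makes the image module, and hence each restricted sequence, holonomic in $n-1$ variables.
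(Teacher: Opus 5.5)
Your proof is correct, and its skeleton matches the paper's. Both proofs take unimodular coordinates adapted to $\ell$, observe that evaluation on the hyperplane kills $(\ell(s)-a)\mathcal N$ and commutes with the tangential coordinates and shifts, use holonomicity of that quotient, and finish with closure under subquotients.

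The difference is how you justify the central holonomicity step. The paper keeps $a$ and cites Sabbah's restriction lemma \cite[Lemma~3.4]{Sabbah1992} for the Mellin complex $[\mathcal N\xrightarrow{\ell(s)-a}\mathcal N]$. You instead translate to $k_1=0$ and use the Mellin dictionary ``restriction in $s_1$ corresponds to integration in $t_1$''. Multiplication by $t_1$ gives a $\mathscr D_{n-1}$-linear isomorphism $N/\partial_{t_1}N\cong N/t_1\partial_{t_1}N=N/s_1N$. The left side is $H^0\int_p N$ for the projection $p:(\C^*)^n\to(\C^*)^{n-1}$, which is holonomic by the direct image theorem. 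Your route rests on a standard textbook result and makes visible why slices inherit holonomicity. The paper's route is shorter and stays inside the Sabbah Mellin framework it already uses for localization in Proposition~\ref{prop:height-propagation}.

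Two small corrections are needed.
\begin{itemize}
\item $s_1N$ is only a $\mathscr D_{n-1}$-submodule, not a $\mathscr D_n$-submodule, because $\tau_1s_1=(s_1+1)\tau_1$. The paper notes this explicitly.
\item $\mathscr D_{n-1}\cdot g|_{k_1=0}$ is not a quotient of $N/s_1N$. In general it is a proper submodule of the image, since that image also contains $(\tau_1^mg)|_{k_1=0}=g|_{k_1=m}$ for every $m$. So the conclusion needs closure of holonomicity under submodules as well as quotients, that is, under subquotients.
\end{itemize}
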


\begin{proof}
Write the hyperplane as \(\ell(k)=a\), where \(\ell\) is primitive
integral and \(a\in\Z\), and complete \(\ell\) to unimodular integer
coordinates. The resulting monomial automorphism of the torus
preserves holonomicity. By \cite[Lemma~3.4]{Sabbah1992}, the
cohomology of the derived restriction of an algebraic holonomic
Mellin module to \(\ell(s)=a\) is holonomic in the remaining
variables. Writing $\mathcal N=\mathscr D_nf$, this restriction is
represented, in degrees $-1$ and $0$, by
\[
 [\mathcal N\xrightarrow{\ell(s)-a}\mathcal N].
\]
The degree-zero cohomology is the ordinary quotient with the action
of tangential polynomial coordinates and tangential integer shifts.
Normal shifts generally do not preserve \((\ell(s)-a)\mathcal N\);
the \(\mathscr D_n\)-action is therefore restricted to this tangential algebra.

Evaluation on the hyperplane factors through
\[
 \mathscr D_n f\big/(\ell(s)-a)\mathscr D_n f
\]
and commutes with tangential polynomial coordinates and integer shifts.
The restricted sequence generates a subquotient of this holonomic
module. Each element of \(\mathscr D_n f\) also generates a holonomic
submodule, so the argument applies to every such sequence.
\end{proof}

\subsection{Propagation away from lattice hyperplanes}
Localization yields finitely many identities relating values at
neighboring lattice points. They imply that, away from the exceptional
hyperplanes, a nonzero vector remains nonzero along a lattice path.

\begin{proposition}[Propagation]\label{prop:height-propagation}
Let $f$ be algebraically holonomic. There are a multiplicative set
$\mathcal S\subset\C[s]$, an integer $r\ge0$,
sequences $g_1,\ldots,g_r\in\mathscr D_nf$, denominators
$b_i\in\mathcal S$, a finite union $\mathcal H$ of lattice
hyperplanes, and matrices $\mathsf A_j,\mathsf B_j\in
\operatorname{Mat}_r(\C[s]_{\mathcal S})$ with the following properties.
For $k\notin\mathcal H$, all $b_i(k)$ are nonzero, and the vector
$F(k)=(g_i(k)/b_i(k))_{i=1}^r$ is nonzero whenever $f(k)\ne0$.
Here a coordinate edge is $\{k,k+e_j\}$.
Along every coordinate edge whose endpoints avoid $\mathcal H$,
the matrices are defined and
\begin{equation}
 F(k+e_j)=\mathsf A_j(k)F(k),\qquad
 F(k)=\mathsf B_j(k+e_j)F(k+e_j).
 \label{eq:height-propagation}
\end{equation}
They satisfy $\mathsf B_j(s+e_j)\mathsf A_j(s)=I$.
For a linear map $T:\R^n\to\R^d$, if $\operatorname{supp}f$ is
$T$-proper, then so is
$\bigcup_i\operatorname{supp}g_i$. If $r=0$, the support of $f$
is contained in $\mathcal H$.
\end{proposition}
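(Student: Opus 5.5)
The plan is to apply Sabbah's localization theorem \cite[Theorem~3.1]{Sabbah1992} to the cyclic module $\mathcal N=\mathscr D_nf$, which is holonomic by hypothesis. Transported through the Mellin identification of Section~\ref{sec:preliminaries}, it provides a multiplicative set $\mathcal S\subset\C[s]$ with three properties: it is stable under the shifts $s\mapsto s\pm e_j$, it is generated by affine forms $\ell(s)-a$ whose linear parts $\ell$ are integral, and $\mathcal N_{\mathcal S}=\C[s]_{\mathcal S}\otimes_{\C[s]}\mathcal N$ is free of finite rank $r$ over $\C[s]_{\mathcal S}$. The $\tau_j^{\pm1}$ act semilinearly on $\mathcal N_{\mathcal S}$ via $\tau_j(u/b)=\tau_ju/b(s+e_j)$. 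Choose a basis $g_1/b_1,\dots,g_r/b_r$ with $g_i\in\mathcal N$ and $b_i\in\mathcal S$. Then write
\[
 \tau_j(g_i/b_i)=\sum_{i'}(\mathsf A_j)_{i'i}\,g_{i'}/b_{i'},\qquad
 \tau_j^{-1}(g_i/b_i)=\sum_{i'}(\mathsf B'_j)_{i'i}\,g_{i'}/b_{i'},
\]
and $f=\sum_i c_i\,g_i/b_i$, with all coefficients in $\C[s]_{\mathcal S}$. Only finitely many denominators occur, say $q_1,\dots,q_N\in\mathcal S$. Each is a product of affine forms. Those forms whose zero sets contain lattice points give finitely many lattice hyperplanes. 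The remaining forms never vanish on $\Z^n$, since a rational affine form either vanishes on a lattice hyperplane or nowhere on $\Z^n$ after normalization. Let $\mathcal H$ be the union of the lattice hyperplanes so obtained.

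The key technical step is evaluation. For $k\notin\mathcal H$ we define $\operatorname{ev}_k(u/b)=u(k)/b(k)$, and we must check that this is well defined and compatible with the module structure. If $u/b=u'/b'$ in $\mathcal N_{\mathcal S}$, then $q(b'u-bu')=0$ in $\mathcal N$ for some $q\in\mathcal S$. Pointwise this gives $q(k)\bigl(b'(k)u(k)-b(k)u'(k)\bigr)=0$, so the two evaluations agree wherever $q(k)\ne0$. Here lies the main obstacle: $q$ depends on the relation and may introduce new hyperplanes. I would handle it by using only finitely many relations, namely the defining identities for $\mathsf A_j$, $\mathsf B'_j$ and the $c_i$, and enlarging $\mathcal H$ by the zero sets of their witnesses $q$. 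This keeps $\mathcal H$ a finite union of lattice hyperplanes. The same finitely many witnesses also handle the $\C[s]$-torsion of $\mathcal N$: torsion elements die in $\mathcal N_{\mathcal S}$ and are supported in the zero set of their annihilator. Applying $\operatorname{ev}_k$ and $\operatorname{ev}_{k+e_j}$ to the displayed identities then yields \eqref{eq:height-propagation} along edges avoiding $\mathcal H$, with $\mathsf B_j(s+e_j)=\mathsf B'_j$ after the shift. The identity $\tau_j^{-1}\tau_j=\mathrm{id}$ on the free module gives $\mathsf B_j(s+e_j)\mathsf A_j(s)=I$. Evaluating $f=\sum c_ig_i/b_i$ shows that $F(k)=0$ forces $f(k)=0$ off $\mathcal H$.

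The last two claims are short. Each $g_i$ is a finite sum $\sum P_\alpha(s)\tau^\alpha f$, so its support lies in a finite union of translates $\operatorname{supp}f-\alpha$. Since $\|T(k+\alpha)\|\le\|Tk\|+\|T\alpha\|$, $T$-properness of $\operatorname{supp}f$ passes to each $g_i$. If $r=0$, then $\mathcal N_{\mathcal S}=0$, so $qf=0$ for some $q\in\mathcal S$. Hence $f$ vanishes off the zero set of $q$, whose lattice points lie in finitely many lattice hyperplanes that we include in $\mathcal H$.
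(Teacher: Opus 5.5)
Your proposal is correct and follows the paper's proof essentially step by step. Both arguments use Sabbah localization to a free $\C[s]_{\mathcal S}$-module, a basis $g_i/b_i$, matrices for $\tau_j^{\pm1}$ and the coefficients of $f$, and finitely many cleared identities whose witnesses in $\mathcal S$ (with the denominators and their shifts) cut out $\mathcal H$; the $T$-properness argument via translates and the case $r=0$ via $qf=0$ also match.

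The only slips are bookkeeping ones:
\begin{itemize}
\item With your index convention $(\mathsf A_j)_{i'i}$, the evaluated relations carry transposes, for example $F(k+e_j)=\mathsf A_j(k)^{t}F(k)$.
\item The matrix $\mathsf B'_j$ of $\tau_j^{-1}$ should itself be taken as $\mathsf B_j$, with no extra shift. It satisfies $\mathsf B'_j(s+e_j)\mathsf A_j(s)=I$ and gives $F(k)=\mathsf B'_j(k+e_j)^{t}F(k+e_j)$ directly.
\item The paper obtains shift-stability of $\mathcal S$ by explicitly adjoining all integer translates of the affine factors, rather than reading it off from Sabbah's theorem.
\end{itemize}
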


\begin{proof}
We localize, clear the resulting coordinate identities, and then
evaluate them on the lattice.
\emph{Localization.}
Put \(\mathcal N=\mathscr D_n f\).  By
\cite[Theorem~3.1]{Sabbah1992}, there is a multiplicative set
\(\mathcal S\subset\C[s]\), generated by integer translates of
finitely many affine factors
\begin{equation}
 \ell(s)-a,\qquad \ell\text{ primitive integral},\quad a\in\C,
 \label{eq:height-affine-factors}
\end{equation}
such that
\[
 \mathcal N_{\mathcal S}
 =\C[s]_{\mathcal S}\otimes_{\C[s]}\mathcal N
\]
is finite free over \(\C[s]_{\mathcal S}\).
Enlarge \(\mathcal S\) by all integer translates of these factors.
Freeness is preserved, and every \(\tau_j^{\pm1}\) acts on the localization.
The torsion of \(\mathcal N\) over \(\C[s]\) vanishes there by
\cite[Lemma~3.2]{Sabbah1992}.

Choose a basis \(v_1,\ldots,v_r\) of \(\mathcal N_{\mathcal S}\)
and write
\[
 v_i=\frac{g_i}{b_i},\qquad g_i\in\mathcal N,\quad b_i\in\mathcal S.
\]
Each \(g_i\) is a finite sum of polynomial multiples of translates
of \(f\). If $\operatorname{supp}f$ is $T$-proper, each translate
is $T$-proper, as is the finite union containing the supports of the $g_i$.

\emph{Clearing the identities.}
Express \(f\), \(\tau_jv_i\), and \(\tau_j^{-1}v_i\) in this
basis. With $v=(v_i)^t$, write
\[
 f=\sum_i c_i(s)v_i,\qquad
 \tau_jv=\mathsf A_j(s)v,\qquad
 \tau_j^{-1}v=\mathsf B_j(s)v.
\]
The coefficients belong to $\C[s]_{\mathcal S}$. Applying $\tau_j$
to the last identity gives
$v=\mathsf B_j(s+e_j)\mathsf A_j(s)v$, so the matrix product is $I$
by independence of the basis. Clear the finitely many denominators,
including \(b_i\) and their required shifts. For each resulting
identity, the difference \(\omega\in\mathcal N\) of its sides vanishes
in \(\mathcal N_{\mathcal S}\). Hence some \(h_\omega\in\mathcal S\)
satisfies \(h_\omega\omega=0\) in \(\mathcal N\). Multiplication by
\(h_\omega\) gives finitely many identities
\[
 h_\omega\omega=0\qquad\text{in }\mathcal N.
\]
They hold in the original sequence module and can be evaluated at every
$k\in\Z^n$.

\emph{Evaluation and propagation.}
Let \(\mathcal H\) be the union of the integer zero sets of all
affine factors in these multipliers and denominators, together with
the finitely many shifts needed at neighboring vertices. It is a finite
union of lattice hyperplanes
\begin{equation}
 \ell_j(k)=a_j,\qquad \ell_j\text{ primitive integral},\quad a_j\in\Z.
 \label{eq:height-exceptional-hyperplanes}
\end{equation}
Factors with no integer zeros impose no restriction.  For
\(k\notin\mathcal H\), put
\[
 F(k)=\bigl(g_1(k)/b_1(k),\ldots,g_r(k)/b_r(k)\bigr)^t.
\]
Evaluating the identities and dividing by the nonzero factors shows
that \(f(k)\ne0\) implies \(F(k)\ne0\). We also obtain
\eqref{eq:height-propagation} along every coordinate
edge whose endpoints avoid \(\mathcal H\).
The rational matrices obtained from the two inverse shifts satisfy
\(\mathsf B_j(s+e_j)\mathsf A_j(s)=I\), so the matrices in
\eqref{eq:height-propagation} are inverse at the indicated integer
points. If $\operatorname{supp}f$ is $T$-proper, so is the support of
$F$, since it is contained in $\bigcup_i\operatorname{supp}g_i$.
Nonzero values therefore propagate along every finite coordinate path
avoiding \(\mathcal H\).

If \(r=0\), clearing the identity \(f=0\) in
\(\mathcal N_{\mathcal S}\) gives \(hf=0\) in \(\mathcal N\) for
some \(h\in\mathcal S\).  The support of \(f\) is then contained
in a finite union of hyperplanes of the form
\eqref{eq:height-exceptional-hyperplanes}.
\end{proof}

\subsection{Descent and restriction}
We choose from finitely many bounded lattice steps to move toward
$\ker T$. Their projected decrease will control the number of steps
and hence the total displacement.

\begin{lemma}[Inward lattice steps]\label{lem:height-inward-steps}
Let $E\subset\R^n$ be a nonzero linear subspace and choose
$\alpha_0>2\sqrt n$. Put
\[
 \gamma=\alpha_0-\sqrt n/2,\quad
 R_{\rm step}=\alpha_0+\sqrt n/2,\quad
 R_*=R_{\rm step}^2/\gamma.
\]
There is a finite set of integer vectors from which, for every unit
$\upsilon\in E$, one can choose $w(\upsilon)$ satisfying
\begin{equation}
 \upsilon\cdot w(\upsilon)\le-\gamma,\qquad
 \|w(\upsilon)\|\le R_{\rm step}.
 \label{eq:height-inward-step}
\end{equation}
A lattice path with unit coordinate steps from $0$ to each step can be chosen with all
vertices of norm at most $R_{\rm path}=R_{\rm step}$. If
$\rho=\|P_Ek\|>R_*$ and $\upsilon=P_Ek/\rho$, then
$\|P_E(k+w(\upsilon))\|\le\rho-\gamma/2$.
\end{lemma}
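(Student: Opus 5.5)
The plan has three parts: build a fixed finite net of integer vectors, read off the inward inequality for each unit vector, and check the path and descent claims by elementary estimates.

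\emph{Construction of the finite set.} Since the unit sphere of $E$ is compact, it suffices to treat each unit $\upsilon\in E$ individually and then use a finite cover. For a unit $\upsilon$, take the real vector $-\alpha_0\upsilon$. Round it coordinatewise to the nearest integer vector $w$, so that $\|w+\alpha_0\upsilon\|\le\sqrt n/2$. Then
\[
 \upsilon\cdot w\le-\alpha_0+\sqrt n/2=-\gamma,\qquad \|w\|\le\alpha_0+\sqrt n/2=R_{\rm step}.
\]
The rounded vectors lie in the bounded integer set $\{w\in\Z^n:\|w\|\le R_{\rm step}\}$. That set is finite, and it can itself serve as the finite set; no open-cover argument is needed.

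\emph{Lattice paths.} For each chosen $w$, I would walk from $0$ to $w$ by unit coordinate steps. Each coordinate is moved monotonically toward its target, so every vertex $v$ has $|v_j|\le|w_j|$ coordinatewise. Hence $\|v\|\le\|w\|\le R_{\rm step}=R_{\rm path}$.

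\emph{Projected decrease.} Let $\rho=\|P_Ek\|>R_*$ and $\upsilon=P_Ek/\rho$, and write $q=P_Ew$. Because $\upsilon\in E$, we have $\upsilon\cdot q=\upsilon\cdot w\le-\gamma$, and $\|q\|\le R_{\rm step}$. Expanding the square gives
\[
 \|P_Ek+q\|^2=\rho^2+2\rho\,\upsilon\cdot q+\|q\|^2\le\rho^2-2\rho\gamma+R_{\rm step}^2.
\]
We want this to be at most $(\rho-\gamma/2)^2=\rho^2-\rho\gamma+\gamma^2/4$, which is equivalent to $R_{\rm step}^2-\gamma^2/4\le\rho\gamma$. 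This holds because $\rho\gamma>R_*\gamma=R_{\rm step}^2$. The hypothesis $\alpha_0>2\sqrt n$ gives $\gamma>0$ and also $\rho-\gamma/2>0$, since $\rho>R_*\ge R_{\rm step}\ge\gamma$. Taking square roots therefore yields $\|P_E(k+w)\|\le\rho-\gamma/2$.

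I expect no real obstacle. The only point needing care is that $P_Ew$ need not equal $w$, and the inequality $\upsilon\cdot P_Ew=\upsilon\cdot w$ uses $\upsilon\in E$.
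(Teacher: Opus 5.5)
Your proposal is correct and follows essentially the same route as the paper's proof. Both round $-\alpha_0\upsilon$ coordinatewise, bound the vertices of a monotone coordinate path by the endpoint, and expand $\|P_Ek+P_Ew\|^2\le\rho^2-2\gamma\rho+R_{\rm step}^2$, which is at most $(\rho-\gamma/2)^2$ once $\rho\gamma>R_{\rm step}^2$.
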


\begin{proof}
Round $-\alpha_0\upsilon$ coordinatewise to an integer vector
$w(\upsilon)$. The rounding error has norm at most $\sqrt n/2$,
which proves \eqref{eq:height-inward-step} and leaves only finitely
many possible vectors. Along a path changing coordinates monotonically,
each coordinate is bounded in absolute value by that of the endpoint.
Every vertex therefore has norm at most
$R_{\rm step}$. Finally,
\begin{equation}
 \|P_E(k+w(\upsilon))\|^2
 \le\rho^2-2\gamma\rho+R_{\rm step}^2.
 \label{eq:height-decrease}
\end{equation}
For $\rho>R_*$ the right-hand side is at most
$(\rho-\gamma/2)^2$. Since $\rho-\gamma/2>0$, taking square roots
proves the decrease.
\end{proof}

\begin{proof}[Proof of Theorem~\ref{thm:holonomic-height}]
\emph{Step 1: reductions.}
We induct on \(n\), starting with the immediate case \(n=0\).
For \(T=0\), the support is finite by \eqref{eq:height-properness}.
Assume \(n\ge1\) and \(T\ne0\), and put \(E_T=(\ker T)^\perp\).
Injectivity of the linear map on this subspace implies that \(\|Tk\|\)
and \(\|P_{E_T}k\|\) bound one another by fixed factors. Apply
Proposition~\ref{prop:height-propagation} to obtain $F$, $g_i$ and
$\mathcal H$.

\emph{Step 2: thickening and long-step propagation.}
Use Lemma~\ref{lem:height-inward-steps} with $E=E_T$ and fix the
coordinate paths supplied there.

Thicken the exceptional hyperplanes to contain any starting point whose
chosen path might meet them:
\begin{equation}
 \mathcal H^+=\bigcup_j
 \left\{k\in\Z^n:|\ell_j(k)-a_j|
 \le\left\lceil\|\ell_j\|R_{\rm path}\right\rceil+1\right\}.
 \label{eq:height-thickening}
\end{equation}
This is a finite union of lattice hyperplanes.
If \(k\notin\mathcal H^+\), every vertex of the chosen path from
\(k\) to \(k+w(\upsilon)\) avoids \(\mathcal H\).  Therefore
\begin{equation}
 F(k)\ne0\quad\Longrightarrow\quad F(k+w(\upsilon))\ne0.
 \label{eq:height-long-step}
\end{equation}
\emph{Step 3: restriction to the thickening.}
Restrict each \(g_i\) to each hyperplane in \(\mathcal H^+\).
By Lemma~\ref{lem:holonomic-slice}, the restriction is holonomic,
with \(T\)-proper support. Write the
integer points of a nonempty slice as \(k_0+\iota\Z^{n-1}\), where
\(\iota:\R^{n-1}\to\R^n\) is the injective integer linear map of
Lemma~\ref{lem:holonomic-slice}.
If $\|T\iota j\|\le R$, then
$\|T(k_0+\iota j)\|\le R+\|Tk_0\|$. Since $\iota$ is injective,
the restricted support is proper for this projection.
We may therefore apply induction with projection \(T\iota\).
The translation \(Tk_0\) changes only the additive constant;
maximizing over the finitely many slices gives
\begin{equation}
 \|k\|\le C_*(1+\|Tk\|)
 \quad\text{on }\mathcal H^+\cap\bigcup_i\operatorname{supp}g_i.
 \label{eq:height-slice-bound}
\end{equation}
Applying the same argument to \(f\) on the original hyperplanes
bounds \(\mathcal H\cap\operatorname{supp}f\).  This proves the
result if $r=0$.

\emph{Step 4: descent.}
Let \(k\notin\mathcal H\) with \(f(k)\ne0\), so
\(F(k)\ne0\).  If \(k\in\mathcal H^+\), some \(g_i(k)\ne0\)
and \eqref{eq:height-slice-bound} applies.  Otherwise repeat
\eqref{eq:height-long-step} until the projected norm is at most
\(R_*\) or an endpoint enters \(\mathcal H^+\).  By
the last assertion of Lemma~\ref{lem:height-inward-steps}, this takes at most
\[
 2\|P_{E_T}k\|/\gamma+1
\]
steps. In the first case, properness of the support of \(F\) leaves
finitely many terminal indices, hence bounded norms. In the second,
some \(g_i\) is nonzero at the terminal index, whose norm is bounded
by \eqref{eq:height-slice-bound}. In both cases projection onto the
chosen subspace has no larger norm at the endpoint than initially.
Each step has norm at most $R_{\rm step}$. If
$k_{\rm end}$ is the terminal index, the triangle inequality gives
\[
 \begin{split}
 \|k\|&\le\|k_{\rm end}\|+
 R_{\rm step}\bigl(2\|P_{E_T}k\|/\gamma+1\bigr)\\
 &\le C'(1+\|P_{E_T}k\|)\le C''(1+\|Tk\|).
 \end{split}
\]
This completes the induction.
\end{proof}

\subsection{Fourier coefficients of toral distributions}
We identify algebraic torus operators with difference operators by
Fourier transformation, then apply the support-height theorem to the
flux measure.

\begin{lemma}[Fourier coefficients and holonomicity]
\label{lem:fourier-holonomic}
If $u$ is an algebraically holonomic distribution on $\T^n$, with
complexification $(\C^*)^n$, then the sequence $k\mapsto\widehat u(k)$
is algebraically holonomic.
\end{lemma}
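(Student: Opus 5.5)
We will prove this by transporting the module structure through the Fourier coefficient map and checking that it intertwines the two algebras. The convention is $\widehat u(k)=u(e^{-2\pi ik\cdot\theta})$, paired with $z=\exp(2\pi i\theta)$. Under this pairing, multiplication by $z_j$ and the invariant derivation $z_j\partial_{z_j}=(2\pi i)^{-1}\partial_{\theta_j}$ (Remark~\ref{rem:flux-euler}) become shifts and polynomial multiplications on the coefficient sequence.

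\textbf{Step 1: the two operator identities.} First, $\widehat{z_ju}(k)=u(z_je^{-2\pi ik\cdot\theta})=\widehat u(k-e_j)$, so multiplication by $z_j$ becomes $\tau_j^{-1}$. Second, integration by parts gives
\[
 \widehat{z_j\partial_{z_j}u}(k)=-u\bigl(z_j\partial_{z_j}z^{-k}\bigr)=k_j\widehat u(k),
\]
so $z_j\partial_{z_j}$ becomes $s_j$. We must also account for the sign and transposition conventions of Remark~\ref{rem:flux-euler} (left versus right modules, densities). The cleanest way is to observe the following. The map $\Phi:P\mapsto$ (the operator on sequences induced by $P$) is an algebra isomorphism or anti-isomorphism between $\C[z^{\pm1}]\langle z\partial_z\rangle$, which is the full ring $\mathscr D_{(\C^*)^n}$, and $\mathscr D_n$. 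Concretely, $t_j=\tau_j$ and $s_j=-t_j\partial_{t_j}$ after possibly composing with the automorphism $z\mapsto z^{-1}$, or with $k\mapsto-k$. All of these are automorphisms of $(\C^*)^n$, or of $\Z^n$, preserving holonomicity.

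\textbf{Step 2: transport of the cyclic module.} The Fourier map $u\mapsto\widehat u$ is injective on distributions on $\T^n$. By Step~1 it intertwines the $\mathscr D_{(\C^*)^n}$-action on $\mathscr D\cdot u$ with the $\mathscr D_n$-action on $\mathscr D_n\cdot\widehat u$, up to the algebra (anti)isomorphism. Hence $\mathscr D_n\widehat u$ is isomorphic to $\mathscr D\cdot u$ viewed through this identification; for the anti-isomorphism, it is the corresponding right/left module obtained by transposition, and Remark~\ref{rem:flux-euler} says this preserves holonomicity. We then conclude by the definition of holonomicity for sequences in Section~\ref{sec:preliminaries}, which is stated exactly via this identification.

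\textbf{Main obstacle.} The delicate point is bookkeeping of conventions. The paper identifies $\tau_j=t_j$ and $s_j=-t_j\partial_{t_j}$, whereas the direct computation above produced $z_j\mapsto\tau_j^{-1}$ and $z_j\partial_{z_j}\mapsto s_j$. I would first try the substitution $t=z^{-1}$. It sends $z_j$ to $t_j^{-1}$ and $z_j\partial_{z_j}$ to $-t_j\partial_{t_j}$, so we get $\tau_j^{-1}=t_j^{-1}$ and $s_j=-t_j\partial_{t_j}$. This is consistent on both generators and preserves the relation $\tau_is_j=(s_j+\delta_{ij})\tau_i$, so it gives a genuine isomorphism of left modules, with no transposition needed.

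Finally, I would confirm that distributions on the compact torus, as opposed to its complexification, are covered by the same notion: holonomicity of $u$ is defined through its cyclic module over $\mathscr D_{(\C^*)^n}$, so nothing further is required.
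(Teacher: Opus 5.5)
Your proposal is correct and is essentially the paper's proof. It uses the same two intertwining identities $\widehat{z_j^{-1}u}(k)=\widehat u(k+e_j)$ and $\widehat{z_j\partial_{z_j}u}(k)=k_j\widehat u(k)$, the same Mellin identification through $t_j=z_j^{-1}$, and the same transport of the cyclic module; as your last paragraph shows, that identification makes the hedging about anti-isomorphisms and transposition unnecessary. The paper only needs $\mathscr D_n\widehat u$ to be an image of the holonomic module generated by $u$, so your appeal to injectivity of the Fourier map, which yields an isomorphism, is valid but more than is required.
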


\begin{proof}
Under the negative-exponent Fourier convention, multiplication and
distributional integration by parts give
\[
 \widehat{z_j^{-1}u}(k)=\widehat u(k+e_j),\qquad
 \widehat{z_j\partial_{z_j}u}(k)=k_j\widehat u(k).
\]
Thus Fourier coefficients intertwine these operators with $\tau_j$
and $s_j$, respectively, through the Mellin identification with
$t_j=z_j^{-1}$. The coefficient sequence generates an image of the
holonomic torus module generated by $u$, so it is holonomic. With
$T$-proper support, it satisfies the support-height theorem.
\end{proof}

\begin{corollary}[Linear spectral height; Theorem~\ref{thm:main-height}]
\label{cor:universal-linear-height}
Let $\mu$ be an integer-mass FQ measure on $\R^d$, with spectrum
$\Lambda'$. Its spectral group has finite rank $n\ge d$.
Choose a basis of \(\langle\Lambda^{\prime}\rangle_{\Z}\) and let its
elements be the rows of \(L\in\R^{n\times d}\).  For each \(\xi\in\Lambda^{\prime}\), write
\(\xi=L^tk(\xi)\), with \(k(\xi)\in\Z^n\).  There is a constant
\(C<\infty\) such that
\begin{equation}
 \|k(\xi)\|\le C(1+\|\xi\|)\qquad(\xi\in\Lambda^{\prime}).
 \label{eq:universal-height}
\end{equation}
Consequently
\begin{equation}
 \#\{\xi\in\Lambda^{\prime}:\|\xi\|\le R\}=O((1+R)^n).
 \label{eq:spectral-point-count}
\end{equation}
Moreover, taking $C>0$ and putting $V=L\R^d$, we have
\[
 \|P_Vk\|\ge\varepsilon\|k\|
\]
whenever $\widehat m(k)\ne0$ and $\|k\|\ge2C$, where
$\varepsilon=\min\{1,(2C\|L\|)^{-1}\}$.
\end{corollary}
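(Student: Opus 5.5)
The plan is to apply Theorem~\ref{thm:holonomic-height} to the Fourier coefficient sequence of the flux measure and then translate the result back to the spectrum. Finite rank $n$ comes from Proposition~\ref{prop:finite-spectral-group}, and $n\ge d$ because $L$ has rank $d$ (Section~\ref{construction-from-finite-band-observations}).

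By Theorem~\ref{thm:holonomic-flux}, $m$ is an algebraically holonomic distribution on $\T^n$. Lemma~\ref{lem:fourier-holonomic} then shows that $f(k)=\widehat m(k)$ is a holonomic sequence on $\Z^n$. Proposition~\ref{prop:ac-flux-support} gives $\supp f=\{k:L^tk\in\Lambda'\}$. Since $L^t$ is injective on $\Z^n$ and $\Lambda'$ is locally finite, the support of $f$ is $T$-proper for $T=L^t$, exactly as checked in Corollary~\ref{cor:all-fq-algebraicity}. Theorem~\ref{thm:holonomic-height} therefore yields a constant $C$ with $\|k\|\le C(1+\|L^tk\|)$ on $\supp f$. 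With $\xi=L^tk(\xi)$, this is \eqref{eq:universal-height}.

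The counting bound follows directly. The map $\xi\mapsto k(\xi)$ is injective, and when $\|\xi\|\le R$ its image lies in the integer ball of radius $C(1+R)$, which contains $O((1+R)^n)$ lattice points.

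For the last assertion, let $\widehat m(k)\ne0$ and $\|k\|\ge2C$, where we may assume $C>0$. Then $C\le\|k\|/2$, so the height bound gives $\|k\|\le C+C\|L^tk\|\le\|k\|/2+C\|L^tk\|$, and hence $\|L^tk\|\ge\|k\|/(2C)$. Next use $L^tk=L^tP_Vk$, which holds because $\ker L^t=V^\perp$. This gives $\|L^tk\|\le\|L\|\,\|P_Vk\|$, so $\|P_Vk\|\ge\|k\|/(2C\|L\|)\ge\varepsilon\|k\|$. Taking the minimum with $1$ only weakens the bound.

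There is no real obstacle left, since all the heavy input (holonomicity and support height) is already proved earlier. The one point to check is the sign convention: the Fourier coefficients of $m$ are indexed so that $k$, not $-k$, matches $L^tk\in\Lambda'$. Either choice of sign gives a $T$-proper support and the same norms, so this does not affect the argument.
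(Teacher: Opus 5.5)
Your proposal is correct and follows the paper's own proof essentially step for step. Both arguments run in the same order:
\begin{enumerate}
\item holonomicity of $m$ via Theorem~\ref{thm:holonomic-flux}, and of $k\mapsto\widehat m(k)$ via Lemma~\ref{lem:fourier-holonomic};
\item $T$-properness from injectivity of $L^t$ on $\Z^n$ together with local finiteness of $\Lambda'$;
\item the support-height theorem (Theorem~\ref{thm:holonomic-height});
\item lattice-point counting for the bound on $\#\{\xi\in\Lambda':\|\xi\|\le R\}$;
\item the same chain $\|k\|/(2C)\le\|L^tk\|=\|L^tP_Vk\|\le\|L\|\,\|P_Vk\|$ for the cone bound.
\end{enumerate}
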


\begin{proof}
By Theorem~\ref{thm:all-fq-toral-cycles}, the spectral group is finitely
generated and has the stated basis matrix. A nonzero real kernel of $L$
would, by covariance, leave the nonzero locally finite measure invariant
under a continuous line of translations, which is impossible.
Hence $L$ has rank $d$ and $n\ge d$. The flux measure $m$ is defined
in \eqref{eq:HF-4.1}, and \eqref{eq:HF-4.2} gives
$\widehat m(k)=c_{L^tk}$ at spectral indices and zero at the other
indices. By Theorem~\ref{thm:holonomic-flux}, $m$ is holonomic;
by Lemma~\ref{lem:fourier-holonomic}, so is $k\mapsto\widehat m(k)$.
The map $T=L^t$ is injective on $\Z^n$, since the rows
of $L$ form a basis of the spectral group. Hence local finiteness of
$\Lambda'$ gives \eqref{eq:height-properness}.
Theorem~\ref{thm:holonomic-height} proves \eqref{eq:universal-height}.
If $\|\xi\|\le R$, its integer coordinate belongs to the ball of
radius $C(1+R)$ in $\Z^n$. Counting lattice points in this ball proves
\eqref{eq:spectral-point-count}.

Choose the height constant positive. If $\widehat m(k)\ne0$ and
$\|k\|\ge2C$, the height estimate gives
\[
 \frac{\|k\|}{2C}\le\|L^tk\|
 =\|L^tP_Vk\|\le\|L\|\,\|P_Vk\|.
\]
This proves the stated cone bound and hence \eqref{eq:branched-cone}.
\end{proof}

\section{Reconstruction from linear height}
\label{sec:non-delone-lh}

We reconstruct the canonical algebraic cycle from the Fourier cone.
Rational projections near the physical plane give algebraic torus
quotients whose fiber moments are Laurent polynomials. The moment
matrices recover the fibers, while their growth along rays yields the
amoeba cone required for compression to a Lee--Yang cycle.

\begin{theorem}[Reconstruction from a Fourier cone]
\label{thm:branched-scalar-cone}
Let $M,m,L,V$ be the canonical set, flux measure, spectral basis
matrix, and physical plane of an integer-mass FQ measure $\mu$, and put
$c=n-d$ and $Y=\overline M^{\mathrm{Zar}}$.
Suppose that for some $0<\varepsilon\le1$ and $R<\infty$,
\begin{equation}
 \|P_Vk\|\ge\varepsilon\|k\|
 \quad\text{whenever }\widehat m(k)\ne0\text{ and }\|k\|\ge R.
 \label{eq:branched-cone}
\end{equation}
Then
\begin{equation}
 Y(\T)=M,\qquad
 \|P_{V^\perp}\Log z\|\ge\varepsilon\|\Log z\|\quad(z\in Y).
 \label{eq:branched-amoeba}
\end{equation}
Every rational $d$-plane $W$ sufficiently close to the physical plane gives a finite quotient
$\pi_W:Y\to(\C^*)^c$ with full inverse image of the compact base torus
exactly $M$.

At every $p\in M$, the reduced analytic germs satisfy
\[
 (Y^{\mathrm{an}},p)=(M^\C,p).
\]
There are unique positive integers $\varkappa_i$ on the irreducible
components $Y_i$ of $Y$ such that the effective algebraic cycle
\[
 \mathfrak Y=\sum_i\varkappa_iY_i
\]
has germ $\mathfrak M^\C$ at every point of $M$. It is invariant under
reciprocal conjugation, and its section at every real phase $t$ is
$\mu_t$, with weighted local intersection degrees.
\end{theorem}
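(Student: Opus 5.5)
Our plan is to pass from the Fourier cone \eqref{eq:branched-cone} to the amoeba inequality in \eqref{eq:branched-amoeba} by duality between supports of Fourier coefficients and exponential growth, and then to use finite rational torus quotients to reconstruct the fibers and their coefficients. First fix a rational $d$-plane $W$ close to $V$, spanned by integer vectors, and let $\pi_W:(\C^*)^n\to(\C^*)^c$ be the quotient by the subtorus with Lie algebra $W$, i.e.\ the monomial map given by an integer basis of $W^\perp\cap\Z^n$. If $W$ is close enough to $V$, the cone condition still holds with $V$ replaced by $W$ and a slightly smaller $\varepsilon$. For each $k\in\Z^n$ and each character $\chi$ of the base, consider the fiber moment of $\chi_k$ along $\pi_W$, namely the pushforward $(\pi_W)_*(\chi_k m)$. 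Its Fourier coefficient at $j\in\Z^c$ is $\widehat m(k+\pi_W^t j)$. The cone bound together with the linear height of Corollary~\ref{cor:universal-linear-height} shows that only finitely many $j$ contribute, since $k+\pi_W^tj$ has bounded projection to $W$ and therefore bounded norm. Hence every fiber moment is a Laurent polynomial $\sigma_k$ on $(\C^*)^c$. Using the finite-sheet model \eqref{eq:HF-5.3}, the moment equals $\sum_\ell w_\ell\,\chi_k(\theta_\ell)$ over the sheets in a fiber.

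Next, the Hermitian moment matrices $(\sigma_{k-k'})_{k,k'\in F}$ for finite $F$ have rank at most the bounded fiber degree $D$. By analytic continuation of the Laurent polynomials, the rank bound and positivity persist on the complexified base, and the Prony/Hankel structure of these matrices recovers a finite weighted point set in each complex fiber. These point sets define an algebraic finite cover that contains $Y$ and coincides with $M$ over the real base torus. This gives finiteness of $\pi_W|_Y$, and it shows that the full inverse image of the compact base torus is $M$, hence $Y(\T)=M$. For the amoeba inequality, take $z\in Y$ with $\Log z=u$. Evaluating $\sigma_k$ along $k=\lambda v$, the growth $|z^k|=e^{\lambda\langle k,u\rangle}$ of the contributing sheet must be dominated by the degree of the Laurent polynomial $\sigma_k$ in the base variables. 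This forces $\langle k,u\rangle\le C\|P_{W^\perp}\cdots\|$ only through base directions, and varying $k$ over the Fourier cone yields $\|P_{V^\perp}u\|\ge\varepsilon\|u\|$ in the limit $W\to V$. We expect this growth-to-amoeba step to be the main obstacle, because the moment matrix mixes several sheets and cancellations between them must be excluded. Our first attempt will isolate the dominant sheet using a positive semidefinite Gram matrix, where the largest-modulus eigenvalue detects the maximal $\langle k,u\rangle$.

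Finally, we identify germs and coefficients. Because $\pi_W|_Y$ is finite with real fibers over the real torus, Lemma~\ref{lem:integer-real-sheets} applies to $Y$. Corollary~\ref{cor:all-fq-algebraicity} already gives $(M^\C,p)\subseteq(Y^{\mathrm{an}},p)$, and every local branch of $Y$ at $p$ has real graphs lying in $Y(\T)=M$. These graphs therefore lie in $M^\C$, which gives equality of the germs. The coefficients of $\mathfrak M^\C$ are locally constant on branches (Theorem~\ref{thm:all-fq-toral-cycles}(c)). By Lemma~\ref{lem:flux-density} they equal the flux weights $\varkappa$, which are recovered as rational conditional weights $w_\ell/J$ of the moment decomposition over the regular locus of each irreducible $Y_i$. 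The regular locus of $Y_i$ is connected and its real locus is Zariski dense, so the weight is a single integer $\varkappa_i$, and uniqueness follows. Reciprocal conjugation invariance follows because $M$ and the masses are fixed by it. The statement about sections at real phases then follows from Theorem~\ref{thm:all-fq-toral-cycles}(e).
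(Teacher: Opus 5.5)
Your outline follows the paper's architecture: rational quotients near $V$, Laurent-polynomial fiber moments from the cone, moment matrices, Laurent support versus growth for the amoeba, real sheets for the germ equality, and rational weights for the coefficients. However, the step that turns the cone into \emph{properness} is missing.

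Prony/Hankel inversion recovers fiber points only where the moment matrix $H(w)$ is invertible. It does not give a finite weighted point set in \emph{each} complex fiber. Nothing in your outline stops the recovered points from running to $0$ or $\infty$ in the fiber torus as $w$ approaches $\{\Delta=0\}$. So the ``algebraic finite cover'', the finiteness of $\pi_W|_Y$, and the description of fibers over exceptional real base points are asserted, not proved. Your later use of Lemma~\ref{lem:integer-real-sheets} for the germ equality depends on exactly this finiteness.

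The paper closes this gap by integrality. Because every $p_a$ is a genuine Laurent polynomial, all matrices $\mathsf U_a=H^{-1}H_a$ share the single denominator $\Delta$. One first proves $\mathsf U_a=\prod_j\mathsf U_{\mathbf f_j}^{a_j}$ over $K$ from the real factorization $H=\mathsf E\operatorname{diag}(\varpi_h)\mathsf E^*$. Then the module $\sum_a\mathsf U_a\mathcal R^r\subset\Delta^{-1}\mathcal R^r$ is finitely generated and stable under $\mathsf U_{\mathbf f_j}^{\pm1}$. Hence these matrices are integral over $\mathcal R$, and $Z=\operatorname{Spec}\mathcal R[\mathsf U_{\mathbf f_j}^{\pm1}]$ is finite over $Q_\C$. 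Only then do positivity on regular real fibers and local openness of a finite map at exceptional fibers give $Z=Y$ and $Y\cap\pi_W^{-1}(Q)=M$.

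The amoeba step also fails as planned, although you correctly identified cancellation as the obstacle. Once $Y\cap\pi_W^{-1}(Q)=M$, every off-torus point of $Y$ lies over a non-real base point, where $H(w)$ is not Hermitian. Positivity does not survive analytic continuation, so a positive semidefinite Gram matrix cannot isolate the dominant sheet there. What you need is that the continued weights $\varpi_h(w)$ are nonzero and that sheets sharing a value of $\chi_k$ cannot cancel. The paper gets this from $H=\mathsf E_+\operatorname{diag}(\varpi_h)\mathsf E_-^t$ with $\Delta(w)\ne0$, and from the shifted moments $p_{Nk+\widetilde\beta_i}$ with an invertible evaluation matrix, followed by Cauchy--Hadamard.

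Your eigenvalue idea can be salvaged in a different form. The spectral radius of $H(w)^{-1}\bigl(p_{k+\widetilde\beta_i-\widetilde\beta_j}(w)\bigr)_{i,j}$ equals $\max_h|\chi_k(y_h)|$. Gelfand's formula, together with multiplicativity ($N$th power equals $H^{-1}H_{Nk}$), bounds it by the Laurent-support growth with no cancellation issue. This route, however, again requires the moment algebra above.

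Two further points need repair.
\begin{itemize}
\item The coefficient normalization is wrong. Both the sheet model \eqref{eq:HF-5.3} and your bound $D$ live over the irrational plane $V^\perp$, which has no algebraic meaning. Over the rational quotient, the conditional weight is $\varkappa|\mathcal G|$ with $\mathcal G=\beta_L^{\mathrm{hol}}/\pi_W^*\omega_Q$, not $\varkappa J$. Since $\mathfrak M^\C$ exists only near $M$, local constancy cannot pass between connected components of $Y_i(\T)$. The paper transports the coefficient with the rational function $\varpi/\mathcal G$, where $\varpi(y)=(\mathbf v_-(y)^tH^{-1}\mathbf v_+(y))^{-1}$. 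This function equals $\pm\varkappa$ on one good real graph and is therefore constant on $Y_i$.
\item The real-phase section identity also needs the amoeba inequality to exclude non-real roots; Theorem~\ref{thm:all-fq-toral-cycles}(e) alone does not suffice.
\end{itemize}
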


\subsection{Rational projections and fiber moments}
\label{sec:rational-projections}
Assume the hypotheses of Theorem~\ref{thm:branched-scalar-cone}, with
$c=n-d$ and $Y=\overline M^{\mathrm{Zar}}$, and first suppose $c>0$.
By Corollary~\ref{cor:all-fq-algebraicity}, $Y$ is reduced and pure of
dimension $c$. Lemma~\ref{lem:ac-transverse-strata} shows that the
smooth $V$-transverse locus of $M$ is dense. The germ inclusion in
Corollary~\ref{cor:all-fq-algebraicity}, together with purity of
dimension $c$, identifies the complexifications of the
$c$-dimensional analytic pieces of the canonical set with local branches of $Y$.

For each irreducible component $Y_j$ of $Y$, choose
$p_j\in M\cap Y_j$ at which $Y$ is smooth and $M$ is smooth and
$V$-transverse. Such points exist: the smooth $V$-transverse locus
has Zariski closure $Y$, so its intersection with each $Y_j$ is
Zariski dense there, whereas $\operatorname{Sing}Y\cap Y_j$ is a
proper closed subset. Near $p_j$, the germ of $Y$ is the
complexification of the germ of $M$, since both have dimension $c$.
Thus $T_{p_j}Y=T_{p_j}M\otimes_\R\C$ in logarithmic coordinates.
Call a rational $d$-plane $W$ \emph{admissible} if
$T_{p_j}M\oplus W=\R^n$ for every $j$ and
\[
 \delta=\|P_W-P_V\|<\varepsilon,
 \qquad \eta=\varepsilon-\delta>0.
\]
Let $\mathcal L=W^\perp\cap\Z^n$, let
$Q_\C=\operatorname{Spec}\C[\mathcal L]\simeq(\C^*)^c$, and let
$\pi_W:(\C^*)^n\to Q_\C$ be the character quotient.
Write $Q$ for its compact real torus, with normalized Haar measure
$dw$. A real base point means a point of this compact torus.
Choose an integral splitting of the character lattice;
we write the resulting coordinates as $(\zeta,w)$ and characters as
$z^k=\zeta^a w^\ell$, for $k=(a,\ell)$.
Set $\mathcal R=\C[\mathcal L]$ and $K=\operatorname{Frac}\mathcal R$.

\begin{lemma}[Admissible projections]\label{lem:admissible-projections}
Every rational $d$-plane in a sufficiently small neighborhood of $V$
is admissible. For such a plane, $\pi_W|_Y$ is dominant and generically
finite on each component, and
\begin{equation}
 \|P_Wk\|\ge\eta\|k\|
 \quad\text{if }\widehat m(k)\ne0,\quad\|k\|\ge R.
 \label{eq:reconstruction-W-cone}
\end{equation}
\end{lemma}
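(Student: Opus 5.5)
Three assertions need proof: admissibility of all rational planes near $V$, dominance with generic finiteness of $\pi_W|_Y$, and the cone bound \eqref{eq:reconstruction-W-cone}. I will take them in reverse order, since the cone bound is pure linear algebra and drives the choice of neighborhood.

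\emph{Cone bound.} Fix $k$ with $\widehat m(k)\ne0$ and $\|k\|\ge R$. By the triangle inequality,
\[
 \|P_Wk\|\ge\|P_Vk\|-\|(P_V-P_W)k\|\ge(\varepsilon-\delta)\|k\|=\eta\|k\|,
\]
using \eqref{eq:branched-cone} and $\|P_W-P_V\|=\delta$. This already proves \eqref{eq:reconstruction-W-cone} for every admissible $W$.

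\emph{Admissibility.} There are finitely many chosen points $p_j$. At each, $T_{p_j}M\oplus V=\R^n$ by $V$-transversality. Transversality of a fixed $c$-plane to a $d$-plane is an open condition in the Grassmannian $\Gr(d,n)$, since it amounts to nonvanishing of a determinant. The map $W\mapsto\|P_W-P_V\|$ is continuous and vanishes at $V$. Hence some neighborhood $\mathcal N$ of $V$ satisfies both $\delta<\varepsilon$ and all transversality conditions, and every rational $d$-plane in $\mathcal N$ is admissible. Rational planes are dense, so such planes exist.

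\emph{Dominance and generic finiteness.} Fix an irreducible component $Y_j$. Work in logarithmic coordinates at $p_j$, where $Y$ is smooth of dimension $c$ and
\[
 T_{p_j}Y_j=T_{p_j}M\otimes_\R\C.
\]
The differential of the character quotient $\pi_W$ is projection $\C^n\to\C^n/W_\C$, whose kernel is $W_\C$. Complexifying the real splitting $T_{p_j}M\oplus W=\R^n$ gives $T_{p_j}Y_j\cap W_\C=0$. The dimensions match ($c$ on both sides), so $d(\pi_W|_{Y_j})$ is an isomorphism at $p_j$. Therefore $\pi_W|_{Y_j}$ is a local biholomorphism near $p_j$, and its image contains an open set of $(\C^*)^c$. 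Since $Y_j$ is irreducible of dimension $c=\dim Q_\C$, the image closure is irreducible and contains an open set, so it is all of $Q_\C$ and the map is dominant. By the fiber-dimension theorem, the generic fibers of a dominant morphism between irreducible varieties of equal dimension are finite.

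The only delicate point is the identification of the differential of the character quotient with projection modulo $W_\C$ in logarithmic coordinates. I would check this first. Characters $z^k$ with $k\in\mathcal L=W^\perp\cap\Z^n$ have logarithmic differentials $2\pi i\,k\cdot d\theta$. Since $W$ is rational, $\mathcal L$ spans $W^\perp$ over $\R$, so the common kernel of these differentials is exactly $W_\C$.
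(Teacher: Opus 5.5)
Your proposal is correct and follows the paper's proof: openness of transversality plus density of rational planes gives admissibility, the quotient differential has rank $c$ at the smooth $V$-transverse points $p_j$ (each lying on $Y_j$ alone), which gives dominance with generically zero-dimensional fibers, and the triangle inequality $\|P_Wk\|\ge\|P_Vk\|-\delta\|k\|$ gives the cone bound. The paper leaves implicit what you make explicit, namely the identification of $d\pi_W$ with projection modulo $W_\C$ in logarithmic coordinates and the fiber-dimension step.
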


\begin{proof}
Transversality at the chosen points is open in $W$ and holds at $V$;
rational planes are dense. The quotient differential therefore has
rank $c$ at each $p_j$. Since $p_j$ is smooth on $Y$, it belongs
to $Y_j$ alone; therefore $Y_j$ dominates the base with generically
zero-dimensional fibers. The cone inequality follows from
$\|P_Wk\|\ge\|P_Vk\|-\delta\|k\|$.
\end{proof}

\begin{lemma}[Polynomial fiber moments]\label{lem:polynomial-fiber-moments}
For every ambient character $k\in\Z^n$ there is $p_k\in\mathcal R$
such that $\pi_{W*}(z^k m)=p_k\,dw$.
In the chosen splitting, $p_{(a,\ell)}=w^\ell p_a$, where
$p_a=p_{(a,0)}$, and $p_{-k}=\overline{p_k}$ on $Q$.
There are positive finite conditional measures $m_w$ with
$m=\int_Qm_w\,dw$ and moments $p_a(w)$ for almost every $w$.
On a nonempty real base open set $\Omega$, for almost every
$w\in\Omega$ these measures have the form
\[
 m_w=\sum_{h=1}^r\varpi_h(w)\delta_{\zeta_h(w)},
 \qquad \varpi_h(w)>0,
\]
where the $r$ distinct points and their weights are real analytic.
\end{lemma}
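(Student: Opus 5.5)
The plan is to compute the Fourier coefficients of $\pi_{W*}(z^km)$ on the compact base torus $Q$ and show that only finitely many are nonzero, using the cone inequality \eqref{eq:reconstruction-W-cone}. The characters of $Q$ are the $z^{\ell'}$ with $\ell'\in\mathcal L=W^\perp\cap\Z^n$, pulled back along $\pi_W$. The coefficient of $\pi_{W*}(z^km)$ at $\ell'$ is therefore $\widehat m(j)$ for $j=\ell'-k$, up to the sign convention of \eqref{eq:HF-4.2}. For such $j$ we have $P_Wj=-P_Wk$, because $\ell'\in W^\perp$. If $\widehat m(j)\ne0$, then Lemma~\ref{lem:admissible-projections} gives
\[
 \|j\|\le\max\{R,\ \|P_Wk\|/\eta\}.
\]
Only finitely many lattice points $\ell'$ satisfy this bound. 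The pushforward is a finite measure on $Q$ with finitely many nonzero Fourier coefficients, so it equals $p_k\,dw$ for a Laurent polynomial $p_k\in\mathcal R$.

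The algebraic identities are then formal. In the splitting $z^{(a,\ell)}=\zeta^aw^\ell$, the factor $w^\ell$ is pulled back from the base, so the projection formula gives $p_{(a,\ell)}=w^\ell p_a$. Since $m$ is a positive measure on the real torus, where $\overline{z^k}=z^{-k}$, conjugating the pushforward gives $p_{-k}=\overline{p_k}$ on $Q$. In particular $\pi_{W*}m=p_0\,dw$ is absolutely continuous with density $p_0\ge0$. Disintegrating the finite positive measure $m$ along $\pi_W$ (Rokhlin) gives positive finite $m_w$ with $m=\int_Q m_w\,dw$. Testing against the countably many characters $\zeta^a w^\ell$ shows that $\int\zeta^a\,dm_w=p_a(w)$ for almost every $w$, with one null set for all $a$.

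For the atomic form I would work over a small real base ball. By Lemma~\ref{lem:admissible-projections}, $\pi_W|_Y$ is dominant and generically finite on each component. Hence there is a proper algebraic subset $Z\subset Q_\C$ outside of which $\pi_W|_Y$ is a finite unramified covering, with the points where $Y$ is singular, where fibers are infinite, and the branch points all mapping into $Z$. Its real points are nowhere dense in $Q$, since a polynomial vanishing on a real open set vanishes identically. Choose a real ball $\Omega\subset Q\setminus Z$ over which $M$ has nonempty preimage; one way is to perturb $\pi_W(p_j)$, using transversality of $W$ at $p_j$. Over $\Omega$, $Y\cap\pi_W^{-1}(\Omega)$ is a disjoint union of holomorphic sections. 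The real points among them form the compact set $M$ over $\Omega$, and after shrinking $\Omega$ these are finitely many real-analytic graphs $w\mapsto\zeta_h(w)$. They are pairwise distinct because the covering is unramified.

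On these graphs I would apply Lemma~\ref{lem:flux-density}, namely $dm=\varkappa|\beta_L|$ on $\mathcal S_{\mathrm{tr}}$, with $\varkappa$ a locally constant positive integer. Pushing forward along $\pi_W$, each graph contributes $\varpi_h(w)\,dw$. Here $\varpi_h$ is $\varkappa$ times the ratio of $|\beta_L|$ to the pullback of the Haar form along the analytic section, which is real analytic. It is positive because the graph is transverse both to $V$ and to $W$: transversality holds off the proper analytic critical locus, and I shrink $\Omega$ to avoid it. The exceptional set $M\setminus\mathcal S_{\mathrm{tr}}$ has $m$-measure zero, so $m_w=\sum_h\varpi_h(w)\delta_{\zeta_h(w)}$ for almost every $w\in\Omega$.

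The main obstacle is this last step. One must make sure that the graphs over $\Omega$ exhaust the mass of $m$ there. One must also secure transversality to both $V$ and $W$ on an open set, so that the weights are genuinely positive and analytic rather than only measurable.
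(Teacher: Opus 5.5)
Your first two paragraphs agree with the paper's proof. The cone inequality \eqref{eq:reconstruction-W-cone} bounds the base indices $\ell'$ with $\widehat m(\ell'-k)\ne0$, and your bound $\|\ell'-k\|\le\max\{R,\|P_Wk\|/\eta\}$ is correct. Fourier uniqueness then gives $p_k\in\mathcal R$, and the projection formula, positivity of $m$ and disintegration give the remaining identities.

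The gap is in the atomic step. As written, ``the real points among them form the compact set $M$ over $\Omega$'' asserts $Y(\T)\cap\pi_W^{-1}(\Omega)=M\cap\pi_W^{-1}(\Omega)$. That equality is not available here: it is part of Lemma~\ref{lem:moment-fiber-identification}, whose proof uses the present lemma.

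What you actually need is weaker, but it still has to be proved. If a holomorphic section $s$ of the covering passes through $y=s(w_0)\in M$, then $s(w)\in M$ for all real $w$ near $w_0$. Without this, $M$ might contain only a closed piece $s(A)$ of a sheet. That sheet would then contribute the density $\varpi_h\mathbf{1}_A$, which is neither positive nor real analytic on $\Omega$. Shrinking $\Omega$ only disposes of sections that miss $M$ at $w_0$. By contrast, the obstacle you name---that the graphs exhaust the mass---is automatic, because $\operatorname{supp}m=M\subset Y$ and $Y$ over $\Omega$ is the union of the sections.

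The paper fills this step with two inputs: the germ identity $(M^\C,y)=(Y^{\mathrm{an}},y)$ at smooth points of $Y$ (Corollary~\ref{cor:all-fq-algebraicity}), and the real-locus statement of Proposition~\ref{prop:ac-canonical-germ}. The smooth conjugation-invariant germ then has real locus a $c$-manifold with $T_yM\otimes_\R\C=T_yY$. Hence $\pi_W|_M$ is a local diffeomorphism at $y$, and uniqueness of the holomorphic inverse places the whole real graph in $M$. Compactness of $M$ then excludes extra fiber points nearby.

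Within your setup there is also an elementary fix. Take $\Omega$ inside $\pi_W$ of the $W$-transverse piece of $M$ near some $p_j$, so that $\Omega\subset\pi_W(M)$ rather than merely meeting it. The finitely many closed sets $A_i=\{w\in\Omega:s_i(w)\in M\}$ then cover $\Omega$. On a small ball avoiding their nowhere dense boundaries, each $A_i$ is either the whole ball or empty, and at least one equals the ball. With either addition, your argument is complete and follows the same route as the paper.
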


\begin{proof}
We first use the cone to obtain polynomial moments, then describe the
conditional measures on regular fibers. For every $a\in\Z^d$, the
measure $\pi_{W*}(\zeta^a m)$ has finitely many nonzero Fourier
coefficients: at a base index $\ell$ its coefficient is
$\widehat m(-a,\ell)$, and \eqref{eq:reconstruction-W-cone} bounds
$\ell$ for fixed $a$. Fourier uniqueness gives
\begin{equation}
 \pi_{W*}(\zeta^a m)=p_a(w)\,dw,
 \qquad p_a\in \mathcal R.
 \label{eq:branched-moments}
\end{equation}
Since $p_0\ge0$, disintegration yields positive finite conditional
measures on the compact fiber torus, with moments $p_a(w)$
simultaneously for almost every $w$. Multiplication by $w^\ell$ and
positivity give the identities for ambient characters and conjugates.

Choose a nonempty open subset of the real base over which the
morphism is finite and unramified, avoiding also the image of the
locus where the logarithmic tangent space of $Y_{\rm reg}$ meets
$V_{\C}$ nontrivially. To justify this choice, each component has a
smooth $V$-transverse point, so that locus lies in a proper algebraic
subset. Over an open set where $\pi_W$ is finite, its image and the
singular image are proper algebraic subsets of the base. The
exceptional values of the generically finite morphism also lie in a
proper algebraic subset. None contains a nonempty open part of the
maximal totally real torus $Q$, including the image of a transverse
real open piece of $M$.
Over a remaining real base point the full algebraic fiber is finite,
regular, and $V$-transverse. At a point $y$ of $M$ in this fiber,
Corollary~\ref{cor:all-fq-algebraicity} gives
$(M^\C,y)=(Y^{\mathrm{an}},y)$, and
Proposition~\ref{prop:ac-canonical-germ} identifies the real part of
$M^\C$ with $M$. Conjugation invariance of the smooth germ in
logarithmic coordinates implies that $M$ is a real $c$-dimensional
manifold near $y$, with $T_yM\otimes_\R\C=T_yY$. Since
$\pi_W|_Y$ is \'etale at $y$, its differential maps $T_yM$
isomorphically onto the tangent space of $Q$. Thus $\pi_W|_M$ is a
local diffeomorphism onto a neighborhood in $Q$, and uniqueness of
the holomorphic inverse sends real base points into $M$.
The points of $M$ in this fiber persist as real local graphs. If additional
points of $M$ occurred over base points tending to the chosen point $w_0$,
compactness would give a limit at one of the points $y_1,\ldots,y_r$ in
the original fiber of $M$, contradicting the local graph descriptions.
Shrink to a real base open set $\Omega$ on which $M$ has a fixed
positive number $r$ of distinct fiber points. Then
\[
 p_a(w)=\sum_{h=1}^r\varpi_h(w)\zeta_h(w)^a,
 \qquad \varpi_h(w)>0.
\]
By Lemma~\ref{lem:flux-density}, the local flux measure is
$\varkappa_h|\beta_L|$, where $\varkappa_h$ is the constant positive
integer coefficient of the canonical analytic cycle on the graph.
The flux density is nonzero by $V$-transversality, and the projection
Jacobian is nonzero by $W$-transversality. The weights are their
absolute ratio times the coefficient: for a tangent basis $v_1,\ldots,v_c$
on the graph,
\[
 \varpi_h=\varkappa_h\frac{|\beta_L(v_1,\ldots,v_c)|}
 {|\pi_W^*dw(v_1,\ldots,v_c)|}.
\]
Here $\pi_W^*dw$ is the pullback of normalized Haar density on $Q$.
On a sufficiently small connected graph, both determinants are analytic
and nonzero. Their absolute values, and hence the positive weights,
are real analytic.
\end{proof}

\subsection{The finite moment algebra}
We recover each fiber as the joint spectrum of multiplication by
characters. A common denominator for all powers of these operators
will imply finiteness over the base.

Write $\mathbf f_1,\ldots,\mathbf f_d$ for the standard basis of
$\Z^d$, and $\mathbf u_1,\ldots,\mathbf u_r$ for that of $\C^r$.

\emph{Construction.}
Choose monomials $b_i=\zeta^{\beta_i}$, $1\le i\le r$, with $b_1=1$,
whose evaluation matrix $\mathsf E_{ih}=b_i(\zeta_h)$ is invertible at
one fiber over $\Omega$. Laurent monomials span the functions on a
finite set, so we may choose such a basis containing the constant.
Shrink $\Omega$ so that $\mathsf E$ remains invertible, and define
\begin{equation}
 \begin{aligned}
 H&=(p_{\beta_i-\beta_j})_{i,j},&
 H_a&=(p_{\beta_i-\beta_j+a})_{i,j},\\
 \mathsf U_a&=H^{-1}H_a,& \Delta&=\det H.
 \end{aligned}
 \label{eq:branched-hermitian}
\end{equation}

The constant monomial will be a cyclic vector for these matrices,
identifying the algebra with its representation away from the
determinant locus.

\begin{lemma}[The moment algebra]\label{lem:finite-moment-algebra}
The Laurent polynomial $\Delta=\det H$ is nonzero, the matrices
$\mathsf U_{\mathbf f_j}$ commute and are invertible, and
\begin{equation}
 \mathsf U_a=\prod_{j=1}^d \mathsf U_{\mathbf f_j}^{a_j},\qquad H\mathsf U_a=H_a
 \quad(a\in\Z^d)
 \label{eq:branched-all-powers}
\end{equation}
holds over $K$. Their joint $K$-algebra is \'etale of dimension $r$.
The algebra
$\mathcal B=\mathcal R[\mathsf U_{\mathbf f_1}^{\pm1},\ldots,
\mathsf U_{\mathbf f_d}^{\pm1}]$ is finite, reduced and torsion-free
of generic rank $r$. Its spectrum
$Z=\operatorname{Spec}\mathcal B$ is a closed subvariety of the ambient
torus, finite over $Q_\C$, and every component dominates the base.
The map $A\mapsto A\mathbf u_1$ identifies
$\mathcal B[\Delta^{-1}]$ with $\mathcal R[\Delta^{-1}]^r$ as modules
and its matrix representation remains faithful on every fiber with $\Delta\ne0$.
\end{lemma}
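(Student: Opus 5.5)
The plan is to work first over the real base open set $\Omega$ of Lemma~\ref{lem:polynomial-fiber-moments}, where everything is explicit, and then transfer each identity to $K$ by analyticity. On $\Omega$ write $\mathsf W=\operatorname{diag}(\varpi_h)$, $\mathsf D_a=\operatorname{diag}(\zeta_h^a)$ and $\mathsf E_{ih}=b_i(\zeta_h)$. The moment formula $p_a=\sum_h\varpi_h\zeta_h^a$ gives the factorizations
\[
 H=\mathsf E\mathsf W\mathsf E^*,\qquad H_a=\mathsf E\mathsf W\mathsf D_a\mathsf E^*,
\]
using $\zeta_h\in(S^1)^d$ on real fibers, so that $\overline{b_j(\zeta_h)}=b_j(\zeta_h)^{-1}$. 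Since $\mathsf E$ is invertible on $\Omega$ and $\varpi_h>0$, $H$ is positive definite there. Hence the Laurent polynomial $\Delta$ does not vanish on a nonempty open subset of the maximal totally real torus $Q$, so $\Delta\ne0$ in $\mathcal R$.

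The commutation and power identities come next. On $\Omega$,
\[
 \mathsf U_a=H^{-1}H_a=(\mathsf E^*)^{-1}\mathsf D_a\mathsf E^*,
\]
so all $\mathsf U_a$ are simultaneously diagonalized. Therefore $\mathsf U_a\mathsf U_{a'}=\mathsf U_{a+a'}$, and each $\mathsf U_{\mathbf f_j}$ is invertible. The entries of $H\mathsf U_{\mathbf f_j}$, of $H\prod_j\mathsf U_{\mathbf f_j}^{a_j}$ and of $H_a$ are rational functions in $K$ with denominators powers of $\Delta$. Their differences vanish on the open real set $\Omega\subset Q$, which is Zariski dense in $Q_\C$, so \eqref{eq:branched-all-powers} and commutativity hold over $K$. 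The joint $K$-algebra $\mathcal B\otimes K$ becomes, over $\overline K$ or at any generic point of $\Omega$, conjugate to the algebra of diagonal matrices with $r$ distinct joint eigenvalue vectors $\zeta_h$. By Lagrange interpolation it is then the full diagonal algebra, so it is commutative, semisimple (étale) and of dimension $r$. The key point is that the $\zeta_h$ are distinct, which separates the eigen-characters.

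For finiteness of $\mathcal B$ over $\mathcal R$ I will use the fiber moment structure. Each $\mathsf U_{\mathbf f_j}^{\pm1}$ satisfies its characteristic polynomial $\prod_h(X-\zeta_{h,j}^{\pm1})$. The coefficients are elementary symmetric functions of the $\zeta_{h,j}$, which are holomorphic on $\Omega$, algebraic over $K$, and bounded on all real fibers, since they have modulus one. I expect the main obstacle to be showing that these coefficients lie in $\mathcal R$ rather than only in $\mathcal R[\Delta^{-1}]$. The first approach I would try is the following:
\begin{itemize}
\item identify them with coefficients of the norm polynomial of the character $\zeta_j$ along the finite quotient $\pi_W:Y\to Q_\C$, using the relevant components;
\item use that $\pi_W|_Y$ is finite by the cone \eqref{eq:reconstruction-W-cone} and Theorem~\ref{thm:branched-scalar-cone}'s setting, so integrality of $\zeta_j^{\pm1}$ over $\mathcal R$ follows from integrality on $Y$.
\end{itemize}
If finiteness of $\pi_W|_Y$ is not yet available at this point, the fallback is a boundedness argument. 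The coefficients are rational functions in $K$ with denominators dividing powers of $\Delta$, and they extend holomorphically, with moduli bounded, across the locus $\Delta=0$ in $Q$, because the roots stay on the unit circle. Normality of $\mathcal R$ (a Laurent ring) then puts them in $\mathcal R$. With integral generators, $\mathcal B$ is a finitely generated $\mathcal R$-module. Being a subring of $\operatorname{Mat}_r(K)$, it is torsion-free, and it is reduced because $\mathcal B\otimes K$ is étale and $\mathcal B$ embeds in it. Its generic rank is $r$, and every component dominates the base by torsion-freeness (Lemma~\ref{lem:local-degree-general}(ii) in the analytic form).

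The last step is the embedding and faithfulness. The surjection $\mathcal R[\zeta^{\pm1}]\to\mathcal B$ sending $\zeta_j\mapsto\mathsf U_{\mathbf f_j}$ realizes $Z=\operatorname{Spec}\mathcal B$ as a closed subvariety of $Q_\C\times(\C^*)^d$, that is, of the ambient torus through the splitting, finite over $Q_\C$. For the module identification, the relation $H\mathsf U_a=H_a$ gives
\[
 \mathsf U_a\mathbf u_1=H^{-1}(p_{\beta_i+a})_i,
\]
using $\beta_1=0$. The vectors $\mathsf U_{\beta_j}\mathbf u_1$ are the columns of $H^{-1}H=I$, so $A\mapsto A\mathbf u_1$ is surjective onto $\mathcal R[\Delta^{-1}]^r$. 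It is injective because $\mathbf u_1$ is cyclic: if $A\mathbf u_1=0$ then $A\mathsf U_{\beta_j}\mathbf u_1=\mathsf U_{\beta_j}A\mathbf u_1=0$ for all $j$ by commutativity, so $A=0$. The same computation, specialized at any base point with $\Delta\ne0$, shows that the matrix representation remains faithful on those fibers.
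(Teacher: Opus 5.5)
Your arguments for $\Delta\ne0$, for the power identities over $K$, for the \'etale structure and for the cyclic vector follow the paper. There is one slip: the $j$th column of $H$ is $(p_{\beta_i-\beta_j})_i$, so the correct identity is $\mathsf U_{-\beta_j}\mathbf u_1=\mathbf u_j$, not $\mathsf U_{\beta_j}\mathbf u_1=\mathbf u_j$.

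The genuine gap is the finiteness of $\mathcal B$ over $\mathcal R$.

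\textbf{Your first route is circular.} At this point only generic finiteness of $\pi_W|_Y$ is known (Lemma~\ref{lem:admissible-projections}). The paper obtains finiteness of $\pi_W|_Y$ in Lemma~\ref{lem:moment-fiber-identification}, and it does so from $Z=Y$ together with the finiteness of $Z$ proved in the present lemma. The Fourier cone \eqref{eq:reconstruction-W-cone} is a spectral statement. It becomes an amoeba statement, and hence a properness statement, only in Lemma~\ref{lem:moment-amoeba}, which again goes through the moment algebra.

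\textbf{Your fallback also fails.} Take an element $g/\Delta^N\in\mathcal R[\Delta^{-1}]$. Normality of $\mathcal R$ puts it in $\mathcal R$ only if it is locally bounded near generic points of every component of the \emph{complex} hypersurface $\{\Delta=0\}\subset Q_\C$. Boundedness on the real torus $Q$ gives no such control, because most of that hypersurface lies off $Q$.
\begin{itemize}
\item Already for $c=1$, the Laurent polynomial $(w-2)(w^{-1}-2)$ equals $|w-2|^2>0$ on $|w|=1$. Its reciprocal is bounded on $Q$ but is not in $\mathcal R$.
\item For $c\ge2$, even near real zeros, real boundedness does not imply complex boundedness; compare $x^2/(x^2+y^2)$.
\item Unit modulus of the eigenvalues at real base points outside $\Omega$ is not automatic either. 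It needs positivity of $H$ on $Q$ and $H$-unitarity of $\mathsf U_{\mathbf f_j}$, neither of which you establish.
\end{itemize}

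\textbf{The missing idea is that the denominator is uniform in the exponent.} We have $\mathsf U_a=H^{-1}H_a=\Delta^{-1}\operatorname{adj}(H)H_a$. By Lemma~\ref{lem:polynomial-fiber-moments}, $H_a$ has entries in $\mathcal R$ for every $a\in\Z^d$. So all the $\mathsf U_a$ lie in the single finite $\mathcal R$-module $\Delta^{-1}\operatorname{Mat}_r(\mathcal R)$.

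By the power identity, $\mathcal B$ is the $\mathcal R$-span of the $\mathsf U_a$. It is therefore a submodule of this finite module, and it is finite because $\mathcal R$ is Noetherian. The paper runs the same argument through $\mathcal P=\sum_a\mathsf U_a\mathcal R^r\subset\Delta^{-1}\mathcal R^r$ and the determinant trick, which gives integrality of $\mathsf U_{\mathbf f_j}^{\pm1}$.

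This route needs no control of individual eigenvalues and no information about $Y$. With it in place, the rest of your argument goes through: torsion-freeness, reducedness, domination of every component, and the closed embedding in the torus.
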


\begin{proof}
\emph{Step 1: moment matrices.}
On the real open set,
\[
 H=\mathsf E\operatorname{diag}(\varpi_h)\mathsf E^*,\qquad
 \mathsf U_a=\mathsf E^{-*}\operatorname{diag}(\zeta_h^a)\mathsf E^*.
\]
These formulas show that $\Delta\ne0$ and that the coordinate
matrices commute and are invertible. They also prove
\eqref{eq:branched-all-powers} over $K=\C(w_1,\ldots,w_c)$, since
identities on a real open part of the base torus are rational identities. Choose a generic linear
combination $X=\sum_j\lambda_j\mathsf U_{\mathbf f_j}$ with $r$
distinct eigenvalues at the chosen fiber. The discriminant of its
characteristic polynomial is therefore nonzero in $K$. Thus $K[X]$
is \'etale of dimension $r$ and equals its centralizer in
$\operatorname{Mat}_r(K)$. The joint algebra is commutative and contains
$X$, so it equals $K[X]$, of dimension $r$.

\emph{Step 2: integrality.}
The common denominator in \eqref{eq:branched-all-powers} is independent
of the multi-index. Consequently the module
\[
 \mathcal P=\sum_{a\in\Z^d}\mathsf U_a \mathcal R^r
 \subset\Delta^{-1}\mathcal R^r
\]
is finitely generated, since $\mathcal R$ is Noetherian. It contains
$\mathcal R^r$ and is preserved by each $\mathsf U_{\mathbf f_j}$
and its inverse. Express any such operator on generators of
$\mathcal P$ by a matrix over $\mathcal R$. By the adjugate identity,
its monic characteristic polynomial annihilates every generator and
therefore $K^r$. Each coordinate matrix and its inverse is thus
integral over $\mathcal R$, so
\[
 \mathcal B=\mathcal R[\mathsf U_{\mathbf f_1}^{\pm1},\ldots,\mathsf U_{\mathbf f_d}^{\pm1}]
\]
is a finite reduced torsion-free $\mathcal R$-algebra of generic degree
$r$, defining a closed subvariety $Z$ of the ambient torus. Every
component dominates the base: $\mathcal B$ embeds in a product of
finite extensions of $K$, and its components are closures of the
generic components. Equivalently, nonzero base elements are
non-zero-divisors in $\mathcal B$ and belong to no minimal prime.
It follows that $Z$ is pure of dimension $c$ and $Z\to Q_{\C}$ is finite.

The chosen character splitting gives the presentation
\[
 \mathcal R[\zeta_1^{\pm1},\ldots,\zeta_d^{\pm1}]
 \twoheadrightarrow\mathcal B,
 \qquad \zeta_j\longmapsto\mathsf U_{\mathbf f_j}.
\]

\emph{Step 3: a cyclic vector.}
For each $i$, the first column of $H_{-\beta_i}$ is the $i$th
column of $H$.  Hence
\begin{equation}
 H_{-\beta_i}\mathbf u_1=H\mathbf u_i,\qquad
 \mathsf U_{-\beta_i}\mathbf u_1=\mathbf u_i.
 \label{eq:branched-cyclic-vector}
\end{equation}
These identities hold wherever $\Delta\ne0$.  In particular $\mathbf u_1$
is a joint cyclic vector there.  A matrix commuting with all the
$\mathsf U_a$ and annihilating $\mathbf u_1$ annihilates every $\mathbf u_i$.
Thus $A\mapsto A\mathbf u_1$ identifies
$\mathcal B[\Delta^{-1}]$ with $\mathcal R[\Delta^{-1}]^r$ as modules:
injectivity follows from cyclicity, and surjectivity from
\eqref{eq:branched-cyclic-vector}.  Consequently this representation
remains faithful on every fiber with $\Delta\ne0$.
\end{proof}

\begin{lemma}[Identification of the fibers]\label{lem:moment-fiber-identification}
The variety constructed in Lemma~\ref{lem:finite-moment-algebra} satisfies
$Z=Y$ and $Y\cap\pi_W^{-1}(Q)=M$. In particular $Y(\T)=M$ and
$\pi_W|_Y$ is finite.
\end{lemma}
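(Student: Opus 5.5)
I will prove the two inclusions $Y\subseteq Z$ and $Z\subseteq Y$, and then read off the real fibers. Throughout I use the ideal $\mathfrak p=\ker(\mathcal R[\zeta^{\pm1}]\to\mathcal B)$ from the presentation in Lemma~\ref{lem:finite-moment-algebra}.

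\emph{Step 1: $Y\subseteq Z$.} Let $F=\sum_a q_a(w)\zeta^a\in\mathfrak p$, so that $\sum_a q_a\mathsf U_a=0$. By \eqref{eq:branched-all-powers} the first column of $H\mathsf U_a$ is the first column of $H_a$. Hence
\[
\sum_a q_a\,p_{\beta_i+a}=0\qquad\text{for every }i.
\]
The entry $i=1$ gives $\sum_a q_a p_a=0$. By Lemma~\ref{lem:polynomial-fiber-moments} this reads $\int F\,dm_w=0$. Replacing $F$ by $\overline{F}F$ is not directly available, so I use another route. Over $\Omega$, $\mathsf U_a$ is similar to $\operatorname{diag}(\zeta_h^a)$, so $F(w,\zeta_h(w))=0$ for every sheet $h$. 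These points are exactly the points of $M$ over $w$, with positive weights, so $F$ vanishes on the open part $M\cap\pi_W^{-1}(\Omega)$. That set is a nonempty transverse real $c$-dimensional piece on every component: its projection $\pi_W$ is a local diffeomorphism, and I enlarge $\Omega$ by taking a union of such sets over all components, using the points $p_j$. By the identity principle (Corollary~\ref{cor:transverse-tropical-rank}), $F$ then vanishes on the Zariski closure of every component, hence on $Y$. This gives $\mathfrak p\subseteq I(Y)$, so $Y\subseteq Z$.

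\emph{Step 2: $Z\subseteq Y$.} Conversely, a Laurent polynomial $G$ vanishing on $Y$ vanishes on all sheets $\zeta_h(w)$ for $w\in\Omega$. The diagonal form then gives $G(\mathsf U)=0$ over $\Omega$, hence over $K$ by rationality. So $G\in\mathfrak p$, because $\mathcal B$ is torsion-free and embeds in $\mathcal B\otimes K$. Therefore $Z=Y$, and finiteness of $\pi_W|_Y$ is Lemma~\ref{lem:finite-moment-algebra}.

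\emph{Step 3: the real fibers.} I must show $Y\cap\pi_W^{-1}(Q)=M$; this is the main obstacle. I would argue via positivity of $H$. For real $w\in Q$, the matrix $H(w)$ is the Gram matrix $\bigl(\int b_i\overline{b_j}\,dm_w\bigr)$ of the positive measure $m_w$, and $p_{-k}=\overline{p_k}$. The inclusion $M\subseteq Y\cap\pi_W^{-1}(Q)$ is trivial. For the reverse, first take $w\in Q$ with $\Delta(w)\ne0$. The fiber of $Z$ over $w$ is the joint spectrum of the $\mathsf U_{\mathbf f_j}(w)$. These operators are unitary with respect to the positive definite form $H(w)$: indeed $\mathsf U_a^*H\mathsf U_a=H$, by the Hermitian identity $H_a^*=H_{-a}$ together with $\mathsf U_{-a}=\mathsf U_a^{-1}$. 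Hence the joint eigenvalues lie on $(S^1)^d$ and are the atoms of $m_w$, which lie in $M$ since $\operatorname{supp}m=M$. This settles the generic real base points. For base points with $\Delta(w)=0$, the fibers of the finite map $Y\to Q_\C$ vary continuously. So points of $Y$ over such $w$ are limits of real-fiber points over nearby generic real $w'$, since $\{\Delta\ne0\}\cap Q$ is dense. The limits of points of $M$ lie in the compact set $M$, provided the fiber points over generic real $w'$ are all in $M$, which Step 3 just established. Taking $W$ with $\mathcal L$ splitting so that $\pi_W^{-1}(Q)\cap(S^1)^n$-torus components are compact, $Y(\T)=M$ follows as $\T^n\subseteq\pi_W^{-1}(Q)$.
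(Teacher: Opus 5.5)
Your Step~2 ($Z\subseteq Y$) is correct, and it differs from the paper's route. Writing $G(\mathsf U)=\mathsf E^{-*}\operatorname{diag}(G(\zeta_h,w))\mathsf E^*$ over $\Omega$ and using rationality needs only the original open set. The paper instead finds a real graph on each component of $Z$, which requires the spectral identification at almost every real base point. Your Step~3 is essentially the paper's argument: positivity of $H$, $H$-unitarity of the $\mathsf U_{\mathbf f_j}$, then openness of the finite map (valid because $Z$ is pure and every component dominates) to reach the exceptional fibres. The garbled last sentence can simply be replaced by the observation that characters map $\T^n$ into the unit circle, so $\T^n\subseteq\pi_W^{-1}(Q)$.

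The gap is in Step~1, the inclusion $Y\subseteq Z$. You correctly show that $F\in\mathfrak p$ vanishes on $M\cap\pi_W^{-1}(\Omega)$. You then assert that this set meets every irreducible component of $Y$, and nothing guarantees this. $\Omega$ was chosen only as some real base open set over which $M$ has a constant number $r$ of fibre points. A component $Y_j$ may a priori have only non-real points over $\Omega$; ruling this out is essentially the statement $Y\cap\pi_W^{-1}(Q)=M$ you are proving.

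Enlarging $\Omega$ near the points $\pi_W(p_j)$ does not help. The diagonal form $\mathsf U_a=\mathsf E^{-*}\operatorname{diag}(\zeta_h^a)\mathsf E^*$, with the $\zeta_h$ the points of $M$, comes from $m_w$ having exactly $r$ atoms separated by the fixed monomials $b_i$. This is known only over the original $\Omega$. Elsewhere, neither the number of points of $M$ in a fibre nor whether the $b_i$ separate them is controlled a priori. Without $Y\subseteq Z$, Step~3 only shows that $Z$ is a union of components of $Y$ whose real fibres lie in $M$, and finiteness of $\pi_W|_Y$ is not established.

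You can recover the inclusion in either of two ways.

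\emph{First way.} The route you abandoned works if you use $\zeta^bF$ instead of $\overline F F$. Since $\mathfrak p$ is an ideal and the $\mathsf U_a$ commute, $\sum_a q_a\mathsf U_{a+b}=0$. The $(1,1)$ entry of $H\mathsf U_{a+b}=H_{a+b}$ then gives $\sum_a q_a p_{a+b}=0$ for every $b\in\Z^d$. So for almost every $w\in Q$, all Fourier coefficients of $F(w,\cdot)\,m_w$ on $\T^d$ vanish. Hence $F$ vanishes $m$-almost everywhere, so on $\supp m=M$ by continuity, and therefore on $Y$.

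\emph{Second way.} Complete the identification you assert in Step~3. Unitarity only places the joint eigenvalues on $(S^1)^d$. To identify them with the atoms of $m_w$ you need two facts. The joint spectral measure of $\mathbf u_1$ has moments $\mathbf u_1^*H\mathsf U_a\mathbf u_1=(H_a)_{11}=p_a$, so it equals $m_w$ for almost every $w$ by Fourier uniqueness. Cyclicity of $\mathbf u_1$ then gives every joint eigenvalue positive mass. It follows that $m$ is concentrated on the closed set $Z$, so $M\subseteq Z$; this is how the paper proves the inclusion.
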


\begin{proof}
We identify regular fibers by positivity of the moment matrix and reach
exceptional fibers by local openness of a finite map.

\emph{Step 1: real fibers away from the discriminant.}
Outside the discriminant and the zero set of $\Delta$, almost
every real base point has $H$ as the Gram matrix of the monomials
$b_i$ for a positive conditional measure. It is therefore positive
semidefinite, and continuity extends this to all of $Q$; it is
positive definite wherever $\Delta(w)\ne0$.
Furthermore $H_a^*=H_{-a}$ on the real torus, and
\eqref{eq:branched-all-powers} gives
\[
 \mathsf U_{\mathbf f_j}^* H \mathsf U_{\mathbf f_j}=H.
\]
The commuting matrices are therefore jointly unitary for this
positive inner product. Their joint spectral measure at the vector
$\mathbf u_1$ is a positive finite atomic measure on $\T^d$ with moments
\[
 \mathbf u_1^*H\mathsf U_a \mathbf u_1=(H_a)_{11}=p_a.
\]
For almost every base point, Fourier uniqueness on $\T^d$ identifies
it with the conditional measure.  By \eqref{eq:branched-cyclic-vector}, the vector $\mathbf u_1$ is
jointly cyclic at every such point.  Each joint eigenspace therefore
occurs with positive mass. The faithful representation identifies the
joint spectral points with the full fiber of $Z$, consisting of $r$
points away from the branch locus
$\operatorname{Disc}(\mathcal B/\mathcal R)$, the image of the
non-\'etale locus of the finite map $Z\to Q_\C$. This image is closed
because the map is finite, and it is proper because
$\mathcal B\otimes K$ is \'etale.

The branch locus and the zero set of $\Delta$ have Haar measure zero,
being proper algebraic subsets of the base. Since
$\pi_{W*}m=p_0\,dw$, their inverse images have zero $m$ mass, so
$m$ is concentrated on the closed set $Z$. By full support
(Proposition~\ref{prop:ac-flux-support}), $M\subset Z$.
Conversely, each component of $Z$ dominates the base and contains a
holomorphic graph over a real base open set avoiding these loci.
Almost every point of that graph over the real base belongs to $M$;
closedness extends this to every such point. This maximal totally
real open piece is Zariski dense in the component, proving
\begin{equation}
 Z=\overline M^{\mathrm{Zar}}=Y.
 \label{eq:branched-identify}
\end{equation}
The same argument identifies the full fiber over every real base point
outside these two loci.

\emph{Step 2: exceptional fibers.}
Let $y\in Z$ lie over a real point $w_0$ of the discriminant or the
zero set of $\Delta$. A finite map from a pure $c$-dimensional
complex space to a smooth $c$-dimensional base is locally open.
Indeed, isolate the points above $w_0$ in disjoint neighborhoods.
Properness provides a base ball whose preimage lies in these
neighborhoods, each piece finite and proper over the ball. The image
of every local component through $y$ is analytic of dimension $c$,
since fibers are finite, and therefore contains a base neighborhood.
Every neighborhood of $y$ thus contains preimages of real base points
outside the two loci tending to $w_0$. These preimages belong to $M$; since $M$ is closed, $y\in M$. The full inverse image of
the base torus is therefore $M$.
\end{proof}

\subsection{The amoeba cone}
\label{sec:reconstruction-amoeba}
Let $C_V$ be the Zariski closure in $Y$ of the locus in $Y_{\rm reg}$
where the logarithmic tangent space meets $V_\C$ nontrivially, and put
\[
 Z_{\rm exc}=\{\Delta=0\}\cup\operatorname{Disc}(\mathcal B/\mathcal R)
 \cup\pi_W(\operatorname{Sing}Y)\cup\pi_W(C_V)\subset Q_\C.
\]
This algebraic subset is proper. By
Lemma~\ref{lem:moment-fiber-identification}, $\pi_W|_Y$ is finite,
so the two images are closed. Reducedness of $Y$ implies
$\dim\operatorname{Sing}Y<c$; also $C_V$ omits every $p_j$ and
therefore meets each $Y_j$ in a proper closed subset. Both images
have dimension less than $c$, while the other two sets are proper by
Lemma~\ref{lem:finite-moment-algebra}. We work over
$Q_\C\setminus Z_{\rm exc}$.

The Fourier cone bounds the Laurent support of the moments. Their
exponential growth along a ray will then bound each character on the
fiber; the shifted moments distinguish points with the same character
value and prevent cancellation.

\begin{lemma}[Moment growth and the amoeba]\label{lem:moment-amoeba}
For every admissible $W$,
\[
 \|P_{W^\perp}\Log z\|\ge\eta\|\Log z\|\qquad(z\in Y).
\]
Consequently $\|P_{V^\perp}\Log z\|\ge\varepsilon\|\Log z\|$ on $Y$.
\end{lemma}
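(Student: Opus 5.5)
The plan is to use the fiber description of $Y$ from Lemmas~\ref{lem:finite-moment-algebra} and~\ref{lem:moment-fiber-identification} and to compare two growth rates of a moment along a real ray in the base. On one side the Laurent support of each moment $p_a$ is controlled by \eqref{eq:reconstruction-W-cone}. On the other side the fiber points dominate the moments. We first work over $Q_\C\setminus Z_{\rm exc}$, a Zariski-dense open set, and then extend by closedness of $Y$ and continuity of $\Log$. The set $\{z:\|P_{W^\perp}\Log z\|\ge\eta\|\Log z\|\}$ is closed, so proving the inequality on a dense open subset of $Y$ suffices.

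\emph{Step 1: support of the moments.} In the splitting $k=(a,\ell)$, the characters $\zeta^a$ correspond to directions whose $P_W$-component is controlled by $a$. The base characters $w^\ell$ span $W^\perp\cap\Z^n$. By Lemma~\ref{lem:polynomial-fiber-moments}, $p_a$ has Fourier coefficients $\widehat m(-a,\ell)$. Then \eqref{eq:reconstruction-W-cone} forces $\|P_{W^\perp}k\|\le\eta^{-1}\sqrt{1-\eta^2}\,\|P_Wk\|$ for all $\ell$ in the support once $\|k\|\ge R$. So $\operatorname{supp}p_a\subset\{\ell:\|\ell\|\le C_0+c_1\|a\|\}$ with $c_1=\sqrt{1-\eta^2}/\eta$, in norms adapted to the splitting. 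Hence for $w=e^{u}\cdot w_{\mathbb T}$ with real log-modulus $u$ we get $\log|p_a(w)|\le C+c_1\|a\|\,\|u\|$.

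\emph{Step 2: lower bound from the fiber.} Fix a point $w$ off $Z_{\rm exc}$ with fiber points $\zeta_1,\dots,\zeta_r$. We continue the identity $p_a=\mathbf u_1^*H\mathsf U_a\mathbf u_1$ algebraically. Since $\mathsf U_a=\mathsf E^{-*}\operatorname{diag}(\zeta_h^a)\mathsf E^*$ holds over $K$, we have $p_a(w)=\sum_h\varpi_h(w)\zeta_h(w)^a$, where the algebraic continuations $\varpi_h$ are nonzero because $\Delta\ne0$. Take a fiber point $\zeta_{h_0}$ and a direction $a=Nb$ with $b\in\Z^d$ chosen so that $|\zeta_{h_0}^b|$ is strictly largest among the $|\zeta_h^b|$. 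Here the shifted moments $p_{Nb+\beta_i-\beta_j}$ are the tool that prevents cancellation: the vector $(p_{Nb+\beta_i})_i=\mathsf E\,\operatorname{diag}(\varpi_h\zeta_h^{Nb})(\ldots)$ is invertible-linear in the contributions. Then $\max_i|p_{Nb+\beta_i}|\gtrsim|\zeta_{h_0}^{b}|^N$ as $N\to\infty$, even when several points tie in modulus. Comparing with Step~1 gives $\langle b,\Log\zeta_{h_0}\rangle\le c_1\|b\|\,\|u\|$ for all $b$ in an open cone. By density in $b$, and since a maximizing $h_0$ exists for each $b$, this yields $\|\Log\zeta\|_{\text{fiber part}}\le c_1\|u\|$ for every fiber point. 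Rewriting in ambient coordinates, with $u=P_{W^\perp}\Log z$ up to the splitting, gives $\|P_W\Log z\|\le c_1\|P_{W^\perp}\Log z\|$, which is equivalent to $\|P_{W^\perp}\Log z\|\ge\eta\|\Log z\|$.

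\emph{Step 3: pass to $V$.} Let admissible rational $W$ tend to $V$. Then $\delta\to0$ and $\eta\to\varepsilon$, and $P_{W^\perp}\to P_{V^\perp}$, so the inequalities converge to $\|P_{V^\perp}\Log z\|\ge\varepsilon\|\Log z\|$. The main obstacle is Step~2. The splitting coordinates are not orthogonal to $W$, so the constants must be matched exactly enough to give $\eta$ rather than merely some constant. I would handle this by working with norms induced by $P_W,P_{W^\perp}$ directly: measure $\ell$ through $P_{W^\perp}k$ and $a$ through $P_Wk$, and use the exact inequality from \eqref{eq:reconstruction-W-cone} instead of splitting coordinates. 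The cancellation argument via invertibility of $\mathsf E$ and nonvanishing of the weights also needs care at complex (nonreal) base points.
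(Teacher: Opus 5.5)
Your route is the paper's. You bound Laurent supports of the fiber moments using \eqref{eq:reconstruction-W-cone}, and you use the expansion $p_k=\sum_h\varpi_h\chi_k(y_h)$ with nonzero weights over $Q_\C\setminus Z_{\rm exc}$. You then use shifted moments $p_{Nk+\widetilde\beta_i}$ against cancellation, compare growth along rays, use density of directions and of good fibers, and finally let $W\to V$.

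Your lower bound is slightly more direct than the paper's. The identity $(p_{Nk+\widetilde\beta_i}(w))_i=\mathsf E_+\,(\varpi_h\chi_k(y_h)^N)_h$, with $\mathsf E_+$ invertible, gives $|\chi_k(y_h)|^N\le C_w\max_i|p_{Nk+\widetilde\beta_i}(w)|$ for every $h$ and every $N$. This replaces the generating functions and Cauchy--Hadamard, and it makes the choice of a strictly dominant $h_0$ unnecessary. At nonreal base points, read $\mathsf E^*$ as the algebraic matrix $\mathsf E_-^t$, with $(\mathsf E_-)_{ih}=\zeta_h^{-\beta_i}$. The expansion then comes from the joint eigendecomposition on \'etale fibers with $\Delta\ne0$. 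The factorization $\det H=\det\mathsf E_+\,\prod_h\varpi_h\,\det\mathsf E_-$ gives the invertibility and nonvanishing you use.

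Step~1, however, is wrong as written, and it is exactly where the constant $\eta$ comes from.
\begin{itemize}
\item For an ambient character $k$ with $P_Wk\ne0$, a Laurent coefficient of $p_k$ at $\ell\in\mathcal L\subset W^\perp$ requires $\ell-k\in\operatorname{supp}\widehat m$. The cone then gives $\|\ell-P_{W^\perp}k\|\le C_\eta\|P_Wk\|$ with $C_\eta=\sqrt{\eta^{-2}-1}$, applied to $Nk+\widetilde\beta_i$ with $N$ large.
\item So the support is a ball centered at $P_{W^\perp}k$, not at the origin. Moreover $P_{W^\perp}\widetilde b$ is in general nonzero for the ambient character $\widetilde b$ of $\zeta^b$, because an integral complement of $\mathcal L$ need not lie in $W$.
\item With $v=P_{W^\perp}\Log z$, which is constant on the fiber, this gives $\log|p_{Nk+\widetilde\beta_i}(w)|\le N(P_{W^\perp}k\cdot v+C_\eta\|P_Wk\|\,\|v\|)+O(\log N)$.
\item The center term cancels exactly against the same term in $\log|\chi_k(y_h)|=P_Wk\cdot P_W\Log y_h+P_{W^\perp}k\cdot v$. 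This leaves $P_Wk\cdot P_W\Log y_h\le C_\eta\|P_Wk\|\,\|v\|$.
\item Density of the directions of $P_W\Z^n$ in $W$ then yields $\|P_W\Log z\|\le C_\eta\|P_{W^\perp}\Log z\|$. This is equivalent to the claim with exactly $\eta$.
\end{itemize}

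Your bound $\log|p_a(w)|\le C+c_1\|a\|\,\|u\|$ drops the center. Repairing it crudely gives only a splitting-dependent constant, which may degenerate as rational $W$ approach $V$, and then the passage to $V$ fails. The remedy you sketch at the end, working with $P_Wk$ and $P_{W^\perp}k$ of the ambient index, is the right one. With it, your argument is the paper's proof.
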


\begin{proof}
\emph{Step 1: Laurent supports.}
For an ambient character $\chi_k(z)=z^k$, $k\in\Z^n$, denote its
polynomial fiber moment by $p_k$. Identifying the fiber character
lattice with its chosen integral complement in $\Z^n$ extends the
notation $p_a$. The Laurent expansion satisfies
\begin{equation}
 p_k(w)=\sum_{\ell\in\mathcal L}\widehat p_k(\ell)w^\ell,
 \qquad \widehat p_k(\ell)\ne0\Longrightarrow
 \ell-k\in\operatorname{supp}\widehat m,
 \qquad |\widehat p_k(\ell)|\le m(\T^n).
 \label{eq:branched-coefficients}
\end{equation}
The last inequality is the Fourier coefficient bound for a finite
positive measure, uniform in $k$.

The directional inequality is immediate when $k\in W^\perp$.
Write $k_W=P_Wk\ne0$, $k_\perp=P_{W^\perp}k$, and let $\widetilde\beta_i$ be the ambient
characters of $b_i$. For $p_{Nk+\widetilde\beta_i}$,
\eqref{eq:reconstruction-W-cone} gives, for every $\ell$ with
$\widehat p_{Nk+\widetilde\beta_i}(\ell)\ne0$ and all sufficiently
large positive integers $N$,
\[
 \|\ell-P_{W^\perp}(Nk+\widetilde\beta_i)\|
 \le C_\eta\|P_W(Nk+\widetilde\beta_i)\|,
 \qquad C_\eta=\sqrt{\eta^{-2}-1}.
\]
No exceptional spectral index contributes for large enough integers:
its $W$-projection would have norm at least $N\|k_W\|-O(1)$.
The rank-$c$ lattice $\mathcal L$ has $O((1+N)^c)$ indices in this
ball.

\emph{Step 2: spectral decomposition on complex fibers.}
For $w\in Q_\C\setminus Z_{\rm exc}$, let $y_h=(\zeta_h,w)$ denote the
$r$ ambient points and let $P_h$ be the joint spectral idempotents of
the matrices in Lemma~\ref{lem:finite-moment-algebra}. Since
$\Delta\ne0$, the $r$-dimensional fiber representation is faithful;
the fiber is \'etale because it lies outside the branch locus. Thus
$\mathsf U_{\mathbf f_j}(w)$ are simultaneously diagonalizable, with
rank-one idempotents corresponding to the $r$ fiber points, and
\[
 \mathsf U_a=\sum_h\zeta_h^aP_h,\qquad
 p_a=(H\mathsf U_a)_{11}
     =\sum_h\varpi_h\zeta_h^a,
 \qquad \varpi_h=(HP_h)_{11}.
\]
Writing $(\mathsf E_+)_{ih}=\zeta_h^{\beta_i}$ and
$(\mathsf E_-)_{ih}=\zeta_h^{-\beta_i}$ gives
\[
 H=\mathsf E_+\operatorname{diag}(\varpi_h)\mathsf E_-^t.
\]
On the initial real open set, $\mathsf E_-=\overline{\mathsf E_+}$
and $b_1=1$. Comparing the expansion
$p_a=\sum_h(HP_h)_{11}\zeta_h^a$ with the disintegration formula
$p_a=\sum_h\varpi_h\zeta_h^a$ on $\Omega$, linear independence of
the characters at distinct points identifies $(HP_h)_{11}$ with
the conditional weight $\varpi_h$. The determinant of the
factorization, being nonzero when $\Delta\ne0$, forces both evaluation
matrices to be invertible and every $\varpi_h$ to be nonzero. For
ambient characters, $p_k=\sum_h\varpi_h\chi_k(y_h)$.

\emph{Step 3: growth along a ray.}
For any common value $\gamma=\chi_k(y_h)$, the
vector
\[
 \left(\sum_{h:\chi_k(y_h)=\gamma}\varpi_h\chi_{\widetilde\beta_i}(y_h)\right)_{i=1}^r
\]
is nonzero by invertibility of the evaluation matrix. Thus at least
one of the generating functions
\[
 \sum_{N\ge0}p_{Nk+\widetilde\beta_i}(w)\tau^N
 =\sum_h\frac{\varpi_h\chi_{\widetilde\beta_i}(y_h)}
                  {1-\tau\chi_k(y_h)}
\]
has a pole at $1/\gamma$. The Cauchy--Hadamard formula yields
\[
 \limsup_{N\to\infty}\max_i|p_{Nk+\widetilde\beta_i}(w)|^{1/N}
 =\max_h|\chi_k(y_h)|.
\]
\emph{Step 4: the cone estimate.}
Let $u_h=P_W\Log y_h$ and $v=P_{W^\perp}\Log y_h$; the latter is
constant on the fiber. The coefficient and support bounds give
\[
 |p_{Nk+\widetilde\beta_i}(w)|
 \le C(1+N)^c
 \exp\bigl(N[k_\perp\cdot v+C_\eta\|k_W\|\|v\|]+O(1)\bigr).
\]
Fix $k$. Taking $N$th roots and cancelling $k_\perp\cdot v$ gives
$k_W\cdot u_h\le C_\eta\|k_W\|\|v\|$ at every point of every fiber
over $Q_\C\setminus Z_{\rm exc}$, that is,
$(P_Wk)\cdot P_W\Log z\le C_\eta\|P_Wk\|\,\|P_{W^\perp}\Log z\|$
for $z$ in a dense subset of $Y$: the fibers over
$Q_\C\setminus Z_{\rm exc}$ are dense since every component
dominates the base. Continuity on $Y$ extends the inequality to all
of $Y$. Varying $k$, density of $P_W\Q^n$ in $W$ implies that the
directions of nonzero $P_Wk$, $k\in\Z^n$, are dense in the unit
sphere of $W$. We obtain
$\|P_W\Log z\|\le C_\eta\|P_{W^\perp}\Log z\|$ on $Y$; equivalently
\[
 \|P_{W^\perp}\Log z\|\ge\eta\|\Log z\|\qquad(z\in Y).
\]
Let the admissible rational planes $W$ tend to $V$.
The variety in \eqref{eq:branched-identify} is independent of $W$,
and $\eta\to\varepsilon$, proving the inequality in
\eqref{eq:branched-amoeba}.
\end{proof}

\subsection{The canonical algebraic cycle}

\begin{lemma}[Equality of analytic germs]
\label{lem:integer-germ-equality}
Under the hypotheses of Theorem~\ref{thm:branched-scalar-cone},
$(Y^{\mathrm{an}},p)=(M^\C,p)$ at every $p\in M$.
\end{lemma}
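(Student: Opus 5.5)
The inclusion $(M^\C,p)\subseteq(Y^{\mathrm{an}},p)$ at every $p\in M$ is already Corollary~\ref{cor:all-fq-algebraicity}. It therefore remains to prove the reverse inclusion: every local irreducible branch of $Y^{\mathrm{an}}$ at $p$ lies in $M^\C$. Both germs are pure of dimension $c$: $Y$ by Corollary~\ref{cor:all-fq-algebraicity}, and $M^\C$ by Theorem~\ref{thm:all-fq-toral-cycles}(c). So it suffices to show that each branch of $Y$ at $p$ contains a nonempty open subset contained in $M^\C$. Indeed, $M^\C$ is a closed analytic subset of a neighborhood of $p$. A branch meeting it in a $c$-dimensional open set is then contained in it by irreducibility and the identity theorem.

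\textbf{Step 1: branches of $Y$ at $p$ map onto the base.} Fix an admissible rational plane $W$ (Lemma~\ref{lem:admissible-projections}). By Lemma~\ref{lem:moment-fiber-identification}, $\pi_W|_Y$ is finite and $Y\cap\pi_W^{-1}(Q)=M$. Isolate $p$ in its finite fiber and choose a small base ball $\mathcal U$ about $w_0=\pi_W(p)$, as in Lemma~\ref{lem:local-degree-general}(iii). Every local branch $\mathcal Y'$ of $Y$ at $p$ is then finite and proper over $\mathcal U$, so it surjects onto $\mathcal U$ by Lemma~\ref{lem:local-degree-general}(ii). The set $\mathcal U\setminus Z_{\rm exc}$ meets the real torus $Q$ in a nonempty open set, because $Z_{\rm exc}$ is proper algebraic and $Q$ is maximal totally real. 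Over such real base points $w$, the preimages in $\mathcal Y'$ lie in $Y\cap\pi_W^{-1}(Q)=M$. They are regular points of $Y$ where $Y$ is étale over the base and $V$-transverse.

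\textbf{Step 2: those preimages lie on holomorphic graphs inside $M^\C$.} At such a preimage $y\in M\cap Y_{\rm reg}$, Corollary~\ref{cor:all-fq-algebraicity} gives $(M^\C,y)=(Y^{\mathrm{an}},y)$. This is a smooth graph over the base near $w$, and it is the unique local sheet of $\mathcal Y'$ through $y$, since the fiber there is étale. Hence a nonempty open subset of $\mathcal Y'$, namely that holomorphic graph, lies in $M^\C$. Two things must be checked. First, $M^\C$ here is the global closed analytic set near $p$, glued as in Proposition~\ref{prop:ac-canonical-germ}, rather than a germ at a different point. Second, the small neighborhood of $y$ lies within the common complex neighborhood of the real torus on which $M^\C$ is defined. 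Both follow by taking $y$ close enough to $p$: shrink $\mathcal U$ and use continuity of the finite map.

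\textbf{Step 3: conclusion.} Irreducibility of the branch $\mathcal Y'$ and the identity theorem give $\mathcal Y'\subseteq M^\C$ near $p$. Summing over branches gives $(Y^{\mathrm{an}},p)\subseteq(M^\C,p)$, hence equality. The main obstacle is Step~2's bookkeeping. The real points of a branch of $Y$ near $p$ must be shown to lie in the \emph{same} analytic set $M^\C$ defined in a fixed tube around $p$, rather than in merely some germ of $M^\C$ at each point. I would handle this with the finite cover of the compact torus already used in gluing the germs in Proposition~\ref{prop:ac-canonical-germ}. That cover gives a single closed analytic set in a tube neighborhood, within which the germ identity at each regular $y$ holds as an equality of sets near $y$.
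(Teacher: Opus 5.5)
Your proposal is correct and follows essentially the paper's route. Finiteness of $\pi_W|_Y$ together with $Y\cap\pi_W^{-1}(Q)=M$ (Lemma~\ref{lem:moment-fiber-identification}) puts real points of $M$ on every branch near $p$. You then conclude via the smooth-point equality of Corollary~\ref{cor:all-fq-algebraicity} at good points; the paper instead applies Lemma~\ref{lem:integer-real-sheets} to $(Y^{\mathrm{an}},p)$ and lets the defining functions of $M^\C$ vanish on the maximal totally real graphs. Note the trivial case $c=0$, where both germs are the point $p$, separately, since admissible projections are only set up for $c>0$.
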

\begin{proof}
If $c=0$, both germs are points and the assertion is immediate.
Assume $c>0$. The inclusion from $M^\C$ to $Y^{\mathrm{an}}$ was proved in
Corollary~\ref{cor:all-fq-algebraicity}. For the other one, fix an
admissible $W$. Since $p\in M$, we have $\pi_W(p)\in Q$.
Lemma~\ref{lem:moment-fiber-identification} says that $\pi_W$ is
finite and that its full inverse image of $Q$ is $M$. In local
logarithmic coordinates, Lemma~\ref{lem:integer-real-sheets} therefore
applies to $(Y^{\mathrm{an}},p)$ and $\pi_W$. It supplies smooth
unramified real graphs arbitrarily near $p$ on every local branch.

Choose holomorphic defining functions for $M^\C$ on a sufficiently
small representative. They vanish on the real graphs in $M$, so the
last assertion of Lemma~\ref{lem:integer-real-sheets} makes them
vanish on every local branch. Hence
$Y^{\mathrm{an}}\subseteq M^\C$ as germs.
\end{proof}

The local cycle coefficients must agree on different real pieces of
one algebraic component. We recover each conditional weight as a
rational function and divide it by the ratio of the flux and quotient
forms. The result is constant on a real open patch, hence on the
component by the identity principle.

\begin{lemma}[Rational weights and global component coefficients]
\label{lem:integer-global-coefficients}
Under the Fourier cone hypothesis there are unique positive integers
$\varkappa_i$ on the irreducible components $Y_i$ of $Y$ such that
\[
 \mathfrak Y=\sum_i\varkappa_iY_i
\]
has analytic germ $\mathfrak M^\C$ at every point of $M$.
Every $Y_i$ is preserved by reciprocal conjugation. In particular
$\mathfrak Y$ is invariant, and for every real phase $t$ and physical
root $x$,
\[
 \mu_t(\{x\})=\sum_i\varkappa_i\deg^{Y_i}_t(x).
\]
Moreover, for every real phase $t$,
$\{x\in\C^d:\exp(2\pi i(t+Lx))\in Y\}=\supp\mu_t$ and
$\nu^{\mathfrak Y}_t=\mu_t$.
\end{lemma}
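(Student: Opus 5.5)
The plan is to fix one admissible rational plane $W$ (Lemma~\ref{lem:admissible-projections}) and work over the real base away from $Z_{\rm exc}$. There the quotient $\pi_W|_Y$ is finite and \'etale, and by Lemmas~\ref{lem:polynomial-fiber-moments} and~\ref{lem:moment-amoeba} the conditional weights are
\[
 \varpi_h=(HP_h)_{11}.
\]
The key observation is that $\varpi$ is \emph{algebraic on $Y$}. The joint spectral idempotents $P_h$ are polynomials over $K$ in the commuting matrices $\mathsf U_{\mathbf f_j}$, by Lagrange interpolation in the generic element $X$ of Lemma~\ref{lem:finite-moment-algebra}. Under $Z=Y$ (Lemma~\ref{lem:moment-fiber-identification}), the assignment $y_h\mapsto(HP_h)_{11}$ therefore defines a rational function $\varpi\in\mathcal B\otimes_{\mathcal R}K$, that is, a rational function on each component $Y_i$.

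Likewise, the logarithmic form $\beta_L$ of \eqref{eq:transverse-flux-form}, written holomorphically with $d\theta_j=(2\pi i)^{-1}dz_j/z_j$, and the pullback $\pi_W^*dw$ of the logarithmic Haar form are algebraic $c$-forms on $Y_{\rm reg}$. Their ratio $q=\beta_L/\pi_W^*dw$ is a rational function on each $Y_i$, regular and nonzero off $C_V\cup\pi_W^{-1}(Z_{\rm exc})$. On the real locus both forms are $(2\pi i)^{-c}$ times real forms, so $q$ is real there and $|q|=\pm q$ on each connected real patch.

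The formula in the proof of Lemma~\ref{lem:polynomial-fiber-moments} reads $\varpi_h=\varkappa_h|q|$ on real graphs. Hence on every connected real patch $S\subset M\cap Y_i$ over $\Omega\setminus Z_{\rm exc}$, the rational function $g_i=\varpi/q$ on $Y_i$ equals $\pm\varkappa$, where $\varkappa$ is the locally constant coefficient of $\mathfrak M^\C$ (Lemma~\ref{lem:flux-density}). Thus $g_i^2$ is constant on a maximal totally real patch, which is Zariski dense in the irreducible $Y_i$. Hence $g_i^2$ is a constant $\varkappa_i^2$ on $Y_i$, and every real patch of $Y_i$ carries the same positive coefficient $\varkappa_i$.

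To pass from generic patches to all of $M$, I use Lemma~\ref{lem:integer-germ-equality}. At $p\in M$ every local branch of $(Y^{\rm an},p)=(M^\C,p)$ lies in some $Y_i$. By Lemma~\ref{lem:integer-real-sheets}, applied with the finite projection $\pi_W$, each such branch contains smooth unramified real graphs arbitrarily near $p$, and these may be taken over base points outside the proper algebraic set $Z_{\rm exc}$. The coefficient of $\mathfrak M^\C$ is constant on local irreducible components (Lemma~\ref{lem:integer-incidence-coefficients}, Theorem~\ref{thm:all-fq-toral-cycles}(c)), so on that branch it equals $\varkappa_i$; hence $\mathfrak Y$ has germ $\mathfrak M^\C$. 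Uniqueness holds because each $Y_i$, being a component of the Zariski closure of $M$, contains such real patches, where the coefficient is forced.

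For reciprocal conjugation: the involution $z\mapsto\overline z^{-1}$ fixes $\T^n$ pointwise, so it maps each $Y_i$ onto an irreducible component containing a Zariski dense real patch of $Y_i$, namely $Y_i$ itself. The mass identity then follows from Theorem~\ref{thm:all-fq-toral-cycles}(e), since $\deg^{\mathfrak M^\C}_t(x)=\mu_t(\{x\})$ and the germ identification decomposes this degree as $\sum_i\varkappa_i\deg^{Y_i}_t(x)$.

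For the section set, let $\exp(2\pi i(t+Lx))\in Y$ with $x\in\C^d$. Then
\[
 \Log z=-2\pi L\operatorname{Im}x\in V,
\]
so $P_{V^\perp}\Log z=0$, and \eqref{eq:branched-amoeba} forces $\Log z=0$. Hence $z\in Y(\T)=M$, so $x$ is real, and covariance gives $x\in\supp\mu_t$; the converse inclusion is immediate. Together with the mass identity this yields $\nu^{\mathfrak Y}_t=\mu_t$.

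The main obstacle is the middle step: showing rigorously that $\varpi$ and $q$ are genuinely rational on $Y_i$, not merely real-analytic, and that the sign ambiguity between $|q|$ and $q$ is harmless. I would first try the Lagrange-idempotent description over $K$ to establish rationality of $\varpi$, and then square to eliminate the sign.
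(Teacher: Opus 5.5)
Your proposal is correct and follows the paper's proof essentially step for step: a rational weight function divided by your rational form ratio $q$ equals $\pm\varkappa$ on good real graphs, Zariski density of one such graph makes it constant on each $Y_i$, and the branch, section, and invariance arguments are the ones the paper uses (squaring rather than fixing the sign on a connected graph is a harmless variant). At the step you flagged, the paper avoids Lagrange interpolation by writing down the rational function $\varpi(y)=\bigl(\mathbf v_-(y)^tH(w)^{-1}\mathbf v_+(y)\bigr)^{-1}$ with $\mathbf v_\pm(y)=(\zeta^{\pm\beta_i})_{i}$, whose value at $y_h$ is $\varpi_h$ because $\mathsf E_-^tH^{-1}\mathsf E_+=\operatorname{diag}(1/\varpi_h)$; your route also works, but the idempotents $P_h$ are polynomials in $X$ with coefficients in $K[\lambda_h]$, not in $K$, so you must descend through $\lambda_h=X(y_h)$ with $X\in\mathcal B$ to land in $\mathcal B\otimes_{\mathcal R}K$.
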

\begin{proof}
If $c=0$, $Y=M$ is a finite set of torus points. Give its point $p$
the coefficient $\mu_p(\{0\})$. The weighted germ assertion follows
from Proposition~\ref{prop:ac-canonical-germ}; each point is fixed
by reciprocal conjugation. We now assume $c>0$ while identifying the
rational weights and branch coefficients.

\emph{Normalization of the two forms.}
Fix an admissible rational $W$ and the character splitting of
Section~\ref{sec:rational-projections}. Choose an ordered integral basis
$\ell_1,\ldots,\ell_c$ of $\mathcal L=W^\perp\cap\Z^n$, so the
quotient coordinates are $w_s=z^{\ell_s}$. Put
\[
 \omega_Q=\bigwedge_{s=1}^c\frac{dw_s}{2\pi i w_s},
 \qquad
 \beta_L^{\mathrm{hol}}
   =J\bigwedge_{s=1}^c
        \left(\sum_{j=1}^n (\mathbf b_s)_j\frac{dz_j}{2\pi i z_j}\right),
 \qquad J=\sqrt{\det(L^tL)}.
\]
The vectors $\mathbf b_s$ are the oriented orthonormal basis of $V^\perp$
used in~\eqref{eq:transverse-flux-form}. On $Q$, $\omega_Q$ is the
oriented volume form whose absolute density is the normalized Haar
measure $dw$. On $\T^n$, $\beta_L^{\mathrm{hol}}$ restricts to
$\beta_L$, since $dz_j/(2\pi iz_j)=d\theta_j$.

On each irreducible $Y_i$, the rational top form $\pi_W^*\omega_Q$
is nonzero because $\pi_W|_Y$ is \'etale off the proper branch locus
$\operatorname{Disc}(\mathcal B/\mathcal R)$. Likewise,
$\beta_L^{\mathrm{hol}}$ is nonzero because $Y_i$ contains a real
$V$-transverse point. Their ratio defines a nonzero rational function
\[
 \mathcal G=\frac{\beta_L^{\mathrm{hol}}|_Y}
                  {\pi_W^*\omega_Q|_Y}
\]
on each component, equivalently a tuple in the total ring of rational
functions of the reduced variety $Y$. At a smooth real point where
both projections are transverse, evaluation on any real tangent frame
gives a quotient of two nonzero real numbers. Hence $\mathcal G$ is
real and nonzero there, and
\[
 |\mathcal G|
 =\frac{|\beta_L(v_1,\ldots,v_c)|}
        {|\pi_W^*dw(v_1,\ldots,v_c)|}.
\]
Changing an orientation changes only the sign of $\mathcal G$,
leaving its absolute value and the recovered coefficients unchanged.

\emph{A rational conditional-weight function.}
Call $y\in Y$ good if $\pi_W(y)\notin Z_{\rm exc}$, where
$Z_{\rm exc}$ is defined in Section~\ref{sec:reconstruction-amoeba}.
Keep the monomials and moment matrices of
Lemma~\ref{lem:finite-moment-algebra}. For $y=(\zeta,w)\in Y$ set
\[
 \mathbf v_+(y)=(\zeta^{\beta_i})_{i=1}^r,
 \qquad \mathbf v_-(y)=(\zeta^{-\beta_i})_{i=1}^r,
 \qquad
 \varpi(y)=
 \left(\mathbf v_-(y)^tH(w)^{-1}\mathbf v_+(y)\right)^{-1}.
\]
The expression inside the reciprocal is regular on
$Y\setminus\pi_W^{-1}\{\Delta=0\}$. On a good complex fiber, the
factorization from
Lemma~\ref{lem:moment-amoeba} gives
\[
 \mathsf E_-^tH^{-1}\mathsf E_+
       =\operatorname{diag}(1/\varpi_h).
\]
At $y_h$ the expression inside the reciprocal is therefore
$1/\varpi_h\ne0$ over $Q_\C\setminus Z_{\rm exc}$, so the rational
function has value $\varpi_h$. Every component dominates the base and
meets this open set; the reciprocal is consequently a nonzero rational
function on each component.

\emph{Identification on good real graphs.}
Good real graphs occur arbitrarily near every point of $M$ on each
local irreducible branch. Indeed, by
Lemma~\ref{lem:moment-fiber-identification}, the projection is finite
and its full inverse image of the real base torus is $M$. Each local
branch maps onto a base neighborhood. As in
Lemma~\ref{lem:integer-real-sheets}, choose real base points outside
$Z_{\rm exc}$ approaching the projection of the given point. Their preimages on the
branch give smooth real graphs transverse to both $V$ and $W$.
These graphs are maximal totally real, and may also be chosen away
from the other global components, whose intersections are proper
algebraic subsets. In particular, every $Y_i$ contains such a graph.

Over a sufficiently small real base open set outside $Z_{\rm exc}$,
all fiber points lie on these graphs.
The weight calculation in Lemma~\ref{lem:polynomial-fiber-moments}
applies on every sufficiently small real open set
$\Omega'\subset Q\setminus Z_{\rm exc}$, since the flux and quotient
Jacobians are nonzero there. The conditional density is therefore
$\varkappa(y)|\mathcal G(y)|$. At a real base point with $\Delta(w)\ne0$,
$H$ is positive definite by continuity, and $\mathsf U_a$ are unitary
for the $H$ inner product by Lemma~\ref{lem:moment-fiber-identification}.
Their joint spectral idempotents $P_h$ are self-adjoint for this inner
product.
The spectral weight at $y_h$ is
\[
 \|P_h\mathbf u_1\|_H^2=(HP_h)_{11}=\varpi(y_h),
\]
where the last equality follows from Step~2 of
Lemma~\ref{lem:moment-amoeba}. The spectral measure identification in
Lemma~\ref{lem:moment-fiber-identification} shows that $\varpi$ is
the conditional weight for almost every base point. On a sufficiently
small connected real graph, $\varkappa$ is constant and $\mathcal G$
is nonzero of constant sign. Real analyticity extends the almost
everywhere equality to every point:
\begin{equation}
 \varpi(y)=\varkappa(y)|\mathcal G(y)|.
 \label{eq:integer-weight-identification}
\end{equation}
The identity holds on every good real graph.

\emph{Constancy on a global algebraic component.}
Choose a connected good real graph on $Y_i$. There
\eqref{eq:integer-weight-identification} gives
\[
 \frac{\varpi}{\mathcal G}=\pm\varkappa_i
\]
for a positive integer $\varkappa_i$. A maximal totally real open
patch in the regular locus of an irreducible $c$-dimensional variety
is Zariski dense: a Laurent polynomial vanishing there vanishes on
the complex graph by the identity theorem, and then on the component.
Clearing the rational denominator and applying this to the numerator
proves the identity on all of $Y_i$.

On every other good real patch of $Y_i$, \eqref{eq:integer-weight-identification} and this identity imply
$\varkappa(y)=\varkappa_i$, by taking absolute values.

\emph{Equality of cycles along the torus.}
Fix $p\in M$ and a local irreducible branch of $Y$ there. By
Lemma~\ref{lem:integer-germ-equality} it is also a branch of $M^\C$.
It belongs to one global $Y_i$ and, by the branchwise density proved
above, contains good real patches away from the other components.
Its analytic-cycle coefficient is locally
constant and equals $\varkappa_i$ on such a patch. This identifies
every local branch coefficient, including those at singular points,
so
$(\mathfrak Y^{\mathrm{an}},p)=(\mathfrak M^\C,p)$.
Uniqueness follows by evaluating on a good patch of each $Y_i$.

\emph{Sections at real phases.}
We now include the case $c=0$ again.
For every real phase $t\in\R^n$,
\[
 \{x\in\C^d:\exp(2\pi i(t+Lx))\in Y\}=\supp\mu_t.
\]
Indeed, if $z=\exp(2\pi i(t+Lx))\in Y$, then
$\Log z=-2\pi L\operatorname{Im}x\in V$.
For $c>0$, Lemma~\ref{lem:moment-amoeba} forces $\Log z=0$;
for $c=0$, the same conclusion follows from $Y\subset\T^n$.
Since $L$ has rank $d$, $\operatorname{Im}x=0$.
Lemma~\ref{lem:moment-fiber-identification} gives $Y(\T)=M$
(the equality is immediate when $c=0$), so $t+Lx\in M$.
Covariance gives $\mu_{t+Lx}=\tau_{-x}\mu_t$, hence
$x\in\supp\mu_t$. Conversely, this covariance shows that
$x\in\supp\mu_t$ implies $t+Lx\in M=Y(\T)$.
By Theorem~\ref{thm:all-fq-toral-cycles}(e), the weighted local
intersection degree of $\mathfrak M^\C$ at a physical root is its
mass. The equality of the weighted germs of $\mathfrak Y$ and
$\mathfrak M^\C$ at $t+Lx$ therefore gives the claimed mass identity
at every real phase and root.
Combining the set and mass identities yields
$\nu^{\mathfrak Y}_t=\mu_t$ for every real phase $t$.

Reciprocal conjugation $\sigma(z)=1/\bar z$ fixes $M$ and sends
Zariski closed sets to Zariski closed sets. Hence $\sigma(Y)=Y$,
and $\sigma$ permutes the components $Y_i$. Each $Y_i$ contains a
Zariski-dense patch of torus points, fixed pointwise by reciprocal
conjugation. Its reciprocal-conjugate component contains that same
patch and therefore equals $Y_i$. For $c=0$ this follows from the
pointwise invariance already observed. The cycle is accordingly
invariant.
\end{proof}

\subsection{Smoothness and real equations}
We next use the description of the fibers along $M$ to prove
smoothness for uniformly discrete supports. Amoeba avoidance then
provides real equations with frequencies in $\Gamma$.

\begin{lemma}[Smoothness for uniformly discrete support]
\label{lem:integer-ud-smoothness}
For an integer-mass FQ measure with uniformly discrete support, $M^\C$
is smooth at every point $p\in M$, with
$T_pM^\C\oplus V_\C=\C^n$. Consequently,
$Y$ is smooth along $Y(\T)=M$.
\end{lemma}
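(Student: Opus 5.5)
The idea is to show that at every $p\in M$ the atom at the origin of $\mu_p$ has mass one, so the local cycle is a single unit-mass branch, and then apply Theorem~\ref{thm:all-fq-toral-cycles}(d). Uniform discreteness of $\supp\mu$ passes to the whole hull. If the atoms of $\mu$ are $\delta_0$-separated, vague limits of translates keep that separation, since two atoms of a limit at distance less than $\delta_0$ would be approximated by two distinct atoms of a translate. Masses can still rise in a limit, but only when distinct atoms collide, and collisions are excluded by the separation. I would make this precise with the cluster construction of Lemma~\ref{lem:ac-persistence}. Choose the reference phase $t_0=p$ and $\delta<\delta_0$. Each cluster $A_b$ of $\mu_p$ is then a single point of mass $s_b$. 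For nearby real phases on the physical flow, $\mu_t=\tau_{-x}\mu$ has its atoms assigned to $A_b$ within $\eta/4<\delta_0/2$ of that point. Separation forces them to be a single atom of mass $s_b$.

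The mass of that atom is not yet known to be one, because $\mu$ itself may have atoms of higher mass. So I would avoid the unit-mass route and work directly with the local cycle near the origin. By Lemma~\ref{lem:ac-splitting} and Proposition~\ref{prop:ac-holomorphic-clusters}, the part of the cycle near $0$ is a holomorphic family $p_0(\theta)\in\operatorname{Sym}^{s_0}(\C^d)$. At all real phases near $p$ it has the form $s_0[\mathfrak x(t)]$: the physical flow is dense and the hull phases are all limits, and the argument above shows each such $\mu_t$ has a single atom of mass $s_0$ in the small ball. The condition that a cycle equals $s_0$ times a point is algebraic on $V_{s_0}$. It is the image of the diagonal, a closed subvariety by finiteness of $\pi_{s_0}$. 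By the identity theorem it persists to complex $\theta$. So $p_0(\theta)=s_0[\mathfrak x(\theta)]$, and $\mathfrak x(\theta)$ is holomorphic, for instance as the first power-sum coordinate divided by $s_0$. Covariance gives $\mathfrak x(\theta+Lu)=\mathfrak x(\theta)-u$, so $D\mathfrak x\,L=-I_d$, exactly as in the unit-mass part of Proposition~\ref{prop:ac-canonical-germ}. Then $M^\C=\{\mathfrak x=0\}$ is a submersion level set near $p$, hence smooth with $T_pM^\C\oplus V_\C=\C^n$.

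Finally I would pass to $Y$. By Corollary~\ref{cor:universal-linear-height}, the cone hypothesis \eqref{eq:branched-cone} holds, so Theorem~\ref{thm:branched-scalar-cone} (through Lemma~\ref{lem:integer-germ-equality}) gives $(Y^{\mathrm{an}},p)=(M^\C,p)$ and $Y(\T)=M$. Smoothness of $M^\C$ at every point of $M$ therefore gives smoothness of $Y$ along $Y(\T)$. I expect the main obstacle to be the middle step. It must be checked that at every real phase near $p$, not only at phases on the physical flow, the local cluster is a single point, and this relies on separation surviving vague limits together with the cluster-count identity. The extension of the diagonal condition to complex phases is then a short identity-theorem argument.
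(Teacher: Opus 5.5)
Your proposal is correct and follows essentially the same route as the paper. Separation passes to the hull, the isolated local cycle at every nearby real phase is $s_0[\mathfrak x(t)]$, and the identity theorem extends this form to complex phases: you use the closed image of the diagonal in $V_{s_0}$, while the paper compares each moment with that of $s_0[\mathfrak x(\theta)]$. Covariance then gives $D\mathfrak x\,L=-I_d$, and Lemma~\ref{lem:integer-germ-equality}, via Corollary~\ref{cor:universal-linear-height}, transfers smoothness to $Y$. Your opening plan of forcing mass one at the origin would not work, but you rightly drop it, and the argument you actually give does not depend on it.
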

\begin{proof}
Let the original support have separation at least $a_0>0$. The supports
of all hull measures have the same separation: two distinct limit atoms
at distance less than $a_0$ would, by positive test functions in disjoint
small balls, force two distinct atoms at such a distance in nearby
translates.

Fix $p\in M$ and isolate zero in a ball of diameter less than $a_0$.
If its mass is $s_0$, Lemma~\ref{lem:ac-splitting} yields a
holomorphic local cycle of degree $s_0$. At every nearby real phase
it is nonempty with at most one support point, hence equals
$s_0[\mathfrak x(t)]$. Its first moment divided by $s_0$ defines a
holomorphic $\mathfrak x(\theta)$ on the complex phase neighborhood. Every moment of the cycle agrees with the corresponding
moment of $s_0[\mathfrak x(\theta)]$ on the real neighborhood; the
identity theorem makes these equalities hold for complex phases too.
Since moment coordinates separate cycles, the local cycle has that form
throughout the complex neighborhood.

Covariance gives
$\mathfrak x(\theta+Lu)=\mathfrak x(\theta)-u$, so
$D\mathfrak x(\theta)L=-I_d$. Hence its zero locus $M^\C$ is smooth
at $p$ with $T_pM^\C\oplus V_\C=\C^n$.
By Corollary~\ref{cor:universal-linear-height}, the Fourier cone
hypothesis of Lemma~\ref{lem:integer-germ-equality} holds. That
lemma identifies this germ with $Y$ at every torus point.
\end{proof}

\begin{proposition}[Real equations in the spectral group]\label{cor:canonical-real-equations}
Let $\mu$ be an integer-mass FQ measure, $\Lambda=\supp\mu$, and
$Y=\overline M^{\mathrm{Zar}}$ for its canonical set. Suppose that
$Y(\T)=M$ and $\cA(Y)\cap V\subset\{0\}$.
Then $E_L^{-1}(Y)=\Lambda$ as subsets of $\C^d$. Let $I(Y)$ denote
the vanishing ideal of $Y$ in the Laurent polynomial ring. This ideal
has finitely many Laurent generators real-valued on $\T^n$.
Their compositions with $E_L$ are real-valued on $\R^d$, have
frequencies in $\Gamma$, and have common complex zero set $\Lambda$.
\end{proposition}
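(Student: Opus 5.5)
The plan has three steps: establish $E_L^{-1}(Y)=\Lambda$ from the hypotheses, produce real-valued generators of $I(Y)$, and then compose with $E_L$.

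\emph{The set identity.} I would follow the argument already used in the "sections at real phases" step of Lemma~\ref{lem:integer-global-coefficients}, at phase $t=0$. Let $x\in\C^d$ with $z=E_L(x)\in Y$. Then $\Log z=-2\pi L\operatorname{Im}x\in V$, so the hypothesis $\cA(Y)\cap V\subset\{0\}$ gives $\Log z=0$. Since $L$ has rank $d$, this yields $\operatorname{Im}x=0$. Then $z\in Y(\T)=M$, so $Lx\bmod\Z^n\in M$, which means $0\in\supp\mu_{Lx}=\supp\tau_{-x}\mu$ by covariance \eqref{eq:AC-1.2}. Hence $x\in\Lambda$. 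Conversely, covariance shows that $x\in\Lambda$ gives $Lx\in M\subset Y$, so $E_L(x)\in Y$.

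\emph{Real generators.} The Laurent ring is Noetherian, so $I(Y)$ has finitely many generators $F_1,\dots,F_q$. Reciprocal conjugation $\sigma(z)=\bar z^{-1}$ preserves $Y$, because it fixes $M$ pointwise and hence preserves its Zariski closure. For a Laurent polynomial $F$, define $F^\sigma(z)=\overline{F(\sigma(z))}$. This is again a Laurent polynomial, obtained by conjugating the coefficients and inverting the exponents. It lies in $I(Y)$ whenever $F$ does, and it agrees with $\overline F$ on $\T^n$. I would then replace each $F_j$ by the pair
\[
\operatorname{Re}F_j:=(F_j+F_j^\sigma)/2,\qquad \operatorname{Im}F_j:=(F_j-F_j^\sigma)/(2i).
\]
Both lie in $I(Y)$ and are real-valued on $\T^n$. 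Together they generate an ideal containing each $F_j=\operatorname{Re}F_j+i\operatorname{Im}F_j$, and hence equal to $I(Y)$.

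\emph{Composition with $E_L$.} For a real-valued generator $G$, I would set $g=G\circ E_L$. This is a finite sum of terms $e^{2\pi i\langle L^tk,x\rangle}$ with $L^tk\in\Gamma$, because the rows of $L$ generate $\Gamma$. Moreover $g$ is real on $\R^d$, since $E_L(\R^d)\subset\T^n$. The generators cut out $Y$ set-theoretically in $(\C^*)^n$, and $E_L$ maps $\C^d$ into $(\C^*)^n$, so the common complex zero set of the $g$'s is $E_L^{-1}(Y)=\Lambda$ by the first step.

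The only step needing care is the invariance $\sigma(Y)=Y$ and the closure of $I(Y)$ under $F\mapsto F^\sigma$. The invariance follows from the final paragraph of Lemma~\ref{lem:integer-global-coefficients}, or directly: $\sigma$ is an antiholomorphic involution mapping Zariski-closed sets to Zariski-closed sets and fixing $M$. Everything else in the proof is routine.
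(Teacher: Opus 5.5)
Your proposal is correct and follows the paper's proof essentially step for step: amoeba avoidance and the rank of $L$ force $\operatorname{Im}x=0$, then $Y(\T)=M$ and covariance identify the section with $\Lambda$, and the involution $F\mapsto F^\sigma$ (the paper's $P^\dagger$, conjugating coefficients and inverting exponents) yields real and imaginary parts in $I(Y)$ that are real on $\T^n$ and still generate the ideal. The only cosmetic difference is order: the paper first proves the identity over $\R^d$ and then extends it to $\C^d$, whereas you handle both at once.
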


\begin{proof}
The assumption $Y(\T)=M$ and the definition of $M$ give the first
equality below; covariance in
Theorem~\ref{thm:all-fq-toral-cycles}(a) gives the second:
\[
 \{x\in\R^d:E_L(x)\in Y\}
 =\{x:0\in\supp\mu_{Lx}\}
 =\{x:0\in\supp\tau_{-x}\mu\}=\Lambda.
\]
If $E_L(x)\in Y$ for $x\in\C^d$, then
$\Log E_L(x)=-2\pi L\operatorname{Im}x\in V$.
Amoeba avoidance and the rank of $L$ force $\operatorname{Im}x=0$,
so the section identity holds over $\C^d$.

Reciprocal conjugation fixes $M$ and hence preserves $Y$. Choose
Laurent generators $P_1,\ldots,P_q$ of $I(Y)$ and put
\[
 P^\dagger(z)=\sum_k\overline{a_k}z^{-k}
 \quad\text{when }P(z)=\sum_k a_kz^k.
\]
Then $P_j^\dagger\in I(Y)$, and
\[
 P_j^{\mathrm{re}}=\tfrac12(P_j+P_j^\dagger),\qquad
 P_j^{\mathrm{im}}=\tfrac1{2i}(P_j-P_j^\dagger)
\]
are real-valued on $\T^n$. They vanish on $Y$ and generate the same
ideal, since they belong to $I(Y)$ and
$P_j=P_j^{\mathrm{re}}+iP_j^{\mathrm{im}}$ recovers each original
generator. Composition with $E_L$ gives frequencies in
$L^t\Z^n=\Gamma$ and the asserted zero set.
\end{proof}

\begin{proof}[Proof of Theorem~\ref{thm:branched-scalar-cone}]
For $c>0$, all sufficiently close rational planes are admissible by
Lemma~\ref{lem:admissible-projections}.
Lemmas~\ref{lem:moment-fiber-identification} and~\ref{lem:moment-amoeba}
prove the finite quotient and amoeba assertions, with $Y(\T)=M$ by
Lemma~\ref{lem:moment-fiber-identification}. The reduced germs agree
by Lemma~\ref{lem:integer-germ-equality}, and the canonical effective
cycle and its mass identities at every real phase follow from
Lemma~\ref{lem:integer-global-coefficients}.
If $c=0$, $M$ is finite and lies in the compact torus, so its Zariski
closure is $M$ itself. The quotient to a point is finite and its
amoeba is $\{0\}$. Give each point $p$ the coefficient
$\mu_p(\{0\})$; Proposition~\ref{prop:ac-canonical-germ}
gives the same germ and mass assertions directly.
Lemma~\ref{lem:integer-global-coefficients} also gives invariance
and the section identity at every real phase in this case.

\end{proof}

\section{Compression to Lee--Yang varieties}
\label{sec:compression}

A basis matrix of the spectral group need not have positive maximal
minors. We take an isogeny preimage for which the matrix has positive
maximal minors and the amoeba avoids every nonnegative plane,
equivalently the ordinary sign-variation condition by
\cite[Lemma~3.2]{AKKV}. The construction follows \cite[\S8, especially
Lemmas~8.1 and~8.3]{AKKV}; taking the full inverse image preserves
the sections and their local intersection degrees.

We say that $Y$ has \emph{robust avoidance at $V$} if there is an open
neighborhood $U$ of $V$ in $\Gr(d,n)$ such that
$\cA(Y)\cap W\subseteq\{0\}$ for every $W\in U$.

\begin{lemma}[Rational compression]
\label{lem:compression}
Let $L\in\R^{n\times d}$ have rank $d$, and let $U$ be an open
neighborhood of $V=L\R^d$ in $\Gr(d,n)$. There is a nonsingular
integer matrix $S$ such that $S^{-1}L$ has positive maximal minors and
$SW\in U$ for every $W\in\Gr_{\ge0}(d,n)$.
\end{lemma}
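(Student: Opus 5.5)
We construct $S$ as a near-scalar perturbation of a large multiple of the identity, following \cite[Lemmas~8.1 and~8.3]{AKKV}. The idea is that $S$ should act almost like a contraction of the whole nonnegative Grassmannian toward $V$. We fix a matrix $P\in\R^{n\times d}$ with positive maximal minors, for instance the Vandermonde matrix $P_{ij}=i^{j-1}$, and a large real parameter $N$. We then look for $S$ with $S^{-1}L\approx P$ after a change of basis in $\R^d$, while at the same time $S$ maps every nonnegative $d$-plane close to $V$.

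First we choose a real $n\times n$ matrix $A$ with two properties: $A$ is invertible, and the column span of $AP$ is $V$. For example, we extend $P$ by a complement basis $P'$ of its span and set $A[P,P']=[L,L']$, where $L'$ spans a complement of $V$. Next we put $A_N=A\,\mathrm{diag}(I\ \text{on}\ P\R^d,\ N^{-1}I\ \text{on}\ P'\R^{n-d})$ in the adapted basis $[P,P']$. For $W\in\Gr_{\ge0}(d,n)$, write a basis of $W$ as $PX+P'Y$. Then $A_NW$ is spanned by $LX+N^{-1}L'Y$.

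The key point is compactness: on $\Gr_{\ge0}(d,n)$ the $X$-part is uniformly nondegenerate. This holds because $P\R^d$ lies in the interior of the positive region and $P'$ can be chosen so that no nonnegative plane meets $P'\R^{n-d}$ nontrivially. This is the sign-variation argument of \cite[Lemma~3.2]{AKKV}: nonnegative planes consist of vectors with $\varn<d$, and we pick $P'$ spanning a subspace of vectors with $\vbar\ge d$. Concretely, we take $P'$ with all nonzero vectors of its span having at least $d$ sign changes, which is possible since the alternating Grassmannian dual of the positive one works. Hence $X$ is invertible and $Y X^{-1}$ is uniformly bounded, so $A_NW\to V$ uniformly as $N\to\infty$.

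Finally we pass to integers. We choose $N$ so large that $A_N\Gr_{\ge0}(d,n)$ lies in a compact subset of $U$, and then replace $A_N$ by a rational matrix $A'$ close to it. Both conditions are open: $A'\Gr_{\ge0}\subset U$ by uniform continuity on the compact set $\Gr_{\ge0}$, and $A'^{-1}L$ has maximal minors close to those of $A_N^{-1}L=P\cdot(\text{invertible }d\times d)$. Those minors are all positive after fixing the orientation of the $d\times d$ factor, flipping one column of $L$ if necessary. Clearing denominators by a positive integer $q$ gives $S=qA'$; scaling does not change $SW$, and $S^{-1}L$ differs from $A'^{-1}L$ by the positive factor $q^{-1}$, so its minors stay positive. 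The main obstacle is the uniform invertibility of $X$ over $\Gr_{\ge0}(d,n)$, that is, the choice of the complement $P'$ disjoint from all nonnegative planes. I would secure it via the duality $\varn(v)+\vbar(\mathrm{alt}(v))=n-1$, taking $P'\R^{n-d}$ to be the alternating-sign image of a positive $(n-d)$-plane.
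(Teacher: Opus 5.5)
Your proposal is correct and follows essentially the paper's argument. Both contract toward a positive $d$-plane along a complement that no nonnegative plane meets, carry that positive plane onto $V$ by a linear map sending $P$ to $L$, then perturb to a rational matrix and clear denominators. The one difference is how the complement is certified. The paper takes the orthogonal complement of an orthonormal positive basis $Q_+$ and uses Cauchy--Binet, $\det(Q_+^tF)>0$, while you use the alternating image of a positive $(n-d)$-plane together with sign-variation duality.

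Two slips should be fixed. First, the complement must satisfy $\varn\ge d$, not merely $\vbar\ge d$ as one of your sentences says; your alternating-image construction does give $\varn\ge d$. Second, drop the clause about flipping a column of $L$, which the lemma does not permit. It is never needed, since your choice $A[P,P']=[L,L']$ gives $A_N^{-1}L=P$ exactly.
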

\begin{proof}
Choose an $n\times d$ positive Vandermonde matrix and write its QR
factorization with positive diagonal in the upper triangular factor.
The orthonormal factor $Q_+$ has positive maximal minors because the
right change of basis has positive determinant. Put
$V_+=Q_+\R^d$ and choose an invertible real linear map $G$ with
$GL=Q_+$. If $W\in\Gr_{\ge0}(d,n)$ has basis matrix $F$ with
nonnegative maximal minors, Cauchy--Binet gives
\[
 \det(Q_+^tF)=\sum_{|I|=d}\det((Q_+)_I)\det(F_I)>0.
\]
Thus $W$ is the graph of a map $D_W:V_+\to V_+^\perp$,
and compactness of the nonnegative Grassmannian bounds these maps
uniformly. Let $C_t$ be the identity on $V_+$ and multiplication by
$t$ on $V_+^\perp$. Then
$C_tW=\operatorname{graph}(tD_W)\to V_+$ uniformly as $t\downarrow0$.
Consequently $G_t=C_t^{-1}G$ satisfies $G_tL=Q_+$ and, for small $t$,
\[
 G_t^{-1}\Gr_{\ge0}(d,n)\subset U.
\]
By compactness of the nonnegative Grassmannian and openness of $U$,
this containment persists under small perturbations of $G_t$, as does
positivity of the finitely many maximal minors of $G_tL$. Choose a nonsingular rational matrix $H$
in this neighborhood and a positive integer $h$ for which
$S=hH^{-1}$ is integral. Then $S^{-1}L=h^{-1}HL$ has positive maximal
minors, and $SW\in U$ for every nonnegative plane $W$.
\end{proof}

\begin{proposition}[Robust avoidance and Lee--Yang representation]
\label{prop:compression}
Let $Y\subset(\C^*)^n$ be reduced and pure of dimension $n-d$,
invariant under $z\mapsto1/\overline z$, and let $L$ have rank $d$.
Suppose that $Y$ has robust avoidance at $L\R^d$. There are a
nonsingular integer matrix $S$, a Lee--Yang variety
$X\subset(\PP^1)^n$, and a matrix $\widetilde L=S^{-1}L$ with positive
maximal minors. Here $X$ is the closure of $f_S^{-1}(Y)$ in $(\PP^1)^n$, where
$f_S(w)_i=\prod_jw_j^{S_{ij}}$. For every
$\theta'\in\R^n$,
\begin{align*}
 &\{x\in\C^d:\exp(2\pi i(S\theta'+Lx))\in Y\}\\
 &\qquad=\{x\in\C^d:\exp(2\pi i(\theta'+\widetilde Lx))\in X\},
\end{align*}
with equal local intersection degrees.
Moreover, $X$ satisfies $\varn(\log|z|)\ge d$ for every
$z\in X\setminus\T^n$. If $Y$ is smooth at every point of $Y(\T)$,
then $X$ is strict. At each physical root the finite torus isogeny
preserves the local intersection degree, smoothness and transversality.

If an effective invariant cycle $\mathfrak Y=\sum_i\varkappa_iY_i$
is given on $Y$, its component coefficients satisfy
$\varkappa_i\in\Z_{>0}$, and invariance means that components
interchanged by reciprocal conjugation have equal coefficients.
Give each irreducible component of $f_S^{-1}(Y_i)$ the coefficient
$\varkappa_i$, and take its projective closure. The
resulting cycle $\mathfrak X$ is effective Lee--Yang, and the section
identities hold at every phase with weighted local intersection degrees.
If $Y$ is smooth on $Y(\T)$, the reduced support of the resulting cycle
is strict.
\end{proposition}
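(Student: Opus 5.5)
We apply Lemma~\ref{lem:compression} with $U$ the open neighborhood of $V=L\R^d$ furnished by robust avoidance. This gives a nonsingular integer $S$ such that $\widetilde L=S^{-1}L$ has positive maximal minors and $SW\in U$ for every $W\in\Gr_{\ge0}(d,n)$. We then set $X_0=f_S^{-1}(Y)\subset(\C^*)^n$. The map $f_S$ is a finite étale isogeny of tori, so $X_0$ is reduced and pure of dimension $n-d$. We take $X$ to be its closure in $(\PP^1)^n$. Since $f_S$ commutes with $z\mapsto1/\bar z$, reciprocal conjugation preserves $X_0$ and hence $X$.

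\emph{Sections.} We have $\Log f_S(w)=S\Log w$ and $f_S(\exp(2\pi i u))=\exp(2\pi iSu)$. Hence
\[
 \exp(2\pi i(\theta'+\widetilde Lx))\in X_0
 \iff
 \exp(2\pi i(S\theta'+Lx))\in Y,
\]
because $S\widetilde L=L$. Points of the section cannot lie on the projective boundary, since their coordinates have finite modulus. So the section of $X$ equals the section of $X_0$, and the displayed identity of sets follows.

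\emph{Local degrees.} In logarithmic coordinates $f_S$ is the linear isomorphism $u\mapsto Su$ of $\C^n$. This holds locally near each point, because $f_S$ is étale: modulo $\Z^n$ its linear lift is a local biholomorphism. It carries the germ of $X_0$ at $\theta'+\widetilde Lx$ onto the germ of $Y$ at $S\theta'+Lx$, and carries the affine plane $\theta'+\widetilde L\C^d$ onto $S\theta'+L\C^d$. Projection along the respective planes is intertwined by the induced linear isomorphism of quotients. The local degree is independent of linear coordinates on the quotient (Definition~\ref{def:local-intersection-degree}), so local intersection degrees agree. For the same reason smoothness and transversality are preserved. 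The same argument applies componentwise, since a local branch of a component of $f_S^{-1}(Y_i)$ maps isomorphically onto a local branch of $Y_i$.

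\emph{Sign variation.} This is the main obstacle. On the torus part, take $w\in X_0$ with $v=\Log w\neq0$. Suppose $\varn(v)<d$. By \cite[Lemma~3.2]{AKKV}, $v$ lies in some $W\in\Gr_{\ge0}(d,n)$. Then $\Log f_S(w)=Sv$ is a nonzero vector of $SW\in U$, which contradicts robust avoidance. Hence $\varn(\log|w|)\ge d$, which also implies the weaker $\vbar$ condition.

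At the projective boundary we follow \cite[Lemma~8.3]{AKKV}. Let a boundary point of $X$ be the limit of $w_j\in X_0$. Then $v_j=\Log w_j$ has $\|v_j\|\to\infty$, and after a subsequence $v_j/\|v_j\|\to\hat v$. The amoeba condition is closed under positive scaling and limits in the projectivized sense, which I expect to be the delicate point. By openness of $\{\varn\ge d\}$ in the appropriate closure, we can transfer $\varn\ge d$ from the $v_j$ to the sign pattern of the boundary point, with $\pm\infty$ entries at the coordinates where $\hat v$ is nonzero and finite values elsewhere. If this direct route fails, I would instead argue by contradiction. A boundary point with $\varn<d$ has a sign pattern lying in a nonnegative plane $W$. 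A curve germ in $X$ through that point has $\Log$ asymptotic to a ray whose direction and finite part lie in a slightly perturbed nonnegative plane, still mapped by $S$ into $U$. This contradicts robust avoidance for points of $X_0$ far out along the curve.

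\emph{Strictness and cycles.} If $Y$ is smooth along $Y(\T)$, then $X(\T)=f_S^{-1}(Y(\T))$ is smooth because $f_S$ is étale, so $X$ is strict. For the cycle statement, each component of $f_S^{-1}(Y_i)$ receives the coefficient $\varkappa_i$. Components lying over different $Y_i$ are distinct, since their images differ. Reciprocal conjugation commutes with $f_S$, so it maps components over $Y_i$ to components over $\sigma(Y_i)$; these carry equal coefficients, and the cycle is invariant. The weighted section identity then follows by summing the componentwise degree equalities established above.
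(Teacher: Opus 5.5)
\emph{Agreement with the paper.} Away from the projective boundary your argument follows the paper's proof: the choice of $S$ from Lemma~\ref{lem:compression}, the \'etale pullback $X_0=f_S^{-1}(Y)$, the section identity via $S\widetilde L=L$, the linear-isomorphism argument for local degrees, smoothness and transversality, and the torus-part sign variation via \cite[Lemma~3.2]{AKKV}.

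\emph{The gap: the boundary step is only sketched.} Your ``direct route'' fails. $\varn$ is only lower semicontinuous, so openness of $\{\varn\ge d\}$ works against passing to a limit: sign changes can be lost. For example, with $n=3$ and $d=2$, the vectors $v_j=(j,-1/j,j)$ have $\varn(v_j)=2$, but they tend to the boundary log-modulus $(+\infty,0,+\infty)$, which has $\varn=0$. So the torus inequality $\varn(\Log w_j)\ge d$ cannot by itself be transferred to the boundary. Your description of the boundary sign pattern is also inaccurate: coordinates with $\hat v_i=0$ may still diverge. Your fallback has the right flavour but places the wrong object in a nonnegative plane. It should be neither the boundary point's sign pattern nor any ``finite part'', but the limit direction $\hat v$ itself. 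Curve germs are unnecessary.

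\emph{The missing observation.} You need $\varn(\hat v)\le\varn(\log|z|)$ for the boundary point $z=\lim w_j$, where $\hat v=\lim v_j/\|v_j\|$. If $\hat v_i\ne0$, then $\log|w_{j,i}|\to\pm\infty$ with the sign of $\hat v_i$. Hence the signs of the nonzero entries of $\hat v$ form a subsequence of the sign sequence of $\log|z|$, and deleting entries cannot increase sign variation.

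\emph{Completing the argument.} Suppose $\varn(\log|z|)<d$. Then the unit vector $\hat v$ has $\varn(\hat v)<d$, so by \cite[Lemma~3.2]{AKKV} it lies in some $W\in\Gr_{\ge0}(d,n)$. Complete $\hat v$ to a basis of $W$ and replace $\hat v$ by $v_j/\|v_j\|$. For large $j$ these planes lie in the open set $U'=\{W':SW'\in U\}$, which contains $\Gr_{\ge0}(d,n)$, and they contain $v_j\ne0$. Then $S v_j=\Log f_S(w_j)$ is a nonzero point of $\cA(Y)$ in a plane of $U$, contradicting robust avoidance.

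This is exactly where robust, rather than pointwise, avoidance is needed. Strictness of $X$ in the smooth case also depends on this step.
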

\begin{proof}
\emph{The pullback and its amoeba.}
Choose $S$ from Lemma~\ref{lem:compression} and put
\[
 f_S(w)_i=\prod_jw_j^{S_{ij}},\qquad
 Y'=f_S^{-1}(Y),\qquad \widetilde L=S^{-1}L.
\]
Smith normal form shows that $f_S$ is finite, surjective and
\'etale, so $Y'$ is reduced, pure of dimension $n-d$, and invariant
under reciprocal conjugation. The identities
\[
 f_S(E_{\widetilde L}(x))=E_L(x),\qquad
 \cA(Y')=S^{-1}\cA(Y)
\]
follow from the monomial formula. For the reverse amoeba inclusion,
lift a point of $Y$ under $f_S$. Real invertibility of $S$ determines
the logarithmic moduli of the lift, so $Y'$ avoids every plane in
$U'=\{W:SW\in U\}$, an open set containing $\Gr_{\ge0}(d,n)$.

\emph{Points in the algebraic torus.}
For a nonzero vector $v$ with $\varn(v)<d$,
\cite[Lemma~3.2]{AKKV} gives a nonnegative plane containing $v$.
It follows that every point of $Y'\setminus\T^n$ satisfies
$\varn(\Log z)\ge d$. If $Y$ is smooth along $Y(\T)$, then $Y'$ is
smooth along $Y'(\T)$: a torus point maps to a torus point, and the
\'etale map is a local analytic isomorphism.

\emph{The projective boundary.}
Let $X$ be the closure of $Y'$ in $(\PP^1)^n$. Suppose a boundary
point violates the ordinary sign-variation condition, and choose
$z_j\in Y'$ tending to it. Then $\|\Log z_j\|\to\infty$. After
passing to a subsequence, the unit vectors
$a_j=\Log z_j/\|\Log z_j\|$ tend to a unit vector $a$.
Every nonzero coordinate of $a$ has the sign of an infinite coordinate
of the limiting logarithmic modulus; deleting the other coordinates
cannot increase sign variation. Hence $\varn(a)<d$.
Choose a nonnegative plane containing $a$, complete $a$ to a basis,
and replace $a$ in that basis by $a_j$. For large $j$ the resulting
planes lie in $U'$ and contain $\Log z_j$, contrary to avoidance.

\emph{The closure and the sections.}
Closure preserves purity and involution invariance and adds no points
in $(\C^*)^n$, where $Y'$ is closed. Hence $X$ is Lee--Yang and has
the same local germs as $Y'$ along the torus; it is strict when $Y$
is smooth along $Y(\T)$. For every real phase $\theta'$,
the monomial formula gives
\[
 f_S\bigl(\exp(2\pi i(\theta'+\widetilde Lx))\bigr)
 =\exp(2\pi i(S\theta'+Lx)),
\]
proving equality of the complex sections. Near each physical root,
$f_S$ is a biholomorphism carrying the two local varieties and their
section parametrizations to one another. It therefore preserves local
intersection degree, smoothness and transversality.

\emph{Component coefficients.}
Each irreducible preimage component lies over a unique $Y_i$ and
receives its coefficient $\varkappa_i$. The map $f_S$ is finite,
surjective and \'etale, so these coefficients are unchanged. Reciprocal
conjugation commutes with $f_S$ and preserves them. The reduced
preimage and its closure satisfy the geometric properties above,
including ordinary sign variation at the boundary. Closure leaves the
germs at torus points unchanged. Near a physical root, the local
biholomorphism identifies each branch over $Y_i$ and preserves its
coefficient and local intersection degree. Summing over branches proves
the weighted identities at every real phase; preservation of smoothness
along the torus proves strictness of the support.
\end{proof}

\begin{remark}\label{rem:singular-isogeny}
The inverse image is needed for equality of sections. Take $n=d=1$,
$Y=\{1\}$, $S=(2)$, $L=(2)$ and $\widetilde L=(1)$.
The section of $Y$ by $L$ is $\frac12\Z$. Although
$f_S(\{1\})=Y$, the section of $\{1\}$ by $\widetilde L$ is $\Z$.
The full inverse image $f_S^{-1}(Y)=\{\pm1\}$ has section
$\frac12\Z$ under $\widetilde L$.
The compression follows \cite[\S8]{AKKV}; taking the full inverse
image preserves the physical section.
\end{remark}

\section{Classification}
\label{sec:classification}

We combine reconstruction and compression to obtain the representation
and real equations of Theorem~\ref{thm:main-equations}. The same
construction proves the representation in
Theorem~\ref{thm:main-delone}; the converse follows from
Proposition~\ref{prop:integer-forward} and \cite[Theorem~2.3]{AKKV}.
We begin in the torus determined by the spectral group.

\begin{theorem}[Classification by effective Lee--Yang cycles]
\label{thm:universal-classification}
An integer-mass measure $\mu$ on $\R^d$ is a Fourier quasicrystal measure
if and only if there are an effective Lee--Yang cycle $\mathfrak X$ of
codimension $d$ in $(\PP^1)^n$ for some $n\ge d$ and
$\widetilde L\in\R^{n\times d}$
with positive maximal minors such that
\begin{equation}
 \mu=\nu^{\mathfrak X,\widetilde L}_0.
 \label{eq:universal-ly-section}
\end{equation}
In the inverse direction the cycle can be chosen with all components
active and with reduced support satisfying
\[
 \varn(\log|z|)\ge d
 \qquad(z\in|\mathfrak X|\setminus\T^n),
\]
including at the projective boundary.
If $\mu$ is an FQ measure, choose a basis matrix $L$ for its spectral
group $\Gamma$, let $M$ be the canonical set, and put
$Y=\overline M^{\mathrm{Zar}}$. In these spectral coordinates,
$E_L^{-1}(Y)=\supp\mu$ in $\C^d$, and $\supp\mu$ is the common
complex zero set of finitely many real-valued trigonometric polynomials
with frequencies in $\Gamma$.
\end{theorem}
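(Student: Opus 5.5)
The forward direction and the inverse direction are handled separately; the inverse is an assembly of the earlier results.

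For the forward implication, that a section $\nu^{\mathfrak X,\widetilde L}_0$ of an effective Lee--Yang cycle is an FQ measure, I would invoke the forward construction of Section~\ref{sec:singular}. This is Proposition~\ref{prop:integer-forward}, the content of Theorem~\ref{thm:main-singular}. It gives realness, local finiteness and the Fourier quasicrystal property of the weighted sections, with spectrum in $\widetilde L^t\Z^n$ and spectral total variation $O((1+R)^n)$. The section is nonzero by the hypothesis $\mu=\nu_0$ with $\mu$ an integer-mass measure, and its masses are positive integers because they are weighted local degrees.

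For the inverse, let $\mu$ be an integer-mass FQ measure. I would proceed in the following order.

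\begin{enumerate}
\item Theorem~\ref{thm:all-fq-toral-cycles} gives the finitely generated spectral group $\Gamma$, the basis matrix $L$, the canonical set $M$ and the analytic cycle $\mathfrak M^\C$.
\item Corollary~\ref{cor:universal-linear-height} gives the Fourier cone \eqref{eq:branched-cone} for some $\varepsilon>0$ and $R$.
\item Theorem~\ref{thm:branched-scalar-cone} then gives $Y(\T)=M$ and the amoeba cone $\|P_{V^\perp}\Log z\|\ge\varepsilon\|\Log z\|$ on $Y$. It also produces the invariant effective cycle $\mathfrak Y=\sum\varkappa_iY_i$ whose real-phase sections are $\mu_t$, in particular $\nu^{\mathfrak Y,L}_0=\mu_0=\mu$.
\item Proposition~\ref{cor:canonical-real-equations} now gives $E_L^{-1}(Y)=\supp\mu$ in $\C^d$ and the real trigonometric equations with frequencies in $\Gamma$. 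Its hypotheses are exactly $Y(\T)=M$ and $\cA(Y)\cap V\subset\{0\}$, and the latter follows from the amoeba cone.
\end{enumerate}

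Next I would check robust avoidance at $V$ from the amoeba cone. If $W$ is a $d$-plane with $\|P_W-P_V\|<\varepsilon$ and $v\in W$ is nonzero, then
\[
\|P_{V^\perp}v\|=\|(P_W-P_V)v\|<\varepsilon\|v\|,
\]
so $v\notin\cA(Y)$. Hence $\cA(Y)\cap W\subseteq\{0\}$ on an open neighbourhood of $V$ in $\Gr(d,n)$.

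Proposition~\ref{prop:compression} then applies to $\mathfrak Y$. It yields the integer matrix $S$, $\widetilde L=S^{-1}L$ with positive maximal minors, and the effective Lee--Yang cycle $\mathfrak X$, whose reduced support satisfies ordinary sign variation off the torus, including at the boundary. The section identity at $\theta'=0$ gives $\nu^{\mathfrak X,\widetilde L}_0=\nu^{\mathfrak Y,L}_0=\mu$, with equal weighted local degrees. Finally I would delete inactive components, which by Remark~\ref{rem:effective-lee-yang-sections} leaves the real-phase sections unchanged; at least one active component survives because $\mu\neq0$.

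The step needing most care is the bookkeeping in the compression: weighted degrees must be preserved branch by branch, with each component of $f_S^{-1}(Y_i)$ receiving $\varkappa_i$. I would also confirm that deleting inactive components keeps reciprocal-conjugation invariance of the cycle. By Lemma~\ref{lem:integer-active-components} each active component is self-conjugate, so the remaining cycle is invariant. Everything else is direct citation.
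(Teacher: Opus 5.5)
Your proposal is correct and follows the paper's own proof essentially step by step. The steps coincide: toral type and the canonical set; the linear height bound feeding Theorem~\ref{thm:branched-scalar-cone}; robust avoidance from the amoeba cone via $\|P_{V^\perp}a\|=\|(P_W-P_V)a\|<\varepsilon\|a\|$; compression by Proposition~\ref{prop:compression}; deletion of inactive components; the real equations from Proposition~\ref{cor:canonical-real-equations}; and the converse from Proposition~\ref{prop:integer-forward}.
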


\begin{proof}
\emph{The canonical variety.}
Suppose first that $\mu$ is an integer-mass FQ measure and put
$\Lambda=\supp\mu$. By Theorem~\ref{thm:all-fq-toral-cycles}, the
spectral group is finitely generated and the hull has the torus
parametrization described there. Choose a basis matrix $L$ and put
$V=L\R^d$. Then
\[
 M=\{\theta\in\T^n:0\in\supp\mu_\theta\},\qquad
 Y=\overline M^{\mathrm{Zar}}\subset(\C^*)^n.
\]
The matrix $L$ has rank $d$. The flux measure $m$ has support $M$,
and its Fourier coefficients are the spectral coefficients in their
unique integer coordinates. By
Corollary~\ref{cor:all-fq-algebraicity}, $Y$ is reduced and pure of
dimension $n-d$.

\emph{Height and reconstruction.}
Put $E=\{k:\widehat m(k)\ne0\}$. By \eqref{eq:HF-4.2},
$\widehat m(k)\ne0$ if and only if $L^tk\in\Lambda'$; hence
Corollary~\ref{cor:universal-linear-height} gives
\eqref{eq:classification-height}:
\begin{equation}
 \|k\|\le C(1+\|L^tk\|)\qquad(k\in E).
 \label{eq:classification-height}
\end{equation}

Since $L^t$ vanishes on $V^\perp$ and is invertible on $V$,
\eqref{eq:classification-height} implies the Fourier cone hypothesis
of Theorem~\ref{thm:branched-scalar-cone}. That theorem yields
\begin{equation}
 Y(\T)=M,\qquad
 \|P_{V^\perp}\Log z\|\ge\varepsilon\|\Log z\|
 \quad(z\in Y)
 \label{eq:universal-envelope}
\end{equation}
for some $\varepsilon>0$.

Put $U=\{W\in\Gr(d,n):\|P_W-P_V\|<\varepsilon\}$. If
$0\ne a\in\cA(Y)\cap W$ for $W\in U$, then
\[
 \|P_{V^\perp}a\|=\|(P_W-P_V)a\|<\varepsilon\|a\|,
\]
contrary to \eqref{eq:branched-amoeba}. Thus $Y$ has robust avoidance
at $V$. Reciprocal conjugation fixes $M$ and therefore preserves $Y$.
Proposition~\ref{cor:canonical-real-equations} gives
$E_L^{-1}(Y)=\Lambda$ over $\C^d$.

\emph{Local degrees and compression.}
Lemma~\ref{lem:integer-germ-equality} identifies the reduced analytic
germ of $Y$ with $M^\C$ at every point of $M$.
By Lemma~\ref{lem:integer-global-coefficients}, unique positive
integer coefficients $\varkappa_i$ define a cycle
$\mathfrak Y=\sum_i\varkappa_iY_i$ with germ $\mathfrak M^\C$ there.
Its section at every real phase is the hull measure, and reciprocal
conjugation preserves each component. Reduced purity and the
neighborhood $U$ now verify the hypotheses of
Proposition~\ref{prop:compression}. The isogeny preimage produces
$\mathfrak X$ and $\widetilde L$ in \eqref{eq:universal-ly-section},
preserving weighted local intersection degrees and satisfying ordinary
sign variation through the projective boundary.
Delete any inactive components; Lemma~\ref{lem:integer-active-components}
shows that sections and masses are unchanged. If $\Lambda$ is
uniformly discrete, then $Y$ is smooth along $Y(\T)$ by
Lemma~\ref{lem:integer-ud-smoothness}, so
Proposition~\ref{prop:compression} yields strict reduced support.

\emph{Real equations.}
By Proposition~\ref{cor:canonical-real-equations}, the real-valued
trigonometric equations have frequencies in $\Gamma$. The Laurent
generators constructed there have the same complex zero set as $I(Y)$,
and their compositions have zero set $E_L^{-1}(Y)=\Lambda$.

For the converse, Proposition~\ref{prop:integer-forward} shows that
the section of an effective Lee--Yang cycle is an integer-mass FQ
measure whenever it is nonzero.
\end{proof}

\section{Singular Lee--Yang sections}
\label{sec:singular}

We extend \cite[Theorem~2.3]{AKKV} to singular Lee--Yang varieties
by counting roots with their local intersection degrees. Conservation
of number makes the section measures continuous in the phase, while
a holomorphic trace replaces the unramified homological pullback of
\cite[\S4.5]{AKKV} in the Fourier argument. We first prove the summation
formula under ordinary sign variation and then apply
Proposition~\ref{prop:compression} to the general case. Together with
the inverse construction, this proves the classification for effective
cycles; the final example explains why the degrees are necessary.

\begin{theorem}[Singular Lee--Yang sections]
\label{thm:singular-section}
Let $1\le d\le n$, let $c=n-d$, let $X\subset(\PP^1)^n$ be a
Lee--Yang variety of codimension $d$, and let $L\in\R^{n\times d}$
have positive maximal minors. For $\theta\in\T^n$, put
\[
 \Lambda_\theta=\{x\in\C^d:
       \exp(2\pi i(\theta+Lx))\in X\}.
\]
Then the following assertions hold.
\begin{enumerate}
\item The set $\Lambda_\theta$ is real and locally finite. Its roots
have positive local intersection degrees $\deg_\theta(x)$, equal to
one whenever the image of the root is a smooth point of $X$.
\item The measures
\[
 \nu_\theta=\sum_{x\in\Lambda_\theta}\deg_\theta(x)\delta_x
\]
form a vaguely continuous family.
The degrees are those of Definition~\ref{def:local-intersection-degree};
their relation to local ring length is given in
Remark~\ref{rem:degree-length}.

The family is uniformly translation bounded. In particular, for a
constant $C$ independent of $\theta,a,R$,
\[
 \nu_\theta(a+B_R)\le C(1+R)^d
 \qquad(a\in\R^d,\ R>0).
\]
\item Each $\nu_\theta$ is a Fourier quasicrystal measure. Its spectrum
is contained in $L^t\Z^n$, and its spectral total variation in $B_R$
is $O((1+R)^n)$.
\end{enumerate}

If $X$ satisfies the stronger condition
\begin{equation}
\label{eq:strong-var-singular}
 z\in X\setminus\T^n
 \quad\Longrightarrow\quad
 \varn(\log|z|)\ge d,
\end{equation}
there is a finite positive measure $\sigma_{X,L}$ on $X(\T)$ such that
\begin{equation}
\label{eq:singular-summation}
 \sum_{x\in\Lambda_\theta}\deg_\theta(x)\widehat f(x)
 =
 \sum_{\substack{k\in\Z^n\\\varn(k)<d}}
 \widehat \sigma_{X,L}(k)e^{2\pi i\langle k,\theta\rangle}f(L^tk)
 \qquad(f\in\mathcal S(\R^d)).
\end{equation}
Both sums converge absolutely.  The spectral total variation in
$B_R$ is $O((1+R)^n)$.
\end{theorem}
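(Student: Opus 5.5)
I would first establish the theorem under the stronger sign-variation hypothesis \eqref{eq:strong-var-singular}, and then reduce the general Lee--Yang case to it using Proposition~\ref{prop:compression}. For the reduction, set $Y=X\cap(\C^*)^n$. The Lee--Yang condition gives $\cA(Y)\cap W\subseteq\{0\}$ for every $W\in\Gr_+(d,n)$. Since $L\R^d\in\Gr_+(d,n)$ and $\Gr_+(d,n)$ is open, $Y$ has robust avoidance at $L\R^d$. The proposition then yields a finite isogeny $f_S$ and a variety $X'$ that satisfies ordinary sign variation, with the same sections and local intersection degrees at phases $\theta=S\theta'$. Because $S$ is nonsingular, every phase in $\T^n$ has the form $S\theta'$ modulo $\Z^n$. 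The spectrum of $\nu_\theta$ lies in $\widetilde L^t\Z^n=L^tS^{-t}\Z^n$. I have to check that it actually lies in $L^t\Z^n$: the coefficients must average out over the deck group $\ker f_S$. This follows because $\nu_{\theta}$ depends only on $S\theta'$, and $\theta'\mapsto\theta'+\ker$ translates give phase factors $e^{2\pi i\langle k,\theta'\rangle}$ that must coincide. Hence only $k\in S^t\Z^n$ survive, and these are exactly $L^t\Z^n$.

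Assertion (1) comes from the realness computation preceding Theorem~\ref{thm:akkv-construction}. A complex analytic set contained in $\R^d$ is discrete, so the section is locally finite. Positivity of the degrees and the value one at smooth points follow from Definition~\ref{def:local-intersection-degree}. At a smooth point, transversality holds automatically: the tangent space meets $L\C^d$ only in real directions forced by amoeba avoidance, so there is a single unramified sheet. Vague continuity in (2) is conservation of number, Lemma~\ref{lem:local-degree-general}(iv), applied to the projection along $L\C^d$ in logarithmic coordinates. The total degree in a small isolating ball is locally constant in complex $\theta$. Because nearby roots remain real, the weighted sum over a real ball varies continuously. For uniform translation boundedness I would use compactness of $\T^n$ together with covariance $\nu_{\theta+Lx}=\tau_{-x}\nu_\theta$. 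Mass in a unit ball at any phase is bounded by a uniform constant, obtained by covering the compact $(\PP^1)^n$ by finitely many isolating charts. Summing over unit balls gives the $(1+R)^d$ bound.

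For (3) and \eqref{eq:singular-summation}, I would follow AKKV \S4 but replace their homological pullback with a holomorphic trace. Fix $f$ with $\widehat f$ bandlimited, or more conveniently an entire test function $g=\widehat f$. The function $\theta\mapsto\sum_x\deg_\theta(x)g(x)$ is a periodic, continuous function on $\T^n$. Using the trace \eqref{eq:AC-3.3} on local finite cycles, it extends holomorphically to a tube $\{\|\operatorname{Im}\theta\|<\epsilon\}$. Its Fourier coefficients should equal $\widehat\sigma_{X,L}(k)\,f(L^tk)$, where $\sigma_{X,L}$ is the flux-type measure $|\beta_L|$ on $X(\T)$ weighted by local degrees; this is the analogue of Definition~\ref{def:ac-flux}. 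To verify this I would unfold the integral over $\T^n$ through the normal disintegration of Lemma~\ref{lem:ac-flux}, which works verbatim because the degrees count sheets. The restriction $\varn(k)<d$ comes from the AKKV argument: shift the contour $\theta\mapsto\theta+iLy$, or more generally $\theta+iv$ along a nonnegative plane. This is legitimate because roots move holomorphically and the phase family stays defined off the torus as long as $X\setminus\T^n$ avoids the shifted amoeba. Here the strong hypothesis ensures that no complex roots enter, even at the projective boundary. Letting $v\to\infty$ kills coefficients with $\langle k,v\rangle$ of the wrong sign for some $v$ in a nonnegative plane, which by \cite[Lemma~3.2]{AKKV} is the condition $\varn(k)\ge d$.

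Finally, the spectral bound $O((1+R)^n)$ follows from $|\widehat\sigma(k)|\le\sigma(\T^n)$ and a height bound $\|k\|\le C(1+\|L^tk\|)$ on $\{\varn(k)<d\}$ for positive-minor $L$, which is AKKV's cone estimate. With this, absolute convergence is Remark~\ref{rem:weighted-schwartz}. The main obstacle is the contour shift at singular points and at the boundary: I must show that the weighted trace remains holomorphic and bounded in the imaginary directions, uniformly. I would handle this by applying Lemma~\ref{lem:local-degree-general} in compact projective charts, using properness of the projection, to bound the number of roots uniformly.
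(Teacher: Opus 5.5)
\textbf{Verdict: genuine gap in the central step.} Your reduction to ordinary sign variation via Proposition~\ref{prop:compression}, with periodicity forcing $k\in S^t\Z^n$, follows the paper. So do parts (1)--(2) and the coefficient identity $\widehat F_h(k)=\widehat\sigma_{X,L}(k)\widehat h(-L^tk)$. The gap is the vanishing $\widehat\sigma_{X,L}(k)=0$ for $\varn(k)\ge d$, which is the substance of (3) and \eqref{eq:singular-summation}.

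\textbf{Why the contour shift fails.}
\begin{enumerate}
\item Shifting along $L\R^d$ only translates the roots by $-iy$, so it carries no information.
\item Suppose the phase function extended boundedly to $\T^n+i\R_{\ge0}v$. Then Cauchy's formula would kill every $k$ with $\langle k,v\rangle<0$. But every nonzero $k$ pairs negatively with some $v=-\operatorname{sgn}(k_j)e_j$, which has $\varn(v)=0$ and so lies in a nonnegative plane. Boundedness along all such directions would make the phase function constant. The Fourier support $\{\varn(k)<d\}$ is a union of $d$-planes, not a convex cone, so any shift must depend on $k$. Your claim that ``$\langle k,v\rangle$ of the wrong sign for some $v$ in a nonnegative plane'' characterizes $\varn(k)\ge d$ is false.
\item For a fixed entire test function the extension is not bounded. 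At phase $\theta+iv$ the roots satisfy $-2\pi(v+L\operatorname{Im}x)\in\cA(X)$. Their imaginary parts are typically of order $\|v\|$, so the test function grows exponentially. The strong hypothesis does not keep roots real at complex phases.
\item Done carefully, with $\widehat h$ localized near $-L^tk$, the test-function growth and the factor $e^{2\pi\langle k,v\rangle}$ combine into $|z^{-k}|$ on $X$. You would then need, for each $k$ with $\varn(k)\ge d$, a direction $w$ with $\langle k,u\rangle\to+\infty$ on $\cA(X)\cap(sw+L\R^d)$ as $s\to\infty$. That is a sign-variation lemma about affine translates of the positive plane, which you neither state nor prove. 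Local degree counting, your proposed fix, does not address it.
\end{enumerate}

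\textbf{The idea you are missing.} The paper deforms a $k$-adapted cycle through coordinate planes instead of shifting the physical phase.
\begin{itemize}
\item Lemma~\ref{lem:singular-torus-measure} writes $\sigma_{X,L}=\sum_I L_{I^c}\sigma_I$. Each $\sigma_I$ is the integral of the holomorphic form $s(I)(2\pi i)^{-c}\bigwedge_{i\in I}dz_i/z_i$ over the smooth torus locus $N$ with one consistent orientation. This rests on real-fiberedness of coordinate projections and the nonnegative conormal from AKKV.
\item For $\varn(k)\ge d$, choose $d+1$ indices $I_0$ on which $k$ alternates, an index $i_*\in I_0$ with $k_{i_*}<0$, and $J_0=([n]\setminus I_0)\cup\{i_*\}$.
\item On the solid torus $C_{J_0}$ (only $|z_{i_*}|\le1$ relaxed), at points with $|z_{i_*}|<1$ the strong sign condition forces $\log|z|$ to be supported on $I_0$ with the sign pattern of $k$. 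Hence $z^{-k}\bigwedge_{i\in I}dz_i/z_i$ is regular on $\pi_{J_0}^{-1}(C_{J_0})$.
\item Its trace (Lemma~\ref{lem:singular-trace}) is a holomorphic top form near $C_{J_0}$. Stokes' theorem gives $\int_{\T^{J_0}}=0$, hence $\int_N=0$.
\end{itemize}

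\textbf{Minor points.}
\begin{itemize}
\item Degree one at smooth points needs the logarithmic tangent space to meet $L\C^d$ trivially, not ``only in real directions.'' A nonzero complex vector there, multiplied by $1$ or $i$, yields a nonzero imaginary part in $L\R^d$, contradicting the Lee--Yang inequality by upper semicontinuity of $\vbar$.
\item You cannot quote Lemma~\ref{lem:ac-flux} verbatim, since independence of $\varphi$ there was derived from the Fourier identity. Prove it by $x$-translation invariance, or use the area formula as the paper does.
\end{itemize}
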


\begin{remark}\label{rem:singular-degree-role}
Local intersection degrees obey conservation of number and therefore
continue through collisions of roots. They can differ from local ring
lengths, as explained in Remark~\ref{rem:degree-length}.
Counting each root once can also destroy the Fourier quasicrystal
property; see Example~\ref{ex:two-circles} and
Proposition~\ref{prop:singular-example}.
\end{remark}

\subsection{Local degree and conservation of number}

We realize the local degrees as degrees of finite incidence maps.
Conservation of number will then imply continuity in the phase, which
is needed to recover singular sections from the averaging formula.
\begin{lemma}[Local degrees and continuity]
\label{lem:singular-degree}
Under the hypotheses of Theorem~\ref{thm:singular-section}, all the
complex sections $\Lambda_\theta$, for real $\theta$, are real and
locally finite.  They have positive local intersection degrees
$\deg_\theta(x)$ for which $\theta\mapsto\nu_\theta$ is vaguely continuous
and uniformly translation bounded.
\end{lemma}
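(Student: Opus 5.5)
Realness comes from the Lee--Yang inequality, exactly as in the calculation preceding Theorem~\ref{thm:akkv-construction}. If $\exp(2\pi i(\theta+Lx))\in X$ with $\theta$ real, then $\Log$ of this point is $-2\pi L\operatorname{Im}x\in L\R^d$. Since $L$ has positive maximal minors, every nonzero vector of $L\R^d$ has $\vbar<d$ by \cite[Lemma~3.2]{AKKV}. The inequality $\vbar(\log|z|)\ge d$ off the torus therefore forces $\operatorname{Im}x=0$. The projective boundary needs separate treatment: there some coordinates would be $0$ or $\infty$, which is impossible for $\exp$ of a finite vector, so boundary points do not arise in sections at all. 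For local finiteness, observe that $\Lambda_\theta$ is a complex analytic subset of $\C^d$ contained in $\R^d$, hence discrete (Remark~\ref{rem:effective-lee-yang-sections}).

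For the degrees, work in logarithmic coordinates. Consider the incidence
\[
 \mathcal I=\{(\theta,x)\in\C^n\times\C^d:\exp(2\pi i(\theta+Lx))\in X\}.
\]
The map $(\theta,x)\mapsto(\theta+Lx,x)$ identifies it with $\widetilde X\times\C^d$, where $\widetilde X$ is the logarithmic preimage of $X\cap(\C^*)^n$. Hence $\mathcal I$ is reduced and pure of dimension $n$. Over a complex neighborhood of a real phase $\theta_0$, I would apply Lemma~\ref{lem:local-degree-general} to the projection $\mathcal I\to\C^n$ near an isolated root $(\theta_0,x_0)$.

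The key point is finiteness and properness near real phases. I would prove a uniform version of realness: for $\theta$ with $\|\operatorname{Im}\theta\|$ small, roots $x$ of bounded real part have $\|\operatorname{Im}x\|$ small. This follows because $\Log z=-2\pi(\operatorname{Im}\theta+L\operatorname{Im}x)$ must avoid $\cA(X)\cap\{\text{positive planes}\}$. Positive planes near $V$ are also positive, but a cleaner route is to argue by compactness. If roots escaped in the imaginary direction as $\operatorname{Im}\theta\to0$, then rescaled logarithmic moduli would converge, using closedness of $X$ in $(\PP^1)^n$ and the boundary sign condition, to a nonzero vector of $V$ with $\vbar<d$ lying in the recession of the amoeba. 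This contradicts Lee--Yang. I expect this step to be the main obstacle: the boundary of $(\PP^1)^n$ must be handled with the $\pm\infty$ convention, and one must ensure that $\vbar$ behaves upper semicontinuously along such limits.

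With properness established, the local degree $\deg_\theta(x)$ in Definition~\ref{def:local-intersection-degree} is defined and is at least one, since each dominating branch contributes. At a smooth point of $X$ that is transverse to the section, it equals one. Transversality at smooth torus points holds because the tangent space of $X(\T)$ is totally real of dimension $c$, and a complex tangent meeting $L\C^d$ would produce nearby nonreal roots, contradicting the uniform realness above. Lemma~\ref{lem:local-degree-one} gives the converse.

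Continuity then follows from conservation of number, Lemma~\ref{lem:local-degree-general}(iii)--(iv). Isolate the roots in a ball $x_0+B$ whose boundary misses $\Lambda_{\theta_0}$. For $\theta$ near $\theta_0$, the total degree inside $x_0+B$ is constant and the roots stay in small neighborhoods of the roots at $\theta_0$. This gives vague continuity.

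For uniform translation boundedness, covariance $\nu_{\theta+La}=\tau_{-a}\nu_\theta$ reduces the bound to a fixed unit ball. The continuous family of masses of a fixed unit ball over the compact torus $\T^n$ is then bounded. A covering argument turns this into the bound $C(1+R)^d$ on balls of radius $R$.
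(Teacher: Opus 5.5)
Your outline follows the paper's proof in all essentials. Realness comes from the Lee--Yang inequality applied to $-2\pi L\operatorname{Im}x$, and discreteness holds because an analytic subset of $\C^d$ lying in $\R^d$ is zero-dimensional. The incidence is identified with $\mathcal Y\times\C^d$ and local degrees come from Lemma~\ref{lem:local-degree-general}. Vague continuity follows by conservation of number, and translation bounds follow from covariance and compactness. Two points need correcting: the first is easily repaired, and the second concerns a claim beyond the lemma's literal statement.

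\textbf{Properness.} The step you call the main obstacle is not needed. Local properness requires no control at nonreal phases. Since $\Lambda_{\theta_0}$ is discrete, take a ball $U_x$ about $x_0$ whose closure meets $\Lambda_{\theta_0}$ only at $x_0$. The incidence is closed and $\partial U_x$ is compact, so some complex neighborhood $U$ of $\theta_0$ satisfies $\mathcal I\cap(U\times\partial U_x)=\varnothing$. Then $\mathcal I\cap(U\times U_x)\to U$ is proper, with compact analytic, hence finite, fibers. This is what the paper does, and it is the construction built into Definition~\ref{def:local-intersection-degree}. Your uniform realness, though true, would not supply properness by itself, since roots could still leave $U_x$ through its real boundary.

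\textbf{Translation bounds.} Your continuity route works once you test against a continuous $\phi\ge\mathbf 1_{B_1}$. The ball mass $\nu_\theta(B_1)$ itself is not continuous in $\theta$: it jumps when atoms cross $\partial B_1$. The paper instead covers the compact incidence over $\T^n\times\overline{B_1}$ by finitely many finite-map neighborhoods; both routes are fine.

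\textbf{Degree one at smooth torus points.} Your justification does not close; the paper proves this claim here and uses it in Lemma~\ref{lem:singular-phase}. Suppose $0\ne w\in T_{y_0}\mathcal Y\cap L\C^d$, and take an arc $\gamma(s)=y_0+sw+O(s^2)$ in $\mathcal Y$. Evaluating at $s=\varepsilon s_0$ with $s_0\in\{1,i\}$ gives roots with imaginary part of order $\varepsilon$, at phases with imaginary part $O(\varepsilon^2)$. Both tend to zero, so the qualitative statement ``small imaginary phase forces small imaginary roots'' is not contradicted. If you instead meant nonreal roots at real phases, that is a real-fibered statement needing its own proof. There are two ways to repair this.
\begin{itemize}
\item Use a linear bound $\|L\operatorname{Im}x\|\le C\|\operatorname{Im}\theta\|$. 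It follows from the route you set aside: openness of $\Gr_+(d,n)$ makes the amoeba avoid a conic neighborhood of $V$ apart from the origin.
\item Use the paper's direct argument. Here $\Log|\exp(2\pi i\gamma(\varepsilon s_0))|=-2\pi(\varepsilon v+O(\varepsilon^2))$ with $0\ne v=\operatorname{Im}(s_0w)\in L\R^d$. Upper semicontinuity and scale invariance of $\vbar$ then give $\vbar\le d-1$ at a point of $X\setminus\T^n$, contradicting the Lee--Yang condition.
\end{itemize}
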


\begin{proof}
\emph{Reality and discreteness.}
Put $V=L\R^d$ and write
\[
 \mathcal Y=\{y\in\C^n:\exp(2\pi iy)\in X\}.
\]
For a real lift of $\theta$,
\[
 \Log\exp(2\pi i(\theta+Lx))=-2\pi L\operatorname{Im}x.
\]
A nonzero vector in $L\R^d$ has upper variation at most $d-1$ by
\cite[Lemma~3.2]{AKKV}. The Lee--Yang condition forces
$L\operatorname{Im}x=0$; since $L$ has full rank, $x\in\R^d$
(cf.\ \cite[Proposition~4.4]{AKKV}).

A positive-dimensional analytic germ contains a nonconstant
holomorphic curve, whereas a holomorphic map with real-valued
coordinates is constant. An analytic subset of a totally real linear
space must therefore be zero-dimensional. Each $\Lambda_\theta$ is
closed and analytic in $\C^d$, hence locally finite.

\emph{Finite incidence maps and local degrees.}
Fix $(\theta_0,x_0)$ with $\theta_0$ real and $x_0\in\Lambda_{\theta_0}$.
Consider the local incidence variety
\[
 \mathcal F=
 \{(\theta,x)\in\C^n\times\C^d:\theta+Lx\in \mathcal Y\},
 \qquad p(\theta,x)=\theta.
\]
The map $(y,x)\mapsto(y-Lx,x)$ identifies $\mathcal F$ with
$\mathcal Y\times\C^d$.  Hence $\mathcal F$ is reduced and of pure dimension
$c+d=n$.  The fiber of $p$ is isolated at $(\theta_0,x_0)$.
The local finite-mapping theorem provides a small isolating
neighborhood $U_x$ of $x_0$ and a connected neighborhood $U$ of
$\theta_0$ such that
\[
 p:\mathcal F\cap(U\times U_x)\longrightarrow U
\]
is proper and finite. Choose $U_x$ with boundary disjoint from the
central fiber and shrink $U$. Over the resulting neighborhood $U$,
the incidence misses $U\times\partial U_x$, so no root can enter
or leave $U_x$ as the phase varies in $U$. This also proves properness
directly.
Every nonempty component dominates $U$. By Remmert's theorem
\cite[II, Theorem~8.8]{Demailly}, its image is analytic; finite fibers
preserve its dimension $n$, so the image contains a neighborhood in
the smooth base.

We define $\deg_{\theta_0}(x_0)$ as the positive generic number of
points of this finite map. To identify this degree with the local
intersection degree, choose a real complement $W$ of $L\R^d$ and
complexify the decomposition.  If
$y=y_V+y_W$, the source coordinate change
$u=y_V-Lx$ changes $p$ to
\[
 (u,y)\longmapsto(u,y_W).
\]
The local degree is therefore that of the projection
$\mathcal Y\to W_\C$ with kernel $L\C^d$, as in
Definition~\ref{def:local-intersection-degree}. Two choices of complement
give equivalent target coordinates and hence the same degree.

Let $y_0\in\mathcal Y$ be real with $\exp(2\pi iy_0)$ a smooth point
of $X$, and suppose $0\ne w\in T_{y_0}\mathcal Y\cap L\C^d$.
Choose $s_0\in\{1,i\}$ with $v=\operatorname{Im}(s_0w)\ne0$; then
$v\in L\R^d$. A holomorphic arc $\gamma(s)=y_0+sw+O(s^2)$ in
$\mathcal Y$ gives points
$z_\varepsilon=\exp(2\pi i\gamma(\varepsilon s_0))\in X$ with
$\Log|z_\varepsilon|=-2\pi(\varepsilon v+O(\varepsilon^2))\ne0$.
Since $\vbar$ is upper semicontinuous and invariant under nonzero
scalings, and $\vbar(v)\le d-1$ by \cite[Lemma~3.2]{AKKV}, we get
$\vbar(\Log|z_\varepsilon|)\le d-1$ for small $\varepsilon>0$,
contradicting the Lee--Yang condition. Hence
$T_{y_0}\mathcal Y\cap L\C^d=0$, and the inverse function theorem gives
degree one.

\emph{Conservation of number.}
Apply Lemma~\ref{lem:local-degree-general} to the finite projection
$p:\mathcal F\cap(U\times U_x)\to U$. If its degree is $q_0$, then
\[
 \sum_{x\in\Lambda_\theta\cap U_x}\deg_\theta(x)=q_0
\]
for every nearby real phase.

\emph{Continuity and translation bounds.}
Let $\phi$ be a continuous function supported in $B_R$, and let
$x_1,\ldots,x_s$ be the points of
$\Lambda_{\theta_0}\cap\overline{B_{R+1}}$. Choose pairwise disjoint
isolating balls $U_j$ around these points. The compact set
\[
 K=\{x:|x|\le R+1\}\setminus\bigcup_j U_j
\]
misses the fiber of $\mathcal F$ over $\theta_0$. Since $\mathcal F$
is closed, it also misses every sufficiently nearby fiber. For real
phases all roots remain real, and the roots in each $U_j$ converge to
$x_j$. Conservation of number keeps their total local intersection
degree fixed. Uniform continuity of $\phi$ therefore gives
$\nu_\theta(\phi)\to\nu_{\theta_0}(\phi)$, proving vague continuity.

The incidence set with $\theta\in\R^n/\Z^n$ and
$x\in\overline{B_1}$ is compact.  Finitely many of the preceding
finite-map neighborhoods bound the total multiplicity of its fibers
uniformly.  Thus
\[
 \sup_\theta\nu_\theta(B_1)<\infty.
\]
The identity
\[
 \nu_\theta(a+E)=\nu_{\theta+La}(E)
\]
holds with multiplicities, since translation in $x$ preserves the
incidence germ. The unit-ball estimate now implies uniform translation
boundedness and the stated $O((1+R)^d)$ bound. Continuity of
$\theta\mapsto\nu_\theta(h)$ for every Schwartz function $h$ follows
by compactly supported approximation, with uniformly summable tails.
\end{proof}

By Lemma~\ref{lem:singular-degree}, if $X(\T)$ is empty, every
real-phase section is empty. All assertions then hold with the zero
measure and empty spectrum. We may therefore assume
$X(\T)\ne\varnothing$ in the arguments using amoeba avoidance.

\begin{remark}[The zero-dimensional case]
\label{rem:singular-c-zero}
If $c=0$, upper variation is at most $n-1$ on every nonzero vector,
so $X$ is a finite subset of $\T^n$.  The matrix $L$ is square
and invertible, and the section is a finite union of distinct
cosets of $L^{-1}\Z^n$.  Every local intersection degree is one.
Ordinary Poisson summation proves
Theorem~\ref{thm:singular-section} in this case.  The remaining
argument assumes $c\ge1$.
\end{remark}

\subsection{Torus measures and a holomorphic trace}

The phase average is computed from the Jacobian of
$(t,x)\mapsto t-Lx$ on the smooth real locus. We express this Jacobian
as a positive measure and prove Fourier vanishing by taking the trace
of a holomorphic form over a solid torus. Throughout this subsection
we assume \eqref{eq:strong-var-singular} and put $N=X_{\rm sm}(\T)$.
For $I=\{i_1<\cdots<i_c\}\subset[n]$, write
\[
 d\theta_I=d\theta_{i_1}\wedge\cdots\wedge d\theta_{i_c},
 \qquad
 s(I)=(-1)^{\,i_1+\cdots+i_c-c(c+1)/2}.
\]
We use angular coordinates $z_j=e^{2\pi i\theta_j}$ and
write $\pi_I:X\to(\PP^1)^I$ for coordinate projection.

\begin{lemma}[Positive torus measures]
\label{lem:singular-torus-measure}
Assume \eqref{eq:strong-var-singular}. The forms $s(I)d\theta_I$
induce the same orientation on $N$.
With this orientation, the measures
\[
 \sigma_I(E)=\int_{E\cap N}s(I)\,d\theta_I
\]
are finite and positive on $X(\T)$ and give zero mass to
$X_{\rm sing}(\T)$. They satisfy
$\sigma_I(N)=\deg(\pi_I)$, where the degree is summed over the
dominant components of the coordinate projection. The measure
\[
 \sigma_{X,L}=\sum_{\substack{I\subset[n]\\|I|=c}}L_{I^c}\sigma_I
\]
has, on an oriented tangent frame $F_N$, the positive density
$\det[F_N,L]$. Here $L_J$ is the maximal minor on rows $J$.
\end{lemma}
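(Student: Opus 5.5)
The plan has three steps: a sign computation at a smooth torus point, a Stokes/degree computation for the masses, and linearity in $L$ for the density.

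\emph{Orientation.} Fix $p\in N$ and an oriented frame $F_N$ (a real $n\times c$ matrix in angle coordinates) of $T_pN$. The first claim says that the numbers $s(I)\det(F_N)_I$, $|I|=c$, all have the same sign, with at least one nonzero. By the Laplace expansion along the first $c$ columns,
\[
 \det[F_N,L]=\sum_{|I|=c}s(I)\det(F_N)_I\,L_{I^c},
\]
up to a global sign. I would show that this determinant is nonzero for every $L$ in the closure $\Gr_{\ge0}(d,n)$, with $d$-planes meeting $T_pN$ only in zero. The proof would follow the tangent argument in Lemma~\ref{lem:singular-degree}. Suppose a nonzero $v\in T_pN\cap W$ for some $W\in\Gr_{\ge0}(d,n)$. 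Push $p$ off the torus along $iv$ inside the complexified smooth germ. This produces points $z\in X\setminus\T^n$ with $\Log|z|\approx-2\pi\varepsilon v$. Since $\varn(v)<d$ by \cite[Lemma~3.2]{AKKV}, we get $\varn(\Log|z|)\le\varn(v)<d$ for small $\varepsilon$; this uses lower semicontinuity of $\varn$ under small perturbation. That contradicts \eqref{eq:strong-var-singular}.

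Given this, $\det[F_N,W]$ never vanishes on the connected set $\Gr_{\ge0}(d,n)$ (take basis matrices with nonnegative minors). It therefore has constant sign there. Specializing $W$ to the coordinate planes $e_{I^c}$, which lie in the nonnegative Grassmannian, gives that $s(I)\det(F_N)_I$ has that constant sign or vanishes. I would orient $N$ so the sign is positive. Then each $s(I)d\theta_I$ is a nonnegative density on $N$, and the density of $\sigma_{X,L}$ on $F_N$ is $\det[F_N,L]>0$. Here I use that $L$ has positive minors and that some $\det(F_N)_I\neq0$. The orientation is consistent across $N$ because it is determined pointwise by positivity of any positive-minor $L$.

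\emph{Finiteness and mass.} The singular locus $X_{\rm sing}(\T)$ has real dimension less than $c$, so it carries no $c$-dimensional mass; this gives zero mass there by definition. For $\sigma_I(N)=\deg\pi_I$, I would use that $\pi_I$ restricted to $X$ maps $X(\T)$ into $\T^I$. Positivity of $s(I)d\theta_I$ means $\pi_I|_N$ is orientation-preserving wherever it is a local diffeomorphism, so $\int_N s(I)\pi_I^*d\theta_I$ counts preimages without cancellation. I also need that the full preimage of a generic point of $\T^I$ lies in $X(\T)$. This should follow from the Lee--Yang inequality: if $\Log|z|$ vanishes on the coordinates $I$, then $\varn(\Log|z|)\le$ the number of nonzero coordinates minus one, which is at most $d-1$, so $z\in\T^n$. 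This step also needs the boundary convention, handled via $\log 0=-\infty$. Hence, for generic $w\in\T^I$, the fiber $\pi_I^{-1}(w)$ is real, and its count equals the generic degree over the dominant components, with non-dominant components contributing measure zero. The area formula then gives $\sigma_I(N)=\deg\pi_I<\infty$.

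\emph{Density formula and main obstacle.} The formula for $\sigma_{X,L}$ follows by linearity from the Laplace expansion above. The main obstacle is the sign-preservation step: I expect $\varn$ under perturbation to be delicate when the direction $v$ has zero entries. Handling this may require pushing along a generic tangent direction, or invoking the robust $\Gr_{\ge0}$ form of \cite[Lemma~3.2]{AKKV}.
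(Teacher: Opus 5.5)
\textbf{Gap: the strict orientation claim is not proved.} The failing step is the one you flag yourself. Lower semicontinuity of $\varn$ gives $\varn(\Log|z_\varepsilon|)\ge\varn(v)$ near the direction of $v$, not $\le$. The higher-order terms of $\Log|z_\varepsilon|=-2\pi(\varepsilon v+O(\varepsilon^2))$ in the coordinates where $v$ vanishes can take either sign, and so can raise the sign variation.

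Already for $n=2$, $d=1$ this happens. If $v=(1,0)\in e_{\{1\}}\in\Gr_{\ge0}(1,2)$, then $\Log|z_\varepsilon|$ has second coordinate of higher order and arbitrary sign. So $\varn(\Log|z_\varepsilon|)=1=d$ is possible, and \eqref{eq:strong-var-singular} is not contradicted.

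Your claim $T_pN\cap W=0$ for all $W\in\Gr_{\ge0}(d,n)$ is true, but it is not proved this way. Evaluated at $W=e_{I^c}$, it is exactly $\det((F_N)_I)\ne0$ for every $I$. That is, each $d\theta_I$ is nowhere zero on $N$, which is what is needed for the forms $s(I)\,d\theta_I$ to induce an orientation at all. Your hedge ``has that constant sign or vanishes'' concedes precisely this point.

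\textbf{What the first-order argument does give.} Run it with positive planes and the upper semicontinuous, scale-invariant $\vbar$, exactly as in the proof of Lemma~\ref{lem:singular-degree}. This yields $\det[F_N,W]\ne0$ for $W\in\Gr_+(d,n)$. Connectedness of $\Gr_+(d,n)$ and passage to its closure then give only the weak signs $s(I)\det((F_N)_I)\ge0$. That suffices for:
\begin{itemize}
\item positivity of $\sigma_I$;
\item $\det[F_N,L]>0$;
\item $\sigma_I(N)=\deg\pi_I$, using your real-fiberedness argument and the area formula.
\end{itemize}
It does not give strict nonvanishing. A first-order push along a tangent direction cannot detect the failure, because a zero minor means $\pi_I$ is ramified at $p$, which is a higher-order phenomenon.

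\textbf{Missing ingredient.} You need that a finite real-fibered morphism is unramified at smooth real points \cite[Theorem~2.19]{KSreal}. You already derived $\pi_I^{-1}(\T^I)=X(\T)$ from \eqref{eq:strong-var-singular}. The paper completes this as follows:
\begin{enumerate}
\item The fibers over $\T^I$ are compact analytic subsets of a totally real torus, hence finite. So $\pi_I$ is finite over an invariant Zariski neighborhood of $\T^I$; this is also the finiteness your generic fiber count tacitly assumes.
\item After coordinatewise Cayley transforms, $\pi_I$ is then real-fibered in the sense of \cite{KSreal}.
\item The cited theorem makes $d\pi_I|_{T_pN}$ an isomorphism, i.e.\ $\det((F_N)_I)\ne0$.
\end{enumerate}
Combine this with the sign pattern. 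The paper takes the signs from the conormal-plane statement \cite[Lemma~4.17]{AKKV}; your corrected $\Gr_+$ argument gives the same weak signs. Together they yield $\operatorname{sgn}\det((F_N)_I)=s(I)$ strictly, for a frame with $\det((F_N)_{[c]})>0$.
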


\begin{proof}
For the coordinate projection $\pi_I:X\to(\PP^1)^I$, prescribing $c$
moduli to be one leaves at most $d$ nonzero logarithmic moduli, with
at most $d-1$ sign changes. The strong condition therefore implies
\begin{equation}
 \pi_I^{-1}(\T^I)=X(\T).
 \label{eq:singular-real-fiber}
\end{equation}
This is the ordinary-sign-variation argument of
\cite[Lemma~4.7]{AKKV}. The fibers over $\T^I$ are compact analytic
subsets of a totally real torus, and are therefore finite. The
projection is consequently finite over a Zariski open neighborhood
$U_I$ of the real torus. The locus of finite fibers is open and
invariant under the involution, so we choose $U_I$ invariant as well.
Each component of $X$ meeting $\pi_I^{-1}(U_I)$ has finite fibers
over $U_I$ and hence maps onto $U_I$.

After coordinatewise Cayley transformations,
$\pi_I:\pi_I^{-1}(U_I)\to U_I$ is a finite surjective real-fibered
morphism as defined in \cite[Definition~2.1]{KSreal}. At each point
of $N$ in $X(\T)$, source and target are smooth, so the differential
is an isomorphism by \cite[Theorem~2.19]{KSreal}; compare
\cite[Lemma~4.18]{AKKV}. It follows that every $d\theta_I$ is
nonvanishing on $N$.

By \cite[Lemma~4.17]{AKKV}, the conormal plane at each point of $N$
belongs to the nonnegative Grassmannian.  The complementary Pl\"ucker
coordinates give, for an oriented frame $F_N$ with
$\det((F_N)_{[c]})>0$,
\[
 \operatorname{sgn}\det((F_N)_I)=s(I).
\]
Thus all the forms $s(I)d\theta_I$ are positive, as in
\cite[Remark~4.20]{AKKV}.

We compute the mass of $\sigma_I$ by change of variables under
$\pi_I$. Off the proper algebraic set of branch and singular values,
each fiber has the finite generic degree of $\pi_I$, and all points
over the real torus are real. The torus $\T^I$ is maximal totally real,
so a nonzero polynomial restricts to a nonzero real-analytic function
on this connected torus. Its zero set has Lebesgue measure zero.
The exceptional set therefore has measure zero in $\T^I$, and its
preimage in $N$ has measure zero because the projection is a local
diffeomorphism there.  Thus
$\sigma_I(N)=\deg(\pi_I)$, summing the degrees of dominant components.
A nondominant component has no smooth real point, since the
projection is unramified at every such point.  This proves
finiteness.

Finally, $L_{I^c}>0$.  On an oriented tangent frame $F_N$ of $N$,
expansion by the first $c$ columns gives
\[
 \det[F_N,L]
 =
 \sum_{|I|=c}s(I)\det((F_N)_I)L_{I^c}>0.
\]
The density of $\sigma_{X,L}$ is thus the absolute Jacobian of
$(t,x)\mapsto t-Lx$, with respect to the tangent frame and the
physical coordinates.
\end{proof}

The trace of a top holomorphic form extends across critical values,
allowing the Fourier argument to include singular torus loci.

\begin{lemma}[Trace of a top holomorphic form]
\label{lem:singular-trace}
Let $\varpi:\widetilde Z\to U$ be a proper holomorphic map between
smooth complex manifolds of the same dimension $c$.  If $\omega$
is a holomorphic $c$-form on $\widetilde Z$, its proper pushforward
$\operatorname{Tr}_{\varpi}\omega=\varpi_*\omega$ is a holomorphic
$c$-form on $U$. Outside the critical values of $\varpi$, it is the
sum of the pullbacks by the local inverse branches.
\end{lemma}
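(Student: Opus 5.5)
The plan is to define the pushforward as a current, show that it is holomorphic away from a thin set, and then remove that set using local boundedness. I will define $\varpi_*\omega$ as the current $\psi\mapsto\int_{\widetilde Z}\omega\wedge\varpi^*\psi$ on compactly supported $(0,c)$-forms $\psi$ on $U$. Since $\varpi$ is proper, $\varpi^*\psi$ has compact support, so this is a well-defined $(c,0)$-current. It commutes with $\bar\partial$, because $\omega$ is a $\bar\partial$-closed holomorphic top form and $\varpi^*$ commutes with $\bar\partial$. Hence $\bar\partial(\varpi_*\omega)=0$ as a current. By elliptic regularity (Dolbeault--Grothendieck in bidegree $(c,0)$: a $\bar\partial$-closed $(c,0)$-current is a holomorphic form), $\varpi_*\omega$ is a holomorphic $c$-form. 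This already gives the first assertion, provided the degenerate cases are handled.

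First, components of $\widetilde Z$ on which $\varpi$ is not generically finite have rank less than $c$ everywhere, so $\varpi^*\psi\wedge\omega$ vanishes there and they contribute zero. On the remaining components I will check that the current is consistent with the explicit description. Let $D\subset U$ be the set of critical values. By Sard and the proper mapping theorem \cite[II, Theorem~8.8]{Demailly}, $D$ is a proper analytic subset, since $\varpi$ is proper and the critical locus is analytic. Over $U\setminus D$, properness and the inverse function theorem make $\varpi$ a finite unramified covering. The change of variables on each sheet then shows that the current equals the sum $\sum_j (\sigma_j)^*\omega$ over the local inverse branches $\sigma_j$, which is evidently holomorphic. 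This proves the second assertion.

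As an alternative route that avoids regularity theory, I would extend the branch sum directly across $D$. In local coordinates write $\omega=g\,dz$ and the branch sum as $G(w)\,dw$ with $G=\sum_j g(\sigma_j)\det D\sigma_j$. The function $G$ is holomorphic off $D$. To apply the Riemann extension theorem I need $G$ to be locally bounded near $D$, which is not obvious, since $\det D\sigma_j$ blows up. So I would instead use the current identity: near a point of $D$, $G\,dw$ coincides with the $L^1_{\rm loc}$ form obtained from the current. Its local integrability comes from $\int_{K}|G|\,dV\le\int_{\varpi^{-1}K}|g|\,|\det D\varpi|\,|\det D\varpi|^{-1}\ldots$, which by change of variables reduces to $\int_{\varpi^{-1}K}|g|\,|J|$ with $J$ bounded on the compact set $\varpi^{-1}K$.

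The main obstacle is justifying that the current equals the $L^1_{\rm loc}$ extension of $G\,dw$, that is, that $\varpi^{-1}(D)$ carries no mass. This holds because $\varpi^{-1}(D)$ is a proper analytic subset of each generically finite component and therefore has measure zero. Once this is in place, $\bar\partial$-closedness together with Weyl's lemma finishes the argument.
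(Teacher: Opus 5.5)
Your proposal is correct and is essentially the paper's proof. You define $\varpi_*\omega$ as a $(c,0)$-current using properness, observe that $\bar\partial(\varpi_*\omega)=\varpi_*(\bar\partial\omega)=0$ so its coefficient is holomorphic, note that components with lower-dimensional image contribute zero, and recover the branch sum by change of variables over the covering locus $U\setminus D$; the fact used in the regularity step is that a distribution satisfying the Cauchy--Riemann equations is holomorphic (Weyl's lemma, which the paper cites from Demailly), not the Dolbeault--Grothendieck lemma, and your alternative $L^1_{\mathrm{loc}}$ route is unnecessary because it ends with that same fact.
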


\begin{proof}
For a compactly supported smooth test form $\eta$ of type $(0,c)$,
define
\[
 \langle \varpi_*\omega,\eta\rangle
 =
 \int_{\widetilde Z}\omega\wedge \varpi^*\eta.
\]
The integral has compact support by properness and defines a current
of type $(c,0)$ \cite[I, Theorem~2.14(b)]{Demailly}. Moreover,
\[
 \overline\partial(\varpi_*\omega)
 =
 \varpi_*(\overline\partial\omega)=0.
\]
Its scalar distributional coefficient in local coordinates satisfies
the Cauchy--Riemann equations and is therefore holomorphic
\cite[I, Lemma~3.29(a)]{Demailly}.
The inverse-branch formula follows by change of variables where
$\varpi$ is a covering map.  Components with lower-dimensional image
contribute zero, while properness defines the current pushforward
also across positive-dimensional exceptional fibers.
\end{proof}

\begin{lemma}[Vanishing Fourier coefficients]
\label{lem:singular-vanishing}
Under \eqref{eq:strong-var-singular},
\[
 \widehat \sigma_{X,L}(k)=0
 \qquad\text{for every }k\in\Z^n\text{ with }\varn(k)\ge d.
\]
\end{lemma}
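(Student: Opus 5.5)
Since $\sigma_{X,L}=\sum_I L_{I^c}\sigma_I$ with constant coefficients, it suffices to prove $\widehat\sigma_I(k)=0$ for every $c$-subset $I$ and every $k$ with $\varn(k)\ge d$. This follows the strategy of \cite[\S4.5]{AKKV}, with the holomorphic trace of Lemma~\ref{lem:singular-trace} in place of their unramified pullback. Write
\[
 \widehat\sigma_I(k)=\int_N z^{-k}\,s(I)\,d\theta_I ,
\]
and regard $z^{-k}\bigwedge_{i\in I}dz_i/(2\pi i z_i)$ as a meromorphic $c$-form $\omega_{I,k}$ on $X$. On $N$ it restricts to $z^{-k}d\theta_I$. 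Because $\varn(k)\ge d$ is invariant under $k\mapsto -k$, the sign convention does not matter.

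\emph{Step 1: a region without poles.} I use the sign-variation duality $\varn(k)+\vbar(v)\le n-1$ whenever $k\perp v$, both nonzero, in the form of \cite[Lemma~3.2]{AKKV}. Since $\varn(k)\ge d$, the line $\R k$ lies in no nonnegative $d$-plane, so $k$ has ``upper variation'' at least $d$ against every vector whose variation is at least... More concretely, I aim to show that for $z\in X\setminus\T^n$ the strong condition \eqref{eq:strong-var-singular} forces $\langle k,\Log z\rangle$ to have a sign determined by the coordinate projection. The hope is that, on the solid torus $D_I=\{z\in X:\ |z_i|\le1,\ i\in I\}$, one has $|z^{-k}|$ bounded, including at the projective boundary. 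If this holds, $\omega_{I,k}$ is holomorphic on $\pi_I^{-1}(\Delta^I)$, where $\Delta^I$ is the open polydisc after the coordinate changes $z_i\mapsto 1/z_i$ needed for the pole orders. I expect the correct choice to be: pass to the polydisc on the side where $z^{-k}$ decays, and choose the orientation/inversion of each coordinate in $I$ according to the sign pattern of $k$. This is exactly where $\varn(k)\ge d$ enters, and it mirrors \cite[Lemma~4.21]{AKKV}.

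\emph{Step 2: the trace.} By \eqref{eq:singular-real-fiber}, $\pi_I$ is finite over a neighbourhood of the closed polydisc boundary $\T^I$. After resolving $X$ over $\pi_I^{-1}$ of a neighbourhood $U$ of the closed polydisc (keeping properness, and dropping nondominant components, which carry no smooth real points), Lemma~\ref{lem:singular-trace} shows that $\operatorname{Tr}\omega_{I,k}$ is a holomorphic $c$-form on $U$. Over regular values in $\T^I$ it equals the sum over real sheets, all lying in $N$. By Lemma~\ref{lem:singular-torus-measure} these sheets are consistently oriented with $s(I)d\theta_I$ and cover $\T^I$ with total degree $\deg\pi_I$, up to a measure-zero set. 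Hence
\[
 \widehat\sigma_I(k)=\pm\int_{\T^I}\operatorname{Tr}\omega_{I,k}.
\]
Writing $\operatorname{Tr}\omega_{I,k}=g(w)\bigwedge_{i\in I}dw_i/w_i$, the right side is the constant term of the Laurent expansion of $g$.

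\emph{Step 3: vanishing.} If Step~1 yields holomorphy of $\omega_{I,k}$ over the whole polydisc (suitably inverted), then $g\prod_{i\in I} w_i^{-1}$ is holomorphic there. The constant term of $g$ is then $g(0)\cdot 0$, which vanishes, provided $g$ itself vanishes on coordinate hyperplanes. More precisely, what I need is that $\operatorname{Tr}\omega_{I,k}$ is a holomorphic multiple of $dw_I$, not merely of $dw_I/w_I$. I expect to get this from the strict decay of $z^{-k}$ toward the boundary $\{w_i=0\}$, using $\varn(k)\ge d$ and not just $\ge d-1$. Cauchy's theorem then gives zero.

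\emph{Main obstacle.} The main obstacle is Step~1 together with the final decay: showing that $z^{-k}$ is bounded, and actually vanishes, on all of $\pi_I^{-1}(\overline{\Delta}^I)$, including points at the projective boundary of $X$. This requires combining \eqref{eq:strong-var-singular} with the sign-variation lemma coordinatewise. My first attempt is to argue by contradiction along sequences, as in the boundary argument of Proposition~\ref{prop:compression}: suppose $z_j\in X$ with $|z_{j,i}|\le 1$ for $i\in I$ and $-\langle k,\Log z_j\rangle\ge0$. Normalize $\Log z_j$ and pass to a limit direction $a$. Then $\langle k,a\rangle\le0$ should contradict the bound on $\varn$ of $a$ obtained from the limiting signs. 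The singular points only enter through Step~2, and the trace lemma is designed to absorb them.
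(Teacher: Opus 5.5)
\textbf{There is a genuine gap: Steps 1 and 3 fail.} Your reduction to the individual $\sigma_I$ is sound, since each $\widehat\sigma_I(k)$ does vanish. The trouble is that you tie both the projection and a full polydisc to the form index $I$.

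\emph{A counterexample to the polydisc method.} Write $\omega_{I,k}=z^{-k}\bigwedge_{i\in I}dz_i/z_i$, dropping your factors $2\pi i$. Let $X$ be the projective closure of the subtorus $T_A$ of Proposition~\ref{prop:integer-equation-obstruction}, whose logarithmic tangent plane is spanned by $(1,-1,1,-1)$ and $(1,-2,4,-8)$. It satisfies \eqref{eq:strong-var-singular} with $n=4$ and $d=c=2$. Take $k=(2,-1,1,0)$, so $\varn(k)=2$, and $I=\{2,4\}$. Since $z^{(2,3,1,0)}=1$ on $T_A$ (a row of $\mathsf C_{12}$), we get $z^{-k}=z_2^4$ there. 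Moreover $\pi_I|_{T_A}$ has degree $|\det\begin{pmatrix}-1&-2\\-1&-8\end{pmatrix}|=6$. Hence
\[
 \omega_{I,k}=\pi_I^*\Bigl(w_2^{3}\,dw_2\wedge\frac{dw_4}{w_4}\Bigr)\ \text{on }T_A,
 \qquad
 \operatorname{Tr}\omega_{I,k}=6\,w_2^{3}\,dw_2\wedge\frac{dw_4}{w_4}.
\]
This trace has poles along $w_4=0$ and along $w_4=\infty$.

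\emph{Consequences for your steps.}
\begin{itemize}
\item No closed polydisc in $(w_2,w_4)$, with any choice of inversions, carries a holomorphic trace. So the "holomorphic multiple of $dw_I$" required in Step~3 does not exist.
\item By Lemma~\ref{lem:singular-trace}, the polar locus of $\omega_{I,k}$ must therefore meet $X$ over every such polydisc, so Step~1 fails too.
\item Here $|z^{-k}|=|z_2|^4$ is bounded on $\{|z_2|\le1\}$ and vanishes over $z_2=0$. So the obstacle is not the boundedness or decay of $z^{-k}$ that you single out. It is the factor $dz_4/z_4$ coming from an index of $I$ where $k$ vanishes.
\item More generally, the $d$ coordinates in $I^c$ are unconstrained on your region, so \eqref{eq:strong-var-singular} cannot fix the signs of $\Log z$ there.
\item Pointwise boundedness can also fail. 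Take $k=(1,-1,1,0)$, $I=\{2,3\}$, and the subtorus with tangent plane spanned by $(27,-9,3,-1)$ and $(1,-1,1,-1)$; the argument of Proposition~\ref{prop:integer-equation-obstruction} applies since $0<1/3<1$. Points with $\Log z=t(-18,0,6,-8)$ lie in the region $|z_2|\le1\le|z_3|$ dictated by the signs of $k$, yet $|z^{-k}|=e^{12t}$.
\end{itemize}
In the $T_A$ example the integral over $\T^I$ does vanish, but only through the $w_2$-integration.

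\textbf{The missing idea:} choose the projection from $k$ alone, independently of $I$, and use a disc in a single coordinate.
\begin{itemize}
\item Pick $I_0=\{j_0<\cdots<j_d\}$ on which $k$ is nonzero and alternates, and $i_*\in I_0$ with $k_{i_*}<0$. Put $J_0=([n]\setminus I_0)\cup\{i_*\}$.
\item Use the solid torus $C_{J_0}=\{|z_i|=1\ (i\ne i_*),\ |z_{i_*}|\le1\}$, whose boundary is $\T^{J_0}$.
\item On $\pi_{J_0}^{-1}(C_{J_0})$ there are two cases. If $|z_{i_*}|=1$, then \eqref{eq:singular-real-fiber} gives $z\in\T^n$. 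Otherwise $\Log z$ is supported on the $d+1$ indices of $I_0$. Then \eqref{eq:strong-var-singular} makes it nonzero and alternating there, and $\log|z_{i_*}|<0$ pins its signs to those of $k$.
\item Coordinatewise: $k_i<0$ gives $|z_i|\le1$; $k_i>0$ gives $|z_i|\ge1$; $k_i=0$ gives $i\notin I_0$, hence $|z_i|=1$. So $\omega_{I,k}$ is regular near $\pi_{J_0}^{-1}(C_{J_0})$ for \emph{every} $c$-subset $I$.
\item Push $\omega_{I,k}$ forward along $\pi_{J_0}$ (not $\pi_I$) composed with a resolution. Lemma~\ref{lem:singular-trace} gives a closed holomorphic $c$-form near $C_{J_0}$.
\item Stokes gives $\int_{\T^{J_0}}=\int_{\partial C_{J_0}}=0$. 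This equals $\pm\int_N\omega_{I,k}$ by the common orientation of the real sheets.
\end{itemize}
In the $T_A$ example, $I_0=\{1,2,3\}$, $i_*=2$ and $J_0=\{2,4\}$. The trace above is indeed holomorphic near $\{|w_2|\le1,\ |w_4|=1\}$.
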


\begin{proof}
We trace the integrand to a holomorphic top form on a solid torus.
Stokes' theorem then gives the required vanishing.

\emph{The solid torus and the sign pattern.}
Fix such $k$. Choose
$I_0=\{j_0<\cdots<j_d\}$ on which the entries of $k$ are nonzero
and alternate in sign.  Choose $i_*\in I_0$ with $k_{i_*}<0$, and set
\[
 J_0=([n]\setminus I_0)\cup\{i_*\},\qquad |J_0|=c.
\]
In the target of $\pi_{J_0}$ let
\[
 C_{J_0}=
 \{z\in(\PP^1)^{J_0}: |z_i|=1\ (i\ne i_*),\ |z_{i_*}|\le1\}.
\]
This is a solid torus with boundary $\T^{J_0}$.

Suppose $z\in\pi_{J_0}^{-1}(C_{J_0})$.  If $|z_{i_*}|=1$, the
real-fibered property \eqref{eq:singular-real-fiber} gives
$z\in\T^n$. Otherwise, the
strong sign condition forces the $d+1$ coordinates
$(\log|z_j|)_{j\in I_0}$ to be nonzero and alternate in sign;
the other coordinates are zero.  Since
$\log|z_{i_*}|<0$ and $k_{i_*}<0$, these signs are exactly the
signs of $(k_j)_{j\in I_0}$. We have used the sign argument of
\cite[Lemma~4.30]{AKKV}, with $J,K,m$ there corresponding to
$I_0,J_0,i_*$ here.

\emph{Regularity of the form.}
For every $c$-subset $I$ the rational ambient form
\[
 \omega_{k,I}
 =
 z^{-k}\bigwedge_{i\in I}\frac{dz_i}{z_i}
\]
is therefore regular in a neighborhood of
$\pi_{J_0}^{-1}(C_{J_0})$.  For $i\in I$ with $k_i<0$, the factor
$z_i^{-k_i}\,dz_i/z_i=z_i^{|k_i|-1}\,dz_i$ can have a pole only at
$z_i=\infty$, and the preceding sign condition gives $|z_i|\le1$;
for $k_i>0$ its only possible pole is at $z_i=0$, and $|z_i|\ge1$.
Coordinates outside $I_0$ have modulus one, which also handles
$k_i=0$.  The same statement holds for the function factors
$z_j^{-k_j}$ with $j\notin I$.

\emph{Trace and Stokes' theorem.}
Let $\rho:\widetilde X\to X$ be a resolution of singularities,
an isomorphism over $X_{\rm sm}$. Let $P$ be the intersection of $X$
with the polar divisor of the rational form $\omega_{k,I}$ on
$(\PP^1)^n$. By the pole calculation,
$\pi_{J_0}^{-1}(C_{J_0})\cap P=\varnothing$. Properness of $\pi_{J_0}$
implies that $\pi_{J_0}(P)$ is closed and algebraic. Its complement
in $(\PP^1)^{J_0}$ is an algebraic open set $U$ containing $C_{J_0}$, and
$\rho^*\omega_{k,I}$ is holomorphic on
$(\pi_{J_0}\rho)^{-1}(U)$.  The map $\varpi=\pi_{J_0}\rho$ is proper over
$U$.  Lemma~\ref{lem:singular-trace} shows that
\[
 \alpha_{k,I}=\operatorname{Tr}_{\varpi}(\rho^*\omega_{k,I})
\]
is a holomorphic $c$-form on $U$. Being of top holomorphic degree,
it is closed, so Stokes' theorem gives
\[
 \int_{\T^{J_0}}\alpha_{k,I}=\int_{\partial C_{J_0}}\alpha_{k,I}=0.
\]

\emph{Comparison with the integral on the smooth locus.}
Remove the proper algebraic set consisting of the branch and
exceptional values of $\varpi$ and $\pi_{J_0}(X_{\mathrm{sing}})$.
Since $\T^{J_0}$ is maximal totally real, a nonzero polynomial
restricts to a nonzero real-analytic function on this connected torus,
whose zero set has Lebesgue measure zero. Thus the discarded set has
real $c$-dimensional measure zero in $\T^{J_0}$. Over its complement,
the inverse branches lie in $N$ and have the common orientation sign
$s(J_0)$.
The omitted preimage in $N$ has measure zero because $\pi_{J_0}$
is a local diffeomorphism there. Moreover, $|z^{-k}|=1$ on $N$, so
$|\omega_{k,I}|$ is a constant multiple of the finite positive measure
$\sigma_I$. The integral is therefore absolutely convergent, and
change of variables across the discarded null set gives
\[
 \int_N\omega_{k,I}
 =s(J_0)\int_{\T^{J_0}}\alpha_{k,I}=0.
\]

Finally,
$d\theta_I=(2\pi i)^{-c}\bigwedge_{i\in I}dz_i/z_i$.
The preceding vanishing for every $I$, followed by the finite
linear combination defining $\sigma_{X,L}$, proves the result.
\end{proof}

\subsection{Phase averaging and Fourier inversion}

We identify the torus Fourier coefficients by applying the area
formula to the phase map. Continuity will then recover the section
measure at every phase.

\begin{lemma}[Averaged Fourier coefficients]
\label{lem:singular-phase}
For $h\in\mathcal S(\R^d)$ define the continuous torus function
$F_h(\theta)=\nu_\theta(h)$.  Under
\eqref{eq:strong-var-singular},
\[
 \widehat F_h(k)
 =
 \widehat \sigma_{X,L}(k)\widehat h(-L^tk)
 \qquad(k\in\Z^n).
\]
\end{lemma}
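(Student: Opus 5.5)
We compute $\widehat F_h(k)=\int_{\T^n}\nu_\theta(h)e^{-2\pi i\langle k,\theta\rangle}\,d\theta$ by unfolding the section sum over the incidence set and changing variables. By Lemma~\ref{lem:singular-degree}, $\theta\mapsto\nu_\theta(h)$ is continuous and uniformly dominated: the translation bound gives $\sum_x\deg_\theta(x)|h(x)|\le C_h$ uniformly in $\theta$. Hence Fubini--Tonelli applies to the incidence integral below.

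First we discard the singular locus. Lemma~\ref{lem:singular-torus-measure} shows that $\sigma_{X,L}$ gives zero mass to $X_{\rm sing}(\T)$. We also show that the set of phases $\theta$ whose section meets $X_{\rm sing}(\T)$ or a critical point of the phase map has Lebesgue measure zero. The incidence $\{(t,x)\in X(\T)\times\R^d:t=\theta+Lx\}$ maps to $\theta=t-Lx$. Over the singular real locus, which has dimension below $c$, its preimage has dimension below $n$, so its image is null. On $N$, Lemma~\ref{lem:singular-torus-measure} gives $\det[F_N,L]>0$, so there is no critical point, and every root over $N$ has degree one by Theorem~\ref{thm:singular-section}(1).

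For almost every $\theta$ we therefore have $\nu_\theta(h)=\sum_{x:\,\theta+Lx\in N}h(x)$. The map $\Phi:N\times\R^d\to\T^n$, $(t,x)\mapsto t-Lx$, is a local diffeomorphism. Its absolute Jacobian, with respect to an oriented frame of $N$ and Lebesgue measure on $\R^d$, is $\det[F_N,L]$, which is the density of $\sigma_{X,L}$. The area formula then gives
\[
 \int_{\T^n}\nu_\theta(h)g(\theta)\,d\theta
 =\int_N\int_{\R^d}h(x)g(t-Lx)\,dx\,d\sigma_{X,L}(t)
\]
for bounded measurable $g$.

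Now take $g(\theta)=e^{-2\pi i\langle k,\theta\rangle}$. Then $g(t-Lx)=e^{-2\pi i\langle k,t\rangle}e^{2\pi i\langle L^tk,x\rangle}$. The inner integral is $\widehat h(-L^tk)$ under the convention $e^{-2\pi i\langle x,\xi\rangle}$, and the outer integral is $\widehat\sigma_{X,L}(k)$. Absolute convergence follows from finiteness of $\sigma_{X,L}$ (Lemma~\ref{lem:singular-torus-measure}) and $h\in L^1$. This justifies Tonelli for the nonnegative version and then Fubini.

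The main obstacle is justifying the area formula globally. The map $\Phi$ is not injective on the torus. Its multiplicity over $\theta$ is exactly the number of roots $x$ with $\theta+Lx\in N$, weighted by $h$. So one must apply the area formula to $\Phi$ as a countable union of local diffeomorphic charts, for instance by partitioning $N\times\R^d$ into Borel pieces on which $\Phi$ is injective, and verify that the counting function equals the unweighted section count off the null set. The plan is to use charts of $N$ lifted to $\R^n$ together with unit cubes in $x$. On these charts $\Phi$ is injective, and summing over them recovers $\sum_x h(x)$ by Tonelli.
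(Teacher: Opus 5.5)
Your proposal is correct and follows essentially the paper's argument: you discard the null set of phases having a root over $X_{\rm sing}(\T)$ (a locally Lipschitz image of a set of dimension at most $n-1$), then apply the area formula to $\Phi(t,x)=t-Lx$ on $N\times\R^d$, whose absolute Jacobian is the density $\det[F_N,L]$ of $\sigma_{X,L}$, and integrate the character. There are only minor differences. The paper first takes $h$ compactly supported and passes to Schwartz $h$ by uniform translation boundedness, whereas you treat Schwartz $h$ directly; the exceptional phases then form a countable union of null images. Also, the area formula with multiplicity already handles the non-injectivity of $\Phi$, so your partition is optional. If you do use it, take $x$-cubes of side small compared with $1/\|L\|$: unit cubes need not make $\Phi$ injective when the physical flow wraps around the torus.
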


\begin{proof}
We first take $h$ with compact support. For almost every phase,
no root in that support maps into $X_{\rm sing}(\T)$: the real singular
set has dimension at most $c-1$, and the exceptional phases lie in
the image of
\[
 X_{\rm sing}(\T)\times\operatorname{supp}(h)
 \longrightarrow\T^n,\qquad(t,x)\longmapsto t-Lx.
\]
Cover the compact real singular locus by finitely many bounded torus
coordinate charts. In these charts the phase map is Lipschitz on
the product with the compact support of $h$. Its source has Hausdorff
dimension at most $c-1+d=n-1$, so its image has Haar measure zero.

At a phase with no singular root in the support of $h$, each root
contributing to $\nu_\theta(h)$ is transverse and has local intersection
degree one, by Lemma~\ref{lem:singular-degree}.
The map
\[
 \Phi:N\times\R^d\longrightarrow\T^n,\qquad
 \Phi(t,x)=t-Lx
\]
is a local diffeomorphism, and its inverse fiber is
\[
 \Phi^{-1}(\theta)
 =\{(\theta+Lx,x):x\in\Lambda_\theta,
              \theta+Lx\in N\}.
\]
The absolute Jacobian of $\Phi$ is the density of $\sigma_{X,L}$,
by Lemma~\ref{lem:singular-torus-measure}.
Since each inverse image has degree one, the area formula gives
\begin{align*}
 \widehat F_h(k)
 &=
 \int_{\T^n}\nu_\theta(h)e^{-2\pi i\langle k,\theta\rangle}
       \,d\theta\\
 &=
 \int_N\int_{\R^d}
 h(x)e^{-2\pi i\langle k,t-Lx\rangle}\,dx\,d\sigma_{X,L}(t)\\
 &=
 \widehat \sigma_{X,L}(k)\widehat h(-L^tk).
\end{align*}
Uniform translation boundedness from
Lemma~\ref{lem:singular-degree} permits passage from compact
support to Schwartz functions.
\end{proof}

\begin{proof}[Proof of Theorem~\ref{thm:singular-section}
under the strong condition]
The cone $\{u:\varn(u)\le d-1\}$ is closed because $\varn$ is
lower semicontinuous. By \cite[Lemma~3.5]{AKKV}, it meets
$\ker L^t=(L\R^d)^\perp$ only at zero. Compactness of its intersection
with the unit sphere, followed by homogeneity, gives $a>0$ such that
\begin{equation}
\label{eq:singular-proper-cone}
 \|L^tu\|\ge a\|u\|
 \qquad\text{whenever }\varn(u)<d.
\end{equation}
It follows that $L^t$ is proper on the closed sign-variation cone.
The positive measure $\sigma_{X,L}$ satisfies
$|\widehat \sigma_{X,L}(k)|\le \sigma_{X,L}(\T^n)$.
By Lemmas~\ref{lem:singular-vanishing} and
\ref{lem:singular-phase}, the Fourier coefficients of $F_h$
vanish off that cone and are absolutely summable on it.
The Fourier series converges uniformly and equals $F_h$ at every
phase by uniqueness of Fourier coefficients for continuous torus
functions.

Take $h=\widehat f$.  With our Fourier convention,
$\widehat h(-L^tk)=f(L^tk)$, which gives
\eqref{eq:singular-summation}.
The left-hand side is absolutely convergent by uniform
translation boundedness.
On the right, \eqref{eq:singular-proper-cone} shows that
there are only $O((1+R)^n)$ integer indices with
$\varn(k)<d$ and $\|L^tk\|\le R$.  In particular, the
spectral set is locally finite, and each frequency has finitely many
preimages among these indices. After grouping equal frequencies and
discarding zero coefficients, the bound on $\widehat \sigma_{X,L}$
and the triangle inequality give spectral total variation
$O((1+R)^n)$. The remaining FQ conditions follow from
Lemma~\ref{lem:singular-degree}.
\end{proof}

\subsection{Reduction to ordinary sign variation}

We apply the compression proposition to obtain the ordinary
sign-variation condition. The kernel of the isogeny supplies periods
of the phase functions, which put their frequencies back in the
original frequency module.

\begin{proof}[Proof of Theorem~\ref{thm:singular-section}
in general]
Set $X^*=X\cap(\C^*)^n$ and take $U=\Gr_+(d,n)$. Since $L$ has
positive maximal minors and $\Gr_+(d,n)$ is open, $U$ is an open
neighborhood of $L\R^d$. The variety $X^*$ is reduced, pure of
codimension $d$, and invariant under reciprocal conjugation. By
\cite[Lemma~3.2]{AKKV}, a nonzero vector in a positive $d$-plane has
upper variation less than $d$. Thus the Lee--Yang condition gives
$\cA(X^*)\cap W\subseteq\{0\}$ for every $W\in U$.
By Proposition~\ref{prop:compression}, there are a nonsingular integer
matrix $S$, a matrix $\widetilde L=S^{-1}L$ with positive maximal
minors, and the preimage
\[
 Z=f_S^{-1}(X^*),\qquad f_S(z)_i=\prod_j z_j^{S_{ij}},
\]
whose projective closure satisfies
\eqref{eq:strong-var-singular}.

The phase identity in Proposition~\ref{prop:compression} gives, at
phase zero,
\[
 E_{\widetilde L}(x)\in Z
 \quad\Longleftrightarrow\quad
 f_S(E_{\widetilde L}(x))=E_L(x)\in X^*.
\]
The local intersection degrees also agree at every real phase by
the same proposition. To see this in logarithmic coordinates, let
$\mathcal Y_Z=\{y\in\C^n:\exp(2\pi iy)\in Z\}$ and
$\mathcal Y_X=\{y\in\C^n:\exp(2\pi iy)\in X\}$ be the logarithmic
lifts. Then $\mathcal Y_Z=S^{-1}\mathcal Y_X$, and $S$ identifies
the local varieties and the physical planes $\widetilde L\C^d$ and $L\C^d$.
The strong case applied to $(\overline Z,\widetilde L)$ shows that
$\nu_\theta$ is an FQ measure with spectrum in $\widetilde L^t\Z^n$;
we now show that the spectrum lies in $L^t\Z^n$.
For an arbitrary phase, choose a lift
$\theta\in\R^n$ and put $\theta'=S^{-1}\theta$ before passing to
the new torus.

To obtain the spectral inclusion in $L^t\Z^n$, fix a Schwartz
function $h$. The phase functions satisfy
\[
 F_h^{\rm new}(\theta')=F_h^{\rm old}(S\theta').
\]
It follows that $F_h^{\rm new}$ has all periods in
$S^{-1}\Z^n/\Z^n$; its Fourier coefficient at $k$ therefore vanishes
unless $k\in S^t\Z^n$.  The phase formula of
Lemma~\ref{lem:singular-phase}, applied with a Gaussian
$h$ whose Fourier transform never vanishes, gives
\[
 \widehat \sigma_{\overline Z,\widetilde L}(k)=0\qquad(k\notin S^t\Z^n).
\]
Writing $k=S^t\ell$, the surviving frequencies are
$\widetilde L^tk=L^t\ell$. More explicitly, the spectrum lies in
\begin{equation}
\label{eq:singular-weak-spectrum}
 \Sigma_{L,S}
 =
 \{L^t\ell:\ell\in\Z^n,\ \varn(S^t\ell)<d\}.
\end{equation}
It is locally finite by the proper-cone bound for $\widetilde L$,
since $S^t$ is invertible.  The same counting argument
gives spectral total variation $O((1+R)^n)$.
\end{proof}

\subsection{Effective cycles}

We now sum over the components of an effective cycle, keeping their
integer coefficients. This gives the forward construction for the
classification theorem; we then identify the components that meet
real-phase sections.

\begin{proposition}[Fourier quasicrystals from effective cycles]
\label{prop:integer-forward}
Let $\mathfrak X$ be an effective Lee--Yang cycle of codimension $d$
in $(\PP^1)^n$, and let $L\in\R^{n\times d}$ have positive maximal
minors. For every real phase $\theta$, its complex section is real
and locally finite. The measures $\nu^{\mathfrak X,L}_\theta$ form
a vaguely continuous, uniformly translation bounded family of FQ
measures, with spectra in $L^t\Z^n$ and spectral total variation
$O((1+R)^n)$ in $B_R$. In particular, every nonzero section is an
integer-mass FQ measure.
\end{proposition}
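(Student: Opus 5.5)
The plan is to reduce to Theorem~\ref{thm:singular-section} applied componentwise and then sum with the integer coefficients. Write $\mathfrak X=\sum_i\varkappa_iX_i$. Each $X_i$ is irreducible, reduced, and of codimension $d$. Since $|\mathfrak X|$ is Lee--Yang, the inequality $\vbar(\log|z|)\ge d$ holds on each $X_i\setminus\T^n$. The one point needing care is reciprocal conjugation: it preserves $|\mathfrak X|$ but may exchange components. So I will not apply Theorem~\ref{thm:singular-section} to each $X_i$ separately. Instead I group the components into orbits $O$ under reciprocal conjugation, which have size one or two. By definition, components in one orbit share a coefficient $\varkappa_O$. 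Each union $X_O=\bigcup_{i\in O}X_i$ is then reduced, equidimensional of codimension $d$, invariant under reciprocal conjugation, and satisfies the sign-variation inequality, so it is a Lee--Yang variety in the sense of Definition~\ref{def:lee-yang}.

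Next I check that the local intersection degree of the union is the sum of those of its components. At a root $x$ of $X_O$ at phase $\theta$, the logarithmic lift of $X_O$ near $\theta+Lx$ is the union of the lifts of the $X_i$, $i\in O$. The generic sheet count of the projection along $L\C^d$ is additive over local irreducible components, because Lemma~\ref{lem:local-degree-general}(iii) counts the points of a nearby regular fiber. Hence $\deg^{X_O}_\theta(x)=\sum_{i\in O}\deg^{X_i}_\theta(x)$. Using the degree-zero convention for components not containing the root, this gives
\[
 \nu^{\mathfrak X,L}_\theta=\sum_O\varkappa_O\,\nu^{X_O}_\theta ,
\]
where $\nu^{X_O}_\theta$ is the measure of Theorem~\ref{thm:singular-section}.

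Theorem~\ref{thm:singular-section} applied to each $X_O$ gives the remaining properties. Every complex section is real and locally finite, and a finite union of such sets is again real and locally finite. Vague continuity and uniform translation boundedness pass to finite positive integer combinations. For the Fourier side, each $\widehat{\nu^{X_O}_\theta}$ is a measure supported in $L^t\Z^n$ with total variation $O((1+R)^n)$ in $B_R$. The finite sum inherits the same support and bound after grouping equal frequencies and discarding zero coefficients. Its spectrum is a subset of a finite union of locally finite sets, so it is locally finite. The physical growth bound follows from $\nu_\theta(a+B_R)\le C(1+R)^d$. Hence each $\nu^{\mathfrak X,L}_\theta$ satisfies Definition~\ref{def:fourier-quasicrystal}; the zero measure is allowed when the section is empty.

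Finally, a nonzero section has positive integer masses, since each degree $\deg^{X_i}_\theta(x)$ is a nonnegative integer that is positive when $x$ lies on $X_i$, and the coefficients are positive integers. So every nonzero section is an integer-mass FQ measure. The only step needing attention is the orbit grouping and the additivity of local degree over a union. Both follow directly from the definitions and Lemma~\ref{lem:local-degree-general}, so I expect no real obstacle beyond the bookkeeping that cancellations cannot arise, which holds because all coefficients are positive.
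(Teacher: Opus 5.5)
Your proposal is correct and matches the paper's proof step for step. Both group the components into reciprocal-conjugation orbits, so that each orbit union is Lee--Yang with a common coefficient. Both apply Theorem~\ref{thm:singular-section} to each orbit union. Both use additivity of the generic sheet count to identify the weighted sum with $\nu^{\mathfrak X,L}_\theta$; that count is taken at a regular value away from the lower-dimensional intersections of distinct components. The continuity, Fourier and positivity conclusions then follow by finite summation, exactly as you describe.
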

\begin{proof}
Partition the irreducible components into reciprocal-conjugation orbits.
The reduced union of one orbit is invariant, pure of codimension $d$,
and satisfies the sign inequality of $|\mathfrak X|$, so it is
Lee--Yang. By definition of the cycle, all its components have the
same coefficient $\varkappa_i$.

Apply Theorem~\ref{thm:singular-section} to this reduced orbit union,
and multiply its section measure by $\varkappa_i$. The reduced local
degree of the orbit union is the sum of the reduced local degrees of
its components: choose a common generic value for the finitely many
local finite maps and count their distinct sheets away from all component
intersections. Thus summing these orbit measures gives exactly
$\nu^{\mathfrak X}_\theta$ as defined above.

The finite sum is vaguely continuous and uniformly translation
bounded, and its Fourier transform is the corresponding sum of the
spectral measures in Theorem~\ref{thm:singular-section}. Their finite
union is locally finite, and the triangle inequality preserves the
spectral total-variation bound $O((1+R)^n)$. Grouping equal frequencies
and discarding zero coefficients leaves the spectrum in $L^t\Z^n$.
At every root, the mass is a sum of positive integers and is therefore
positive, which proves the assertion at every real phase.
\end{proof}

\begin{lemma}[Active components]
\label{lem:integer-active-components}
Let $X$ be Lee--Yang of pure dimension $c=n-d$. If an irreducible
component $X_i$ meets $\T^n$, then $X_i(\T)$ has real dimension $c$,
is Zariski dense in $X_i$, and $X_i$ is individually preserved by
reciprocal conjugation. Moreover, every analytic branch of $X_i$
at a torus point contains smooth real $c$-dimensional patches arbitrarily
near that point. Components with empty torus locus contribute zero to
all real-phase sections.
\end{lemma}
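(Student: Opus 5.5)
The plan is to work in logarithmic coordinates near a torus point $p\in X_i(\T)$. The local input will be the reality and finiteness arguments of Lemma~\ref{lem:singular-degree} and the real-fibered structure that the Lee--Yang condition forces. Fix a matrix $L$ with positive maximal minors, for instance a positive Vandermonde matrix, and put $V=L\R^d$. By Lemma~\ref{lem:singular-degree}, the section through $p$ along $L\C^d$ is isolated. Hence projection along $L\C^d$ onto a real complement $W_\C$ is finite on the germ $(\mathcal Y,\log p)$, where $\mathcal Y$ is the logarithmic lift of $X$.

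Next I would verify the real-fiber property required by Lemma~\ref{lem:integer-real-sheets}. Take $y\in\mathcal Y$ near the real point $\log p$ whose projection lies in the real base. Then $y=u+Lx$ with $u$ real, so $\exp(2\pi iy)$ lies in the section of $X$ at the real phase $u$. By the realness argument of Lemma~\ref{lem:singular-degree}, which uses only $\vbar(\log|z|)\ge d$, $x$ is real and therefore $y$ is real. Lemma~\ref{lem:integer-real-sheets}, applied to each local irreducible branch of $X_i$ at $p$, then supplies smooth unramified real graphs of dimension $c$ arbitrarily near $p$. These graphs are maximal totally real. This proves the branch statement and shows that $X_i(\T)$ has real dimension $c$.

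Zariski density follows next. A Laurent polynomial vanishing on $X_i(\T)$ vanishes on one such maximal totally real graph. By the last assertion of Lemma~\ref{lem:integer-real-sheets}, it then vanishes on the whole local branch, which is open in $(X_i)_{\rm reg}$ at generic points of the branch. Irreducibility then gives vanishing on $X_i$. For invariance, reciprocal conjugation $\sigma$ fixes $\T^n$ pointwise and permutes the components of the invariant variety $X$. The component $\sigma(X_i)$ therefore contains $X_i(\T)$, which is Zariski dense in $X_i$. Since $\sigma(X_i)$ is an irreducible closed set of the same dimension containing $X_i$, it equals $X_i$.

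Finally, a component with $X_i(\T)=\varnothing$ contributes nothing to real-phase sections. Every real-phase root $x$ is real by Lemma~\ref{lem:singular-degree}, so $\exp(2\pi i(\theta+Lx))\in\T^n$ cannot lie in $X_i$, and $\deg^{X_i}_\theta(x)=0$. The main obstacle is making the real-fiber hypothesis precise enough for Lemma~\ref{lem:integer-real-sheets}, because that lemma needs a \emph{real} linear projection. I would choose the complement $W$ real, so that its complexification gives a real projection and the decomposition $y=u+Lx$ has real $u$ exactly when the image is in the real base. A secondary technical point is to restrict to the branch germ of the single component $X_i$ and not all of $X$; finiteness and the real-fiber property pass to subgerms automatically.
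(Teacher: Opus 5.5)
Your proposal is correct and follows essentially the same route as the paper: finiteness of the projection along a positive $d$-plane via Lemma~\ref{lem:singular-degree}, the real-fiber property from the reality of real-phase sections, Lemma~\ref{lem:integer-real-sheets} on each branch of $X_i$, the identity principle for Zariski density, and the dense real patch for invariance under reciprocal conjugation. The one step you leave implicit is the upper bound $\dim_\R X_i(\T)\le c$, which the paper obtains from the torus being totally real inside the $c$-dimensional complex variety; the real graphs by themselves give only the lower bound.
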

\begin{proof}
Choose a positive $d$-plane and a torus point of $X_i$. By
Lemma~\ref{lem:singular-degree}, the local logarithmic projection
along this plane is finite, and its entire fiber over each real
point of a complementary coordinate space is real. This follows
from the sign inequality on each local branch of $X_i$, as in the reality
argument for every real phase. Lemma~\ref{lem:integer-real-sheets}
then yields the asserted real patches. We may choose them away from
the singular and component-intersection loci, which have smaller
dimension. The identity principle and irreducibility imply that
these patches are Zariski dense in the component. Since the torus
is totally real, the real dimension is at most $c$ and hence equals $c$.

Reciprocal conjugation sends $X_i$ to an irreducible component containing
its dense real patch, so the two components coincide. Finally a real-phase
root is real by Lemma~\ref{lem:singular-degree}, and its exponential image
lies in $\T^n$. It cannot lie in a component with empty torus locus.
Deleting all such components preserves purity, reciprocal invariance,
and the required sign inequality of the remaining union.
\end{proof}

\subsection{Unweighted singular sections}

Two transverse branches can contribute to the same root, and counting
that root once changes the measure. We give a two-dimensional example
satisfying ordinary sign variation, so that no compression is needed.

\begin{proposition}[The local intersection degrees cannot be discarded]
\label{prop:singular-example}
There is a reduced Lee--Yang variety $X\subset(\PP^1)^3$
satisfying \eqref{eq:strong-var-singular} and a matrix
$L\in\R^{3\times2}$ with positive maximal minors for which the
section with its local intersection degrees is an FQ measure, but
its unweighted counting measure is not. One may take $X$ to be the
reduced union of the projective closures of
$t\mapsto(t,t^{-1},t)$ and $t\mapsto(t,t^{-2},t^3)$, and
\[
 L=\begin{pmatrix}1&0\\0&1\\-\sqrt2&\sqrt3\end{pmatrix}.
\]
\end{proposition}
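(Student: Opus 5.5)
The plan is to verify the hypotheses of Theorem~\ref{thm:singular-section} for the displayed $X$ and $L$, compute both sections explicitly as lattice cosets, and reduce the failure of unweighted counting to a Fourier-transform statement about the common points.

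\emph{Step 1: Lee--Yang property and strong sign variation.} Write $X_1,X_2$ for the closures of $t\mapsto(t,t^{-1},t)$ and $t\mapsto(t,t^{-2},t^3)$. Both are graphs of morphisms $\PP^1\to(\PP^1)^2$, hence smooth and irreducible. Both are preserved by reciprocal conjugation, since $t\mapsto\bar t^{-1}$ commutes with the monomial parametrizations.
\begin{itemize}
\item On the algebraic torus, with $u=\log|t|\neq0$, the logarithmic moduli are $u(1,-1,1)$ and $u(1,-2,3)$. Each has $\varn=2=d$.
\item At $t=0$ both curves have limit $(0,\infty,0)$, with signs $(-,+,-)$; at $t=\infty$ the signs reverse. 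Sign variation is $2$ at both boundary points.
\end{itemize}
So \eqref{eq:strong-var-singular} holds for $X=X_1\cup X_2$. The torus loci are circles. A common point requires $s=t$ and $t^{-1}=t^{-2}$, so the circles meet only at $(1,1,1)$, with independent tangents $(1,-1,1)$ and $(1,-2,3)$. Thus $X$ is reduced Lee--Yang but not strict.

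The maximal minors of $L$ on rows $\{1,2\},\{1,3\},\{2,3\}$ are $1$, $\sqrt3$ and $\sqrt2$, all positive. Theorem~\ref{thm:singular-section}, or Proposition~\ref{prop:integer-forward} for the cycle $X_1+X_2$, then shows that the weighted section $\nu_0$ is an FQ measure.

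\emph{Step 2: explicit sections.} Eliminating $t$ from the condition $\exp(2\pi iLx)\in X_j$ gives two linear congruences modulo $\Z$. For $X_1$ these are $x_1+x_2\in\Z$ and $(1+\sqrt2)x_1-\sqrt3x_2\in\Z$. For $X_2$ they are $2x_1+x_2\in\Z$ and $(3+\sqrt2)x_1-\sqrt3x_2\in\Z$. Each system has nonzero determinant, so it defines a full lattice $\Lambda_j=A_j^{-1}\Z^2$.

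Each $X_j$ is smooth, and Lemma~\ref{lem:singular-degree} gives degree one at smooth points. Hence each root on $X_j$ has local intersection degree one from that component, and
\[
 \nu_0=\sum_{x\in\Lambda_1}\delta_x+\sum_{x\in\Lambda_2}\delta_x .
\]
The unweighted counting measure is $\nu_0-\sum_{x\in\Lambda_1\cap\Lambda_2}\delta_x$.

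\emph{Step 3: the intersection has rank at most one (main obstacle).} I expect this to be the hardest step. A point of $\Lambda_1\cap\Lambda_2$ satisfies all four congruences. Subtracting gives $x_1\in\Z$ and $2x_1\in\Z$, hence $x_1=a\in\Z$ and $x_2=b\in\Z$. It also gives $(1+\sqrt2)a-\sqrt3 b\in\Z$.

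Linear independence of $1,\sqrt2,\sqrt3$ over $\Q$ then forces $a=0$, $b=0$. So I expect $\Lambda_1\cap\Lambda_2=\{0\}$. If the computation instead leaves a rank-one group, the argument below still works with a line comb.

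\emph{Step 4: conclusion.} If the unweighted measure were FQ, then $\delta_0=\nu_0-(\text{unweighted})$ would have a pure point Fourier transform with locally finite support. But $\widehat{\delta_0}=1$ is Lebesgue measure, a contradiction. In the rank-one fallback, the Fourier transform of a comb on $\Z v$ is a sum of line measures, which is nonatomic, giving the same contradiction. Hence the unweighted counting measure is not an FQ measure.
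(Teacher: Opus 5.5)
Your proposal is correct and follows the paper's proof essentially step by step: the same sign-variation check at the torus and boundary points, the same lattices $\Lambda_j=D_j^{-1}\Z^2$ from the torus equations, additivity of the branch degrees to get mass two at the origin, rational independence of $1,\sqrt2,\sqrt3$ to get $\Lambda_1\cap\Lambda_2=\{0\}$, and the observation that removing $\delta_0$ adds Lebesgue measure to the Fourier transform. Your Step~3 computation is already complete, so the hedging and the rank-one fallback are unnecessary; also, where you cite Theorem~\ref{thm:singular-section} for the FQ property of $\nu_0$, the paper writes $\widehat\nu_0$ explicitly by Poisson summation, and either route works.
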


\begin{proof}
Let $X_1$ and $X_2$ be the projective closures of
\[
 t\longmapsto(t,t^{-1},t),
 \qquad
 t\longmapsto(t,t^{-2},t^3),
 \qquad t\in\C^*,
\]
and let $X=X_1\cup X_2$.
Each map has first coordinate $t$, so it is an embedding
of $\PP^1$.  Off the torus, the logarithmic modulus
vectors are nonzero scalar multiples of $(1,-1,1)$
and $(1,-2,3)$, respectively.  They have two sign
changes; the same holds at the two boundary points,
using infinite logarithmic moduli.  Each component is
invariant under $z\mapsto1/\overline z$.
Both components are therefore strict Lee--Yang curves, and their
reduced union satisfies \eqref{eq:strong-var-singular}.  Their torus intersection is exactly
$(1,1,1)$, where their tangent directions are distinct. Their
projective closures also meet at $(0,\infty,0)$ and $(\infty,0,\infty)$;
these boundary intersections do not affect the torus sections.

The maximal minors of the displayed matrix are $1,\sqrt3,\sqrt2$.
The two sections are lattices
$\Lambda_j=D_j^{-1}\Z^2$, where
\[
 D_1=
 \begin{pmatrix}
 1&1\\
 -(1+\sqrt2)&\sqrt3
 \end{pmatrix},
 \qquad
 D_2=
 \begin{pmatrix}
 2&1\\
 -(3+\sqrt2)&\sqrt3
 \end{pmatrix}.
\]
In the algebraic torus, the first curve is defined by $z_1z_2=1$
and $z_3=z_1$, and the second by $z_1^2z_2=1$ and $z_3=z_1^3$.
Pulling back these equations gives $D_jx\in\Z^2$.

If $x\in\Lambda_1\cap\Lambda_2$, its torus image is
$(1,1,1)$.  Hence $x_1,x_2$ are integers and
$-\sqrt2x_1+\sqrt3x_2$ is an integer.  The rational
independence of $1,\sqrt2,\sqrt3$ gives $x=0$.
Each component meets the section transversely: the tangent directions
$(1,-1,1)$ and $(1,-2,3)$ both have $\varn=2=d$, so neither lies in
$L\R^2$ by \cite[Lemma~3.2]{AKKV}. Equivalently, $\det D_j\ne0$.
Additivity of the local intersection degree over the two branches
therefore gives the section
measure
\[
 \nu=\sum_{x\in\Lambda_1}\delta_x+
      \sum_{x\in\Lambda_2}\delta_x,
\]
with weight two at the origin and weight one elsewhere.
Poisson summation gives the exact identity
\[
 \widehat\nu
 =
 (1+\sqrt2+\sqrt3)
       \sum_{k\in\Z^2}\delta_{D_1^tk}
 +
 (3+\sqrt2+2\sqrt3)
       \sum_{k\in\Z^2}\delta_{D_2^tk}.
\]
This is a locally finite measure of polynomial growth.

The unweighted counting measure of
$\Lambda_1\cup\Lambda_2$ is $\nu-\delta_0$.
Its Fourier transform is $\widehat\nu-\mathcal L^2$,
where $\mathcal L^2$ denotes Lebesgue measure.
Outside the two dual lattices the Fourier transform equals
$-\mathcal L^2$, so it cannot have locally finite discrete support.
The unweighted section is therefore not an FQ-set.
\end{proof}

\begin{remark}[The collision at the node]\label{rem:singular-node}
In local logarithmic coordinates $(u,v,w)$ centered at $(1,1,1)$,
the two branches are $(s,-s,s)$ and $(s,-2s,3s)$. The coordinates
\[
 a=2u+v,\qquad b=-u-v,\qquad h=u+2v+w
\]
have change-of-basis determinant $-1$ and identify their union with
$\{h=0,ab=0\}$. Thus $X$ has an ordinary node there and is not strict.
Each branch contributes one root near the origin under a small phase
change. The roots can agree only if $\theta+Lx=0$ in the local logarithmic
chart, since the two branches intersect only at the origin there.
This requires $\theta\in L\R^2$, a proper plane in $\R^3$. Thus
the two roots are distinct at a generic nearby phase and coalesce
at phase zero. The measure counted with local intersection degrees
retains mass two, whereas the unweighted measure has mass one at the
limit and is not vaguely continuous there. Continuity of $\nu_\theta$
is what extends the almost-everywhere phase identity to every phase.
\end{remark}

\section{Equations of the support}
\label{sec:q4-criterion}

We determine all trigonometric polynomials with arbitrary real frequencies
that vanish on the support. We first identify the closure of the root
phases in an enlarged torus, then separate characters on the kernel of
its projection. These equations also explain the distinction between
local intersection degrees and colengths.

Let $\mu$ be an integer-mass FQ measure with support $\Lambda=\supp\mu$
and basis matrix $L$ for $\Gamma$. Write $M$ and
$Y=\overline M^{\mathrm{Zar}}\subset(\C^*)^n$ for its canonical set
and variety. The rows of $L$ identify $\C[\Gamma]$ with the Laurent
polynomial ring of $(\C^*)^n$. If $\Gamma'\supset\Gamma$ is finitely
generated and $L'$ is a basis matrix for it, then
$L=AL'$ for an integer matrix $A\in\Z^{n\times n'}$ of rank $n$.
The dual map is
\[
 \rho:\T^{n'}\longrightarrow\T^n,\qquad \rho(\theta')=A\theta'.
\]
Smith normal form expresses it as coordinate power maps followed by
projection, so it is surjective. Writing $H=\ker\rho$, we have
\[
 H\cong\mathbb U_{\varsigma_1}\times\cdots\times\mathbb U_{\varsigma_n}
                    \times\T^{n'-n},
\]
where $\varsigma_1,\ldots,\varsigma_n$ are the nonzero Smith invariant
factors and $\mathbb U_j$ is the group of $j$th roots of unity.
The same notation $\rho$ denotes the monomial extension to the complex
algebraic tori, $\rho(z')_i=\prod_j(z'_j)^{A_{ij}}$. Then
$\rho\circ E_{L'}=E_L$, and $\rho^*$ is the inclusion
$\C[\Gamma]\subset\C[\Gamma']$.

For a finitely generated subgroup $\Gamma'\subset\R^d$ with basis
matrix $L'$, write an element of its group algebra as
$P=\sum_{\gamma\in\Gamma'}a_\gamma u^\gamma$. The associated
trigonometric polynomial is
\[
 \mathcal E_P(x)=\sum_\gamma a_\gamma e^{2\pi i\gamma\cdot x},
 \qquad x\in\C^d.
\]
Under the Laurent identification, $\mathcal E_P=P\circ E_{L'}$.
The association is injective: on a real line separating the finitely
many frequencies, linear independence follows from the Vandermonde
determinant.

\begin{proposition}[The vanishing ideal of the support]
\label{prop:canonical-ideal}
For every finitely generated subgroup $\Gamma'\subset\R^d$
containing $\Gamma$,
\begin{equation}
 \{P\in\C[\Gamma']:\mathcal E_P|_\Lambda=0\}
       =I(Y)\C[\Gamma'].
 \label{eq:q4-ideal-extension}
\end{equation}
Consequently the common complex zero set of all trigonometric
polynomials with real frequencies vanishing on $\Lambda$ is
$E_L^{-1}(Y)$.
\end{proposition}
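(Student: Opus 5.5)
The inclusion $\supseteq$ is immediate: by Theorem~\ref{thm:universal-classification}, $E_L(\Lambda)\subset Y$, and $\rho\circ E_{L'}=E_L$. Hence every $P=\sum_j Q_j\,\rho^*F_j$ with $F_j\in I(Y)$ satisfies
\[
 \mathcal E_P=\sum_j (Q_j\circ E_{L'})(F_j\circ E_L),
\]
which vanishes on $\Lambda$. For the reverse inclusion the plan is to work in the enlarged torus. Put $M'=\overline{L'\Lambda\bmod\Z^{n'}}\subset\T^{n'}$ and $Y'=\overline{M'}^{\mathrm{Zar}}\subset(\C^*)^{n'}$. If $\mathcal E_P$ vanishes on $\Lambda$, then $P$ vanishes on $E_{L'}(\Lambda)$, hence on $M'$ by continuity, and hence on $Y'$. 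So it suffices to prove $I(Y')=I(Y)\C[\Gamma']$, that is, that $Y'$ is the scheme-theoretic preimage $\rho^{-1}(Y)$ and that this preimage is reduced.

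Reducedness is the easy step. Over $\C$ the monomial map $\rho$ is, after Smith normal form, a finite étale map (coordinate power maps) followed by a trivial torus bundle. Pullback of a radical ideal along such a smooth morphism stays radical, so $I(Y)\C[\Gamma']$ is the vanishing ideal of $\rho^{-1}(Y)$. It therefore remains to show the set equality $Y'=\rho^{-1}(Y)$.

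The inclusion $M'\subset\rho^{-1}(M)$ is clear, since $\rho(L'x)=Lx$ and $L\Lambda$ is dense in $M$ (Lemma~\ref{lem:ac-transverse-strata}). Thus $Y'\subset\rho^{-1}(Y)$. The converse is the main obstacle: we must show that the closure of root phases fills every $H$-translate, i.e.\ $M'=\rho^{-1}(M)$. Then Zariski density of $M$ in $Y$, together with the bundle structure of $\rho$ over $Y$ with fiber $H$, gives $Y'=\rho^{-1}(Y)$.

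To prove $M'\supset\rho^{-1}(M)$ I would use the hull of $\mu$ in the enlarged torus. The map $x\mapsto L'x$ is equidistributed in the closed subgroup $G=\overline{L'\R^d}\subset\T^{n'}$. The relevant question is whether the characters of $\Gamma'$ that are trivial on the phases separate $H$. A character $k'$ of $\T^{n'}$ that is trivial on $G$ satisfies $L'^tk'=0$. Since $L'$ is a basis matrix of a group of real vectors, this forces $k'=0$, so $G=\T^{n'}$. Now fix $\theta'\in\rho^{-1}(t)$ with $t\in M$, and choose $x_j$ with $L'x_j\to\theta'$, so that $Lx_j\to t$. Vague continuity of the hull at $t$ gives atoms $\lambda_j\in\Lambda$ with $\lambda_j-x_j\to0$, hence $L'\lambda_j\to\theta'$. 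This shows $\theta'\in M'$, mirroring the density argument of Lemma~\ref{lem:ac-transverse-strata}. The delicate point is that hull continuity is only known in the $\T^n$ parametrization; but here I only need it along $Lx_j\to t$, which holds, so I expect this step to go through.

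Finally, the zero-set statement follows. Every trigonometric polynomial with real frequencies lies in some finitely generated $\C[\Gamma']$ with $\Gamma'\supset\Gamma$. By the ideal identity its zero set contains $E_{L'}^{-1}(\rho^{-1}Y)=E_L^{-1}(Y)$. Conversely, the generators of $I(Y)$ already cut out exactly $E_L^{-1}(Y)$.
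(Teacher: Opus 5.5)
Your proof is correct. Its density step is the paper's own Lemma~\ref{lem:extended-phase-density}: the physical flow is dense in $\T^{n'}$, and vague continuity of $\mu_{Lx_j}\to\mu_{\rho\theta'}$ moves $x_j$ onto atoms. Where you differ is in passing from $\rho^{-1}(M)$ to the ideal $I(Y)\C[\Gamma']$.

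\textbf{The paper's route.} Lemma~\ref{lem:torus-ideal-extension} expands a polynomial vanishing on $\rho^{-1}(M)$ as $P=\sum_\beta u^\beta\rho^*P_\beta$ over the cosets of $\Gamma$ in $\Gamma'$. It then integrates $P(z'h)$ against $\overline{u^\beta(h)}$ over the compact kernel $H$. Since the characters $u^\beta|_H$ are distinct, each $P_\beta$ vanishes on $M$ and so lies in $I(Y)$. This single Fourier computation handles radicality and the Zariski closure together. It works for an arbitrary subset $M\subset\T^n$ and needs no scheme theory.

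\textbf{Your route.} You split the step into two pieces. First, $I(Y)\C[\Gamma']$ is radical, by smooth base change of the reduced $Y$ along $\rho$. Second, you need the set identity $\overline{\rho^{-1}(M)}^{\mathrm{Zar}}=\rho^{-1}(Y)$. This makes the geometric reason visible: $\rho$ is a finite \'etale map times a trivial torus bundle. It costs a standard fact about smooth morphisms.

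\textbf{The step to expand.} The second piece is stated too briefly. The fibers of $\rho$ on $(\C^*)^{n'}$ are cosets of the complex kernel $H_\C$, not of the compact group $H$. To complete the argument:
\begin{itemize}
\item[(i)] $H$ is Zariski dense in $H_\C$: they have the same finite factors, and $\T^{n'-n}$ is Zariski dense in $(\C^*)^{n'-n}$.
\item[(ii)] $Y'=\overline{\rho^{-1}(M)}^{\mathrm{Zar}}$ is $H$-stable. The set $\{g: gY'\subseteq Y'\}$ is Zariski closed and contains $H$, so it contains $H_\C$, and $Y'$ is $H_\C$-stable.
\item[(iii)] Hence $Y'$ is saturated for $\rho$. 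Since $\rho$ is flat and therefore open, $\rho(Y')$ is closed.
\item[(iv)] $\rho(Y')$ contains $M$, hence contains $Y$, and so $\rho^{-1}(Y)\subseteq Y'$.
\end{itemize}
With these lines added, your argument is complete.
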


Theorem~\ref{thm:universal-classification} already gives
$E_L^{-1}(Y)=\Lambda$ and finite real-valued equations in $\Gamma$.
The proposition identifies the entire ideal after adjoining any
finitely many real frequencies.

\begin{lemma}[Density in an enlarged torus]\label{lem:extended-phase-density}
Let $L'$ be a basis matrix for a finitely generated group
$\Gamma'\supset\Gamma$, and let $\rho:\T^{n'}\to\T^n$ be dual to the
inclusion, where $n'=\operatorname{rank}\Gamma'$. Then
\begin{equation}
 \overline{\{L'\lambda\bmod\Z^{n'}:\lambda\in\Lambda\}}
       =\rho^{-1}(M).
 \label{eq:q4-phase-density}
\end{equation}
\end{lemma}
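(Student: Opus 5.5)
The inclusion ``$\subseteq$'' is the easy direction. For $\lambda\in\Lambda$ we have $\rho(L'\lambda)=L\lambda$ modulo $\Z^n$. By covariance, $\mu_{L\lambda}=\tau_{-\lambda}\mu$, which has an atom at $0$, so $L\lambda\in M$. Since $M$ is compact (Theorem~\ref{thm:all-fq-toral-cycles}(c)) and $\rho$ is continuous, $\rho^{-1}(M)$ is closed and contains the closure of the phases $L'\lambda$.

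For ``$\supseteq$'' I would use an enlarged hull. The measure $\mu$ is unchanged, but I parametrize its translates by the torus $\T^{n'}$ via $x\mapsto L'x$. First, $L'\R^d$ is dense in $\T^{n'}$: a character $k'\in\Z^{n'}$ vanishing on $L'\R^d$ gives $L'^tk'=0$, and the rows of $L'$ form a basis of $\Gamma'$, so $k'=0$. Next, I define $\mu'_{t'}=\mu_{\rho(t')}$ for $t'\in\T^{n'}$. This family is vaguely continuous, satisfies $\mu'_{L'x}=\tau_{-x}\mu$, and obeys covariance $\mu'_{t'+L'x}=\tau_{-x}\mu'_{t'}$. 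Then
\[
 \rho^{-1}(M)=\{t':0\in\supp\mu'_{t'}\}.
\]
With this in place I repeat the density argument of Lemma~\ref{lem:ac-transverse-strata}. Given $t'_0\in\rho^{-1}(M)$, density of $L'\R^d$ provides $x_j$ with $L'x_j\to t'_0$. Vague continuity, together with the atom of mass at least one at zero in $\mu'_{t'_0}$, gives atoms $y_j=\lambda_j-x_j\to0$ of $\tau_{-x_j}\mu$ with $\lambda_j\in\Lambda$. Hence $L'\lambda_j=L'x_j+L'y_j\to t'_0$.

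The only point needing care is that $\tau_{-x_j}\mu=\mu'_{L'x_j}$ converges vaguely to $\mu'_{t'_0}$. This follows because $\rho(L'x_j)=Lx_j\to\rho(t'_0)$ and $t\mapsto\mu_t$ is vaguely continuous on $\T^n$ by Theorem~\ref{thm:all-fq-toral-cycles}(a).

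The persistence of an atom near zero is the step I expect to be the main obstacle, since it is where positivity and integrality enter. I would take a nonnegative bump $\zeta$ supported in $B(0,r)$ with $\zeta(0)=1$. The integral of $\zeta$ against $\mu'_{t'_0}$ is at least one because the mass at zero is a positive integer. By vague convergence, the integral of $\zeta$ against $\tau_{-x_j}\mu$ is eventually positive, so that measure has an atom in $B(0,r)$. Letting $r\downarrow0$ along a diagonal subsequence finishes the proof.
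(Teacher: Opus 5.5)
Your proposal is correct and follows essentially the same route as the paper: the forward inclusion uses $\mu_{L\lambda}=\tau_{-\lambda}\mu$ together with closedness of $\rho^{-1}(M)$. The reverse inclusion uses density of the flow $L'\R^d$ in $\T^{n'}$, the vague convergence $\tau_{-x_j}\mu=\mu_{Lx_j}\to\mu_{\rho(t'_0)}$, and a nonnegative bump positive at zero to produce atoms $y_j\to0$. Your auxiliary family $\mu'_{t'}=\mu_{\rho(t')}$ is only a notational repackaging of what the paper does directly with $\mu_{\rho\theta'}$.
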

\begin{proof}
By Theorem~\ref{thm:all-fq-toral-cycles}(c), the canonical set $M$ is
compact. For $\lambda\in\Lambda$, the measure
$\mu_{L\lambda}=\tau_{-\lambda}\mu$ has an atom at zero, which proves
one inclusion in \eqref{eq:q4-phase-density}.

Conversely, let $\theta'\in\rho^{-1}(M)$. The flow
$x\mapsto L'x\bmod\Z^{n'}$ is dense because the basis vectors of
$\Gamma'$ are integer-independent in $\R^d$. We can therefore choose
$x_j$ with $L'x_j\to\theta'$. By
Definition~\ref{def:ac-hull-measures},
\[
 \tau_{-x_j}\mu=\mu_{Lx_j}\longrightarrow\mu_{\rho\theta'}
\]
vaguely. The limit has an atom at zero, so a nonnegative smooth test
function supported in any ball about zero and positive at zero has positive
limiting integral. It follows that $\operatorname{dist}(0,\Lambda-x_j)\to0$.
Choosing $y_j\in\Lambda-x_j$ tending to zero and setting $\lambda_j=x_j+y_j$,
we obtain $\lambda_j\in\Lambda$ and
\[
 L'\lambda_j=L'x_j+L'y_j\longrightarrow\theta'\pmod{\Z^{n'}}.
\]
\end{proof}

\begin{lemma}[Extension of the vanishing ideal]\label{lem:torus-ideal-extension}
Let $\Gamma\subset\Gamma'\subset\R^d$ be finitely generated groups
with basis matrices $L$ and $L'$, and let
$\rho:\T^{n'}\to\T^n$ be the dual surjection. For any subset
$M\subset\T^n$, let $Y$ be its Zariski closure in $(\C^*)^n$.
The Laurent polynomials in $\C[\Gamma']$ vanishing on $\rho^{-1}(M)$
form the ideal $I(Y)\C[\Gamma']$.
\end{lemma}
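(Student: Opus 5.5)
The inclusion $I(Y)\C[\Gamma']\subseteq\{P:P|_{\rho^{-1}(M)}=0\}$ is immediate: every $Q\in I(Y)$ vanishes on $M\subset Y$, and $\rho^*Q=Q\circ\rho$ then vanishes on $\rho^{-1}(M)$. The ideal generated by such pullbacks consists of $\C[\Gamma']$-multiples of functions vanishing there. The work is in the reverse inclusion. I plan to decompose $P\in\C[\Gamma']$ according to the characters of the kernel $H=\ker\rho$ and average over $H$.

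First, using Smith normal form as described before the lemma, choose coordinates in which $\rho$ is a product of coordinate power maps $z_i\mapsto z_i^{\varsigma_i}$ followed by projection onto the first $n$ coordinates. In these coordinates, $\C[\Gamma']$ is a free $\C[\Gamma]$-module with basis a finite-or-infinite set of monomials $u^{\gamma_\chi}$, one for each character $\chi$ of $H$. Explicitly, take $z'^{b}$ with $0\le b_i<\varsigma_i$ for $i\le n$ and $b_i\in\Z$ for $i>n$. Here $H$ acts on $z'^{b}$ through the character $\chi_b$, and distinct $b$ give distinct characters. Write $P=\sum_b z'^{b}\,\rho^*Q_b$ with $Q_b\in\C[\Gamma]$, a finite sum.

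Second, use that $\rho^{-1}(M)$ is $H$-invariant. For $h\in H$, the translate $P(h\cdot)$ also vanishes on $\rho^{-1}(M)$. Integrating $P(h\theta')\overline{\chi_b(h)}$ against Haar measure on the compact group $H$ isolates the single term $z'^{b}\rho^*Q_b$. This component therefore vanishes on $\rho^{-1}(M)$. Since $z'^{b}$ has modulus one on the real torus, $\rho^*Q_b$ vanishes on $\rho^{-1}(M)$. By surjectivity of $\rho:\T^{n'}\to\T^n$, $Q_b$ vanishes on $M$, hence on its Zariski closure $Y$. So $Q_b\in I(Y)$, and $P\in I(Y)\C[\Gamma']$.

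The main obstacle I expect is bookkeeping rather than substance. One must check that the monomials $z'^b$ really form a $\C[\Gamma]$-basis with pairwise distinct $H$-characters. This is where the Smith form and the identification $\rho^*=$ inclusion $\C[\Gamma]\subset\C[\Gamma']$ enter. One must also check that averaging is legitimate on the mixed finite-times-torus group $H$, which it is since $P$ is a finite sum and Haar measure exists. If the Smith coordinate change is awkward, I would instead argue directly: characters of $\Gamma'$ restricted to $H$ are determined by the class of the exponent in $\Gamma'/\Gamma$, up to the torsion-free part. Grouping the monomials of $P$ by these classes gives the same decomposition.
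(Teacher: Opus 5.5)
Your proposal is correct and takes essentially the same route as the paper. You expand $P$ over coset representatives of $\Gamma'/\Gamma$ (your Smith-form monomials $z'^b$), note that these restrict to distinct characters of $H=\ker\rho$, average over $H$ using the $H$-invariance of $\rho^{-1}(M)$, and then use the nonvanishing of monomials on the torus together with surjectivity of $\rho$ to put each coefficient in $I(Y)$. In your fallback remark, the qualifier ``up to the torsion-free part'' is unnecessary and slightly misleading: the restriction of $u^\gamma$ to $H$ depends exactly on the coset $\gamma+\Gamma$, and distinct cosets give distinct characters because the annihilator of $H$ in $\Gamma'$ is exactly $\Gamma$.
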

\begin{proof}
Pullback proves one inclusion. For the reverse inclusion, we group the
exponents of a Laurent polynomial $P$ vanishing on $\rho^{-1}(M)$ by
cosets in $\Gamma'/\Gamma$:
\begin{equation}
 P=\sum_{\beta}u^\beta\rho^*P_\beta,
 \qquad P_\beta\in\C[\Gamma].
 \label{eq:q4-coset-expansion}
\end{equation}
Once the distinct coset representatives are fixed, this finite expansion
is unique. In multiplicative notation, $u^\beta$ is a character of the
compact torus. Put $H=\ker\rho$. The characters $u^\beta|_{H}$ are
distinct because the annihilator of $H$ in $\Gamma'$ is $\Gamma$.
Indeed, in Smith normal form, triviality on the finite factors requires
exponents divisible by $\varsigma_i$, while triviality on the remaining
torus factors requires zero exponents. These are precisely the characters
from $\Gamma$; the finite factors account for torsion in $\Gamma'/\Gamma$.

For $z'\in\rho^{-1}(M)$ and $h\in H$,
\[
 0=P(z'h)=
 \sum_\beta u^\beta(z')u^\beta(h)P_\beta(\rho z').
\]
Integrating against the conjugate of each character over
$H$, with normalized Haar measure, gives
$u^\beta(z')P_\beta(\rho z')=0$ for every $\beta$.
Monomials never vanish on the torus, and $\rho$ is surjective, so each
$P_\beta$ vanishes on $M$ and hence on $Y$. This proves
$P_\beta\in I(Y)$, and \eqref{eq:q4-coset-expansion} concludes the proof.
\end{proof}

\begin{proof}[Proof of Proposition~\ref{prop:canonical-ideal}]
Every element of $I(Y)\C[\Gamma']$ vanishes on $\Lambda$.
Conversely, Lemma~\ref{lem:extended-phase-density} shows that a
polynomial vanishing on $\Lambda$ vanishes on $\rho^{-1}(M)$, and
Lemma~\ref{lem:torus-ideal-extension} gives
\eqref{eq:q4-ideal-extension}.

For a trigonometric polynomial with real frequencies vanishing on
$\Lambda$, adjoin its frequencies to $\Gamma$ and write it as
$\mathcal E_P$ in the resulting group algebra. The ideal identity implies
$P\in I(Y)\C[\Gamma']$, and the complex torus morphism $\rho$ satisfies
\[
 \rho(E_{L'}(x))=E_L(x),\qquad x\in\C^d,
\]
so $\mathcal E_P$ vanishes on $E_L^{-1}(Y)$. Conversely, the finite
real-valued generators in Proposition~\ref{cor:canonical-real-equations}
already have common complex zero set $E_L^{-1}(Y)$ after composition
with $E_L$. Allowing additional frequencies therefore leaves this
common zero set unchanged.
\end{proof}

\subsection{Masses and equation multiplicities}
\label{sec:equation-multiplicities}

Equations determine the support, but their local colengths need not
recover the masses. The following example uses two transverse surfaces
and a dense physical flow to exhibit this discrepancy.

\begin{proposition}[Two transverse surfaces and equation colengths]
\label{prop:integer-equation-obstruction}
Put
\[
 \mathsf q(a)=(1,-a,a^2,-a^3)^t,\qquad
 W=\operatorname{span}_{\R}\{\mathsf q(1),\mathsf q(2)\},\qquad
 W'=\operatorname{span}_{\R}\{\mathsf q(3),\mathsf q(5)\},
\]
and let $T_A,T_{A'}\subset(\C^*)^4$ be the connected subtori with
these logarithmic tangent spaces. Their projective closures are strict
Lee--Yang of codimension $2$. Their reduced union $X$ is Lee--Yang,
satisfies ordinary sign variation off the torus, and is not strict.
For
\[
 L=L_{\mathrm{irr}}=
 \begin{pmatrix}1&0\\1&1\\1&\sqrt2\\1&\sqrt3\end{pmatrix},
\]
all maximal minors are positive and the physical flow is dense in $\T^4$.
Its section measure $\mu=\nu^{X,L}_0$ is an integer-mass FQ measure with
$\mu(\{0\})=2$. Every finite family of trigonometric polynomials
$F_1,\ldots,F_q$, with arbitrary real frequencies, vanishing on $\supp\mu$
satisfies
\[
 \dim_\C\mathcal O_{\C^2,0}/(F_1,\ldots,F_q)
       \ge3>2=\mu(\{0\}).
\]
The inequality allows the left side to be infinite. Thus weighted local
intersection degrees cannot in general be realized as equation colengths,
already for $c=2$.
\end{proposition}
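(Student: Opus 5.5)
The plan is to verify the geometric assertions about $X$ and $L$ directly, then obtain the colength bound from Proposition~\ref{prop:canonical-ideal}. That proposition says every equation vanishing on $\supp\mu$ lies in the extended vanishing ideal of the canonical variety, which here is the union of the two subtori. Pulling that ideal back to the physical plane at the node lands inside $\mathfrak m^2$, whose colength in $\mathcal O_{\C^2,0}$ is $3$.

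\emph{Geometry of $X$ and $L$.} A vector $\alpha\mathsf q(a)+\beta\mathsf q(b)$ with $0<a<b$ has $j$th entry $(-1)^j(\alpha a^j+\beta b^j)$, for $j=0,1,2,3$. The scalar factor $\alpha a^j+\beta b^j=a^j(\alpha+\beta(b/a)^j)$ is monotone up to a positive factor, so it changes sign at most once and vanishes for at most one $j$. Combined with the alternating factor $(-1)^j$, every nonzero vector of $W$ or $W'$ therefore has $\varn\ge2=d$, even after removing one zero entry. This gives ordinary sign variation on $T_A\setminus\T^4$ and $T_{A'}\setminus\T^4$. At the projective boundary the same holds for the limiting sign patterns, which are sign patterns of nonzero vectors of $W$ and $W'$. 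The subtori are smooth and invariant under reciprocal conjugation, so each closure is strict Lee--Yang. The four vectors $\mathsf q(1),\mathsf q(2),\mathsf q(3),\mathsf q(5)$ form a nonsingular Vandermonde-type matrix, so $W\oplus W'=\R^4$. Hence $T_A\cap T_{A'}$ is finite, and the two subtori meet transversally at $(1,1,1,1)$. The union is therefore Lee--Yang with ordinary sign variation, but it is singular at a torus point and so not strict. The maximal minors of $L_{\mathrm{irr}}$ are $s_j-s_i>0$ for $(s_i)=(0,1,\sqrt2,\sqrt3)$. The flow is dense because $L^tk=0$ means $\sum k_i=0$ and $k_2+\sqrt2k_3+\sqrt3k_4=0$, which forces $k=0$ by rational independence of $1,\sqrt2,\sqrt3$. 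By \cite[Lemma~3.2]{AKKV} we have $V\cap W=V\cap W'=0$, so each torus meets the physical plane transversally with local intersection degree one at the origin. Thus $\mu(\{0\})=2$, and $\mu$ is an integer-mass FQ measure by Proposition~\ref{prop:integer-forward}.

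\emph{Identification of the canonical variety.} I will show that the canonical variety of $\mu$ is $Y=T_A\cup T_{A'}$ in the coordinates given by $L$. Since the flow is dense, the spectral group lies in $L^t\Z^4$ with full rank, so $L$ serves as a basis matrix, possibly after a finite-index adjustment handled by Lemma~\ref{lem:torus-ideal-extension}. Covariance identifies the canonical set with $X(\T)$: a real phase $t+Lx$ with $x$ a root lies in $X(\T)$, and conversely density of the flow together with vague continuity (Theorem~\ref{thm:singular-section}) puts every point of $X(\T)$ in $M$. The torus loci $T_A(\T)$ and $T_{A'}(\T)$ are compact real subtori of dimension $2$ and are Zariski dense in the respective subtori. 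Hence $Y=\overline M^{\mathrm{Zar}}=T_A\cup T_{A'}$.

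\emph{Colength bound.} Adjoin all frequencies of $F_1,\dots,F_q$ to obtain $\Gamma'$ with basis matrix $L'$. Proposition~\ref{prop:canonical-ideal} gives $F_i=P_i\circ E_{L'}$ with $P_i\in I(Y)\C[\Gamma']$. Near the point $E_{L'}(0)$ the map $\rho$ is a submersion of complex tori, so in logarithmic coordinates the germ of $\rho^{-1}(Y)$ is the product of the germ of $Y$ with a linear space, and $E_L=\rho\circ E_{L'}$. It therefore suffices to pull back the local analytic ideal of $Y$ at the node along $x\mapsto2\pi i Lx$. In logarithmic coordinates that ideal is $I_{W_\C}\cap I_{W'_\C}=I_{W_\C}\cdot I_{W'_\C}$; the product equals the intersection because the two subspaces are complementary, so after a linear change of coordinates this is $(u_1,u_2)\cap(u_3,u_4)$. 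Since $L\C^2\cap W_\C=0$, the pullbacks of the linear forms cutting out $W_\C$ span all linear forms on $\C^2$, so $\ell^*I_{W_\C}=\mathfrak m$, and likewise for $W'$. Hence $(F_1,\dots,F_q)\subset\mathfrak m\cdot\mathfrak m=\mathfrak m^2$, and $\dim_\C\mathcal O_{\C^2,0}/\mathfrak m^2=3$.

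I expect the main obstacle to be the second step: justifying that the canonical variety of this particular $\mu$ is exactly the union of the two subtori, and in particular that $L$ is a valid spectral basis up to finite index, so that Proposition~\ref{prop:canonical-ideal} applies in these coordinates. The local algebra in the third step is routine once this identification is in place.
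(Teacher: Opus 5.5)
Your proposal is correct and follows essentially the paper's proof. It uses the same sign-variation, transversality and flow-density checks, then the ideal identity in an enlarged frequency group, and the same local computation $I(W_\C)\cap I(W'_\C)=I(W_\C)I(W'_\C)$, whose restriction to $L\C^2$ is $\mathfrak m_0^2$. The finite-index adjustment you flag is genuinely needed: the spectral group has index $24$ in $L^t\Z^4$, so $L$ is not a spectral basis. The paper handles it along your lines, proving $\overline{\{L'\lambda:\lambda\in\supp\mu\}}=\rho^{-1}(Y_{\mathrm{amb}}(\T))$ directly from vague continuity of the sections and then applying Lemma~\ref{lem:torus-ideal-extension} to the ambient group $L^t\Z^4$.
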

\begin{proof}
We construct two transverse subtori and use density of the physical flow
to force every equation of the support into their common ideal.

\emph{Sign variation on the two planes.}
For $0<a<b$ write a nonzero vector in their span as
$A\mathsf q(a)+B\mathsf q(b)$. Its coordinate of index $j=0,1,2,3$ is
$(-1)^js_j$, where
\[
 s_j=a^j\bigl(A+B(b/a)^j\bigr).
\]
The nonzero $s_j$ change sign at most once, and at most one
$s_j$ vanishes. With no zero entry, alternating the signs leaves at least
two sign changes. If a zero is at an endpoint, the other three $s_j$
have the same sign, so the remaining coordinates alternate. At an interior
zero, the $s_j$ on either side have opposite signs, and deleting the zero
again leaves three alternating coordinates. Every nonzero vector in
$W$ or $W'$ therefore has ordinary variation at least $2$.

Since the amoeba of a subtorus is its real logarithmic tangent plane,
this proves the inequality in the algebraic torus. For a sequence
converging to a projective boundary point, the logarithmic norms tend to
infinity, and a subsequence of the normalized logarithmic vectors tends
to a nonzero vector in $W$ or $W'$. Every nonzero coordinate of this
limit has the sign of an infinite logarithmic modulus at the boundary
point. These signs form a subsequence of the boundary sign vector;
inserting the other coordinates cannot decrease its variation, which is
therefore at least $2$.

Each subtorus is smooth and closed in the algebraic torus, so its
projective closure remains smooth along the real torus. The defining
characters have integer exponents and value $1$, which makes both
subtori and their closures invariant under reciprocal conjugation.
Together with the boundary inequality, this proves that both closures
are strict Lee--Yang by Definition~\ref{def:lee-yang}.

\emph{Transversality and the section masses.}
The alternating Vandermonde determinant is
\[
 \det[\mathsf q(1)\ \mathsf q(2)\ \mathsf q(3)\ \mathsf q(5)]=48.
\]
It follows that $W\oplus W'=\R^4$, so the two subtori meet in a finite
algebraic subgroup and are transverse at every intersection point.
Their reduced union is singular at the identity, where it has two
branches; hence $X$ is Lee--Yang but not strict.

Every nonzero vector in a positive $2$-plane has upper variation less
than $2$ by \cite[Lemma~3.2]{AKKV}. The preceding ordinary-variation
bound implies $L\R^2\cap W=L\R^2\cap W'=\{0\}$.
Each branch has local intersection degree one at the identity, so the
union has degree two there. By Theorem~\ref{thm:singular-section}, its
section measure is an FQ measure.

For explicit equations, we use the saturated integer annihilator matrices
\[
 \mathsf C_{12}=
 \begin{pmatrix}2&3&1&0\\-6&-7&0&1\end{pmatrix},
 \qquad
 \mathsf C_{35}=
 \begin{pmatrix}15&8&1&0\\-120&-49&0&1\end{pmatrix}.
\]
Multiplication yields
$\mathsf C_{12}\mathsf q(1)=\mathsf C_{12}\mathsf q(2)=0$ and
$\mathsf C_{35}\mathsf q(3)=\mathsf C_{35}\mathsf q(5)=0$.
Since the last two columns of either matrix form the identity, the
character equations $z^{\mathsf C_{12,j}}=1$ and $z^{\mathsf C_{35,j}}=1$,
with exponents given by the rows, define connected subtori with the
required dimensions and tangent spaces. Their sections are the lattices
\[
 \Lambda_1=(\mathsf C_{12}L)^{-1}\Z^2,
 \qquad
 \Lambda_2=(\mathsf C_{35}L)^{-1}\Z^2,
 \qquad
 \mu=\sum_{x\in\Lambda_1}\delta_x+\sum_{x\in\Lambda_2}\delta_x.
\]
Both matrices are invertible because the physical plane is transverse
to each subtorus.

\emph{The dense physical flow.}
For $L=L_{\mathrm{irr}}$, the slopes $0<1<\sqrt2<\sqrt3$ imply that
all maximal minors are positive. If $L^tk=0$ for $k\in\Z^4$, the
second coordinate reads $k_2+\sqrt2k_3+\sqrt3k_4=0$.
The rational independence of $1,\sqrt2,\sqrt3$ implies $k_2=k_3=k_4=0$,
and the first coordinate then implies $k_1=0$. The rows thus form an
integer-independent basis of $\Gamma_{\mathrm{amb}}=L^t\Z^4$, and the
physical flow is dense in $\T^4$.

Write $Y_{\mathrm{amb}}=T_A\cup T_{A'}$. Since Poisson summation for
each lattice has positive coefficients, the spectral group of $\mu$ is
\[
 L^t(\mathsf C_{12}^t\Z^2+\mathsf C_{35}^t\Z^2).
\]
Its index in $\Gamma_{\mathrm{amb}}=L^t\Z^4$ is
\[
 \left|\det\begin{pmatrix}\mathsf C_{12}\\\mathsf C_{35}\end{pmatrix}\right|=24.
\]
The ambient torus $\T^4$ is therefore a $24$-fold cover of the canonical
torus, and we work in this cover.
For any finitely generated $\Gamma'\supset\Gamma_{\mathrm{amb}}$, take
a basis matrix $L'$ and the dual surjection
$\rho:\T^{n'}\to\T^4$. The section measures in
Theorem~\ref{thm:singular-section} satisfy
$\nu_{Lx}=\tau_{-x}\nu_0$ and are vaguely continuous. Therefore
\[
 \overline{\{L'\lambda:\lambda\in\supp\mu\}}
       =\rho^{-1}\bigl(Y_{\mathrm{amb}}(\T)\bigr).
\]
The section identity and closedness prove the forward inclusion.
Conversely, approximate a point on the right by $L'x_j$.
The measures $\tau_{-x_j}\mu$ converge to a real-phase section measure
having a positive atom at zero. Positive test functions give support
points $y_j$ of these translates with $y_j\to0$, so
$L'(x_j+y_j)$ tends to the chosen point. This proves the other inclusion.

The torus locus is Zariski dense in each of the two subtori: pull a
Laurent polynomial back to $(\C^*)^2$ and use Fourier uniqueness on
$(S^1)^2$. Applying Lemma~\ref{lem:torus-ideal-extension} to this
ambient group and the preceding phase-density equality gives
\[
 \{P\in\C[\Gamma']:\mathcal E_P|_{\supp\mu}=0\}
     =I(Y_{\mathrm{amb}})\C[\Gamma'].
\]
Adjoining its finitely many frequencies therefore places every
polynomial in the proposition in such an extended ideal.

\emph{The local ideal and colength.}
In logarithmic coordinates at the identity, the germ of
$Y_{\mathrm{amb}}$ is $W_\C\cup W'_\C$. The two planes are complementary,
so a linear change of coordinates writes their ideals as
$(u_1,u_2)$ and $(v_1,v_2)$. Their intersection in the convergent
power-series ring equals their product: a series in the intersection
has no monomial involving only $u$ or only $v$, and grouping the remaining
monomials by their first $u$ and first $v$ factors expresses it as a sum
of $u_iv_j$ times convergent series. Hence the reduced germ ideal is
\[
 I(W_\C)\cap I(W'_\C)=I(W_\C)I(W'_\C).
\]
On the physical plane $L\C^2$, each linear ideal restricts to the
maximal ideal $\mathfrak m_0$: the two normal linear forms remain
independent because $L\C^2$ is complementary to the branch. The product
ideal therefore restricts to $\mathfrak m_0^2$.

Every Laurent function vanishing on $Y_{\mathrm{amb}}$ has its germ in
this product ideal. An added character restricts to a holomorphic unit,
so the extended ideal identity places each $F_i$ in $\mathfrak m_0^2$.
We obtain a surjection
\[
 \mathcal O_{\C^2,0}/(F_1,\ldots,F_q)
   \longrightarrow\mathcal O_{\C^2,0}/\mathfrak m_0^2.
\]
The target has basis $1,x_1,x_2$ and dimension $3$, proving the
inequality. The two-branch projection has local degree $2$, whereas the
restricted reduced ideal $\mathfrak m_0^2$ has colength $3$,
in accord with the non-Cohen--Macaulay case of
Remark~\ref{rem:degree-length}.
\end{proof}

\begin{remark}
For the rational matrix
\[
 L_{\mathrm{rat}}=
 \begin{pmatrix}1&0\\1&1\\1&2\\1&3\end{pmatrix},
\]
all maximal minors are positive, since $0<1<2<3$. The determinants
$\det(\mathsf C_{12}L_{\mathrm{rat}})=36$ and
$\det(\mathsf C_{35}L_{\mathrm{rat}})=576$ show that the two section
lattices lie in $(1/576)\Z^2$, so
$F_1(x)=\sin(2\pi\,576x_1)$ and $F_2(x)=\sin(2\pi\,576x_2)$ vanish on
the support and have independent derivatives at zero, giving local
colength $1$ although the mass there is $2$.
Here the physical flow is not dense in $\T^4$; the spectral group has
rank $2$, the canonical torus is two-dimensional, and the canonical
variety is zero-dimensional.
\end{remark}

\section{Proofs of the main theorems}
\label{sec:main-proofs}

The general classification is proved in
Theorem~\ref{thm:universal-classification}. We give its Delone and
unit-mass specializations here; the latter uses density of the root
phases to determine every component coefficient.
Theorem~\ref{thm:main-height} follows from
Corollary~\ref{cor:universal-linear-height}.

\begin{proof}[Proof of Theorem~\ref{thm:main-delone}]
By Lemma~\ref{lem:ac-translation-bound}, the support is relatively
dense, so uniform discreteness and the Delone property are equivalent
for an integer-mass FQ measure. For uniformly discrete support, the
representing cycle of
Theorem~\ref{thm:universal-classification} has strict support by
Lemma~\ref{lem:integer-ud-smoothness} and
Proposition~\ref{prop:compression}. Conversely, a section of a cycle
with strict support is FQ by Proposition~\ref{prop:integer-forward}.
Its support is the section of the reduced variety and is Delone by
\cite[Theorem~2.3]{AKKV}.
For the counting measure of a Delone FQ-set,
$\Lambda=\supp\nu^{\mathfrak X,\widetilde L}_0
=\Lambda(|\mathfrak X|,\widetilde L)$, with $|\mathfrak X|$ strict.
This answers Question~(1) of AKKV.
\end{proof}

\begin{proof}[Proof of Theorem~\ref{thm:main-equations}]
The integer-mass assertions follow from
Theorem~\ref{thm:universal-classification}. For the unit-mass
specialization, suppose first that $c=0$. The set $M$ is finite, and
density of $L\Lambda$ in $M$ implies that every point of $M$ is the phase
of a root of the original measure. That root has mass one, so its component
coefficient is one. If $c>0$, every $Y_i$ contains a smooth real patch
disjoint from the other components, with local intersection degree one.
By Lemma~\ref{lem:ac-transverse-strata}, the root phases $L\Lambda$ are
dense in $M$, so the patch contains such a phase and the mass identity
implies $\varkappa_i=1$. All component coefficients are therefore one.
At every root of the original measure, the local intersection degree
equals its mass, which is one; by Lemma~\ref{lem:local-degree-one}, the
variety is smooth and the section is transverse there.
Compression preserves these properties and the reduced coefficients.
The unit-mass specialization follows. As
Remark~\ref{rem:smooth-cycle-coefficients} shows, smoothness alone does
not force the cycle coefficients to be one.
\end{proof}

\begin{corollary}[Geometric classification]
\label{cor:geometric-classification}
The nonempty Delone Fourier quasicrystals in $\R^d$ are exactly the
sections $\Lambda(X,L)$ of strict Lee--Yang varieties by matrices with
positive maximal minors.

The nonempty Fourier quasicrystals with unit masses are exactly the
nonempty sections $\Lambda(X,L)$ of Lee--Yang varieties by such matrices for
which every physical root has local intersection degree one.
Equivalently, $X$ is smooth at $E_L(x)$ for every physical root $x$;
the section is then transverse there by
Theorem~\ref{thm:singular-section}(1) and
Lemma~\ref{lem:local-degree-one}. The representing variety may be required
to satisfy ordinary sign variation off the torus.
\end{corollary}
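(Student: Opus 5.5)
The Delone part will be obtained by combining Theorem~\ref{thm:main-delone} with the unit-mass specialization. Let $\Lambda$ be a nonempty Delone FQ-set. Then its counting measure $\mu_\Lambda$ is an integer-mass FQ measure with uniformly discrete support. Theorem~\ref{thm:universal-classification} represents it as $\nu^{\mathfrak X,\widetilde L}_0$. By Lemma~\ref{lem:integer-ud-smoothness} and Proposition~\ref{prop:compression}, the reduced support $|\mathfrak X|$ is strict. The unit-mass argument from the proof of Theorem~\ref{thm:main-equations} then applies: the root phases $L\Lambda$ are dense in $M$ by Lemma~\ref{lem:ac-transverse-strata}, and every mass equals one, so all coefficients $\varkappa_i=1$. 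Hence $\mathfrak X=|\mathfrak X|=X$ is a strict Lee--Yang variety and $\Lambda=\Lambda(X,\widetilde L)$. For the converse, \cite[Theorem~2.3]{AKKV} (Theorem~\ref{thm:akkv-construction}) shows that $\Lambda(X,L)$ is a Delone FQ-set. This needs no extra input because, for strict $X$, every root is smooth and so has degree one by Theorem~\ref{thm:singular-section}(1), which means the counting measure equals $\nu^{X,L}_0$.

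For unit masses, the forward direction goes as follows. Apply Theorem~\ref{thm:main-equations}, which yields a reduced Lee--Yang cycle $X$ (every coefficient one) satisfying ordinary sign variation off the torus, with local intersection degree one at every physical root, and with $\mu_\Lambda=\nu^{X,\widetilde L}_0$. In particular $\Lambda=\Lambda(X,\widetilde L)$. Conversely, take a Lee--Yang $X$ and a positive-minor $L$ with every physical root of degree one. Then $\nu^{X,L}_0$ is the counting measure of $\Lambda(X,L)$, and by Theorem~\ref{thm:singular-section}(3) (equivalently Proposition~\ref{prop:integer-forward}) it is FQ. If it is nonempty, $\Lambda(X,L)$ is therefore an FQ-set with unit masses.

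Next comes the equivalence of ``degree one'' with ``$X$ smooth at $E_L(x)$''. Theorem~\ref{thm:singular-section}(1) gives that a smooth image point has degree one. Conversely, suppose the degree is one. Apply Lemma~\ref{lem:local-degree-one} to the reduced pure $c$-dimensional logarithmic germ of $X$ together with the finite projection along $L\C^d$, which is finite by Definition~\ref{def:local-intersection-degree} and Remark~\ref{rem:effective-lee-yang-sections}. The lemma shows this projection is a local biholomorphism. Hence the germ is smooth, and its tangent space is complementary to $L\C^d$, which is transversality.

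The only point needing care, and the step I expect to be the main obstacle, is checking the hypotheses of Lemma~\ref{lem:local-degree-one}. Reducedness and purity of the germ must hold at the torus point, possibly where several components meet. Purity follows because $X$ is reduced and equidimensional by Definition~\ref{def:lee-yang}. The degree is summed over all local components, so degree one forces a single branch, and the lemma handles this.
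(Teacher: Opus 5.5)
Your proposal is correct and follows essentially the paper's route: necessity from the unit-mass specializations of Theorems~\ref{thm:main-delone} and~\ref{thm:main-equations}, sufficiency from \cite[Theorem~2.3]{AKKV} and Theorem~\ref{thm:singular-section}, and the smoothness equivalence from Lemma~\ref{lem:local-degree-one} together with the degree-one statement at smooth points. The only difference is that your Delone argument does more than it needs to: showing the coefficients equal one is unnecessary, since $\Lambda=\supp\nu^{\mathfrak X,\widetilde L}_0=\Lambda(|\mathfrak X|,\widetilde L)$ already holds with the strict reduced support, which is how the paper argues in the proof of Theorem~\ref{thm:main-delone}.
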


\begin{proof}
Necessity follows from the unit-mass specializations of
Theorems~\ref{thm:main-delone} and~\ref{thm:main-equations}.
For strict varieties, sufficiency is \cite[Theorem~2.3]{AKKV}.
For a general Lee--Yang variety, Theorem~\ref{thm:singular-section}
produces the FQ measure weighted by local intersection degrees, which
is the counting measure when all these degrees are one.

A local projection of degree one onto a smooth base is a local
isomorphism by Lemma~\ref{lem:local-degree-one}, so the variety is smooth
and the section is transverse. The converse is the local inverse
function theorem.
\end{proof}

\begin{proof}[Proof of Theorem~\ref{thm:main-singular}]
Proposition~\ref{prop:integer-forward} proves the forward assertion,
and Theorem~\ref{thm:universal-classification} proves the converse.
Example~\ref{ex:two-circles} and Proposition~\ref{prop:singular-example}
show that discarding degrees can destroy the FQ property.
\end{proof}

\section{Further questions}\label{sec:further-questions}

The first question concerns when local colength can recover the masses
supplied by local intersection degree.

\begin{question}[Masses and colengths]\label{qu:mass-colengths}
Let $\mu$ be an integer-mass FQ measure, and suppose that the local
rings of its canonical variety $Y$ at points of $M$ are Cohen--Macaulay.
Is there a finite family of trigonometric polynomials whose common complex
zero set is $\supp\mu$ and whose local colength at every point of this
support equals its mass?
\end{question}

The local rings are Cohen--Macaulay when $c=1$.
Remark~\ref{rem:degree-length} compares degree with colength;
Proposition~\ref{prop:integer-equation-obstruction} shows that, in
general, equations cannot realize the masses.

\begin{question}[The number of equations]\label{qu:number-equations}
For an integer-mass FQ measure on $\R^d$ with spectral group $\Gamma$,
can the number of real-valued trigonometric equations with frequencies
in $\Gamma$ in Theorem~\ref{thm:main-equations} always be taken equal
to $d$?
\end{question}

For $d=1$, the canonical variety $Y$ is a reduced hypersurface in
$(\C^*)^n$, so its Laurent ideal is principal, say $I(Y)=(P)$.
The involution of Proposition~\ref{cor:canonical-real-equations}
preserves $I(Y)$, so $P^\dagger$ differs from $P$ by a Laurent unit.
The identity $\mathcal E_{PP^\dagger}=|\mathcal E_P|^2$ on $\R$
therefore gives one real-valued trigonometric polynomial with frequencies
in $\Gamma$ and complex zero set $E_L^{-1}(Y)=\Lambda$.

\begin{question}[Effective bounds]\label{qu:effective-bounds}
For an integer-mass FQ measure, can the rank of the spectral group be
bounded effectively in terms of the data in \eqref{eq:fq-measure-growth}?
Can one choose a basis of the spectral group for which the constant
$C_L$ in Theorem~\ref{thm:main-height} is also bounded effectively
in terms of these data?
\end{question}

The basis cannot be chosen arbitrarily: for the Dirac comb of $\Z^2$,
which is its own Fourier transform, the bases with rows $(1,0)$ and
$(q,1)$, $q\in\N$, give the spectral point $(0,1)$ the coordinates
$(-q,1)$.

\begin{question}[An intrinsic representative]\label{qu:intrinsic-representative}
Can the effective Lee--Yang cycle representing an integer-mass FQ measure
be chosen intrinsically, independently of the basis $L$ and the
compression matrix $S$?
\end{question}

By Proposition~\ref{prop:canonical-ideal}, the variety $Y$ in
$\operatorname{Hom}(\Gamma,\C^*)$ is intrinsic. Its coordinates in
$(\C^*)^n$ depend on $L$, while the Lee--Yang representative also
depends on $S$.

Question~(5) of \cite[\S2.4]{AKKV} asks for a Lee--Yang representation
of the zero set of an arbitrary real-rooted system of trigonometric
equations. Our inverse theorem starts from an integer-mass FQ measure
on that zero set; this additional hypothesis leaves their question open.

\section*{Use of AI}

The principal mathematical ideas, constructions and proofs were generated
by GPT, including the comparison of character counts in automatic
algebraicity, the support-height theorem for holonomic sequences, and
the reconstruction of branched fibers from their moments. Claude
reviewed the mathematics and proposed the extension from unit masses
to positive integer masses; GPT generated its statements and proofs,
which Claude reviewed. Claude also proposed the example in
Proposition~\ref{prop:integer-equation-obstruction}; GPT identified the
need for a dense physical flow and supplied the correction. Codex
wrote and revised the exposition, typeset the paper, compared cited
statements with their sources, and carried out computational checks.
The author selected the questions, organized the reviews and revisions,
and takes responsibility for the entire content of the paper.

\end{document}